      \OR\ifentrytype{incollection}\OR\ifentrytype{inproceedings}%
      \OR\ifentrytype{inreference}} {\printtext[title]{%
\definecolor{darkblue}{rgb}{0.13,0.13,0.39}
\newtheorem{thm}{Theorem}[section] 
\newtheorem{lem}[thm]{Lemma}
\newtheorem{prop}[thm]{Proposition} \newtheorem{cor}[thm]{Corollary}
\theoremstyle{definition} \newtheorem{rem}[thm]{Remark} \newtheorem*{rem*}{Remark}
 \newcounter{assum}
\newcommand{\I}{{\rm i}} \newcommand{\pp}{\mathbb{P}} 
 \newcommand{\ee}{\mathbb{E}} \newcommand{\rr}{\mathbb{R}}
\newcommand{\nn}{\mathbb{N}} \newcommand{\zz}{\mathbb{Z}}
 \newcommand{\cc}{{\mathbb{C}}} \newcommand{\p}{\partial}
\newcommand{\uno}[1]{\mathbf{1}_{#1}}
\newcommand{\ep}{\varepsilon} \newcommand{\vs}{\vspace{6pt}}\newcommand{\wt}{\widetilde}
\newcommand{\bk}{{m}}
\newcommand{\K}{K_{\Ai}} \newcommand{\m}{\overline{m}}
 \newcommand{\qand}{\quad\text{and}\quad}
\newcommand{\qqand}{\qquad\text{and}\qquad}
\newcommand{\mfh}{\mathfrak{h}}
\newcommand{\mff}{\mathfrak{f}}
\newcommand{\mfe}{\mathfrak{e}}
\newcommand{\mfs}{\mathfrak{s}}
\newcommand{\mfg}{\mathfrak{g}}
\newcommand{\mfgp}{\mfg_{\rm p}}
\newcommand{\mfgu}{\mfg_{\rm u}}
\newcommand{\tmfgp}{\tilde{\mfg}_{\rm p}}
\newcommand{\tmfgu}{\tilde{\mfg}_{\rm u}}
\newcommand{\mfu}{\mathfrak{u}}
\newcommand{\mfuu}{\mathfrak{u}_{\rm u}}
\newcommand{\mfup}{\mathfrak{u}_{\rm p}}
\newcommand{\mfuap}{\mathfrak{u}_{\rm ap}}
\newcommand{\tmfuap}{\tilde{\mathfrak{u}}_{\rm ap}}
\newcommand{\mfv}{\mathfrak{v}}
\newcommand{\mfZ}{\mathfrak{Z}}
\newcommand{\mcP}{\mathcal{P}}
\newcommand{\Kmu}{K}
\newcommand{\KASEP}{K^{\rm flat}}
\newcommand{\KASEPeta}{K^{{\rm flat},\eta}}
\newcommand{\KASEPzero}{K^{{\rm flat},0}}
\newcommand{\wtKASEP}{\wt{K}^{\rm flat}}
\newcommand{\wtKASEPz}{\wt{K}^{{\rm flat},\zeta}}
\newcommand{\wtKASEPeta}{\wt{K}^{\eta}}
\newcommand{\wtKASEPzero}{\wt{K}^{0}}
\newcommand{\KCDRP}{\bar{K}}
\newcommand{\psiASEP}{\psi}
\newcommand{\cpsiASEP}{\check{\psi}}
\newcommand{\vsigma}{\sigma}
\DeclareMathOperator{\Ai}{Ai}
\DeclareMathOperator{\tr}{tr}
\renewcommand{\idotsint}{\int}
\def\XXint#1#2#3{{\setbox0=\hbox{$#1{#2#3}{\int}$}
\vcenter{\hbox{$#2#3$}}\kern-.5\wd0}}
\newcommand*{\pvint}{
  \mathop{\,\,\vphantom{\intop}\!\!\!
    \mathpalette\pvintop\relax}\nolimits}
\newcommand*{\pvintop}[2]{
  \ooalign{$#1\intop$\cr\hidewidth$#1-$\hidewidth}}
\newcommand{\inv}[1]{\frac{1}{#1}}
\newcommand{\abs}[1]{\ensuremath{\left|#1\right|}}
\DeclareMathOperator{\pf}{Pf}
\DeclareMathOperator*{\Res}{Res}
\DeclareMathOperator{\sgn}{sgn}
\newcommand{\pochinf}[1]{\ensuremath{\left(#1\right)_\infty}}
\newcommand{\taufac}{_\tau!}
\newcommand{\qfac}{_q!}
\newcommand{\twopii}[1]{(2\pi\I)^{#1}}
\newcommand{\itwopii}[1]{\frac{1}{(2\pi\I)^{#1}}}
\numberwithin{equation}{section}
\let\oldmarginpar\marginpar
\renewcommand\marginpar[1]{\-\oldmarginpar[\raggedleft\footnotesize #1]%
  {\raggedright{\small\textsf{#1}}}}
\let\nnu\nu
\renewcommand{\nu}{n^{\rm u}}
\newcommand{\np}{n^{\rm p}}
\newcommand{\nf}{n^{\rm f}}
\newcommand{\vnu}{\vec{n}^{\rm u}}
\newcommand{\vnp}{\vec{n}^{\rm p}}
\newcommand{\vnf}{\vec{n}^{\rm f}}
\newcommand{\ku}{k_{\rm u}}
\newcommand{\kp}{k_{\rm p}}
\newcommand{\kf}{k_{\rm f}}
\renewcommand{\wp}{w^{\rm p}}
\newcommand{\wmp}{w^{\text{\rm --p}}}
\newcommand{\wu}{w^{\rm u}}
\newcommand{\wf}{w^{\rm f}}
\newcommand{\vwp}{\vec{w}^{\rm p}}
\newcommand{\vwmp}{\vec{w}^{\text{\rm --p}}}
\newcommand{\vwu}{\vec{w}^{\rm u}}
\newcommand{\vwf}{\vec{w}^{\rm f}}
\newcommand{\zp}{z^{\rm p}}
\newcommand{\zmp}{z^{\text{\rm --p}}}
\newcommand{\zu}{z^{\rm u}}
\newcommand{\vzp}{\vec{z}^{\rm p}}
\newcommand{\vzmp}{\vec{z}^{\text{\rm --p}}}
\newcommand{\vzu}{\vec{z}^{\rm u}}
\newcommand{\ts}{\widetilde{s}}
\newcommand{\ty}{\widetilde{y}}
\newcommand{\tl}{\widetilde{\lambda{}}}
\newcommand{\nuhalfflat}{\nnu^\text{\rm h-fl}_{k,m}}
\newcommand{\tnuhalfflat}{\wt{\nnu}^\text{\rm h-fl}_{k,m}}
\newcommand{\nuflat}{\nnu^{\rm flat}_{k,m}}
\newcommand{\bnuflat}{\bar{\nnu}^{\rm flat}_{k,m}}
\newcommand{\mfZhalfflat}{\mfZ^\text{\rm h-fl}_{\bk}}
\newcommand{\mfZflat}{\mfZ^{\rm flat}_{\bk}}
\newcommand{\bmfZflat}{\bar{\mfZ}^{\rm flat}_{\bk}}
\newcommand{\thfl}{{\text{\rm h-fl}}}
\begin{document}

\title{A Pfaffian representation for flat ASEP}

\author{Janosch Ortmann} \address[J.~Ortmann]{
  Department of Mathematics\\
  University of Toronto\\
  40 St. George Street\\
  Toronto, Ontario\\
  Canada M5S 2E4} \email{janosch.ortmann@utoronto.ca}

\author{Jeremy Quastel} \address[J.~Quastel]{
  Department of Mathematics\\
  University of Toronto\\
  40 St. George Street\\
  Toronto, Ontario\\
  Canada M5S 2E4} \email{quastel@math.toronto.edu}

\author{Daniel Remenik} \address[D.~Remenik]{
  Departamento de Ingenier\'ia Matem\'atica and Centro de Modelamiento Matem\'atico\\
  Universidad de Chile\\
  Av. Beauchef 851, Torre Norte\\
  Santiago\\
  Chile} \email{dremenik@dim.uchile.cl}

\maketitle

\begin{abstract}
  We obtain a Fredholm Pfaffian formula for an appropriate generating function of the
  height function of the asymmetric simple exclusion process starting from flat
  (periodic) initial data. Formal asymptotics lead to the GOE Tracy-Widom distribution.
\end{abstract}

\tableofcontents

\section{Introduction}
\label{sec:intro}

In this article, we consider the one-dimensional asymmetric simple exclusion process
(ASEP) with \emph{flat} initial data, meaning that initially even sites are occupied and
odd sites are empty.  The particles then perform nearest neighbour asymmetric random
walks, in continuous time, the only interaction being that jumps to already occupied sites
are suppressed.  ASEP is perhaps the most popular of the small class of special partially
solvable discretizations of the Kardar-Parisi-Zhang (KPZ) equation.  In recent years, a
series of breakthroughs on these models has led to exact formulas for one point
distributions of the solutions of KPZ for very special initial data.  The first formulas
were for the narrow wedge and half-Brownian initial conditions \cite{ACQ,
  corwinQuastelRarefaction}, which were obtained by taking limits of earlier formulas for
ASEP with step and step-Bernoulli initial conditions due to Tracy and Widom
\cite{tracyWidomASEP3,tracyWidomASEP4}. The remaining key initial data, flat, half-flat and
Brownian, are more challenging and had to wait several years (\cite{borCorFerrVeto,oqr-half-flat}, and
the present work).  Meanwhile there was a parallel effort in the physics community working
directly from KPZ, using the method of replicas to derive rigorous formulas for the
moments, and then writing divergent series for appropriate generating functions for the
one point distributions of KPZ, which could then be formally manipulated into convergent
Fredholm series \cite{cal-led-rosso, dotsenkoGUE, imamSasamReplicaHalfBr, imamSasamStatCorrKPZ}.  

In this context, P. Le Doussal and P. Calabrese \cite{cal-led} derived what in mathematics
would be called conjectural expressions for the flat case, as Fredholm Pfaffians.  To a
certain extent, this work is an attempt to make sense of their remarkable,
but highly non-rigorous computations.  Our main results provide analogous formulas for
flat ASEP.  Formal asymptotics give the correct fluctuations in the large time limit,
which in this case correspond to the GOE Tracy-Widom distribution \cite{tracyWidom2}, in a
new form. In the weakly asymmetric limit one obtains versions of the formulas in \cite{cal-led}.
However the technicalities of this, as well as those of providing a rigorous justification of the
large time asymptotics, are so involved that we have left them for a future article.

It should be noted that an earlier formula existed for the one point distribution of ASEP
with flat initial data, due to E. Lee \cite{lee}.  However, this formula is similar to the
first formulas of Tracy and Widom for the step case
\citet{tracyWidomASEP1} and are not in a suitable form for
asymptotics, even at a formal level.  In the step case, it already required an
extraordinary argument \cite{tracyWidomASEP2} to rewrite this as what is now referred to
as a \emph{small contour formula}, which is indeed suitable for asymptotics
\cite{tracyWidomASEP3}.  In the flat case this method has not been successful so far, and
thus we follow an alternative approach.

In a prequel article \cite{oqr-half-flat}, we derived a formula for a generating function
in the half-flat case, meaning that initially only even positive sites are occupied.
Because such data is \emph{left-finite}, meaning that there is a leftmost particle, the
duality methods of \cite{BCS} apply, and with suitable guessing, inspired, in part, by the
formulas of \cite{lee}, one is able to obtain an exact solution to the duality equations,
which provide formulas for the exponential moments of the ASEP height function and can
then be turned into a certain generating function. 

In this article we take a limit of the half-flat moment formula probing into the positive
region, which reproduces the flat initial data. We follow the broad lines of
\cite{cal-led}. The asymptotics is extremely interesting, nonstandard, and unusually
involved. One is forced to make a large deformation of contours, and the flat formula is
an enormous sum of the residues passed as one makes this deformation. In this sum, a
pairing structure arises which leads to the emergence of Pfaffians.

Naturally, the remarkable formulas beg the question whether there is a more direct approach,
and some recent physics work have indicated possibilities in this direction
\cite{deNardisEtAl-quench,cal-led-quench}.

\vs

\noindent{\bf Outline.} The rest of the article is organized as follows. Our main results
are discussed in Section \ref{sec:main}. Section \ref{sec:flat-lim} contains the
derivation of the flat ASEP moment formulas as a limit of our earlier half-flat
formulas. Section \ref{sec:pfaffian} is devoted to unravelling the Pfaffian structure
behind the moment formulas and rewriting them in a simpler way. The resulting formulas are
then summed in Section \ref{sec:generatingfunction} to form a generating function and
showing how, in a certain case, they lead to a Fredholm Pfaffian. Section
\ref{sec:app-bose} contains a discussion of analogous results for the moments of the
solution of the KPZ/SHE equation (or, more precisely, the delta Bose gas) with flat
initial data. Appendix \ref{sec:goe-limit} presents a formal critical point analysis
supporting the conjecture that the ASEP height function fluctuations are described in the
long time limit by the GOE Tracy-Widom distribution when the initial data is flat. As
explained in Remark \ref{remark2.2}.vii, the rigorous asymptotics cannot be obtained
directly by these methods and remains the subject for future work. Finally, Appendix
\ref{sec:linalg} contains an overview of Pfaffians and Fredholm Pfaffians.

\section{Main results}\label{sec:main}

\subsection{Preliminaries}
\label{sec:prelim}

The one-dimensional asymmetric simple exclusion process (ASEP) with jump rates
${p\in [0,1/2)}$ to the right and $q=1-p\geq p$ to the left is the continuous time Markov
process with state space $\{0,1\}^{\zz}$ and generator
\begin{align}
  \mathcal{L} f(\eta) & =\sum_{x\in \zz} p \eta(x) (1-\eta(x+1)) (f( \eta-\uno{x}+
  \uno{x+1}) - f(\eta) )\\ & \qquad+ q \eta(x+1) (1-\eta(x)) (f( \eta+\uno{x}- \uno{x+1})
  - f(\eta) ),
\end{align}
where $\uno{x}(y)=1$ if $y=x$ and $0$ otherwise. Writing $\hat{\eta}(x)=2\eta(x)-1$, we define the ASEP \emph{height function} by
\begin{equation}
  \label{defofheight}
  h(t,x) = \begin{cases}
    2N^{\rm flux}_0(t) + \sum_{0<y\leq x}\hat{\eta}(t,y), & x>0,\\
    2N^{\rm flux}_0(t), & x=0,\\
    2N^{\rm flux}_0(t)-  \sum_{x<y\leq 0}\hat{\eta}(t,y), & x<0,
  \end{cases}
\end{equation}
where $N^{\rm flux}_0(t)$ is the net number of particles which crossed from site $1$ to $
0$ up to time $t$, meaning that particle jumps $1\to 0$ are counted as $+1$ and jumps
$0\to 1$ are counted as $-1$. For $x\notin\zz$ we define $h(t,x)$ by linear interpolation.
With these conventions $h(t,x)$ is made of line segments with slopes $\pm1$, and it
evolves with its own Markovian dynamics, which flips $\vee\mapsto \wedge$ at rate $q$ and
$\wedge\mapsto\vee$ at rate $p$. For $q>p$, the general trend is an upwards moving height
function.

An important parameter in the formulas will be
\begin{equation}\label{tau}
  \tau = p/q \in [0,1].
\end{equation} 
We are primarily concerned with the \emph{flat},
\begin{equation}
  \label{eq:flat-initial-condition}
  \eta^{\rm flat}_0(x)=\uno{x\in2\zz},
\end{equation}
and \emph{half-flat},
\begin{equation}
  \label{eq:halfflat-initial-condition}
  \eta^\thfl_0(x)=\uno{x\in2\zz_{>0}},
\end{equation}
initial conditions (with the  notation $\zz_{>n}=\{ n+1,n+2,\ldots\}$ which will be used
throughout). The superscripts flat and h-fl will  be used for probabilities
and expectations computed with respect to these initial conditions.

In order to state our results we need to recall some definitions. Given a measure space
$(X,\Sigma,\mu)$, the \emph{Fredholm Pfaffian} (introduced in \cite{rainsCorr}) of an
anti-symmetric $2\times2$-matrix kernel $K(\lambda_1,\lambda_2)=\left[
  \begin{smallmatrix}
    K_{1,1}(\lambda_1,\lambda_2) & K_{1,2}(\lambda_1,\lambda_2)\\
    -K_{1,2}(\lambda_2,\lambda_1) & K_{2,2}(\lambda_1,\lambda_2)
  \end{smallmatrix}\right]$ acting on $L^2(X)$ is defined as the (formal) series
\begin{equation}\label{eq:fredPf}
  \pf\!\big[J-K\big]_{L^2(X)}=
  \sum_{k\geq0}\frac{(-1)^k}{k!}\int_{X^k}\mu(d\lambda_1)\dotsm\mu(d\lambda_k)\,\pf\!\big[K(\lambda_a,\lambda_b)\big]_{a,b=1}^k,
\end{equation}
where, we recall, the Pfaffian of a $2k\times 2k$ matrix $A=\{a_{i,j}\}$ is given by
\begin{equation}
  \pf\!\big[A\big]=\frac{1}{2^k k!}\sum_{\sigma\in S_{2n}}\sgn(\sigma)\prod_{i=1}^k
  a_{\sigma(2i-1),\sigma(2i)}.
\end{equation}
Appendix \ref{sec:linalg} surveys basic facts about Fredholm Pfaffians.
   
We will also need various $q$-analogues of standard calculus functions.  The $q$ here is the $q$ in \emph{quantum
  calculus}  \cite{gasperRahman}.  Our parameter \eqref{tau} came with the unfortuitous name $\tau$ instead of
$q$, since historically the left jump rate was called $q$.  In an attempt to keep with
standard practice, we will use $q$ when discussing the definitions of the $q$-deformed
functions, but set $q\mapsto\tau$ within the computations. The standard \emph{$q$-numbers} are given
by
\[[k]_q=\frac{1-q^k}{1-q}.\] Based on them one defines the \emph{$q$-factorial}
\begin{equation}
  \label{eq:q-fact}
  n\qfac=[1]_q[2]_q\dotsc[n]_q=\frac{(q;q)_n}{(1-q)^n},
\end{equation}
where the \emph{$q$-Pochhammer symbols} are given by
\begin{equation}\label{Eq:DefQPoch}
  (a;q)_n = \prod_{k=0}^{n-1}\left(1-q^k a\right).
\end{equation}
This last definition can be extended directly to $n=\infty$ when $|q|<1$, yielding the
infinite $q$-Pochhammer symbols. The standard $q$-exponentials are then defined as
\begin{equation}
e_q(z)=\frac{1}{\pochinf{(1-q)z;q}}
\qqand  E_q(z)=\pochinf{-(1-q)z;q}\label{eq:standardqexpspoch}
\end{equation}
or (thanks to the $q$-Binomial Theorem) through their series representations
\begin{equation}\label{eq:standardqexps}
e_q(z)=\sum_{k\geq0}\frac{z^k}{k\qfac}
\qqand E_q(z)=\sum_{k\geq0}\frac{q^{\inv2k(k-1)}z^k}{k\qfac}.
\end{equation}
The first series converges absolutely only for $|z|<1$, but it extends analytically to
$\Re(z)<0$ through the formula in \eqref{eq:standardqexpspoch}, where it approximates
$\exp(z)$ well as $q\nearrow 1$. And, of course, $E_q(z)=e_q(-z)$.  In
\cite{oqr-half-flat} we obtained a formula for
$\ee^\thfl\big[e_\tau(\zeta\tau^{h(t,x)/2})\big]$. 

In the flat case one is naturally
led to use a different kind of $q$-exponential.
The \emph{symmetric $q$-numbers} \cite{nelsonGartley,frenkelTuraev} are given by
\begin{equation}
  \widetilde{[k]}_q = \frac{q^{k/2} - q^{-k/2}}{q^{1/2} - q^{-1/2} }
\end{equation}
and the \emph{symmetric $q$-factorial} by
\begin{equation}
  \widetilde{\,k_q!\,} =
  \widetilde{[1]}_q\widetilde{[2]}_q\dotsc\widetilde{[k]}_q=q^{-\inv4k(k-1)}k\qfac
\end{equation}
with $\widetilde{\,0_q!\,}=1$.  The \emph{symmetric $q$-exponential}
function is then defined as
\begin{equation}\label{eq:symmqexp}
  \exp_q(z) = \sum_{k=0}^\infty\frac{z^k}{\widetilde{\,k_q!\,}} = \sum_{k=0}^\infty \frac{ q^{\frac14k(k-1)} }{k\qfac}z^k.
\end{equation}
The function $f(z) =\exp_q(z)$ is the unique solution of
\begin{equation}
  \frac{\delta_q f(z)}{\delta_q z}  = f(z), \qquad f(0)=1
\end{equation}
where the symmetric $q$-difference operator is defined by $\delta_q f(z) = f(q^{1/2} z) -
f(q^{-1/2} z)$ (see for instance \cite{gasperRahman}).  The standard $q$-exponentials
do satisfy analogous $q$-difference equations, but, unlike them, the symmetric $q$-exponential satisfies
\begin{equation}\label{eq:symm}
  \exp_q(z)= \exp_{q^{-1}}(z),
\end{equation}
which is a natural symmetry in the flat problem (see Remark \ref{remark2.2}.v).

\subsection{Fredholm Pfaffian formula}
\label{sec:fred-pf-intro}

Now we can state one of our main results. We need to introduce some functions (in all that
follows we fix a parameter $\beta>0$): for $s\in\cc$, $\lambda\in\rr_{\geq0}$,
$y\in\cc$ with $|y|=1$ and $\zeta\in\cc\setminus\rr_{>0}$, define
\begin{equation}
  \psiASEP(s,\lambda,y;\zeta) = 
  (-\zeta)^s\frac{\tau^{-\beta s^2 + \frac34s-1}}y
  \frac{1-\tau^{s/2}y}{1+\tau^{s/2}y}\,\frac{\pochinf{-\tau^{-s/2}y;\tau}}{\pochinf{\!-\tau^{s/2}y;\tau}}
  e^{-\lambda \frac{1-\tau^{s/2}y}{1+\tau^{s/2}y}+t\Big[\frac{1}{1+\tau^{-s/2} y}-\frac1{1+\tau^{s/2} y}\Big]}
  \label{eq:psiASEP-intro}
\end{equation}
and let, for $\omega\in\rr$,
\begin{equation}\label{eq:cpsiASEP-intro}
\cpsiASEP(\omega,\lambda,y;\zeta)=\inv{2\pi\I}\int_{\I\rr}ds\,e^{s\omega}\psiASEP(s,\lambda,y;\zeta),
\end{equation}
which should be regarded as the inverse double-sided Laplace transform of $\psiASEP$ (in
$s$), see \eqref{eq:DLapInv}; additionally, if $z,z_1,z_2\in\rr$ and
$\sigma,\sigma_1,\sigma_2\in\{-1,1\}$, let
\begin{equation}
\begin{aligned}
  F_1(z,y) & =  \sum_{m=1}^\infty (-z)^{m}(1-\tau)^{2m}\tau^{(\frac12+2\beta)m^2-m}\frac{1+y^2}{y^2-1}
    \frac{\pochinf{\tau^{1+m}y^2;\tau}\pochinf{\tau^{1+m}/y^2;\tau}}
    {\pochinf{\tau y^2;\tau}\pochinf{\tau/y^2;\tau}}\\
  F_2(z_1,z_2;\sigma_1,\sigma_2) & = \sum_{m_1,m_2=1}^\infty (-z_1)^{m_1} (-z_2)^{m_2}
    (-\sigma_1\sigma_2)^{m_1\wedge m_2+1} \sgn(\sigma_2m_2-\sigma_1 m_1)\\
    &\hspace{2.8in}\times\frac{\tau^{(\inv4+\beta)(m_1^2+m_2^2)-\inv4(m_1+m_2)}}{(m_1)\taufac
      \,(m_2)\taufac},\\
  F_3(z) &= - \sum_{m=1}^\infty\frac{\tau^{(\frac14+\beta)m^2-\inv4m}}{m\taufac}(-z)^m.  
\end{aligned}\label{eq:DefF}
\end{equation}

\begin{thm}\label{thm:fredPf-intro}
  For $\zeta\in\cc$ with $|\zeta|<\tau^{1/4}$, we have
  \begin{equation}
    \label{eq:mainresult}
    \ee^{\rm flat}\!\left[\exp_\tau\!\big(\zeta\tau^{\inv2h(t,0)}\big)\right]
    =\pf\!\big[J-\wtKASEPz\big]_{L^2(\rr_{\geq0})}
  \end{equation}
  where the $2\times 2$ matrix kernel $\wtKASEP$ is given in terms of the functions
  defined in \eqref{eq:psiASEP-intro} and \eqref{eq:DefF}, with any $\beta>0$, through (here
  $C_{0,1}$ is the unit circle centered at the origin)
  \begin{align}
    \wtKASEPz_{1,1}(\lambda_1,\lambda_2)& = \int_{\rr^2} d\omega_1\,d\omega_2\,\inv{\pi\I}\pvint_{\!\!\!\!C_{0,1}}dy\,
    \cpsiASEP(\omega_1,\lambda_1,y)\cpsiASEP(\omega_2,\lambda_2,\tfrac1y)F_1(e^{-(\omega_1+\omega_2)},y)\\
    &\quad+\frac12\sum_{\sigma_1,\sigma_2\in\{-1,1\}}\int_{\rr^2}d\omega_1\,d\omega_2\,\cpsiASEP(\omega_1,\lambda_1,\sigma_1)
    \cpsiASEP(\omega_2,\lambda_2,\sigma_2)F_2(e^{-\omega_1},e^{-\omega_2};\sigma_1,\sigma_2)\\
    \wtKASEPz_{1,2}(\lambda_1,\lambda_2)&=\frac12\sum_{\sigma\in\{-1,1\}}\,\int_{-\infty}^\infty
    d\omega\, F_3(e^{-\omega})\cpsiASEP(\omega;\lambda_1,\sigma)\label{eq:kernelBessel}\\
    \wtKASEPz_{2,2}(\lambda_1,\lambda_2)&=\frac12\sgn(\lambda_2-\lambda_1).
  \end{align}
\end{thm}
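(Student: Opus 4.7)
The approach follows the roadmap outlined in the introduction. The starting point is the half-flat exponential moment formula from \cite{oqr-half-flat}, an explicit $k$-fold nested contour integral representation for $\ee^{\thfl}[\tau^{kh(t,x)/2}]$. First I would take a suitable limit of this formula in which the leftmost particle of the half-flat data is sent to $-\infty$ (equivalently, the observation point is pushed deep into the bulk of the occupied region); after a careful tracking of the exponential $x$-dependent prefactors this should produce a $k$-fold nested contour integral representation of $\ee^{\rm flat}[\tau^{kh(t,0)/2}]$, of the same general shape (Bethe-ansatz eigenfunctions weighted by the two-body $S$-matrix of ASEP) but with symmetries reflecting the flat background.

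The central and most delicate step is to extract the Pfaffian structure from these moment formulas. I would deform the $k$ nested contours onto a single common contour, each crossing picking up the residue at a simple pole of the $S$-matrix and forcing two integration variables to merge. Iterating this deformation produces a large sum indexed by set partitions of $\{1,\dots,k\}$ whose blocks have size at most two: singleton blocks contribute a ``one-body'' factor (which will become $F_3$ of \eqref{eq:DefF}) and paired blocks contribute a ``two-body'' factor (which will become $F_1$, together with a boundary contribution $F_2$ arising when the remaining free variables approach $y=\pm 1$ on the unit circle $C_{0,1}$, explaining the principal-value prescription in $\wtKASEPz_{1,1}$). The sum over pairings is precisely the combinatorial content of a Pfaffian via the expansion \eqref{eq:fredPf}. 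The main obstacle here is the careful bookkeeping of signs, symmetry factors, multiplicities $m,m_1,m_2$ of the iterated residues, and coincident-pole contributions at $y=\pm 1$; the free parameter $\beta>0$ in the theorem is a reflection of how these factors can be redistributed between $\psiASEP$ and $F_1,F_2,F_3$.

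Once the moment formula has been rewritten in this paired form, I would sum it against the symmetric $q$-exponential weight \eqref{eq:symmqexp}, using $\exp_\tau(\zeta\tau^{h/2})=\sum_{k\ge 0}\tau^{k(k-1)/4}\zeta^k\tau^{kh/2}/(k\qfac)$. The Gaussian factor $\tau^{k(k-1)/4}$ coming from $\widetilde{\,k_\tau!\,}$ matches exactly the Gaussian factor produced during the pairing step, which is why $\exp_\tau$ (rather than $e_\tau$ used in the half-flat case) is the natural generating function here, and is also the algebraic reason behind the symmetry \eqref{eq:symm}. The double-sided Laplace transform \eqref{eq:cpsiASEP-intro} is introduced precisely to decouple the $s$-dependence of $\psiASEP$ from the pair-multiplicity sums hidden inside $F_1,F_2,F_3$; after resummation the series should match term-by-term the Fredholm Pfaffian expansion \eqref{eq:fredPf} of $J-\wtKASEPz$ on $L^2(\rr_{\ge 0})$.

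The condition $|\zeta|<\tau^{1/4}$ ensures absolute convergence: each pair in the Pfaffian expansion contributes a factor of order $|\zeta|^2\tau^{-1/2}$ from the $m$-summation in $F_1,F_2$, forcing $|\zeta|^2<\tau^{1/2}$. The remaining technical obstacles I anticipate are (a) justifying the interchange of the large-deformation limit with the $k$-fold contour integrals in the passage from half-flat to flat, (b) controlling tails to legitimate the pairing sum, and (c) identifying the off-diagonal entry $\wtKASEPz_{1,2}$ (which involves $F_3$) together with the skew kernel $\wtKASEPz_{2,2}(\lambda_1,\lambda_2)=\tfrac12\sgn(\lambda_2-\lambda_1)$, which should emerge from the ``unpaired'' singletons together with the Pfaffian completion provided by the constant $\tfrac12\sgn$ kernel.
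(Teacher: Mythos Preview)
Your overall architecture matches the paper's: pass from half-flat to flat moments via a limit, deform contours to produce paired and unpaired residues, recognize a Pfaffian, sum against the $\exp_\tau$ weight, and use the double-sided Laplace transform to reach the final kernel. But several of the mechanisms you describe are not what actually happens and would not work as stated.

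The contour deformation is not ``onto a single common contour'': each $w_a$ contour is expanded \emph{outward} to a circle of radius $\tau^{-n_a/2-\eta}$ depending on the string length $n_a$. The poles crossed are \emph{unpaired} poles at $w_a=\pm\tau^{-n_a/2}$ (from $\mfgp$) and \emph{paired} poles at $w_b=\tau^{-n_a}/w_a$ when $n_a=n_b$ (from $\mfh_1$); a third class of \emph{free} variables remains on the outer contours, carries the only $x$-dependence, and vanishes as $x\to\infty$. More importantly, the Pfaffian does \emph{not} come from the generic ``sum over pairings'' combinatorics you invoke. For each fixed split into $\ku$ unpaired and $\kp$ paired variables, the cross-factors among paired variables are recognized as a Pfaffian via the \emph{Schur Pfaffian identity} $\prod_{a<b}\frac{y_b-y_a}{y_ay_b-1}=\pf\!\big[\frac{y_b-y_a}{y_ay_b-1}\big]$, and the cross-factors among unpaired variables form a second Pfaffian via a degenerate limit of it (Lemma~\ref{lem:intpfaffian}). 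Only after both Pfaffians are in hand is the sum over $(\ku,\kp)$ collapsed via a Pfaffian Cauchy--Binet identity. Without the Schur step you will not see a Pfaffian at all.

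A few smaller corrections: $F_2$ is not a boundary contribution of free variables near $y=\pm1$; it is the cross-term between two \emph{unpaired} variables, each already evaluated at $\sigma=\pm1$ by the residue computation. The kernel $\tfrac12\sgn(\lambda_2-\lambda_1)$ in $\wtKASEPz_{2,2}$ does not arise from ``Pfaffian completion'' but from the explicit identity $\frac{p_a-p_b}{p_ap_b(p_a+p_b)}=\int_{(\rr_{\ge0})^2}e^{-\lambda_ap_a-\lambda_bp_b}\sgn(\lambda_b-\lambda_a)\,d\lambda_a\,d\lambda_b$, which is what allows the $\lambda$-integrals to be pushed inside the Pfaffian. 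And $\beta>0$ is not a redistribution freedom: it is inserted purely to make the inverse Laplace integral \eqref{eq:cpsiASEP-intro} converge (Remark~\ref{remark2.2}(ii)).
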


\begin{rem}\mbox{}\label{remark2.2}
  \begin{enumerate}[label=(\roman*),leftmargin=*]
  \item The symbol $\pvint$ in the $y$ integral appearing in the definition of
    $\wtKASEP_{1,1}$ denotes that this is a principal value integral. This is because
    the $y$ contour goes through the singularities of $F_1(z,y)$ at $y=\pm1$ (see the
    discussion starting after \eqref{eq:pvPfaffPre}).
  \item The parameter $\beta>0$ which appears in \eqref{eq:psiASEP-intro} and
    \eqref{eq:DefF} is necessary to make \eqref{eq:cpsiASEP-intro} a convergent integral,
    but otherwise has no meaning.
  \item The three functions in \eqref{eq:DefF} should be regarded as certain
    $q$-deformations of classical special functions. For example, $F_3(z)-1$ is a
    $q$-deformation of the exponential; in fact,
     $F_3(z)=1-\exp_\tau(-z)$ if $\beta=0$.
    Similarly, $F_1$ is related to
    the Bessel function $J_0$:
    \[F_1(z,y)=\frac{1+y^2}{y^2-1}\big(J_0(2\sqrt{z};\tau,y^2)-1\big),\] where
    $J_0(z;\tau,y)$ is a 2-parameter ($\tau\in(0,1)$, $y\in\cc$ with $|y|=1$) deformation
    of the Bessel function given by $J_0(z;\tau,y) =
    \sum_{m=0}^\infty(1-\tau)^{2m}\tau^{(\inv2+2\beta)m^2-m} \frac{(-z^2/4)^m}{ (\tau
      y^2;\tau)_m(\tau/y^2;\tau)_m }$.  As $y\to\pm1$, $J_0(z;\tau,y)$ becomes $\wt
    J^{(1)}_0(z;\tau)= \sum_{m=0}^\infty\tau^{(\inv2+2\beta)m^2-m} \frac{(-z^2/4)^m}{
      (m\taufac)_m^2}$, which in the case $\beta=0$ can be regarded as a symmetric version
    of the Jackson $q$-Bessel function $J^{(1)}_0(z;\tau)=\sum_{m=0}^\infty
    \frac{(-z^2/4)^m}{(1-\tau)^{2m}(m\taufac)_m^2}$ (which, as $\tau\to 1$ as well,
    becomes the standard Bessel function $J_0(z)$). $F_2$ is a bit more difficult to
    recognize, so we will not attempt it here, but in view of (146) in \cite{cal-led} it
    should be possible to write it in terms of certain $q$-deformations of the Bessel and
    hyperbolic sine functions. 
      \item The statistical symmetry of flat ASEP under the transformation $(p,q)\mapsto
    (q,p)$ together with $h\mapsto -h$ is respected by the formula because the symmetric
    $q$-exponential satisfies \eqref{eq:symm}.
  \item There are similar, although less appealing, formulas for generating functions of
    $\tau^{\inv2h(0,x)}$ defined in terms of other $q$-deformations of the exponential
    function, see Theorem \ref{thm:taulaplflat-pre}. These include formulas for the case
    of $E_\tau$, but not $e_\tau$. This last case does not seem to be accessible through
    our methods, because the moments of $\tau^{\inv2h(0,x)}$ are expected to grow like
    $e^{ck^2}$ for some $c>0$, and hence $\ee^{\rm
      flat}\!\left[e_\tau\big(\zeta\tau^{\inv2h(t,0)}\big)\right]$ cannot be computed by
    summing moments.  It is worth emphasizing here that the use of the function $\exp_\tau(z)$
    is \emph{intrinsic} to the problem, and \emph{not} just a convenient choice. The formulas for
    other generating functions are \emph{not} Fredholm Pfaffians.
  \item The $\exp_\tau$ generating function on the left hand side of \eqref{eq:mainresult} does 
  \emph{not} in general determine the distribution of $h(t,0)$. On the other hand, the formula in
    \cite{lee} is for the distribution function of $h(t,0)$, but it is not obvious how to
    obtain \eqref{eq:mainresult} from Lee's formula.
  \item Focusing on the left hand side of \eqref{eq:mainresult}, it is tempting to believe
    that $\exp_\tau(z)$ behaves sufficiently like $\exp(z)$ so that the key identity
    \begin{equation}\label{eq:exptrick}
      \lim_{t\to\infty} \ee^{\rm flat}\!\left[\exp\{-e^{ \alpha(h(t,0)-\frac12t-t^{1/3} r) } \}\right] 
      =\pp^{\rm flat}\!\left(\lim_{t\to\infty} \frac{h(t,0)-\frac12t}{t^{1/3}} > r \right)
    \end{equation}
    for $\alpha<0$ still holds with $\exp$ replaced by $\exp_\tau$.  Indeed, this is the
    case with $e_\tau$ (this fact has been used succesfully in recent years to derive the
    asymptotics of some related models, although not for the half-flat or flat initial
    conditions, see for instance \cite{borCor,borodinCorwinFerrari,ferrVeto-q-TASEP}).
        
    For $x>-(1-\tau)^{-1}$ it is the case that $\exp_\tau(z)$ looks quite like $\exp(z)$,
    and in fact it converges to it uniformly on $[-a,\infty)$ for any $a>0$, as
    $\tau\nearrow 1$.  However, $\exp_\tau(z)$ has a largest real zero at $x_0\sim -
    (1-\tau)^{-1}$, and as $x$ decreases below $x_0$, it begins to oscillate wildly, with
    zeros at $x_k\in [x_0q^{-(k+1)/2},x_0q^{-k/2}]$ for each $k\in\zz_{\geq1}$ -- their
    precise disposition is unknown \cite{nelsonGartley} -- and reaching size approximately
    $e^{|\!\log|x||^2/ \log(1-\tau)|}$ in between.  Unfortunately, this genuinely
    precludes estimations cutting off this bad region.
  \item A formal steepest descent analysis shows that, setting
    $\zeta=-\tau^{-t/4+t^{1/3}r/2}$, the right hand side of \eqref{eq:mainresult} leads in
    the long time limit, as expected, to the GOE Tracy-Widom distribution. The limit
    is obtained in the form
    \begin{equation}
      \label{eq:GOE-pfaffian-intro}
      F_{\rm GOE}(r)=  \pf\!\left[J-K_r\right]_{L^2([0,\infty))}
    \end{equation} with 
    \begin{equation}
      K_r(\lambda_1,\lambda_2)=\left[
        \begin{matrix}
          \tfrac12(\p_{\lambda_1}-\p_{\lambda_2})\K(\lambda_1+r,\lambda_2+r) & -\tfrac12\Ai(\lambda_1+r)\\
          \tfrac12\Ai(\lambda_2+r) & \tfrac12\sgn(\lambda_2-\lambda_2)
        \end{matrix}\right],\label{eq:Kr}
    \end{equation}
    a formula for the Tracy-Widom GOE distribution which is essentially equivalent to one
    implicit in \cite{cal-led}, and which is also very similar to (but not quite the same
    as) a formula appearing in \cite{ferrariPolyGOE}. Here the \emph{Airy kernel} $\K$ is
    defined as
    \begin{equation}
      \K(\lambda_1,\lambda_2)=\int_0^\infty d\xi\Ai(\lambda_1+\xi)\Ai(\lambda_2+\xi).\label{eq:airyKernel}
    \end{equation}
    A proof of \eqref{eq:GOE-pfaffian-intro} together with the formal $t\to\infty$
    asymptotic analysis leading to this Fredholm Pfaffian is given in Appendix
    \ref{sec:goe-limit}.  
  \item Although we have not included it, a similar formal asymptotic analysis in the
    weakly asymmetric limit leads to the Le Doussal-Calabrese Pfaffian formula for flat
    KPZ \cite{cal-led} (see also Section \ref{sec:app-bose} for related formulas).
  \end{enumerate}
\end{rem}

\subsection{Moment formula and emergence of the Pfaffian structure}
\label{sec:moment}

The sum in \eqref{eq:symmqexp} includes a brutal cutoff by $q^{n^2/4}$, so that the left
hand side of \eqref{eq:mainresult} is practically a finite sum of moments.  Thus, in a
sense, the Pfaffian formula is mostly indicating a nice algebraic structure for a sum of
moments.  Therefore we state separately our moment formula.  It involves a kernel $\KASEP$
which is related to, but not quite the same as the kernel $\wtKASEPz$ appearing in
\eqref{eq:mainresult}. Since the meaning of the terms in $\KASEP$ only becomes apparent in
the computations, we do not repeat the detailed formula here.
 
\begin{thm}\label{prop:secondPfaffian}
  Let $\KASEP$ be the the $2\times2$ matrix kernel given by \eqref{eq:hatKmu} and
  \eqref{eq:hatKmatrix}. Then for any $\bk\in\zz_{\geq0}$ we have
  \begin{equation}\label{eq:flatASEPmoments-intro}
    \ee^{\rm flat}\!\left[\tau^{\inv2\bk h(t,0)}\right]
    =\bk\taufac\tau^{-\inv4m^2}\sum_{k=0}^{\bk}\frac{(-1)^{k}}{k!}\quad\,
      \smashoperator{\sum_{\substack{m_1,\dotsc,m_{k}=1,\\m_1+\dotsm+m_k=\bk}}^\infty}\qquad
    \int_{(\rr_{\geq0})^k}d\vec\lambda\,\pf\!\big[\KASEP(\lambda_a,\lambda_b;m_a,m_b)\big]_{a,b=1}^k.
  \end{equation}
\end{thm}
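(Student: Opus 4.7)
The plan is to obtain this moment formula in two stages that mirror Sections \ref{sec:flat-lim} and \ref{sec:pfaffian} of the paper. First I would derive a (not-yet-Pfaffian) formula for $\ee^{\rm flat}[\tau^{\bk h(t,0)/2}]$ by taking a limit of the half-flat moment formula from the prequel \cite{oqr-half-flat}; then I would reorganize the resulting expression to expose the Pfaffian structure.

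For the first stage, I would start from the half-flat moment formula, which represents $\ee^\thfl[\tau^{\bk h(t,x)/2}]$ as a nested contour integral/sum indexed by compositions of $\bk$, with the spatial variable $x$ appearing through an exponential factor in the integration variables. The flat initial datum is obtained from half-flat by sending the leftmost particle to $-\infty$, equivalently, by evaluating the half-flat height far to the right and compensating for the average linear tilt; by translation-invariance of flat ASEP this identifies $\ee^{\rm flat}[\tau^{\bk h(t,0)/2}]$ with the limit of $\tau^{(\text{deterministic tilt})}\,\ee^\thfl[\tau^{\bk h(t,x)/2}]$ as $x\to\infty$. On the integral side, this limit forces a large outward deformation of the $z$-contours, which crosses a countable family of poles. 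The surviving contributions organize themselves as sums over strings of positive integers $(m_1,\dots,m_k)$ with $\sum m_j = \bk$, producing the outer sum $\sum_{k=0}^{\bk}\sum_{m_1+\cdots+m_k=\bk}$ and the prefactor $\bk\taufac\,\tau^{-\bk^2/4}$ in the statement.

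For the second stage, I would analyze the resulting $k$-fold integrand. After symmetrization in the pairs $(\lambda_j,m_j)$, the integrand can be written as a sum over involutions of $\{1,\dots,k\}$ with fixed points allowed: each two-cycle $(a,b)$ contributes a symmetric two-variable kernel, and each fixed point contributes a one-variable kernel, with signs controlled by $\sgn$-factors arising from the ordered pole-picking. This is precisely the combinatorial expansion of the Pfaffian of a $2\times 2$-block antisymmetric matrix kernel; matching block-by-block against the formulas \eqref{eq:hatKmu}--\eqref{eq:hatKmatrix} identifies the entries of $\KASEP$ and completes the proof.

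The main obstacle is the accounting in the first stage. The half-flat integrand has a dense pole structure coming from $q$-Pochhammer denominators together with sine-type factors, and each contour in the nested integral must be deformed past its own family of poles while maintaining convergence and controlling contributions from infinity. Even once the residues are collected, the combinatorial rearrangement is delicate: the raw residue sums are a priori indexed by ordered data, and one must check both that they are symmetric enough in the $k$ integration variables to be reinterpreted as sums over matchings, and that the signs conspire to produce a genuine Pfaffian rather than some other pairing-based polynomial. This is the pairing structure highlighted in the introduction, and it is what ultimately converts an ASEP moment formula into a Pfaffian.
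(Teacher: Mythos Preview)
Your two–stage outline is exactly the paper's strategy: the half-flat moment formula is pushed to $x\to\infty$ via a large contour deformation whose residues split into ``unpaired'' poles at $w_a=\pm\tau^{-n_a/2}$ and ``paired'' poles at $w_b=\tau^{-n_a}/w_a$ (with $n_a=n_b$), and the limiting formula is then reorganized into the Pfaffian \eqref{eq:flatASEPmoments-intro}.

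One point of the second stage is worth sharpening. The paper does not obtain the Pfaffian by directly matching an involution expansion block-by-block. Instead, after the contour deformation one has a sum over $(\ku,\kp)$ with $\ku+2\kp=k$, and the integrand contains two separate cross-products: the $\tilde\mfe$-factors among paired variables, and the sign-type factors $(-\sigma_a\sigma_b)^{m_a\wedge m_b+1}\sgn(\sigma_a m_a-\sigma_b m_b)$ among unpaired ones. The first product is recognized as a \emph{Schur Pfaffian} via \eqref{eq:schur-nl-pf-intro}; the second is identified as a Pfaffian through Lemma~\ref{lem:intpfaffian}, which is a genuinely nontrivial (degenerate Schur) identity. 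Only after this does the $(\ku,\kp)$ sum combine with the two Pfaffians into a single one, via a resummation lemma (Lemma~\ref{lem:resumpf}); finally the integral representation $\frac{p_a-p_b}{p_ap_b(p_a+p_b)}=\int_{(\rr_{\ge0})^2}e^{-\lambda_ap_a-\lambda_bp_b}\sgn(\lambda_b-\lambda_a)$ together with a Pfaffian Cauchy--Binet identity (Lemma~\ref{lem:pfaffInt}) pulls the $y$- and $\lambda$-integrals inside to produce $\KASEP$. So the ``pairing $\Rightarrow$ Pfaffian'' step is less a direct combinatorial matching than a sequence of specific algebraic identities, and these (especially Lemma~\ref{lem:intpfaffian}) are where the real work lies.
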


The route to these moment formulas for the flat initial data is by
taking appropriate limits of formulas for the half-flat initial case
\eqref{eq:halfflat-initial-condition}. The limit we are interested in consists in starting
ASEP with the shifted half-flat initial condition
$\eta_0(y)=\uno{y\in2\zz_{>-x}}$, considering the variable $h(t,0)$ and computing
the limit when $x\to\infty$. More precisely, we will obtain moment formulas for flat ASEP
through the identity
\begin{equation}
  \label{eq:momPreShift}
  \ee^{\rm flat}\!\left[\tau^{\inv2\bk h(t,0)}\right]=\lim_{x\to\infty}\ee^{2\zz_{>-x}}\!\left[\tau^{\inv2\bk h(t,0)}\right]
\end{equation}
for $m\geq0$, where the superscript on the right hand side simply refers to the initial condition
specified above. Introduce now the random
variables
\begin{equation}
N_x(t)=\sum_{y=-\infty}^x\eta_t(y),\label{eq:Nx}
\end{equation}
which are of course finite if and only if the initial condition is left-finite. It is not hard to
check that when all particles start to the right of the origin one has $N^{\rm
  flux}_0(t)=N_0(t)$, and hence from \eqref{defofheight} we get
\begin{equation}
  \label{eq:heightNx}
  h(t,x)=2N_x(t)-x
\end{equation}
in the half-flat case. On the other hand a simple coupling argument shows that
$h(t,0)$, with initial condition $\eta_0(y)=\uno{y\in2\zz_{>-x}}$ has the same
distribution as the shifted observable $h(t,2x)$ with initial condition
$\eta_0(y)=\uno{y\in2\zz_{>0}}$. Hence, using \eqref{eq:momPreShift} and \eqref{eq:heightNx} we deduce that
\begin{equation}\label{eq:xlimit}
  \ee^{\rm flat}\!\left[\tau^{\inv2mh(t,0)}\right]=\lim_{x\to\infty}\ee^\thfl\!\left[\tau^{m(N_{2x}(t)-x)}\right].
\end{equation}
  
In Theorem 1.3 of \cite{oqr-half-flat} we obtained the following formula for the moments
appearing on the right hand side of \eqref{eq:xlimit}:

\begin{thm}\label{thm:main-hf}
  Let $m\geq0$. Then
  \begin{equation}\label{eq:stpt1}
    \ee^\thfl\!\left[\tau^{mN_{x}(t)}\right]
    =m\taufac\sum_{k=0}^{m}\nuhalfflat(t,x)
  \end{equation}
  with
  \begin{multline}\label{eq:nuk-intro}
    \nuhalfflat(t,x)=\inv{k!}\sum_{\substack{n_1,\dotsc,n_k\geq1\\n_1+\dotsm+n_k=m}}\itwopii k
    \idotsint_{C_{0,\tau^{-\eta}}^k}d\vec w\,\det\!\left[\frac{-1}{w_a\tau^{n_a}-w_b}\right]_{a,b=1}^{k}\\
    \times\prod_{a=1}^k\mff(w_a;n_a)\mfgp(w_a;n_a)\prod_{1\leq a<b\leq k}\mfh_1(w_a,w_b;n_a,n_b),
  \end{multline}
  where $C_{0,\tau^{-\eta}}$ is a circle of radius $\tau^{-\eta}$ centered at
  the origin, with $\eta\in(0,1/4)$, $C_{0,\tau^{-\eta}}^k$ denotes the product of $k$
  copies of $C_{0,\tau^{-\eta}}$, and where
  \begin{gather}
    \mff(w;n)=(1-\tau)^{n}e^{(q-p)t\left[\frac{1}{1+w}-\frac1{1+\tau^{n}w}\right]}
    \Big(\tfrac{1+\tau^{n}w}{1+w}\Big)^{x-1},
    \end{gather} and $\mfgp$ and $ \mfh_1$ are given in \eqref{eq:germans-hf}.
\end{thm}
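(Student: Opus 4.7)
The plan is to derive \eqref{eq:stpt1}--\eqref{eq:nuk-intro} via the Borodin-Corwin-Sasamoto $\tau$-moment self-duality for ASEP, followed by a Tracy-Widom-style nested-contour ansatz and a Borodin-Corwin-style contour-collapse (``string decomposition'') argument. Since half-flat data is left-finite, duality applies directly and there are no convergence issues on the probabilistic side.

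The first step is to apply the duality: the observables $\prod_{i=1}^m\tau^{N_{x_i}(t)}$ form a closed duality family, so $u(t;\vec x) = \ee^\thfl\!\big[\prod_{i=1}^m\tau^{N_{x_i}(t)}\big]$ satisfies a free evolution equation on the Weyl chamber $x_1\le\cdots\le x_m$ with $(q,p)$-deformed two-body boundary conditions, with initial data $u(0;\vec x) = \prod_i \tau^{N_{x_i}(0)}$ determined by the half-flat profile. Evaluating at the coincident point $x_1=\cdots=x_m=x$ at the end recovers $\ee^\thfl[\tau^{mN_x(t)}]$. The second step is to diagonalize this system by the Bethe ansatz, writing $u(t;\vec x)$ as a sum over $\sigma\in S_m$ of nested contour integrals with integrand proportional to $\prod_{a<b}\frac{w_{\sigma(a)}-\tau w_{\sigma(b)}}{w_{\sigma(a)}-w_{\sigma(b)}}\prod_a w_a^{x_a-1}\mff(w_a;1)\,G(\vec w)$, where the Bethe eigenvalue exponential $e^{(q-p)t[\frac{1}{1+w}-\frac{1}{1+\tau w}]}$ is absorbed into $\mff$, and $G(\vec w)$ is the coefficient chosen to match the initial data. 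For the half-flat profile this coefficient factorizes, $G(\vec w) = \prod_a g(w_a)$, with $g(w)$ arising as a geometric sum $\sum_{y>0}(\tau w^2)^y$-type series which converges on contours of radius $|w|>1$.

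The third and main step is to collapse all $m$ nested contours to the common circle $C_{0,\tau^{-\eta}}$ with $\eta\in(0,1/4)$. Each deformation of $w_a$ past a pole at $w_a=\tau w_b$ produces a residue; iterating this process groups the $m$ variables into $k$ ``strings'' of lengths $n_1,\dotsc,n_k$ with $n_1+\dotsm+n_k = m$, each string collapsing to a single remaining contour integral. The intra-string residues telescope to produce $\mff(w;n)=\prod_{j=0}^{n-1}\mff(\tau^j w;1)$ (explaining the $x-1$ exponent and the $(1-\tau)^n$ factor) together with the single-variable factor $\mfgp(w;n)$ from summing $g$ along the string. The inter-string residues then reorganize, by a Cauchy-type identity, into the determinant $\det\!\big[-1/(w_a\tau^{n_a}-w_b)\big]_{a,b=1}^k$ together with the pairwise factor $\mfh_1(w_a,w_b;n_a,n_b)$; the external prefactor $m\taufac$ absorbs the $S_m$-symmetrization together with the $\tau$-Pochhammer normalizations attached to the nested-contour ansatz.

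The main obstacle is precisely the string-decomposition bookkeeping in the third step: one must verify that the very large sum of nested residues reassembles into the clean determinantal kernel $-1/(w_a\tau^{n_a}-w_b)$ and the pairwise $\mfh_1$ factor, without leftover cross terms. This is an intricate Cauchy-type combinatorial argument, essentially a $q$-deformation of the analysis used by Tracy-Widom for ASEP with step and step-Bernoulli data and by Borodin-Corwin for $q$-TASEP. A secondary subtlety is the choice of contour radius $\tau^{-\eta}$ with $\eta\in(0,1/4)$: it must simultaneously lie in the annulus of convergence for the half-flat initial-data sum defining $g$ and avoid all the ``bad'' pole configurations (e.g., $w_a = \tau^{n_a} w_b$ for the remaining contours) so that no spurious residues are picked up; the range $\eta<1/4$ is dictated by the worst-case pole encountered during the collapse.
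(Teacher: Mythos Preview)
This theorem is not proved in the present paper: it is quoted from the prequel \cite{oqr-half-flat} (see the sentence immediately preceding the theorem statement), so there is no in-paper proof to compare against. The introduction does describe the prequel's method, and it differs from your outline in an essential way: rather than a constructive derivation via contour collapse, \cite{oqr-half-flat} \emph{guesses} an explicit solution to the BCS duality system (``with suitable guessing, inspired, in part, by the formulas of \cite{lee}'') and then \emph{verifies} that it satisfies the free evolution, the two-body boundary conditions, and the half-flat initial data.

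Your constructive outline also has a concrete gap. You assert that the half-flat initial-data coefficient factorizes, $G(\vec w)=\prod_a g(w_a)$, and that the pairwise factor $\mfh_1$ then emerges from ``inter-string residues\ldots by a Cauchy-type identity'' during the contour collapse. But look at the structure
\[
\mfh_1(w_a,w_b;n_a,n_b)=\frac{\pochinf{w_aw_b;\tau}\pochinf{\tau^{n_a+n_b}w_aw_b;\tau}}{\pochinf{\tau^{n_a}w_aw_b;\tau}\pochinf{\tau^{n_b}w_aw_b;\tau}}:
\]
it depends on the \emph{product} $w_aw_b$, not on differences $w_a-\tau^jw_b$. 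The residues one picks up when collapsing nested contours through the Bethe $S$-matrix poles $w_a=\tau w_b$ produce the Cauchy determinant $\det\big[{-1}/(w_a\tau^{n_a}-w_b)\big]$ (equivalently the $\mfh_2$-type cross factor in \eqref{eq:germans-hf}), but they cannot manufacture $q$-Pochhammer expressions in $w_aw_b$. That multiplicative dependence is instead the signature of the half-flat \emph{initial data}: the even-site occupation forces, already at the level of matching $u(0;\vec x)$, a two-body interaction between the integration variables (heuristically, summing over ordered even initial positions couples the geometric series in $w_a^2$ and $w_b^2$). So $G(\vec w)$ does \emph{not} factorize over single variables; the pairwise $\mfh_1$ is present before any contour deformation, and identifying its precise form is exactly the step that required the guessing alluded to in the introduction. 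Your third step, as written, cannot produce $\mfh_1$.
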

  
For simplicity, throughout the rest of the paper we will omit the bound on the indices
in products such as $1\leq a\leq k$ and $1\leq a<b\leq k$ when no confusion can arise
and the factors involved in the products are defined in terms of a collection of $k$
variables. A similar convention will sometimes be used for sums. Additionally, we will
continue using the notation $C^k$ for the product of $k$ copies of a given contour $C$
in the complex plane.

One of the most interesting parts of the story is the computation of the limit
\eqref{eq:xlimit}, which involves taking the limit $x\to\infty$ of the right hand side of
\eqref{eq:nuk-intro} with $x$ replaced by $2x$ and after multiplying by $\tau^{-mx}$. We
will start by rewriting this formula in a slightly different way which will be better
adapted for the calculation of this limit. Introduce the following functions:
  \begin{equation}
  \begin{gathered}
  \mff_1(w;n)=\tfrac{(1-\tau)^{n}}{w(1-\tau^{n})}
    e^{(q-p)t\left[\frac{1}{1+w}-\frac1{1+\tau^{n}w}\right]},
    \qquad\mff_2(w;n)=\left(\tfrac{1+\tau^{n}w}{1+w}\tau^{-\frac12n}\right)^{2x-1},\\
    \mfgp(w;n)=\frac{\pochinf{\!-w;\tau}}{\pochinf{\!-\tau^{n}w;\tau}}
    \frac{\pochinf{\tau^{2n}w^2;\tau}}{\pochinf{\tau^{n}w^2;\tau}},\\
    \mfgu(w_a;n_a)=\frac{\pochinf{\!-w_a;\tau}}{\pochinf{\!-\tau^{n_a}w_a;\tau}}
  \frac{\pochinf{\tau^{2n_a}w_a^2;\tau}}{\pochinf{\tau^{1+n_a}w_a^2;\tau}},\\
    \mfh_1(w_1,w_2;n_1,n_2)=\frac{\pochinf{w_1w_2;\tau}\pochinf{\tau^{n_1+n_2}w_1w_2;\tau}}
    {\pochinf{\tau^{n_1}w_1w_2;\tau}\pochinf{\tau^{n_2}w_1w_2;\tau}},\\
    \mfh_2(w_a,w_b;n_a,n_b)=\frac{(w_a\tau^{n_a}-w_b\tau^{n_b})(w_b-w_a)}{(w_a\tau^{n_a}-w_b)(w_b\tau^{n_b}-w_a)}.
  \end{gathered}\label{eq:germans-hf}
\end{equation}
 $\mfgu$ will not appear in the coming
formula \eqref{eq:tnuk-intro}, but will appear later in Theorem
\ref{thm:flat-moments}. The subscripts in $\mfgp$ and $\mfgu$ stand for ``paired'' and
``unpaired'', a terminology which we will explain shortly and which will become clear in
Section \ref{sec:flat-lim}.  Using these definitions in \eqref{eq:nuk-intro} (with $x$
replaced by $2x$, and multiplied by $\tau^{-mx}$) and expanding the determinant in that
formula using the \emph{Cauchy determinant formula}
\begin{equation}\label{eq:cauchydet}
  \det\!\left[\frac1{x_a-y_b}\right]_{a,b=1}^k=\frac{\prod_{a<b}(x_a-x_b)(y_b-y_a)}{\prod_{a,b}(x_a-y_b)},
\end{equation}
we obtain
\begin{equation}\label{eq:stpt}
  \ee^\thfl\!\left[\tau^{m(N_{2x}(t)-x)}\right]=m\taufac\sum_{k=0}^{m}\tnuhalfflat(t,2x)
\end{equation}
with
\begin{multline}\label{eq:tnuk-intro}
  \tnuhalfflat(t,2x)=\inv{k!}\sum_{\substack{n_1,\dotsc,n_k\geq1\\n_1+\dotsm+n_k=\bk}}\itwopii k\idotsint_{C_{0,\tau^{-\eta}}^k}d\vec w\,\prod_a\mff_1(w_a;n_a)\mff_2(w_a;n_a)\mfgp(w_a;n_a)\\
  \times\prod_{a<b}\mfh_1(w_a,w_b;n_a,n_b)\mfh_2(w_a,w_b;n_a,n_b).
\end{multline}
We have added a tilde in $\tnuhalfflat(t,2x)$ to indicate the fact that we have multiplied
by $\tau^{-mx}$ in \eqref{eq:stpt} (in other words,
$\tnuhalfflat(t,2x)=\tau^{-mx}\nuhalfflat(t,2x)$ in view of \eqref{eq:stpt1}). Note that
in \eqref{eq:tnuk-intro} we have written $\tau^{-mx}$ as $\prod_a\tau^{-n_ax}$.

Note that the only factors in the integrand on the right hand side of
\eqref{eq:tnuk-intro} that depend on $x$ are those of the form $\mff_2(w_a;n_a)$. An easy
computation shows that the base in this power has modulus strictly less than 1 if and only
if $|w_a|>\tau^{-n_a/2}$. This suggests that for each $a=1,\dotsc,k$ we should deform the
corresponding contour $C_{0,\tau^{-\eta}}$ to some contour lying just outside the ball of
radius $\tau^{-n_a/2}$.

As we perform this deformation we will cross many poles. The residue calculus associated
to this deformation is quite complicated, and is explained in detail in Sections
\ref{sec:poles} and \ref{sec:contDefLimit}. The result has two properties which turn out
to be crucial for the sequel. First, there are two types of poles that are crossed (see
the list in page \pageref{it:S1}): the ones coming from $\mfgp(w_a,n_a)$, which occur at
$w_a=\pm\tau^{-n_a/2}$, and the ones coming from $\mfh_1(w_a,w_b;n_a,n_b)$, which occur at
$w_b=\tau^{-n_a}/w_a$ whenever $n_a=n_b$. We will refer to these two types of poles
respectively as \emph{unpaired poles} and \emph{paired poles}. This pairing
structure is the key to the emergence of the Pfaffian.

The second crucial property is that the residues of the unpaired and paired poles are such that $x$ disappears from the
corresponding factors (see \eqref{eq:mff2crucial} and \eqref{eq:h1crucial}). As a
consequence of this, after deforming the contours $x$ only appears in factors of the form
$\mff_2(w_a;n_a)$ with $w_a$ living in the deformed contour, which are such
that $|\mff_2(w_a;n_a)|\to0$ as $x\to\infty$. This will allow us to derive an exact formula for
the limit of $\tnuhalfflat(t,2x)$, which after further simplification and a change of
variables which turns the contours into circles of radius 1 (see Section
\ref{sec:simplif}), leads to the result that follows. To state it we define the functions
\begin{gather}\label{eq:2.26a}
  \mfe(w_1,w_2;n_1,n_2)=\frac{(1-\tau^{n_1}w_1w_2)(1-\tau^{n_2}w_1w_2)}{(1-w_1w_2)(1-\tau^{n_1+n_2}w_1w_2)}\\
  \mfh(w_1,w_2;n_1,n_2)=\mfh_1(w_1,w_2;n_1,n_2)\mfh_2(w_1,w_2;n_1,n_2).
\end{gather}
Furthermore, given generic functions $a(z,n)$ and $b(z_1,z_2;n_1,n_2)$, we define
modified functions $\tilde{a}$ and $\tilde{b}$ through
\begin{equation}\label{eq:2.26b}
\tilde a(z,n)=a(\tau^{-\inv2n}z,n)\qqand\tilde
b(z_1,z_2;n_1,n_2)=b(\tau^{-\inv2n_1}z_1,\tau^{-\inv2n_2}z_2;n_1,n_2).
\end{equation}
In the formula that follows this modifier will be applied to all the functions defined in
\eqref{eq:germans-hf} and \eqref{eq:2.26a} (the modification arises through the change of variables
performed at the end of Section \ref{sec:simplif}, see \eqref{eq:defzs} and \eqref{eq:tmcP}).  
  
\begin{thm}\label{thm:flat-moments}
  Let
  \begin{equation}\label{xtoinflimit}
    \nuflat(t)=\lim_{x\to\infty}\tnuhalfflat(t,2x).
  \end{equation}
  Then, with the notation introduced in \eqref{eq:germans-hf}, \eqref{eq:2.26a} and
  \eqref{eq:2.26b}, and letting $C_{0,1}$ be a circle of radius $1$ centred at the origin,
  \begin{equation}
    \begin{split}
      &\nuflat(t)=\sum_{\substack{\ku,\kp\geq0\\\ku+2\kp=k}}\inv{\ku!2^{\kp}\kp!}\sum_{\sigma_1,\dotsc,\sigma_{\ku}\in\{-1,1\}}
      \sum_{\substack{\nu_1,\dotsc,\nu_{\ku}\geq1\\\sigma_a\nu_a\neq\sigma_b\nu_b,\,\,a\neq
          b}}\sum_{\np_1,\dotsc,\np_{\kp}\geq1}
      \uno{\sum_a\nu_a+2\sum_a\np_a=m}\\
      &\qquad\times\itwopii k\idotsint_{C_{0,1}^{\kp}}d\vec{w}^{\rm
        p}\,\prod_{a=1}^{\ku}(\tilde\mff_1\tmfgu)(\sigma_a;n_a)
      \prod_{a=1}^{\kp}(\tilde\mff_1\tmfgp)(\zp_a;\np_a)(\tilde\mff_1\tmfgp)(\tfrac{1}{\zp_a};\np_a)\\
      &\qquad\times\prod_{a=1}^{\ku}\tfrac12\sigma_a\tau^{1-\inv2\nu_a}\prod_{a=1}^{\kp}\frac{(-1)^{\np_a}}{\zp_a}\tau^{2-\inv2\np_a(\np_a+1)}(\tau^{-\np_a})
      \tilde\mfh_2(\zp_a,\tfrac{1}{\zp_a};\np_a,\np_a)\\
      &\qquad\times\prod_{1\leq a<b\leq \ku}\tilde\mfh(\sigma_a,\sigma_b;\nu_a,\nu_b)
      \prod_{a\leq\ku,\,b\leq\kp}\tau^{-\nu_a\np_b}\tilde\mfe(\sigma_a,\zp_b;\nu_a,\np_b)\tilde\mfe(\sigma_a,\tfrac{1}{\zp_b};\nu_a,\np_b)\\
      &\qquad\times\prod_{1\leq a<b\leq
        \kp}\tau^{-2\np_a\np_b}\tilde\mfe(\zp_a,\zp_b;\np_a,\np_b)\tilde\mfe(\zp_a,\tfrac{1}{\zp_b};\np_a,\np_b)
      \tilde\mfe(\tfrac1{\zp_a},\zp_b;\np_a,\np_b)\tilde\mfe(\tfrac{1}{\zp_a},\tfrac{1}{\zp_b};\np_a,\np_b).
    \end{split}\label{eq:nuflat}
  \end{equation}
\end{thm}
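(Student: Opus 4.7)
The proof is a multivariable contour deformation followed by a residue calculation. Starting from \eqref{eq:tnuk-intro}, I observe that the only factor in the integrand that depends on $x$ is $\mff_2(w_a;n_a)=\big(\tfrac{1+\tau^{n_a}w_a}{1+w_a}\tau^{-n_a/2}\big)^{2x-1}$, and a direct estimate shows $|\mff_2(w_a;n_a)|\to 0$ as $x\to\infty$ precisely when $|w_a|>\tau^{-n_a/2}$, while it blows up for $|w_a|<\tau^{-n_a/2}$. My plan is therefore to deform each contour $C_{0,\tau^{-\eta}}$ (with $\eta\in(0,1/4)$, so the initial radius $\tau^{-\eta}$ lies strictly inside every critical radius $\tau^{-n_a/2}$) outward to a contour lying just outside $|w_a|=\tau^{-n_a/2}$. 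Once the deformation is carried out, the integrand on the new contour decays uniformly to $0$ as $x\to\infty$, so only the residues collected along the way survive the limit.

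The crucial structural observation---made precise in Sections \ref{sec:poles} and \ref{sec:contDefLimit}---is that the singularities crossed during the deformation split into two families. The \emph{unpaired} poles come from $\mfgp(w_a;n_a)$: its denominator factor $\pochinf{\tau^{n_a}w_a^2;\tau}$ carries a simple zero at $w_a=\pm\tau^{-n_a/2}$, labeled by a sign $\sigma_a\in\{-1,1\}$. The \emph{paired} poles come from $\mfh_1(w_a,w_b;n_a,n_b)$ at $w_aw_b=\tau^{-n_a}$; an application of the $\tau$-Pochhammer identity $(a;\tau)_\infty/(\tau^m a;\tau)_\infty=\prod_{k=0}^{m-1}(1-\tau^k a)$ simplifies $\mfh_1$ to a finite rational function and reveals that this pole lies inside the deformation region only when $n_a=n_b$ (since otherwise $|w_aw_b|=\tau^{-(n_a+n_b)/2}$ on the final contour is strictly below $\tau^{-\max(n_a,n_b)}$). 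Higher-order singularities of $\mfh_1$ and of $\mfgp$ lie strictly outside the final contours and are never crossed.

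The next step is the key $x$-killing identities at the residues. A direct substitution gives $\mff_2(\sigma\tau^{-n/2};n)=\sigma$ for $\sigma\in\{-1,1\}$, so unpaired residues contribute only $x$-independent factors. Similarly, at a paired residue with $n_a=n_b=n$ and $w_b=\tau^{-n}/w_a$, the product $\mff_2(w_a;n)\mff_2(w_b;n)$ telescopes to $1$. With the $x$-dependence removed, evaluating the residues of the remaining factors produces the combination $\tilde\mff_1\tmfgu$ on unpaired variables (the passage $\mfgp\mapsto\tmfgu$ reflects the removal of the polar factor upon residue extraction), and on paired variables the combination $(\tilde\mff_1\tmfgp)(z^{\rm p};n^{\rm p})(\tilde\mff_1\tmfgp)(1/z^{\rm p};n^{\rm p})$ together with $\tilde\mfh_2(z^{\rm p},1/z^{\rm p};n^{\rm p},n^{\rm p})$ from the Cauchy-type residue of $\mfh_2$; the surviving cross-interactions combine into the $\tilde\mfh$ and $\tilde\mfe$ terms of \eqref{eq:nuflat}.

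Finally I would assemble the combinatorics and rescale the remaining integrals. A configuration with $\ku$ unpaired and $\kp$ paired variables ($\ku+2\kp=k$) arises from the original $k$-tuple in $\binom{k}{\ku}\cdot\frac{(2\kp)!}{2^{\kp}\kp!}$ ways, which combined with the $1/k!$ of \eqref{eq:tnuk-intro} yields the prefactor $\frac{1}{\ku!\,2^{\kp}\kp!}$. The constraint $\sigma_a\nu_a\neq\sigma_b\nu_b$ on unpaired indices excludes the degenerate configurations where two unpaired residues would coincide, which are already subsumed by the paired contributions (and in any event vanish through the $(w_a-w_b)$ factor carried by $\mfh_2$). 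A change of variables $z^{\rm p}_a=\tau^{n^{\rm p}_a/2}w^{\rm p}_a$ maps each surviving contour onto the unit circle $C_{0,1}$ and, through the rule \eqref{eq:2.26b}, transforms every remaining function into its tilded counterpart at the cost of the compensating powers $\tau^{-\nu_an^{\rm p}_b}$ and $\tau^{-2n^{\rm p}_an^{\rm p}_b}$ in the mixed and paired-paired cross terms, as well as the monomial prefactors on the paired diagonal. The main obstacle is the residue bookkeeping: organizing the simultaneous multivariable deformation so that exactly this family of residues is extracted, justifying the absence of contributions from paired poles with $n_a\neq n_b$ and from higher-order Pochhammer singularities, and controlling the remaining outside-contour integral uniformly in $x$ so that it truly vanishes in the limit.
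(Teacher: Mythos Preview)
Your plan matches the paper's approach: deform each $w_a$ contour outward past radius $\tau^{-n_a/2}$, collect the unpaired poles of $\mfgp$ at $\pm\tau^{-n_a/2}$ and the paired poles of $\mfh_1$ at $w_aw_b=\tau^{-n_a}$ (only when $n_a=n_b$), use the $x$-killing identities \eqref{eq:mff2crucial}--\eqref{eq:h1crucial} so only residue terms survive as $x\to\infty$, and then change variables to land on the unit circle. The combinatorial factor $1/(\ku!\,2^{\kp}\kp!)$ is exactly what the inductive deformation (Proposition~\ref{prop:x-deformation}) produces.

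Two points where your bookkeeping is off. First, the powers $\tau^{-\nu_a\np_b}$ and $\tau^{-2\np_a\np_b}$ do \emph{not} come from the change of variables; they come from an algebraic simplification of the cross $\mfh_1\mfh_2$ factors between paired (and unpaired--paired) variables. The key identity is
\[
\mfh_1(w_1,w_2)\,\mfh_1(\tfrac{\tau^{-n_1}}{w_1},\tfrac{\tau^{-n_2}}{w_2})\,\mfh_2(\tfrac{\tau^{-n_1}}{w_1},w_2)\,\mfh_2(w_1,\tfrac{\tau^{-n_2}}{w_2})=\tau^{-n_1n_2}\,\mfe(w_1,w_2;n_1,n_2),
\]
which is Lemma~\ref{lem:simplif} in the paper; this is what collapses the raw residue output into the $\tilde\mfe$ factors of \eqref{eq:nuflat}. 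Without it you would not arrive at the stated formula. Second, your explanation of the constraint $\sigma_a\nu_a\neq\sigma_b\nu_b$ is not right: these configurations are not ``subsumed by the paired contributions''. Rather, when two unpaired residues coincide the factor $\tilde\mfh(\sigma_a,\sigma_b;\nu_a,\nu_b)$ (interpreted as the limit in \eqref{eq:defht}, computed in Lemma~\ref{lem:mfeu}) carries the prefactor $\big|\sigma_b\tau^{\nu_b/2}-\sigma_a\tau^{\nu_a/2}\big|$, which vanishes---so those terms are simply zero and may be dropped from the sum.
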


In view of \eqref{eq:xlimit} and \eqref{eq:stpt}, this provides a first formula for $\ee^{\rm
  flat}[\tau^{mh(t,0)/2}]$, which will be later turned into
\eqref{eq:flatASEPmoments-intro}. A minor issue with this formula is that the numerator
and denominator in the factors of the form $\tilde\mfh(\sigma_a,\sigma_b;\nu_a,\nu_b)$
vanish when $\sigma_a\sigma_b=1$ and $\nu_a-\nu_b$ is even, in which case the factor is to
be interpreted as in \eqref{eq:defht} below (see the discussion preceding that
definition).

This formula can be regarded as a microscopic (and rigorous) version of formulas
(107)-(108) of \cite{cal-led}, which can be recovered in the weakly asymmetric limit (see also
Section \ref{sec:app-bose}).

To get an idea of how the pairing structure arising from the contour deformation leads to
a Pfaffian formula for $\nuflat(t)$, let us consider first the product
\begin{equation}
  \prod_{1\leq a<b\leq \kp}\tilde\mfe(\zp_a,\zp_b;\np_a,\np_b)\tilde\mfe(\zp_a,\zmp_b;\np_a,\np_b)
  \tilde\mfe(\zmp_a,\zp_b;\np_a,\np_b)\tilde\mfe(\zmp_a,\zmp_b;\np_a,\np_b).\label{eq:es}
\end{equation}
It turns out that this product has the structure of a Schur Pfaffian. To see this, we
recall first \emph{Schur's Pfaffian identity}:
\begin{equation}
  \prod_{1\leq
    a<b\leq2k}\frac{x_a-x_b}{x_a+x_b}=\pf\!\left[\frac{x_a-x_b}{x_a+x_b}\right]_{a,b=1}^{2k}.\label{eq:schur-pf-intro}
\end{equation}
Letting $y_a=\frac{x_a-1}{x_a+1}$ we obtain the identity
\begin{equation}\label{eq:schur-nl-pf-intro}
   \pf\!\left[\frac{y_b-y_a}{y_ay_b-1}\right]_{a,b=1}^{2k}
   =\prod_{1\leq a<b\leq2k} \frac{y_b-y_a}{y_ay_b-1}.
 \end{equation}
 Although the four factors inside the product in \eqref{eq:es} are not of the form
 $(y_b-y_a)/(y_ay_b-1)$, this expression appears after rearranging the factors in the
 numerator and denominator. In fact, letting
 \[y_{2a-1}=\zp_a\qqand y_{2a}=\zmp_a\]
 for $a=1,\dotsc,\kp$, one checks that the product in \eqref{eq:es} is given exactly by
 $\prod_{ a<b}
 \frac{y_b-y_a}{y_ay_b-1}$,
 and thus it equals the Pfaffian in \eqref{eq:schur-nl-pf-intro}.

 In Section \ref{sec:pfaffian} we will show how the last two lines of \eqref{eq:nuflat}
 (which include in particular the product in \eqref{eq:es})
 can be turned into the product of two Pfaffians, times the products of the factors of the
 form $\tau^{-\nu_a\np_b}$ and $\tau^{-2\np_a\np_b}$. We will then show how the two types
 of variables (paired and unpaired) can be put on the same footing, and finally how the
 whole integrand in \eqref{eq:nuflat} can (almost) be written as single Pfaffian. The
 result is \eqref{eq:flatASEPmoments-intro}. From this we can form a generating
 function. This is done in Theorem \ref{thm:exptau-pre}, a particular case of which becomes,
 after further manipulations, Theorem \ref{thm:fredPf-intro}.

 The Pfaffian structure which we have described was first discovered in the context of the
 delta Bose gas with flat initial data in \cite{cal-led}, using a rough singularity
 analysis. They started from the divergent series for the generating function, while we
 start from moment formulas \eqref{eq:flatASEPmoments} which are more reminiscent of the
 Borodin-Corwin approach \cite{borCor}. In hindsight, the two give rather similar
 results. However, the existence of nice exact expressions for the flat case at the ASEP
 level is a surprise, and it is far from clear that it is replicated in other solvable
 models such as $q$-TASEP, the log-Gamma polymer, and the O'Connell-Yor semi-discrete
 polymer.

 We next describe in detail the core of the argument, which is the residue summation.

\section{Moment formula: contour deformation}\label{sec:flat-lim}

\begin{quotation}
  \centering
  \small{\emph{And though the poles were rather small \\They had to count them all}}
\end{quotation}

\vspace{4pt}
The goal of this section is to prove Theorem \ref{thm:flat-moments}. We are trying to
compute the limit as $x\to\infty$ of $\tnuhalfflat$, where $\tnuhalfflat$ is defined in
\eqref{eq:tnuk-intro}. As we noted in the introduction, the only factor in the integrand on the right hand side
of \eqref{eq:tnuk-intro} that depends on $x$ is $\mff_2(w_a,n_a)$, and the base in this power is
strictly bounded by $1$ in modulus whenever $|w_a|>\tau^{-n_a/2}$. Hence in order to compute
the limit, we will deform the $w_a$ contours to some contour which has this property,
which for simplicity we choose to be the contour $C^{n_a}_0$ given by a circle of
radius $\tau^{-n_a/2-\eta}$ centered at $0$.

\subsection{Poles of the integrand}\label{sec:poles}

In order to write down the result of the contour deformation we need to determine first
what poles are crossed as we deform each of the contours. It will be useful
to list first the possible singularities of the integrand:
\begin{enumerate}[label=(\alph*),leftmargin=*]
  \item\label{it:S1} $\mff_1(w_a,n_a)$ has a simple pole at $w_a=0$, but this point is inside the
    original contour $C_{0,\tau^{-\eta}}$. It also has essential singularities at $w_a=-1$ and
    $w_a=-\tau^{-n_a}$, but the first one is contained inside $C_{0,\tau^{-\eta}}$ while the second is
    outside $C^{n_a}_0$. So this factor will not contribute any poles.
  \item\label{it:S2}  $\mff_2(w_a,n_a)$ has simple poles at $w_a=-1$ or
    $w_a=-\tau^{-n_a}$ (depending on the sign of $2x-1$), but as in \ref{it:S1} they are never in the
    deformation region.
  \item\label{it:S3} $\mfgp(w_a,n_a)$ has simple poles at $w_a=-\tau^{-n_a-\ell}$ for any
    $\ell\in\zz_{\geq0}$, but all these points lie outside $C^{n_a}_0$. It also has
    simple poles at $w_a=\pm\tau^{-n_a/2-\ell/2}$ for any $\ell\in\zz_{\geq0}$. These
    points are inside the deformation region only when $\ell=0$, and we have, for $\vsigma=\pm1$,
    \begin{equation}
      \Res_{w_k=\vsigma\tau^{-\inv2n_k}}\mfgp(w_k,n_k)=-\tfrac12\vsigma\tau^{-\inv2n_k}\mfgu(\vsigma\tau^{-\inv2n_k},n_k).
      \label{eq:resmfg}
    \end{equation}
    Observe also that, crucially, for $\vsigma=\pm1$,
    \begin{equation}
      \mff_2(\vsigma\tau^{-\frac12n_k},n_k)=\left(\tfrac{1+\vsigma\tau^{n_k/2}}{1+\vsigma\tau^{-n_k/2}}\tau^{-\frac12n_k}\right)^{2x-1}=\vsigma.\label{eq:mff2crucial}
    \end{equation}
  \item\label{it:S4} The factors on the denominator of $\mfh_1(w_a,w_b;n_a,n_b)$ vanish when
    $w_aw_b=\tau^{-n_a-\ell_a}$ with $\ell_a\in\zz_{\geq0}$ and $w_aw_b=\tau^{-n_b-\ell_b}$ with
    $\ell_b\in\zz_{\geq0}$. Observe that whenever one of the two factors vanishes, the factor
    $\pochinf{w_aw_b;\tau}$ in the numerator of $\mfh_1(w_a,w_b;n_a,n_b)$ is
    going to vanish as well. Thus there can only be a (simple) pole when both factors on the
    denominator vanish at the same time, i.e., when
    $w_b=w_a^{-1}\tau^{-n_a-\ell_a}=w_a^{-1}\tau^{-n_b-\ell_b}$ for some $\ell_a,\ell_b\in\zz_{\geq0}$. Whether these points lie inside the deformation
    region or not will depend on which stage of the deformation we are at:
    \begin{enumerate}[label=(\alph{enumi}.\roman*)]
    \item\label{it:S4i} Assume first that $w_a\in C_{0,\tau^{-\eta}}$ and we are deforming the $w_b$ contour. The
      possible singularities occur at $w_b=w_a^{-1}\tau^{-n_b-\ell_b}$ for some
      $\ell_b\in\zz_{\geq0}$. This point has modulus
      $\tau^{\eta-n_b-\ell_b}\geq\tau^{-\inv2n_b-\eta}$ (recall $\eta\in(0,1/4)$) and thus it is outside the
      deformation region. As a consequence, this factor does not contribute a pole in this case.
    \item\label{it:S4ii} Now assume that $w_a\in C^{n_a}_0$ as we deform the $w_b$ contour. There is a
      singularity when $w_{b}=\bar w_1=\bar w_2$ with $\bar
      w_1=w_a^{-1}\tau^{-n_{b}-\ell_1}$ and $\bar w_2=w_a^{-1}\tau^{-n_{a}-\ell_2}$.
      Observe that $|\bar w^1|=\tau^{n_a/2+\eta-n_b-\ell_1}$ which is
      smaller than $\tau^{-n_{b}/2-\eta}$ if and only if
      $\ell_1\leq(n_a-n_{b})/2+2\eta$. Similarly we have $|\bar
      w^2|=\tau^{-n_a/2+\eta-\ell_2}$ which is smaller than $\tau^{-n_{b}/2-\eta}$ if
      and only if $\ell_2\leq(n_{b}-n_a)/2+2\eta$. Remembering that we also need to have
      $\ell_1,\ell_2\geq0$ these conditions on $\ell_1,\ell_2$ (and the fact that $\eta\in(0,1/4)$) imply that $n_{a}=n_b$, and
      thus also that $\ell_1=\ell_2=0$. Hence we have a simple pole at
      $w_{b}=w_a^{-1}\tau^{-n_{a}}$ when $n_{a}=n_b$, whose residue is computed to be
      \begin{equation}
      \Res_{w_b=w_a^{-1}\tau^{-n_{a}}}\mfh_1(w_{a},w_b;n_{a},n_a)
      =\tfrac{(-1)^{n_a+1}}{w_a}\tau^{-n_a(n_a+1)/2}(\tau^{-n_a}-1).\label{eq:resh1}
    \end{equation}
    Note that, crucially, in the case $n_{a}=n_b$ we have
    \begin{equation}
      \mff_2(w_b^{-1}\tau^{2-n_b},n_{b})\mff_2(w_b,n_{b})=\left(\tfrac{\tau+w_b^{-1}\tau^{2}}{\tau+w_b^{-1}\tau^{2-n_b}}
      \tfrac{\tau+\tau^{n_b}w_b}{\tau+w_b}\tau^{-n_b}\right)^{x-1}=1.\label{eq:h1crucial}
    \end{equation}
  \item\label{it:S3iii} The third possibility of interest is when $w_a=\vsigma\tau^{-n_a/2}$ for $\vsigma=\pm1$ (which
    happens when a residue in $w_a$ has already been evaluated as one of the poles from
    \ref{it:S3}). Exactly as in \ref{it:S4ii}, a singularity may only arise when $n_a=n_b$
    at $w_{b}=\vsigma\tau^{-n_{a}/2}$. But in this case the factor
    $\mfh_2(w_{b},\vsigma\tau^{-n_a/2};n_{a},n_a)$ has a double zero at this
    point, and hence the double zero in the denominator is canceled, which means that there is no pole in this case.
    \end{enumerate}
  \item\label{it:S5}  Finally, the factors on the denominator of $\mfh_2(w_a,w_b;n_a,n_b)$ vanish when
    $w_b=w_a\tau^{n_a}$ and $w_b=w_a\tau^{-n_b}$. The first point is inside $C_{0,\tau^{-\eta}}$ whenever
    $w_a$ is in $C_{0,\tau^{-\eta}}$ or $C^{n_a}_0$. The second point is outside $C^{n_b}_0$ whenever
    $w_a$ is in $C_{0,\tau^{-\eta}}$ or $C^{n_a}_0$. So this factor will not contribute any poles.
\end{enumerate}

Finally, consider the product $(\mfh_1\mfh_2)(w_a,\vsigma_b\tau^{-n_b/2};n_a,n_b)$ at
$w_a=\vsigma_a\tau^{-n_a/2}$ for $\vsigma_a,\vsigma_b=\pm1$, which occurs when both
$w_a$ and $w_b$ are evaluated as residues coming from the poles in \ref{it:S3}, with $w_b$
having been evaluated first. When $n_a-n_b$ is even and $\vsigma_{a}\vsigma_b=1$,
both the numerator and the denominator of the factor coming from $\mfh_1$ vanish. Since
the evaluation at these points comes from a residue computation (in $w_a$), the product should be computed as
\begin{equation}\label{eq:defht}
(\mfh_1\mfh_2)(\vsigma_a\tau^{-\inv2n_a},\vsigma_b\tau^{-\inv2n_b};n_a,n_b)
:=\qquad\smashoperator{\lim_{w\to\vsigma_a\tau^{-n_a/2}}}\qquad(\mfh_1\mfh_2)(w,\vsigma_b\tau^{-\inv2n_b};n_a,n_b).
\end{equation}
The right hand side is computed in the next lemma. We stress that, in the formulas that
will follow, we will use the extension of $\mfh_1\mfh_2$ defined in \eqref{eq:defht}
mostly without reference.

\begin{lem}\label{lem:mfeu}
  For $n_a,n_b\in\zz_{\geq0}$ and $\vsigma_a,\vsigma_b=\pm1$ we have
  \[\lim_{w\to\vsigma_{a}\tau^{-n_a/2}}
    ~~(\mfh_1\mfh_2)(w,\vsigma_b\tau^{-\inv2n_b};n_{a},n_b)
  =(-\vsigma_a\vsigma_b)^{n_a\wedge n_b}\tau^{-\inv2n_an_b}\tfrac{\big|\vsigma_b\tau^{\frac12n_b}
    -\vsigma_a\tau^{\frac12n_a}\big|}{1-\vsigma_a\vsigma_b\tau^{\frac12(n_a+n_b)}}.\]
\end{lem}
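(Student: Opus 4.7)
The plan is to evaluate $\mfh_1$ and $\mfh_2$ separately at the prescribed point and then multiply. Both factors are symmetric under the swap $(w_1,n_1)\leftrightarrow(w_2,n_2)$, so I would assume $n_a\leq n_b$ without loss of generality and write $s:=\vsigma_a\vsigma_b\in\{\pm1\}$, so that $n_a\wedge n_b=n_a$.

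For $\mfh_2$, direct substitution of $w=\vsigma_a\tau^{-n_a/2}$ and $w_b=\vsigma_b\tau^{-n_b/2}$, followed by factoring a $\vsigma_a$ out of one binomial and a $\vsigma_b$ out of the other (and using $s^2=1$), reduces both numerator and denominator to expressions of the form $\tau^\alpha+\tau^{-\alpha}-2s$. The identity $x^2+x^{-2}\mp 2=(x\mp x^{-1})^2$ then collapses each to a perfect square, yielding
\[\mfh_2\big|_{\rm eval}=\tau^{n_a}\frac{(1-s\tau^{(n_b-n_a)/2})^2}{(1-s\tau^{(n_a+n_b)/2})^2}.\]
For $\mfh_1$, I would use $\pochinf{a;\tau}/\pochinf{\tau^m a;\tau}=(a;\tau)_m$ for $m\in\zz_{\geq0}$ to collapse the four infinite $q$-Pochhammers (using $n_a\leq n_b$) into the finite ratio
\[\mfh_1=\prod_{k=0}^{n_a-1}\frac{1-\tau^k u}{1-\tau^{k+n_b}u},\qquad u:=w\,\vsigma_b\tau^{-n_b/2}.\]
At $u\to s\tau^{-(n_a+n_b)/2}$, apply $1-s\tau^j=-s\tau^j(1-s\tau^{-j})$ (valid since $s\in\{\pm1\}$) to each numerator factor. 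After reindexing, the numerator product runs over $1-s\tau^m$ for $m=(n_b-n_a)/2+1,\dots,(n_a+n_b)/2$ while the denominator product runs over $m=(n_b-n_a)/2,\dots,(n_a+n_b)/2-1$; telescoping leaves only the two endpoint factors and gives
\[\mfh_1\big|_{\rm eval}=(-s)^{n_a}\tau^{-n_a(n_b+1)/2}\,\frac{1-s\tau^{(n_a+n_b)/2}}{1-s\tau^{(n_b-n_a)/2}}.\]

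Multiplying the two, the squared factors in $\mfh_2$ cancel against the ones in the numerator/denominator of the telescoped $\mfh_1$, and $\tau^{n_a}\cdot\tau^{-n_a(n_b+1)/2}=\tau^{-n_an_b/2}\cdot\tau^{n_a/2}$, so
\[(\mfh_1\mfh_2)\big|_{\rm eval}=(-s)^{n_a}\,\tau^{-n_an_b/2}\,\frac{\tau^{n_a/2}(1-s\tau^{(n_b-n_a)/2})}{1-s\tau^{(n_a+n_b)/2}}.\]
A one-line case split on $s=\pm1$ (using $n_a\leq n_b$) identifies $\tau^{n_a/2}(1-s\tau^{(n_b-n_a)/2})=\tau^{n_a/2}-s\tau^{n_b/2}=|\vsigma_b\tau^{n_b/2}-\vsigma_a\tau^{n_a/2}|$, recovering the stated formula.

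The main obstacle — and really the only subtlety — is the degenerate case $s=1$, $n_a=n_b$, where the evaluation point is a simple pole of $\mfh_1$ and a double zero of $\mfh_2$, and both the telescoped denominator of $\mfh_1$ and the numerator of the final formula contain the vanishing factor $1-s\tau^0=0$. The target is $0$, so the formula still agrees formally, but one must legitimize this as a genuine limit. I would parametrize $w=\vsigma_a\tau^{-n_a/2}(1+\epsilon)$ and track leading orders: $\pochinf{\tau^{-n_a}(1+\epsilon);\tau}$ contributes $-\epsilon$ from its $k=n_a$ term, $\pochinf{(1+\epsilon);\tau}^2$ in the denominator contributes $\epsilon^2$ from its $k=0$ term squared, $\mfh_2$ carries an extra $\epsilon^2/(1-\tau^{n_a})^2$, and all other factors remain finite and nonzero. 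The net order in $\epsilon$ is $\epsilon^1$, so $(\mfh_1\mfh_2)(w,w_b;n_a,n_a)=O(\epsilon)\to 0$, matching the value $0$ predicted by the formula.
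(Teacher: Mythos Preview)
Your proof is correct. The approach differs from the paper's in organization: the paper keeps $w$ as a free variable and first combines $\mfh_1\mfh_2$ into a single telescoping product
\[
\prod_{\ell=1}^{n_a}\frac{1-\tau^{\ell-\frac12 n_a-\frac12 n_b}\bar w}{1-\tau^{\ell-\frac12 n_a+\frac12 n_b}\bar w},\qquad \bar w=\vsigma_a\vsigma_b w,
\]
and only then takes $w\to1$, tracking which factors tend to $\pm1$; this handles the degenerate case $s=1$, $n_a=n_b$ in the same sweep. You instead evaluate $\mfh_1$ and $\mfh_2$ separately at the point (after collapsing $\mfh_1$ to a finite $q$-Pochhammer ratio, which incidentally resolves the $0/0$ ambiguity in the infinite-Pochhammer form whenever $n_a\neq n_b$), multiply, and treat the one genuinely singular case by an explicit order-counting in $\epsilon$. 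Your route is slightly more elementary and makes the cancellation between the squared factor in $\mfh_2$ and the telescoped $\mfh_1$ transparent, at the cost of the separate degenerate-case check; the paper's route is a bit more uniform. Both lead to the same algebra.
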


\begin{proof}
  We rewrite the limit as
  \begin{equation}\label{eq:limmfh1mfh2}
    \lim_{w\to1}(\mfh_1\mfh_2)(\vsigma_a\tau^{-\frac12n_a}w,\vsigma_b\tau^{-\frac12n_b};n_a,n_b).
  \end{equation}
  A simple computation (see the proof of Lemma \ref{lem:simplif} for a similar one) leads to
  \[(\mfh_1\mfh_2)(\vsigma_a\tau^{-\frac12n_a}w,\vsigma_b\tau^{-\frac12n_b};n_a,n_b)
  =\prod_{\ell=1}^{n_a}\frac{1-\tau^{\ell-\frac12n_a-\frac12n_b}\bar
    w}{1-\tau^{\ell-\frac12n_a+\frac12n_b}\bar w}\]
  where $\bar w=\vsigma_a\vsigma_bw$. This product equals
  \begin{align}
    \prod_{\ell=1}^{n_a}\frac{1-\tau^{1-\ell+\frac12n_a-\frac12n_b}\bar w}{1-\tau^{\ell-\frac12n_a+\frac12n_b}\bar w}
    &=\prod_{\ell=1}^{n_a}\frac{\bar w-\tau^{\ell-1-\frac12n_a+\frac12n_b}}{1-\tau^{\ell-\frac12n_a+\frac12n_b}\bar w}(-\tau^{1-\ell+\frac12n_a-\frac12n_b})\\
    &=(-1)^{n_a}\tau^{\frac12n_a-\frac12n_an_b}\frac{\bar w-\tau^{\frac12n_b-\frac12n_a}}{1-\tau^{\frac12n_a+\frac12n_b}\bar w}
    \frac{\prod_{\ell=2}^{n_a}(\bar w-\tau^{\ell-1-\frac12n_a+\frac12n_b})}{\prod_{\ell=1}^{n_a-1}(1-\tau^{\ell-\frac12n_a+\frac12n_b}\bar w)}.
  \end{align}
  The last ratio equals $\prod_{\ell=1}^{n_a-1}\frac{\vsigma_a\vsigma_bw-\tau^{\ell-\frac12n_a+\frac12n_b}}{1-\tau^{\ell-\frac12n_a+\frac12n_b}\vsigma_a\vsigma_bw}$,
  and each factor in this product tends to $\vsigma_a\vsigma_b$ as $w\to1$ unless $\ell-\frac12n_a+\frac12n_b=0$, in
  which case it tends to $-1$. Therefore the whole ratio goes to
  $(\vsigma_a\vsigma_b)^{n_a-2}(-1)^{\uno{n_a-n_b\in2\zz_{>0}}}(\vsigma_a\vsigma_b)^{1-\uno{n_a-n_b\in2\zz_{>0}}}
  =(\vsigma_a\vsigma_b)^{n_a-1}(-\vsigma_a\vsigma_b)^{\uno{n_a-n_b\in2\zz_{>0}}}$,
  and thus the limit in \eqref{eq:limmfh1mfh2} equals
 \begin{multline}
    -(-\vsigma_a\vsigma_b)^{n_a+\uno{n_a-n_b\in2\zz_{>0}}-1}\tau^{\frac12n_a-\frac12n_an_b}
    \tfrac{\vsigma_a\vsigma_b-\tau^{\frac12n_b-\frac12n_a}}{1-\tau^{\frac12n_a+\frac12n_b}\vsigma_a\vsigma_b}\\
    =\begin{dcases*}
      (-1)^{n_a\wedge n_b}\tau^{-\frac12n_an_b}
      \tfrac{\big|\tau^{\frac12n_b}-\tau^{\frac12n_a}\big|}{1-\tau^{\frac12n_a+\frac12n_b}}
      & if $\vsigma_a\vsigma_b=1$\\
      \tau^{-\frac12n_an_b}
      \tfrac{\tau^{\frac12n_b}+\tau^{\frac12n_a}}{1+\tau^{\frac12n_a+\frac12n_b}}
      & if $\vsigma_a\vsigma_b=-1$,
    \end{dcases*}
  \end{multline}
which gives the claimed formula.
\end{proof}

\subsection{Contour deformation and computation of the limit}\label{sec:contDefLimit}

As we mentioned, we will move the contours one by one, starting from $w_k$, then $w_{k-1}$, and so on. From the above
discussion, there are two types of poles which are crossed. First, as we expand the
$w_a$ contour we will cross singularities at $w_a=\pm\tau^{-n_a/2}$ as in \ref{it:S3}
above. We will refer to these as \emph{unpaired poles}. The $n$ variables associated to
these poles will be called \emph{unpaired variables}, and will be denoted with a
superscript u (note that unpaired poles have no $w$ variables associated to them, since a residue has been computed).

The second type of pole appears at $w_a=w_b^{-1}\tau^{-n_b}$ (for $a<b$) when $w_b\in
C^{n_b}_0$ and $n_a=n_b$. We refer to these as \emph{paired poles}. To each paired pole
corresponds a pair of $w$ variables and a pair of $n$ variables: $w_b$, which is
integrated over $C^{n_b}_0$, and $w_a$, which is set to $w_a=w_b^{-1}\tau^{-n_b}$, and
similarly $n_b$ and $n_a$, which are set to be equal. We call these \emph{paired
  variables}, and denote them with a superscript p.

Finally, in addition to the paired and unpaired variables, there are \emph{free variables},
denoted with a superscript f, corresponding to the $w$ variables (and associated $n$ variables)
which, after the deformation, end up integrated on a $C^n_0$ contour (note that these have no
paired variable associated to them).

The result of deforming all the $w_a$ contours involves a sum over all possible ways in
which the $k$ original variables can be grouped into $\ku$ unpaired variables, $\kp$ pairs
of paired variables, and $\kf$ free variables, which of course means that
$\ku+2\kp+\kf=k$. Given such a grouping, the original $w_a$'s and $n_a$'s will be split into
strings of variables: $\vnu$ for the unpaired ones (of size $\ku$), $\vnp$ and $\vwp$ for
the paired ones (of size $\kp$), and $\vnf$ and $\vwf$ for the free ones (of size
$\kf$). It is important to note that in the case of the paired variables, the variables
forming these strings will appear twice in the formulas that will follow, once associated
to $\vwp$ and once associated to the ``paired'' string $\vwmp$ defined by
\begin{equation}\label{eq:defwmp}
\wmp_a=\tau^{-\np_a}/\wp_a\qquad\text{for }a=1,\dotsc,\kp.
\end{equation}
These variables should be regarded as ``virtual'', in the sense that they are not being
directly summed or integrated, but are instead defined directly in terms of other
variables. On the other hand, in the case of the unpaired variables there is an additional
string $\vec\vsigma$ of $\pm1$ variables, which will indicate whether the unpaired pole is
evaluated at $\tau^{-n_a/2}$ or $-\tau^{-n_a/2}$.  It will be convenient to introduce
another string of virtual variables $\vwu$, defined as follows:
\begin{equation}\label{eq:defwu}
\wu_a=\vsigma_a\tau^{-\inv2\nu_a}\qquad\text{for }a=1,\dotsc,\ku.
\end{equation}

Before writing down the result of the contour deformation we need to introduce some
additional notation which will help us express some products arising from the pairing
structure which emerges from the contour deformation. Assume we are given a collection of
pairs of strings of unpaired variables $(\vwu,\vnu)$, paired variables $(\vwp,\vnp)$ and
free variables $(\vwf,\vnf)$. Given a function $g$ of a single $w$ and a single $n$ we define
\begin{equation}\label{eq:aug-g}
\mcP^{\rm {u,p,f}} g(\vwu,\vwp,\vwf;\vnu,\vnp,\vnf)
=\prod_{a=1}^{\ku}g(\wu_a,\nu_a)\prod_{a=1}^{\kp}g(\wp_a,\np_a)g(\wmp_a,\np_a)\prod_{a=1}^{\kf}g(\wf_a,\nf_a).
\end{equation}
That is, we multiply over all possible variables, accounting for the extra set of paired ones.
Similarly, given a function $h$ of a pair $w_1,w_2$ and a pair $n_1,n_2$ we define
\begin{equation}\label{eq:aug-h}
\begin{split}
\mcP^{\rm {u,p,f}}&(\vwu,\vwp,\vwf;\vnu,\vnp,\vnf)
=\prod_{1\leq a<b\leq\ku}h(\wu_a,\wu_b;\nu_a,\nu_b)\prod_{1\leq a<b\leq\kf}h(\wf_a,\wf_b;\nf_a,\nf_b)\\
&\times\prod_{1\leq a<b\leq\kp}h(\wp_a,\wp_b;\np_a,\np_b)h(\wp_a,\wmp_b;\np_a,\np_b)h(\wmp_a,\wp_b;\np_a,\np_b)h(\wmp_a,\wmp_b;\np_a,\np_b)\\
&\times\prod_{a\leq\ku,\,b\leq\kp}h(\wu_a,\wp_b;\nu_a,\np_b)h(\wu_a,\wmp_b;\nu_a,\np_b)\prod_{a\leq\ku,\,b\leq\kf}h(\wu_a,\wf_b;\nu_a,\nf_b)\\
&\times\prod_{a\leq\kp,\,b\leq\kf}h(\wp_a,\wf_b;\np_a,\nf_b)h(\wmp_a,\wf_b;\np_a,\nf_b).
\end{split}
\end{equation}
Here we multiply over all possible pairs of different variables, accounting again for the
extra set of paired variables, but omitting factors with variables from the same pair
(that is, omitting factors of the form $h(\wp_a,\wmp_a;\np_a,\np_a)$).

We extend the notations \eqref{eq:aug-g} and \eqref{eq:aug-h} as follows: if any group of
variables is omitted from the superscript (and the argument), it simply means that those
variables are omitted from the products. Thus, for example,
\begin{equation}\label{eq:aug-g2}
\mcP^{\rm {u}}g(\vwu;\vnu)=\prod_{a=1}^{\ku}g(\wu_a,\nu_a)
\end{equation}
while\noeqref{eq:aug-g2}
\begin{multline}
\label{eq:aug-h2}
\mcP^{\rm {u,p}}h(\vwu,\vwp;\vnu,\vnp)\\
=\quad\smashoperator{\prod_{1\leq
  a<b\leq\ku}}\quad h(\wu_a,\wu_b;\nu_a,\nu_b)\quad\smashoperator{\prod_{a\leq\ku,\,b\leq\kp}}\quad
h(\wu_a,\wp_b;\nu_a,\np_b)h(\wu_a,\wmp_b;\nu_a,\np_b)\\
\times\prod_{1\leq a<b\leq\kp}h(\wp_a,\wp_b;\np_a,\np_b)h(\wp_a,\wmp_b;\np_a,\np_b)h(\wmp_a,\wp_b;\np_a,\np_b)h(\wmp_a,\wmp_b;\np_a,\np_b).
\end{multline}
We will slightly abuse this notation by omitting the superscript in $\mcP$ (which in any
case is implicit in the superscripts of the argument). For convenience we will also write
\begin{equation}\label{eq:aug-mfh}
\mcP\mfh(\cdot;\cdot)
=\mcP\mfh_1(\cdot;\cdot)\mcP\mfh_2(\cdot;\cdot).
\end{equation}
Finally, we introduce the index set\noeqref{eq:aug-mfh}\noeqref{eq:LambdaSet}
\begin{multline}\label{eq:LambdaSet} 
\Lambda^{\bk}_{k_1,k_2,k_3}=\Big\{(\vec{\vsigma},\vec{n}^1,\vec{n}^2,\vec{n}^3)\in\{-1,1\}^{k_1}\times\zz_{\geq1}^{k_1}\times\zz_{\geq_1}^{k_2}\times\zz_{\geq1}^{k_3}\!:\\
\sum_{a=1}^{k_1}n^1_a+2\sum_{a=1}^{k_2}n^2_a+\sum_{a=1}^{k_3}n^3_a=\bk,\vsigma_an^1_a\neq\vsigma_bn^1_b~\forall\,1\leq a<b\leq k_1\Big\}
\end{multline}
and the notation $C^{\vec{n}}_0=C^{n_1}_0\times\dotsm\times C^{n_k}_0$ for $\vec{n}$ of
size $k$. 

We are ready now to state the result of the complete contour deformation.

\begin{prop}\label{prop:contourDeformationResult}
For every $k,\bk\in\zz_{\geq0}$ with $k\leq\bk$ we have
\begin{equation}
\tnuhalfflat(t,2x)=\sum_{\substack{\ku,\kp,\kf\geq0\\\ku+2\kp+\kf=k}}\inv{\ku!2^{\kp}\kp!\kf!}\mfZhalfflat(\ku,\kp,\kf),\label{eq:tnukmfZ}
\end{equation}
where $\tnuhalfflat$ was defined in \eqref{eq:tnuk-intro} and
\begin{equation}
\begin{split}
&\mfZhalfflat(\ku,\kp,\kf)=\quad\quad\smashoperator{\sum_{(\vec{\vsigma},\vnu,\vnp,\vnf)\in\Lambda^{\bk}_{\ku,\kp,\kf}}}\quad\quad
\itwopii{\kp+\kf}\int_{C^{\vnp}_0\times C^{\vnf}_0}d\vwp\,d\vwf\,\mcP\mff_1(\vwu,\vwp,\vwf;\vnu,\vnp,\vnf)\\
&\hspace{1.2in}\times\mcP\mff_2(\vwf;\vnf)
\mcP\mfgu(\vwu;\vnu)\mcP\mfgp(\vwp,\vwf;\vnp,\vnf)\mcP\mfh(\vwu,\vwp,\vwf;\vnu,\vnp,\vnf)\\
&\hspace{1.2in}\times\prod_{a=1}^{\ku}\tfrac12\wu_a
\prod_{a=1}^{\kp}(-1)^{\np_a}\tau^{-\np_a(\np_a+1)/2}(1-\tau^{\np_a})\wmp_a\,\mfh_2(\wp_a,\wmp_a;\np_a,\np_a).
\end{split}\label{eq:mfZhalfflat}
\end{equation}
\end{prop}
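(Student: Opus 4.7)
The plan is to prove \eqref{eq:tnukmfZ} by sequentially deforming the $k$ contours from $C_{0,\tau^{-\eta}}$ to $C^{n_a}_0$ one at a time, say in the order $w_k, w_{k-1},\ldots,w_1$, and carefully collecting the residues crossed at each step. By the pole analysis in Section \ref{sec:poles}, when we first deform $w_k$ the only residues contributing come from the \emph{unpaired} poles of $\mfgp(w_k;n_k)$ at $w_k=\pm\tau^{-n_k/2}$, via \eqref{eq:resmfg}, and the \emph{paired} poles of $\mfh_1(w_j,w_k;n_j,n_k)$ at $w_k=w_j^{-1}\tau^{-n_j}$ for those $j<k$ with $n_j=n_k$, via \eqref{eq:resh1}; case \ref{it:S4i} is precisely what excludes any $\mfh_1$-pole involving a $w_j$ still on $C_{0,\tau^{-\eta}}$. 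Iterating this procedure assigns each variable one of three mutually exclusive fates: it is integrated on $C^{n_a}_0$ (\emph{free}), evaluated at $\vsigma\tau^{-n_a/2}$ for some $\vsigma\in\{-1,1\}$ (\emph{unpaired}), or set equal to $w_b^{-1}\tau^{-n_b}$ for a partner variable $w_b$ with $n_a=n_b$ (\emph{paired}). Relabelling the integration variable of a pair as $\wp$ and its partner as $\wmp = \tau^{-\np}/\wp$ matches the conventions \eqref{eq:defwmp} and \eqref{eq:defwu}.

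The output of the complete deformation is therefore a sum over all ordered assignments of the labels $\{1,\ldots,k\}$ to $\ku$ unpaired positions (each with a sign $\vsigma_a$), $\kp$ pairs, and $\kf$ free positions, subject to $\ku+2\kp+\kf=k$ and to the constraint $\vsigma_a\nu_a\neq\vsigma_b\nu_b$ for distinct unpaired indices. This constraint comes from case \ref{it:S3iii}: when two unpaired evaluations would coincide, the double zero of $\mfh_2$ cancels the potential double pole of $\mfh_1$, so no residue is produced. The integrand is symmetric under permutation of labels within each category, and within each pair the choice of which index plays the role of $\wp$ versus $\wmp$ is free, so each configuration $(\vec{\vsigma},\vnu,\vnp,\vnf)\in\Lambda^\bk_{\ku,\kp,\kf}$ is overcounted exactly $\ku!\cdot 2^\kp\kp!\cdot\kf!$ times. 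Combined with the $1/k!$ in \eqref{eq:tnuk-intro}, this reproduces the prefactor in \eqref{eq:tnukmfZ}, and the explicit residue formulas \eqref{eq:resmfg} and \eqref{eq:resh1} assemble into the factors $\tfrac12\wu_a$ (with the replacement $\mfgp\to\mfgu$ on unpaired positions) and $(-1)^{\np_a}\tau^{-\np_a(\np_a+1)/2}(1-\tau^{\np_a})\wmp_a\,\mfh_2(\wp_a,\wmp_a;\np_a,\np_a)$ for each pair that appear in \eqref{eq:mfZhalfflat}. All remaining factors are collected on the appropriate variables through the augmented-product operators $\mcP$ from \eqref{eq:aug-g}--\eqref{eq:aug-mfh}.

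The main obstacle lies in the careful bookkeeping of intermediate residues: once $w_k$ has been evaluated at a paired point $w_j^{-1}\tau^{-n_j}$, the integrand seen by the remaining variables is not the original one, and one must verify that no spurious poles arise beyond those catalogued in \ref{it:S1}--\ref{it:S5}. Substituting $w_k = w_j^{-1}\tau^{-n_j}$ into the $\mfh_1$ and $\mfgp$ factors introduces new $w_j$-dependence, and one must check either that the resulting singularities are already accounted for or that they are cancelled by compensating zeros, as in the limiting convention \eqref{eq:defht}. A second delicate but essential point is the simplification, via \eqref{eq:mff2crucial} and \eqref{eq:h1crucial}, that $\mff_2$ evaluates to a simple constant at every unpaired and paired residue point, so that all $x$-dependence after deformation is confined to $\vwf$ through the single factor $\mcP\mff_2(\vwf;\vnf)$ in \eqref{eq:mfZhalfflat}. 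This is not only the reason the formula takes the clean form stated, but also the mechanism that will make the large-$x$ limit \eqref{xtoinflimit} of Theorem \ref{thm:flat-moments} tractable in the subsequent steps.
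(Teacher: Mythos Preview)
Your approach is essentially the paper's: sequential contour deformation, with residues classified as unpaired or paired via the pole list in Section~\ref{sec:poles}, and the observation that the $\mff_2$ factors collapse at every residue point by \eqref{eq:mff2crucial} and \eqref{eq:h1crucial}. The paper packages this through the slightly more general Proposition~\ref{prop:x-deformation} (with generic analytic $\phi$ and $\psi$ in place of $\mff_1\mff_2$ and $\mfh_2$), proved by induction on $k$, and then specializes; the combinatorial prefactor is obtained there by verifying a recursive identity rather than by a direct overcounting.

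One point in your overcounting argument should be tightened: the $2^{\kp}$ does \emph{not} come from a free choice of which member of a pair is $\wp$ versus $\wmp$. In the sequential deformation the roles are fixed by the order---the variable deformed first and left on its large circle becomes $\wp$, and the later one, whose residue is taken, becomes $\wmp$---so each unordered pair arises exactly once. The factor $\tfrac{k!}{\ku!2^{\kp}\kp!\kf!}$ is instead the number of ways to partition the $k$ labels into an unpaired set, a free set, and a perfect matching on the remaining $2\kp$ labels, namely $\binom{k}{\ku,2\kp,\kf}\cdot\tfrac{(2\kp)!}{2^{\kp}\kp!}$; it is this count (not a role-swap symmetry) that combines with the $1/k!$ in \eqref{eq:tnuk-intro} to give the stated prefactor. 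Your conclusion is correct, but the justification of the $2^{\kp}$ as written would not survive scrutiny.
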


The proposition follows from a slightly more general result which we prove next. Define
the annulus $R^n_0=\big\{w\in\cc\!:\tau^{-\eta}\leq|w|\leq\tau^{-n/2-\eta}\big\}$ and the
set $D=\prod_{n\geq1}R^n_0\times\{n\}$.  We assume as given functions
$\phi\!:D\longrightarrow\cc$ and $\psi\!:D\times D\longrightarrow\cc$ such that
$\phi(w,n)$ is analytic in $w\in R^n_0$ and $\psi(w_1,w_2;n_1,n_2)$ is analytic in each
variable $w_1\in R^{n_1}_0$ and $w_2\in R^{n_2}_0$ (we are slightly abusing notation here
to write $\psi(w_1,w_2;n_1,n_2)$ instead of $\psi((w_1,n_1),(w_2,n_2))$).

\begin{prop}\label{prop:x-deformation}
  Fix $k,\bk\in\zz_{\geq1}$ with $k\leq\bk$ and functions as $\phi$ and $\psi$ as above. Then
  \begin{align}
    &\sum_{\substack{n_1,\dotsc,n_k\geq1\\n_1+\dotsm+n_k=\bk}}\itwopii{k}\int_{C_{0,\tau^{-\eta}}^k}d\vec
    w\,\prod_{a}\phi(w_a,n_a)\mfgp(w_a,n_a)\prod_{a<b}\psi(w_a,w_b;n_a,n_b)\mfh_1(w_a,w_b;n_a,n_b)\\[-30pt]
    &\hspace{0.5in}=\sum_{\substack{\ku,\kp,\kf\geq0\\\ku+2\kp+\kf=k}}\frac{k!}{\ku!2^{\kp}\kp!\kf!}
    \sum_{(\vec\vsigma,\vec{n}^{\rm u},\vec{n}^{\rm{p}},\vec{n}^{\rm
        f})\in\Lambda^{\bk}_{\ku,\kp,\kf}}\itwopii{\kp+\kf}\int_{C^{\vnp}_0}d\vwp\int_{C^{\vnf}_0}d\vwf\,\\
    &\hspace{1.5in}\times\mcP\phi(\vwu,\vwp,\vwf;\vnu,\vnp,\vnf)\mcP\psi(\vwu,\vwp,\vwf;\vnu,\vnp,\vnf)\\
    &\hspace{1.5in}\times\mcP\mfgu(\vwu;\vnu)\mcP\mfgp(\vwp,\vwf;\vnp,\vnf)
    \mcP\mfh_1(\vwu,\vwp,\vwf;\vnu,\vnp,\vnf)\\
    &\hspace{1.5in}\times\prod_{a=1}^{\ku}\tfrac12\wu_a
    \prod_{a=1}^{\kp}(-1)^{\np_a}\tau^{-\np_a(\np_a+1)/2}(1-\tau^{\np_a})\wmp_a\,\psi(\wp_a,\wmp_a;\np_a,\np_a).
  \end{align}
\end{prop}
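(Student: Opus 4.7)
The plan is to deform each of the $k$ contours from $C_{0,\tau^{-\eta}}$ outward to $C_0^{n_a}$ in sequence, say in the order $w_1,\ldots,w_k$, and to collect the residues crossed. Section \ref{sec:poles} has already catalogued them: only \emph{unpaired poles} at $w_a = \vsigma_a \tau^{-n_a/2}$ with $\vsigma_a\in\{\pm 1\}$, coming from $\mfgp(w_a,n_a)$ (case \ref{it:S3}, residue \eqref{eq:resmfg}), and \emph{paired poles} at $w_b = w_a^{-1}\tau^{-n_a}$ with $n_a=n_b$, coming from $\mfh_1(w_a,w_b;n_a,n_b)$ (case \ref{it:S4ii}, residue \eqref{eq:resh1}), can contribute. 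Paired poles are only active once the partner $w_a$ has been placed on $C_0^{n_a}$. The constraint $\vsigma_a n^1_a \neq \vsigma_b n^1_b$ built into $\Lambda^{\bk}_{\ku,\kp,\kf}$ (see \eqref{eq:LambdaSet}) simply excludes configurations whose combined contribution vanishes identically, either by case \ref{it:S3iii} or by the vanishing of the limiting $\mfh_1\mfh_2$ factor computed in Lemma \ref{lem:mfeu}.

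I would formalize the iteration by induction on $k$. Deforming $w_j$ when $w_1,\ldots,w_{j-1}$ have already been processed yields three mutually exclusive outcomes: (a) $w_j$ remains integrated along $C_0^{n_j}$ as a \emph{free} variable; (b) $w_j$ is captured at $\vsigma_j \tau^{-n_j/2}$, converting $\mfgp(w_j,n_j)$ into $-\tfrac12\vsigma_j\tau^{-n_j/2}\mfgu(\vsigma_j\tau^{-n_j/2},n_j)$ via \eqref{eq:resmfg} and producing an \emph{unpaired} variable; or (c) $w_j$ is captured at $w_a^{-1}\tau^{-n_a}$ for some earlier $w_a$ still on $C_0^{n_a}$ with $n_a=n_j$, in which case $w_j$ becomes the virtual partner $\wmp$ of $w_a$ (now relabelled $\wp$), contributing the residue prefactor from \eqref{eq:resh1} together with the evaluation $\psi(\wp,\wmp;n_a,n_a)$. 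A short rearrangement using $1/\wp_a = \wmp_a\tau^{\np_a}$ matches this paired prefactor with the $(-1)^{\np_a}\tau^{-\np_a(\np_a+1)/2}(1-\tau^{\np_a})\wmp_a$ of the claim, while outcome (b) supplies the $\tfrac12\wu_a$ prefactor. After all $k$ deformations, each original variable sits in exactly one of the three categories.

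The last step is a combinatorial reorganization. Since the original integrand is invariant under joint permutations of the index pairs $(w_a,n_a)$, any two ordered assignments of $\{1,\ldots,k\}$ to the three categories that agree up to (i) permutations within the unpaired group, (ii) permutations of pairs combined with swaps inside each pair, and (iii) permutations within the free group produce identical contributions. The sum over ordered assignments therefore equals $\frac{k!}{\ku!\,2^{\kp}\kp!\,\kf!}$ times the sum over canonical unordered configurations, giving the multinomial prefactor. Relabelling the variables in canonical order (unpaired first, then paired pairs, then free) and invoking the augmented-product notation of \eqref{eq:aug-g}--\eqref{eq:aug-h}, the surviving factors reassemble exactly into $\mcP\phi\cdot\mcP\psi\cdot\mcP\mfgu\cdot\mcP\mfgp\cdot\mcP\mfh_1$. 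I expect the main obstacle to be precisely this bookkeeping: at each inductive step one must check that every augmented product enumerates the correct interactions among the $\ku$ unpaired, $2\kp$ paired (real plus virtual), and $\kf$ free variables with neither duplication nor omission. Induction on $k$ makes the check uniform, since removing one processed variable reduces the problem to a smaller instance of the proposition.
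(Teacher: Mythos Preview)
Your proposal is correct and follows essentially the same inductive route as the paper: apply the hypothesis to the first $k$ variables (the paper does this by absorbing the interactions with the last variable into a modified single-variable factor $\tilde\phi$), then deform the remaining contour and classify the outcome as unpaired, paired with an earlier free variable, or free, recovering the multinomial prefactor $k!/(\ku!\,2^{\kp}\kp!\,\kf!)$ from the permutation symmetry of the integrand. One small remark: your appeal to case~\ref{it:S3iii} and Lemma~\ref{lem:mfeu} to justify the distinctness constraint $\vsigma_a\nu_a\neq\vsigma_b\nu_b$ invokes the double zero of $\mfh_2$, which is specific to $\psi=\mfh_2$ rather than the general analytic $\psi$ of the proposition; the paper's proof leans on the same pole catalogue, and since the proposition is only ever applied with $\psi=\mfh_2$ this is harmless, but it is worth being aware that for a truly arbitrary $\psi$ the coincidence of the $\mfgp$ and $\mfh_1$ simple poles at $\vsigma\tau^{-n/2}$ would produce a double pole not obviously covered by the stated right-hand side.
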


\begin{proof}
  The proof is by induction in $k$ (for fixed $\bk$). First we check the case $k=1$. Note
  that $\mcP\mfh_1(\vec w;\vec n)=1$ in this case (because the product is empty), and thus
  the only poles crossed as we deform are the ones coming from $\mfgp(w_1,n_1)$ as in
  \ref{it:S3} in page \pageref{it:S3}. On the other hand, the condition $\ku+2\kp+\kf=1$
  implies that the sum on the right hand side has two terms: one with $\ku=1$ and the
  other two set to 0, and one with $\ku=1$ and the other two set to 0. Since
  $\phi(w_1,n_1)$ is analytic on the region of interest, one checks directly that the
  first term corresponds to the computation of the residues at the two poles (keeping in
  mind that the poles are passed from the outside, so we get an additional minus sign),
  while the second term corresponds to the integral on the deformed contour.

We assume now that the formula holds for a given $k$ and rewrite the integrand in the case $k+1$, for fixed $w_{k+1}\in C_{0,\tau^{-\eta}}$, as
\[(\phi\hspace{0.05em}\mfgp)(w_{k+1},n_{k+1})\prod_{a=1}^k(\tilde\phi\hspace{0.05em}\mfgp)(w_a,n_a)\prod_{1\leq
  a<b\leq k}(\psi\hspace{0.05em}\mfh_1)(w_a,w_b;n_a,n_b)(w_a,w_b;n_a,n_b)\] with
\[\tilde\phi(w_a,n_a)=\phi(w_a,n_a)(\psi\hspace{0.05em}\mfh_1)(w_a,w_{k+1};n_a,n_{k+1}).\]
By \ref{it:S4i} in page \pageref{it:S4i} and the hypotheses on $\phi$ and $\psi$, $\tilde\phi(\cdot,n)$ is analytic in the deformation region and thus, using the inductive hypothesis, the integral equals
  \begin{equation}
  \begin{split}
    &\sum_{n_{k+1}=1}^{\bk-k}\inv{2\pi\I}\int_{C_{0,\tau^{-\eta}}}dw_{k+1}\,\mfgp(w_{k+1},n_{k+1})\phi(w_{k+1},n_{k+1})\sum_{\substack{\ku,\kp,\kf\geq0\\\ku+\kp+\kf=k}}\frac{k!}{\ku!2^{\kp}\kp!\kf!}
    \\
    &\quad\times\qquad\smashoperator{\sum_{(\vec\vsigma,\vec{n}^{\rm u},\vec{n}^{\rm{p}},\vec{n}^{\rm
        f})\in\Lambda^{\bk-n_{k+1}}_{\ku,\kp,\kf}}}\qquad\quad\itwopii{\kp+\kf}\int_{\wt C^{\vnp}_0\times C^{\vnf}_0}d\vwp\,d\vwf
    \,\mcP\tilde\phi(\vwu,\vwp,\vwf;\vnu,\vnp,\vnf)\\
&\qquad\times\mcP\mfgu(\vwu;\vnu)\mcP\mfgp(\vwp,\vwf;\vnp,\vnf)\mcP\mfh(\vwu,\vwp,\vwf;\vnu,\vnp,\vnf)\\
&\qquad\times\prod_{a=1}^{\ku}\tfrac12\wu_a
\prod_{a=1}^{\kp}(-1)^{\np_a}\tau^{-\np_a(\np_a+1)/2}(1-\tau^{\np_a})\wmp_a\,\psi(\wp_a,\wmp_a;\np_a,\np_a).
  \end{split}\label{eq:Dkinduc}
\end{equation}

Now we have to deform the $w_{k+1}$ contour to $C^{n_{k+1}}_0$. We list the singularities that are crossed:
\begin{enumerate}[label=(\alph*),leftmargin=*]
\item Since $\phi$ is analytic in $w_{k+1}$ in the deformation region, the first line of
  \eqref{eq:Dkinduc} only contributes the poles of $\mfgp(w_{k+1},n_{k+1})$ as specified
  in \ref{it:S3} in page \pageref{it:S3}. By \eqref{eq:resmfg} it is clear that computing
  the associated residues corresponds to grouping $(w_{k+1},n_{k+1})$ together with the
  unpaired variables (i.e., turning it into $(\wu_{\ku+1},\nu_{\ku+1})$, keeping
  \eqref{eq:defwu} in mind).
\item Now we consider the second line of \eqref{eq:Dkinduc}. The factors involving $\phi$
  and $\psi$ are analytic in $w_{k+1}$ in the deformation region. As for the factors
  involving $\mfh_1$, by \ref{it:S4} in page \pageref{it:S4}, the only ones which
  contribute poles are those of the form $\mfh_1(\wf_a,w_{k+1};\nf_a,n_{k+1})$ (see in
  particular \ref{it:S4ii}). In view of \eqref{eq:resh1}, this corresponds to removing
  $(\wf_a,\nf_a)$ from the group of free variables and adding it to the paired variables
  together with $(w_{k+1},n_{k+1})$, which becomes its pair (i.e., the pairs
  $(\wf_a,\nf_a)$ and $(w_{k+1},n_{k+1})$ become $(\wp_{\kp+1},\np_{\kp+1})$ and
  $(\wmp_{\kp+1},\np_{\kp+1})$, keeping \eqref{eq:defwmp} in mind.
\item The last two lines of \eqref{eq:Dkinduc} do not depend on $w_{k+1}$, and thus contribute no additional poles.
\end{enumerate}
In addition to the residues in (a) and (b) above, which respectively increase $\ku$ and
$\kp$ by 1, we get one more term with $w_{k+1}$ integrated on the contour
$C^{n_{k+1}}_0$. As in the above cases, this can be thought of as grouping
$(w_{k+1},n_{k+1})$ together with the free variables, this time increasing $\kf$ by one.

The above discussion explains the form taken by the integrand on the right hand side of
the identity claimed in the proposition (recall that we are passing poles from the
outside, accounting for additional minus sign in each case). It only remains to verify
that the inductive argument also yields the combinatorial factor
$k!/(\ku!2^{\kp}\kp!\kf!)$. Consider first those terms in the resulting sum which are
grouped into sizes $(\ku+1,2\kp,\kf)$ (which satisfy $\ku+1+2\kp+\kf=k+1$). In terms of
our inductive argument, these terms come from deforming the last contour when the first
$k$ variables have been grouped into sizes either $(\ku,2\kp,\kf)$ (so the last one is set
as unpaired), $(\ku+1,2(\kp-1),\kf+1)$ (so the last one is set as paired, which can be
done in $\kf+1$ ways), or $(\ku+1,2\kp,\kf-1)$ (so the last one is set as free). By the
symmetry of the integrand, and since
\begin{multline}
\frac{k!}{\ku!2^{\kp}\kp!\kf!}+\frac{k!}{(\ku+1)!2^{\kp-1}(\kp-1)!(\kf+1)!}(\kf+1)+\frac{k!}{(\ku+1)!2^{\kp}\kp!(\kf-1)!}\\=\frac{(k+1)!}{(\ku+1)!2^{\kp}\kp!\kf!},
\end{multline}
we obtain the desired combinatorial factor on the resulting formula. The other cases (where $\kp$ or $\kf$ are increased by one) can be treated similarly.
\end{proof}

\begin{proof}[Proof of Proposition \ref{prop:contourDeformationResult}]
  In Proposition \ref{prop:x-deformation} set $\phi=\mff_1\mff_2$ and $\psi_2=\mfh_2$. By
  \ref{it:S1} and \ref{it:S2} in the list in page \pageref{it:S1} as well as the paragraph
  preceding \eqref{eq:defht}, these functions satisfy the hypotheses of the
  proposition. The result follows from the formula given in that result, noting
  that, by \eqref{eq:mff2crucial} and \eqref{eq:h1crucial}, many of the $\mff_2$ factors
  disappear.
\end{proof}

We get now to the crucial point. The only factor depending on $x$ in
\eqref{eq:mfZhalfflat} is $\mff_2(\wf_a,\nf_a)$, which corresponds to the free variables. As we mentioned, 
$\big|\frac{1+\tau^{\nf_a}\wf_a}{1+\wf_a}\tau^{-\frac12\nf_a}\big|<1$
whenever $|\wf_a|>\tau^{-\frac12\nf_a}$. Consequently,
\[\lim_{x\to\infty}\mff_2(\wf_a,\nf_a)=0.\]
Now we apply to this to the computation of the limit as $x\to\infty$ of
$\tnuhalfflat(t,2x)$, which in view of Proposition \ref{prop:contourDeformationResult} amounts to
computing the limit of $\mfZhalfflat(\ku,\kp)$. Since all the sums
in \eqref{eq:mfZhalfflat} are finite, while all the integrals are over finite contours and
the integrands are continuous along them, we may take the limit inside and conclude that
every term with $\kf\geq1$ vanishes in the limit. Using this, and keeping in mind the
notation introduced in \eqref{eq:aug-g}-\eqref{eq:LambdaSet}, we deduce the following

\begin{prop}\label{prop:x-limit}
  For every $\bk\in\zz_{\geq0}$ we have
  \begin{gather}
    \nuflat(t):=\lim_{x\to\infty}\tnuhalfflat(t,2x)=\sum_{\substack{\ku,\kp\geq0\\\ku+2\kp=k}}\inv{\ku!2^{\kp}\kp!}\mfZflat(\ku,\kp)
    \shortintertext{with}
    \begin{aligned}
      &\mfZflat(\ku,\kp)=\quad\quad\smashoperator{\sum_{(\vec{\vsigma},\vnu,\vnp)\in\Lambda^{\bk}_{\ku,\kp}}}\quad\quad
      \itwopii{\kp}\int_{C^{\vnp}_0}d\vwp\,\mcP\mff_1(\vwu,\vwp;\vnu,\vnp)\\
      &\hspace{1.6in}\times
      \mcP\mfgu(\vwu;\vnu)\mcP\mfgp(\vwp;\vnp)\mcP\mfh(\vwu,\vwp;\vnu,\vnp)\prod_{a=1}^{\ku}\tfrac12\wu_a\\
      &\hspace{1.6in}\times\prod_{a=1}^{\kp}(-1)^{\np_a}\tau^{-\np_a(\np_a+1)/2}(1-\tau^{\np_a})\wmp_a\,\mfh_2(\wp_a,\wmp_a;\np_a,\np_a).
    \end{aligned}
  \end{gather}
\end{prop}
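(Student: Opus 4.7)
The plan is to combine Proposition \ref{prop:contourDeformationResult}, which has already done all the heavy lifting of the contour deformation and residue computation, with a routine pointwise-limit argument that kills the contribution of the free variables. First, I would write
\begin{equation}
\tnuhalfflat(t,2x) = \sum_{\substack{\ku,\kp,\kf\geq0\\\ku+2\kp+\kf=k}}\frac{1}{\ku!\,2^{\kp}\kp!\,\kf!}\,\mfZhalfflat(\ku,\kp,\kf)
\end{equation}
via Proposition \ref{prop:contourDeformationResult}, and then inspect the integrand of each $\mfZhalfflat(\ku,\kp,\kf)$ to isolate the $x$-dependence.

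The second step is to observe that all of $\mcP\mff_1$, $\mcP\mfgu$, $\mcP\mfgp$, $\mcP\mfh$, and the explicit factors in the last line of \eqref{eq:mfZhalfflat} are independent of $x$. The only $x$-dependent factor is $\mcP\mff_2(\vwf;\vnf)=\prod_{a=1}^{\kf}\mff_2(\wf_a,\nf_a)$, with
\[
\mff_2(w,n)=\left(\tfrac{1+\tau^n w}{1+w}\tau^{-n/2}\right)^{2x-1}.
\]
On the contour $C^n_0$ the variable $w$ has modulus $\tau^{-n/2-\eta}$ with $\eta\in(0,1/4)$, and a short computation (writing $w=\tau^{-n/2-\eta}e^{\I\theta}$ and comparing the moduli squared of numerator and denominator) shows that $\bigl|\tfrac{1+\tau^n w}{1+w}\tau^{-n/2}\bigr|<1$ uniformly in $\theta$. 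Hence for each fixed $\wf_a\in C^{\nf_a}_0$,
\[
\lim_{x\to\infty}\mff_2(\wf_a,\nf_a)=0,
\]
with convergence uniform over the compact contour.

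The third step is to interchange the limit with the sum and integral. The sum over $(\vec\vsigma,\vnu,\vnp,\vnf)\in\Lambda^{\bk}_{\ku,\kp,\kf}$ is finite since each component of $\vnu,\vnp,\vnf$ is bounded by $\bk$; the contours $C^{\vnp}_0\times C^{\vnf}_0$ are compact; and the remaining factors in the integrand are continuous in $(\vwp,\vwf)$ on these contours (by the singularity analysis in Section \ref{sec:poles}, none of the functions $\mff_1$, $\mfgu$, $\mfgp$, $\mfh_1$, $\mfh_2$ has a singularity on $C^n_0$ for any $n\geq1$). By dominated convergence (or simply uniform convergence on compacts), we may take $x\to\infty$ inside every sum and integral, and every term with $\kf\geq1$ vanishes. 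Only the $\kf=0$ terms survive, and in that case $\mfZhalfflat(\ku,\kp,0)$ reduces to exactly the expression $\mfZflat(\ku,\kp)$ appearing in the statement.

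There is no real obstacle here once Proposition \ref{prop:contourDeformationResult} is in place: the only point requiring verification is the uniform estimate $\bigl|\mff_2(\wf_a,\nf_a)\bigr|\to0$ on the contour $C^{\nf_a}_0$, which is the precise reason the radius was chosen slightly larger than $\tau^{-\nf_a/2}$ in the first place. This vanishing is in fact the mechanism that gives the whole contour-deformation strategy its point: the $x$-dependence, which is the obstruction to taking the limit in the original formula \eqref{eq:tnuk-intro}, has been entirely transferred into those terms that die at exponential rate, leaving behind the paired/unpaired residue structure that will feed into the Pfaffian in the next section.
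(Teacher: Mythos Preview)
Your proposal is correct and follows essentially the same approach as the paper: invoke Proposition \ref{prop:contourDeformationResult}, note that the only $x$-dependence sits in $\mcP\mff_2(\vwf;\vnf)$ which vanishes pointwise (in fact uniformly) on the deformed contours, and pass the limit inside using finiteness of the sums and compactness/continuity on the contours. The paper's argument is identical in structure and slightly more terse; your added justification for continuity via the pole analysis of Section \ref{sec:poles} is a welcome bit of extra care.
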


\subsection{Simplification of the \texorpdfstring{$\mfh$}{h} factors}\label{sec:simplif}

The next step is to observe that there is an important simplification in the factors
appearing in $\mcP\mfh$ (which, we recall, is defined as the product of $\mcP\mfh_1$
and $\mcP\mfh_2$), due to the emergence of the paired structure. Define
\begin{equation}
   \label{eq:defmfe}
   \mfe(w_1,w_2;n_1,n_2)=\frac{(1-\tau^{n_1}w_1w_2)(1-\tau^{n_2}w_1w_2)}{(1-w_1w_2)(1-\tau^{n_1+n_2}w_1w_2)}.
\end{equation}

\begin{lem}\label{lem:simplif}
  For any $w_1,w_2\in\cc$ and $n_1,n_2\in\zz_{\geq0}$ for which both sides are well
  defined,
  \begin{multline}
    \mfh_1(w_1,w_2;n_1,n_2)\mfh_1(\tau^{-n_1}/w_1,\tau^{-n_2}/w_2;n_1,n_2)\\
    \times\mfh_2(\tau^{-n_1}/w_1,w_2;n_1,n_2)\mfh_2(w_1,\tau^{-n_2}/w_2;n_1,n_2)
    =\tau^{-n_1n_2}\mfe(w_1,w_2;n_1,n_2).
  \end{multline}
\end{lem}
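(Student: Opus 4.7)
The key observation making this tractable is that $\mfh_1$ depends on $(w_1,w_2)$ only through the product $u:=w_1w_2$, whereas $\mfh_2$ does not, but simplifies dramatically under the substitutions $w_1\mapsto\tau^{-n_1}/w_1$ or $w_2\mapsto\tau^{-n_2}/w_2$. The plan is to compute the four factors separately and combine them.

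\textbf{Step 1: the two $\mfh_2$ factors each equal $\mfe$.} Substituting $w_1\mapsto\tau^{-n_1}/w_1$ in $\mfh_2$ uses $\tau^{-n_1}/w_1\cdot\tau^{n_1}=1/w_1$, so pulling a factor $1/w_1$ out of each of the four terms gives
\[
\mfh_2(\tau^{-n_1}/w_1,w_2;n_1,n_2)=\frac{(1-\tau^{n_2}w_1w_2)(\tau^{-n_1}-w_1w_2)}{(1-w_1w_2)(\tau^{-n_1}-\tau^{n_1+n_2}w_1w_2)/\tau^{n_1}\cdot\tau^{n_1}},
\]
and pulling out $\tau^{-n_1}$ from the two factors containing it shows this equals $\mfe(w_1,w_2;n_1,n_2)$. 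A symmetric computation shows the same for $\mfh_2(w_1,\tau^{-n_2}/w_2;n_1,n_2)$. Hence these two factors together contribute $\mfe(w_1,w_2;n_1,n_2)^2$.

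\textbf{Step 2: telescope $\mfh_1$.} Using $(\tau^ku;\tau)_\infty=(u;\tau)_\infty/\prod_{j=0}^{k-1}(1-\tau^j u)$ twice, with $u=w_1w_2$, gives the finite product
\[
\mfh_1(w_1,w_2;n_1,n_2)=\prod_{j=0}^{n_1-1}\frac{1-\tau^j u}{1-\tau^{j+n_2}u}.
\]

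\textbf{Step 3: the inverted $\mfh_1$.} The same formula applied to $v=\tau^{-n_1-n_2}/u$ gives a product of terms $(1-\tau^{j-n_1-n_2}/u)/(1-\tau^{j-n_1}/u)$. Factoring $-\tau^{a-n_1-n_2}/u$ out of each $1-\tau^a v$ cancels the $1/u$'s and yields, after reindexing $k=n_1-j$,
\[
\mfh_1(\tau^{-n_1}/w_1,\tau^{-n_2}/w_2;n_1,n_2)=\tau^{-n_1n_2}\prod_{k=1}^{n_1}\frac{1-\tau^{n_2+k}u}{1-\tau^k u}.
\]
Combined with Step 2, the interior of the two telescoping products cancels and leaves
\[
\mfh_1(w_1,w_2)\,\mfh_1(\tau^{-n_1}/w_1,\tau^{-n_2}/w_2)=\tau^{-n_1n_2}\frac{(1-u)(1-\tau^{n_1+n_2}u)}{(1-\tau^{n_1}u)(1-\tau^{n_2}u)}=\frac{\tau^{-n_1n_2}}{\mfe(w_1,w_2;n_1,n_2)}.
\]

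\textbf{Step 4: combine.} Multiplying the contributions of Steps 1 and 3,
\[
\mfe^2\cdot\frac{\tau^{-n_1n_2}}{\mfe}=\tau^{-n_1n_2}\mfe,
\]
which is the claim. No step is a real obstacle; the calculation is entirely mechanical, and the only point requiring care is the sign and $\tau$-power bookkeeping when factoring the terms $1-\tau^{-k}/u$ in Step 3 and the reindexing that makes the telescoping line up with Step 2.
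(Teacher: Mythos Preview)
Your proof is correct. Both your argument and the paper's are direct telescoping computations, but they group the four factors differently. The paper pairs each $\mfh_1$ with one of the $\mfh_2$ factors, defining $h_{1,2}(w_1,w_2)=\mfh_1(w_1,w_2)\,\mfh_2(\tau^{-n_1}/w_1,w_2)$, observing that the $\mfh_2$ factor strips the leading term from each Pochhammer in $\mfh_1$ (so $h_{1,2}$ is just $\mfh_1$ with every argument shifted by $\tau$), and then multiplies $h_{1,2}(w_1,w_2)\,h_{1,2}(\tau^{-n_1}/w_1,\tau^{-n_2}/w_2)$ and telescopes. You instead separate the computation by function: your Step~1 isolates the clean fact that each of the two $\mfh_2$ factors equals $\mfe$ exactly, and Steps~2--3 show the product of the two $\mfh_1$'s equals $\tau^{-n_1n_2}/\mfe$. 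The two routes are equivalent in difficulty; yours has the minor advantage of exposing the identity $\mfh_2(\tau^{-n_1}/w_1,w_2;n_1,n_2)=\mfe(w_1,w_2;n_1,n_2)$ as a standalone fact.
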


\begin{proof}
  Let $h_{1,2}(w_1,w_2;n_1,n_2)=\mfh_1(w_1,w_2;n_1,n_2)\mfh_2(\tau^{-n_1}/w_1,w_2;n_1,n_2)$. Noting
  that the first factor in each of the Pochhammer symbols in $\mfh_1$ cancel with like
  factors in $\mfh_2$ we get
  \[h_{1,2}(w_1,w_2;n_1,n_2)=\frac{\pochinf{\tau w_1w_2;\tau}\pochinf{\tau^{1+n_1+n_2}w_1w_2;\tau}}
    {\pochinf{\tau^{1+n_1}w_1w_2;\tau}\pochinf{\tau^{1+n_2}w_1w_2;\tau}}.\]
  Then the product
  $h_{1,2}(w_1,w_2;n_1,n_2)h_{1,2}(\tau^{-n_1}/w_1,\tau^{-n_2}/w_2;n_1,n_2)$, which is
  what we are trying to compute, equals
  \[\frac{\pochinf{\tau w_1w_2;\tau}}{\pochinf{\tau^{1+n_1}w_1w_2;\tau}}\frac{\pochinf{\tau/(w_1w_2);\tau}}
    {\pochinf{\tau^{1-n_1}/(w_1w_2);\tau}}\frac{\pochinf{\tau^{1+n_1+n_2}w_1w_2;\tau}}
    {\pochinf{\tau^{1+n_2}w_1w_2;\tau}}
    \frac{\pochinf{\tau^{1-n_1-n_2}/(w_1w_2);\tau}}{\pochinf{\tau^{1-n_2}/(w_1w_2);\tau}}.\]
  Writing $\bar w=w_1w_2$, the product of the first two ratios above equals
  \[\prod_{\ell=0}^{n_1-1}\frac{1-\tau^{\ell+1}\bar w}{1-\tau^{1-n_1+\ell}/\bar w}
  =\prod_{\ell=0}^{n_1-1}\frac{1-\tau^{\ell+1}\bar w}{1-\tau^{n_1-\ell-1}\bar w}
  (-\tau^{n_1-\ell-1}\bar w)=\frac{1-\tau^{n_1}\bar w}{1-\bar w}
  \prod_{\ell=0}^{n_1-1}(-\tau^{n_1-\ell-1}\bar w),\]
  while in a similar way one checks that the product of the last two ratios equals
  \begin{equation}
    \prod_{\ell=0}^{n_1-1}\frac{1-\tau^{1-n_1-n_2+\ell}/\bar
      w}{1-\tau^{1+\ell+n_2}\bar w}
    =\frac{1-\tau^{n_2}\bar w}{1-\tau^{n_1+n_2}\bar
      w}\prod_{\ell=0}^{n_1-1}\frac{-1}{\tau^{n_1+n_2-\ell-1}\bar w}.
  \end{equation}
  Multiplying out the last two expressions gives the result.
\end{proof}

Now note that, in view of \eqref{eq:defwmp} and \eqref{eq:defwu},
$\tau^{-\nu_a}/\wu_a=\wu_a$, $\wmp_a=\tau^{-\np_a}/\wp_a$ and
$\tau^{-\np_a}/\wmp_a=\wp_a$. Recalling the definition of the product $\mcP\mfh_1\mcP\mfh_2$ through
\eqref{eq:aug-h2} and pairing the factors coming from the last two products in a suitable
way, the lemma implies that
\begin{multline}
\label{eq:simplif-mfh}
\mcP\mfh(\vwu,\vwp;\vnu,\vnp)
=\mcP\mfh(\vwu;\vnu)\quad\smashoperator{\prod_{a\leq\ku,\,b\leq\kp}}\quad\tau^{-\nu_a\np_b}\mfe(\wu_a,\wp_b;\nu_a,\np_b)\\
\times\quad\smashoperator{\prod_{1\leq
    a<b\leq\kp}}\quad\tau^{-2\np_a\np_b}\mfe(\wp_a,\wp_b;\np_a,\np_b)\mfe(\wmp_a,\wp_b;\np_a,\np_b).
\end{multline}

Finally, we introduce the change of variables
\begin{equation}
\wp_a=\tau^{-\frac12\np_a}\zp_a,\qquad\wmp_a=\tau^{-\frac12\np_a}\zmp_a,\qqand\wu_a=\tau^{-\frac12\nu_a}\zu_a.\label{eq:defzs}
\end{equation}
(Note that, in view of \eqref{eq:defwmp} and \eqref{eq:defwu}, $\zp_a$ and $\zmp_a$
satisfy $\zp_a=1/\zmp_a$, while $\zu_a=\vsigma_a$). For convenience we also
introduce the following notation: given functions $g(z,n)$ and $h(z_1,z_2;n_1,n_2)$ we write
\begin{equation}
  \label{eq:tmcP}
  \tilde g(z,n)=g(\tau^{-\inv2n}z,n)\qqand\tilde h(z_1,z_2;n_1,n_2)=h(\tau^{-\inv2n_1}z_1,\tau^{-\inv2n_2}z_2;n_1,n_2).
\end{equation}
Under the above change of variables the contour $C_0^{\np_a}$ is mapped to a circle
$C_{0,\tau^{-\eta}}$ of radius $\tau^{-\eta}$ centered at 0. Using this and
\eqref{eq:simplif-mfh} in Proposition \ref{prop:x-limit}, together with \eqref{eq:xlimit}, \eqref{eq:stpt} and
\eqref{eq:tnuk-intro}, and multiplying by the factor $\prod_a\tau^{-\np_a/2}$ introduced
by the change of variables, we obtain the following:

\begin{thm}\label{thm:flatSeries}
For every $\bk\in\zz_{\geq0}$ we have
\[\ee^{\rm flat}(\tau^{\frac12\bk h(t,0)})=\bk\taufac\sum_{k=0}^{\bk}\nuflat(t)\]
with
\[\nuflat(t)=\sum_{\substack{\ku,\kp\geq0\\\ku+2\kp=k}}\inv{\ku!2^{\kp}\kp!}\mfZflat(\ku,\kp),\]
where
\begin{equation}\label{eq:mfZflat}
\begin{split}
&\mfZflat(\ku,\kp)=\quad\quad\smashoperator{\sum_{(\vec{\vsigma},\vnu,\vnp)\in\Lambda^{\bk}_{\ku,\kp}}}\quad\quad
\itwopii{\kp}\int_{C^{\kp}_{0,1}}d\vzp\,\mcP\tilde\mff_1(\vzu,\vzp;\vnu,\vnp)\mcP\tmfgu(\vzu;\vnu)\mcP\tmfgp(\vzp;\vnp)\\
&\hspace{0.4in}\times
\prod_{1\leq a<b\leq\kp}\tau^{-2\np_a\np_b}\tilde\mfe(\zp_a,\zp_b;\np_a,\np_b)\tilde\mfe(\zmp_a,\zp_b;\np_a,\np_b)
\prod_{a\leq\ku,\,b\leq\kp}\tau^{-\nu_a\np_b}\tilde\mfe(\zu_a,\zp_b;\nu_a,\np_b)\\
&\hspace{0.4in}\times\mcP\tilde\mfh(\vzu;\vnu)\prod_{a=1}^{\ku}\tfrac12\tau^{-\inv2\nu_a}\zu_a
\prod_{a=1}^{\kp}(-1)^{\np_a}\tau^{-\inv2(\np_a)^2}(\tau^{-\np_a}-1)\zmp_a\,\tilde\mfh_2(\zp_a,\zmp_a;\np_a,\np_a),
\end{split}
\end{equation}
$C_{0,1}$ is a circle of radius 1 centered at the origin, $\zmp_a=1/\zp_a$ and $\zu_a=\vsigma_a$.
\end{thm}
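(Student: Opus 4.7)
The goal is to assemble Theorem \ref{thm:flatSeries} by chaining together the results already established in this section. First, I would combine \eqref{eq:xlimit} with \eqref{eq:stpt}, obtaining
\[
\ee^{\rm flat}\!\left[\tau^{\frac12\bk h(t,0)}\right] = \bk\taufac \lim_{x\to\infty}\sum_{k=0}^{\bk}\tnuhalfflat(t,2x),
\]
so that the theorem follows once we identify $\lim_{x\to\infty}\tnuhalfflat(t,2x)$ with the claimed $\nuflat(t)$.

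Next, I would invoke Proposition \ref{prop:contourDeformationResult}, which decomposes $\tnuhalfflat(t,2x)$ into a finite sum over groupings $(\ku,\kp,\kf)$ of the integration variables into unpaired, paired, and free ones. The crucial observation is that $x$ enters the decomposed integrand only through the factors $\mcP\mff_2(\vwf;\vnf)$ attached to free variables, and that the modulus of $\mff_2(\wf_a,\nf_a)$ is strictly less than $1$ on the deformed contour $C_0^{\nf_a}$ (since $|\wf_a|>\tau^{-\nf_a/2}$ there), uniformly by compactness. Since all remaining sums and integrals range over finite index sets and compact contours with continuous integrands, we may interchange limit and integration: only the $\kf=0$ terms survive, yielding Proposition \ref{prop:x-limit}.

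The third step is to apply Lemma \ref{lem:simplif} to $\mcP\mfh(\vwu,\vwp;\vnu,\vnp)$. Using the identities $\tau^{-\nu_a}/\wu_a=\wu_a$ and $\tau^{-\np_a}/\wp_a=\wmp_a$, one groups the $\mfh_1\mfh_2$ factors linking each $\vwp$–variable with another variable into quadruples, each of which collapses via the lemma into a single $\mfe$ factor and an explicit $\tau$ power; the net identity is \eqref{eq:simplif-mfh}.

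Finally, I would perform the scaling \eqref{eq:defzs} in each $\wp_a$ integral. This converts $\mff_1,\mfgu,\mfgp,\mfh,\mfe$ into their tilded counterparts by the very definition \eqref{eq:tmcP}; it also maps each contour $C_0^{\np_a}$ to the circle $|\zp_a|=\tau^{-\eta}$, and since the integrand is analytic in the annulus between this circle and the unit circle (by the pole analysis of Section \ref{sec:poles}, which showed that the only remaining singular structure lives at $\zp_a=\pm 1$, on the contour itself), we may further deform to $C_{0,1}$. The main obstacle is the bookkeeping in this last step: one must track every $\tau$–power, sign, and $w$–power through the substitution $\wp_a=\tau^{-\np_a/2}\zp_a$ and $\wmp_a=\tau^{-\np_a/2}\zmp_a$, and verify that the Jacobian $\prod_a\tau^{-\np_a/2}$ combines with the prefactor $(-1)^{\np_a}\tau^{-\np_a(\np_a+1)/2}(1-\tau^{\np_a})\wmp_a$ of Proposition \ref{prop:x-limit}, together with the hidden $\tau^{\np_a/2}$ appearing when $\wmp_a$ is rewritten in $\zmp_a$, to produce exactly the factor $(-1)^{\np_a}\tau^{-(\np_a)^2/2}(\tau^{-\np_a}-1)\zmp_a$ of \eqref{eq:mfZflat} (using $\tau^{-\np_a}(1-\tau^{\np_a})=\tau^{-\np_a}-1$). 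All other structural ingredients—the index set $\Lambda^{\bk}_{\ku,\kp}$ encoding $\vsigma_a\nu_a\neq\vsigma_b\nu_b$ and $\sum\nu_a+2\sum\np_a=\bk$, and the combinatorial prefactor $1/(\ku!\,2^{\kp}\kp!)$—are already in place from Propositions \ref{prop:contourDeformationResult} and \ref{prop:x-limit}.
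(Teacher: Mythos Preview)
Your outline is exactly the paper's route: combine \eqref{eq:xlimit}--\eqref{eq:stpt}, invoke Proposition~\ref{prop:x-limit} to kill the free variables, simplify via \eqref{eq:simplif-mfh}, and rescale by \eqref{eq:defzs}. There is, however, one genuine gap in the last step. You note that after the change of variables ``the only remaining singular structure lives at $\zp_a=\pm1$, on the contour itself'' and then deform to $C_{0,1}$ anyway; but if those were honest poles the integral over $C_{0,1}$ would not even be defined, so analyticity in the open annulus alone does not justify the move. The missing observation (which the paper makes explicitly right after the theorem) is that the apparent simple poles of $\tmfgp(\zp_a,\np_a)$ and $\tmfgp(\zmp_a,\np_a)$ at $\zp_a=\pm1$ are exactly cancelled by the double zeros of $\tilde\mfh_2(\zp_a,\zmp_a;\np_a,\np_a)$ at those points, so the full integrand is in fact regular on $C_{0,1}$ and the deformation is legitimate.

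A smaller remark on your bookkeeping: the substitution $\wmp_a=\tau^{-\np_a/2}\zmp_a$ contributes a factor $\tau^{-\np_a/2}$, not $\tau^{+\np_a/2}$, so the combinatorics you sketch needs to be redone (though the endpoint is as stated).
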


In particular, this proves Theorem \ref{thm:flat-moments}. Note that in \eqref{eq:mfZflat}
we have taken the $\zp_a$ contours to lie at circles of radius $1$ instead of
$\tau^{-\eta}$ as specified by the change of variables \eqref{eq:defzs}. We may do this at
this point because, although both $\tmfgp(\zp_a,\np_a)$ and $\tmfgp(\zmp_a,n_a)$ have
simple poles at $\zp_a=\pm1$, the singularities are cancelled by the double zeros of
$\tilde\mfh_2(\zp_a,\zmp_a;\np_a,\np_a)$ at $\zp_a=1/\zp_a=\pm1$.

\section{Moment formula: Pfaffian structure}\label{sec:pfaffian}

In this section we will prove Theorem \ref{thm:fredPf-intro}, starting from the formula
given in Theorem \ref{thm:flatSeries}. The structure of the proof is similar to the one
provided in \cite{cal-led} for the case of the delta Bose gas.

In order to unveil the Pfaffian structure lying behind the formula in Theorem
\ref{thm:flatSeries} it will be convenient to consider first the case when $\ku$ is even.

\subsection{Case $\ku$ even}\label{sec:kueven}

Define the following variables:
\begin{gather}\label{eq:defyvars}
y_{2a-1}=\zp_a,\qquad y_{2a}=1/\zp_a\qquad y_{2\kp+b}=\vsigma_b,\\
m_{2a-1}=m_{2a}=\np_a,\qqand m_{2\kp+b}=\nu_b
\end{gather}
for $1\leq a\leq\kp$ and $1\leq b\leq\ku$. We will see next how the pairing structure, and
in particular the simplification presented in Section \ref{sec:simplif}, yields formulas
which can be naturally turned into a Pfaffian.

A computation shows that
\begin{equation}
  \prod_{1\leq a<b\leq\kp}\tilde\mfe(\zp_a,\zp_b;\np_a,\np_b)
   \tilde\mfe(\zmp_a,\zp_b;\np_a,\np_b)
   =\prod_{1\leq a<b\leq2\kp}\frac{\tau^{\inv2m_{b}}y_{b}-\tau^{\inv2m_{a}}y_{a}}{\tau^{\inv2m_a+\inv2m_b}y_{a}y_{b}-1}
   \label{eq:pairedPf}
\end{equation}
and, similarly,
\begin{equation}
\prod_{a\leq\ku,b\leq\kp}\tilde\mfe(\zu_a,\zp_b;\nu_a,\np_b)
=\prod_{a\leq2\kp,b\leq\ku}\frac{\tau^{\inv2m_{b}}y_{b}-\tau^{\inv2m_{2\kp+a}}y_{2\kp+a}}
  {\tau^{\inv2m_a+\inv2m_{2\kp+b}}y_{a}y_{2\kp+b}-1}.\label{eq:crossedPf}
\end{equation}
In order to complete the nonlinear Schur Pfaffian \eqref{eq:schur-nl-pf-intro} in
$y_1,\dotsc,y_{2\kp+\ku}$ we need to multiply by some missing factors, yielding
\begin{multline}
  \label{eq:preSchurPf}
   ({\rm L.H.S.}\eqref{eq:pairedPf})\times({\rm L.H.S.}\eqref{eq:crossedPf})
   =\qquad\quad\smashoperator{\prod_{2\kp+1\leq a<b\leq 2\kp+\ku}}\qquad\quad\frac{\tau^{\inv2m_a+\inv2m_b}y_{a}y_{b}-1}{\tau^{\inv2m_{b}}y_{b}-\tau^{\inv2m_{a}}y_{a}}\\
   \times\prod_{a=1}^{\kp}\frac{\tau^{\inv2m_{2a-1}+\inv2m_{2a}}y_{2a-1}y_{2a}-1}{\tau^{\inv2m_{2a}}y_{2a}-\tau^{\inv2m_{2a-1}}y_{2a-1}}
   \pf\!\left[\frac{\tau^{\inv2m_b}y_b-\tau^{\inv2m_a}y_a}{\tau^{\inv2(m_a+m_b)}y_ay_b-1}\right]_{a,b=1}^{2\kp+\ku}.
\end{multline}
Now consider the factor $\mcP\tilde\mfh(\vzu;\vnu)$ on the right hand side of
\eqref{eq:mfZflat}. Since $\zu_a=\vsigma_a$, \eqref{eq:defht} and Lemma
\ref{lem:mfeu} yield
$\tilde\mfh(\zu_a,\zu_b;\nu_a,\nu_b)=(-\vsigma_a\vsigma_b)^{\nu_a\wedge\nu_b}\frac{|\vsigma_b\tau^{\inv2\nu_b}-\vsigma_a\tau^{\inv2\nu_a}|}{1-\vsigma_a\vsigma_b\tau^{\inv2(\nu_a+\nu_b)}}$. Using
this and \eqref{eq:preSchurPf} together with the facts that $\vsigma_a=y_{2\kp+a}$,
$\frac{|\vsigma_b\tau^{\frac12\nu_b}-\vsigma_a\tau^{\frac12\nu_a}|}{\vsigma_a\tau^{\frac12\nu_a}-\vsigma_b\tau^{\frac12\nu_b}}=\vsigma_a\vsigma_b\sgn(\vsigma_b\nu_b-\vsigma_a\nu_a)$ and $\frac{\tau^{\inv2m_{2a-1}+\inv2m_{2a}}y_{2a-1}y_{2a}-1}{\tau^{\inv2m_{2a}}y_{2a}-\tau^{\inv2m_{2a-1}}y_{2a-1}}
  =\frac{\zp_a(\tau^{\np_a}-1)}{\tau^{\inv2\np_a}(1-(\zp_a)^2)}$ yields
\begin{multline}
  \mcP\tilde\mfh(\vzu;\vnu)\times({\rm L.H.S.}\eqref{eq:pairedPf})\times({\rm L.H.S.}\eqref{eq:crossedPf})
  =\prod_{1\leq a<b\leq\ku}(-\vsigma_a\vsigma_b)^{\nu_a\wedge\nu_b+1}\sgn(\vsigma_a\nu_a-\vsigma_b\nu_b)\\
  \times\prod_{a=1}^{\kp}\frac{\zp_a(\tau^{\np_a}-1)}{\tau^{\inv2\np_a}(1-(\zp_a)^2)}
  \pf\!\left[\frac{\tau^{\inv2m_b}y_b-\tau^{\inv2m_a}y_a}{\tau^{\inv2(m_a+m_b)}y_ay_b-1}\right]_{a,b=1}^{2\kp+\ku}.\label{eq:pfaffian1}
\end{multline}

It remains to recognize the cross-product on the right hand side of \eqref{eq:pfaffian1}
as a Pfaffian. As will be clear in the proof of the next lemma, the factors in that product can be thought
of as the entries of a degenerate Schur matrix. In the case $\vsigma_a\equiv 1$, this
identity was discovered and checked for small values of $k$ using Mathematica in \cite{cal-led}. 
\begin{lem}\label{lem:intpfaffian}
For  $k\in \{2,4,6,\ldots\}$, positive integers $m_1,\dotsc,m_k$, and  $\vsigma_1,\dotsc\vsigma_k\in\{-1,1\}$,
  \begin{align}\prod_{1\leq a<b\leq k}&(-\vsigma_a\vsigma_b)^{m_a\wedge
    m_b+1}\sgn(\vsigma_am_a-\vsigma_bm_b)
  =\pf\!\left[(-\vsigma_a\vsigma_b)^{m_a\wedge
      m_b}\sgn(\vsigma_bm_b-\vsigma_am_a)\right]_{a,b=1}^k.\end{align} 
\end{lem}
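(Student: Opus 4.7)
My plan is to derive this identity from the nonlinear Schur Pfaffian identity \eqref{eq:schur-nl-pf-intro} by a degeneration procedure, with induction as a backup.

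First I would use the antisymmetry of both sides under index permutations (for the LHS through the $\sgn$ factors, for the RHS through the standard Pfaffian transformation law $\pf(P^TMP)=\det(P)\,\pf(M)$) to reduce to the sorted case $\vsigma_1m_1<\vsigma_2m_2<\dotsb<\vsigma_km_k$. Under this sorting every $\sgn(\vsigma_am_a-\vsigma_bm_b)$ becomes explicitly $-1$ for $a<b$, and the claim reduces to showing that the antisymmetric matrix with upper-triangular entries $(-\vsigma_a\vsigma_b)^{m_a\wedge m_b}$ has Pfaffian equal to $\prod_{a<b}\vsigma_a\vsigma_b(-\vsigma_a\vsigma_b)^{m_a\wedge m_b}$.

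Next I would realize this matrix as a degenerate specialization of a nondegenerate Schur matrix and apply \eqref{eq:schur-nl-pf-intro}. A natural candidate, suggested by the Pfaffian appearing in \eqref{eq:pfaffian1} under the substitution $y_a=\vsigma_a$, is $Y_a=\vsigma_a\tau^{\vsigma_am_a/2}$, giving
\[
\pf\!\left[\frac{Y_b-Y_a}{Y_aY_b-1}\right]_{a,b=1}^k=\prod_{1\le a<b\le k}\frac{Y_b-Y_a}{Y_aY_b-1},
\]
with both sides rational in $\tau$. After conjugating by a diagonal matrix of powers of $\tau$ chosen to cancel divergences (this multiplies the Pfaffian by $\det$ of the scaling and introduces the matching factor on the product side), each renormalized entry should have a finite non-zero $\tau\to 0^+$ limit equal to $(-\vsigma_a\vsigma_b)^{m_a\wedge m_b}\sgn(\vsigma_bm_b-\vsigma_am_a)$, at which point the Schur identity degenerates to the desired one.

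The hard part will be the bookkeeping in this limit, because the factor $(-\vsigma_a\vsigma_b)^{m_a\wedge m_b}$ involves a $\min$ that does not arise smoothly from a single ratio $(Y_b-Y_a)/(Y_aY_b-1)$; one must separately extract leading powers from numerator and denominator and track how the signs split according to whether $\vsigma_a\vsigma_b=\pm 1$. The sorting assumption controls which term dominates in each case, making the analysis case-by-case but tractable.

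If the limiting argument proves too delicate, the backup is induction on even $k$. The base case $k=2$ is a one-line check. For the inductive step I would use the Pfaffian Dodgson--Jacobi identity
\[
\pf(M)\,\pf(M_{\hat{1234}})=\pf(M_{\hat{12}})\,\pf(M_{\hat{34}})-\pf(M_{\hat{13}})\,\pf(M_{\hat{24}})+\pf(M_{\hat{14}})\,\pf(M_{\hat{23}})
\]
to reduce the $k$-case to smaller ones, and verify that the product formula on the LHS satisfies the same quadratic relation. Since the claim is that the Pfaffian equals a full product of entries (a rank-2 phenomenon, satisfied by Schur matrices), the Dodgson--Jacobi recurrence should close after an algebraic check involving the min function and the $\vsigma_a$'s.
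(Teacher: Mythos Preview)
Your overall strategy---degenerate the Schur Pfaffian identity---is the right one, and is also what the paper does. But your specific implementation has a real gap. With the choice $Y_a=\vsigma_a\tau^{\vsigma_a m_a/2}$ and a single diagonal rescaling $D_a=\tau^{-\alpha_a}$, you cannot make every entry $D_aD_b\frac{Y_b-Y_a}{Y_aY_b-1}$ converge to a finite nonzero limit once $k\ge 4$. Already for $\vsigma_a\equiv 1$ and $m_1<m_2<m_3<m_4$, the leading behaviour of the $(a,b)$-entry is $\tau^{m_a/2}$, so entrywise convergence forces $\alpha_a+\alpha_b=m_a/2$ for every $a<b$; the resulting linear system is inconsistent (it forces $m_2=m_3$). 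So the hoped-for statement ``each renormalized entry has a finite nonzero limit'' is simply false, and the $\min$ in $(-\vsigma_a\vsigma_b)^{m_a\wedge m_b}$ is precisely what obstructs a one-shot diagonal degeneration.

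The paper circumvents this with two extra ideas you are missing. First, it sorts by \emph{sign} rather than by $\vsigma_a m_a$: put all $\vsigma_a=+1$ first, then all $\vsigma_a=-1$. In this ordering the off-diagonal block has every entry equal to $1$ (since $-\vsigma_a\vsigma_b=1$ and $\sgn(-m_b-m_a)=-1$ for the lower block, etc.), hence is rank one, and Lemma~\ref{lem:pfaffianblockform} factors the Pfaffian as $\pf[A]\pf[B]$ with $A,B$ the two diagonal blocks. Second, for each block separately the paper uses a \emph{multi-parameter sequential} limit rather than a single $\tau\to 0$: it chooses $p(h,m)$ with $p\to\infty$ for $m$ even and $p\to 0$ for $m$ odd, and sends $h_a\to 0$ one index at a time in order of $m_a$ (increasing for the $+$ block, decreasing for the $-$ block). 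Each entry of the Schur matrix $\frac{p_b-p_a}{p_b+p_a}$ then converges to the correct $\pm 1$, and the Schur identity degenerates cleanly on each block. Your Dodgson--Jacobi backup is in principle viable, but the paper's block-plus-sequential-limit argument is both shorter and more transparent.
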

\begin{proof} If any $\vsigma_am_a=\vsigma_bm_b$, both sides vanish, as $\sgn(0)=0$, so we
  can assume they are all distinct. Switching $(\vsigma_a,m_a)$ with a different
  $(\vsigma_b,m_b)$ induces a sign change on both sides of the identity, hence by making
  finitely many such switches, we can assume that $\vsigma_i=1$, $i=1,\ldots, n$, and
  $\vsigma_i=-1$,
  $i=n+1,\ldots,k$.  Our matrix then has the block form $\left[\begin{smallmatrix} A & D \\
      -D^{\sf T} & B \end{smallmatrix}\right]$ where $A$ is $k\times k$, $B$ is
  $(k-n)\times(k-n)$ and the $k\times(n-k)$ matrix $D$ has all entries $D_{ab}=1$.  Since
  $D$ is rank one, $\pf\!\left[\begin{smallmatrix} A & D \\ -D^{\sf T} &
      B \end{smallmatrix}\right] =\pf\!\left[A\right]\pf\!\left[B\right]$ by
  \eqref{eq:pfaffianblockform} below.  Choose $p(h,m)$ to be a function of $h\in \rr$,
  $m\in \zz$ satisfying $p(h,m)\to \infty $ as $h\to 0$ if $m$ is even, and $p(h,m)\to 0 $
  as $h\to 0$ if $m$ is odd.  Let $A_{a,b}(\vec{h},\vec{m}) = \frac{ p(h_b,m_b) -
    p(h_a,m_a) }{ p(h_b,m_b) + p(h_a,m_a) }$ for $a,b=1,\ldots,n$ and
  $B_{a,b}(\vec{h},\vec{m}) = \frac{ p(h_b,m_b) - p(h_a,m_a) }{ p(h_b,m_b) + p(h_a,m_a) }$
  for $a,b=n+1,\ldots,k$.  They have the property that if $h_a\to 0$ in increasing order
  of $m_a$ (i.e. if $m_b>m_a$, then $h_a\to 0$ followed by $h_b\to 0$) for $a,b$ with
  $\vsigma_a=\vsigma_b=1$ (i.e. $1\le a,b\le n$) and in decreasing order of $m_a$ for $
  \vsigma_a=\vsigma_b=-1$ (i.e. $n+1\le a\le k$) then both $A_{a,b}(\vec{h},\vec{m})$ and
  $B_{a,b}(\vec{h},\vec{m})$ converge to $(-\vsigma_a\vsigma_b)^{m_a\wedge
    m_b+1}\sgn(\vsigma_am_a-\vsigma_bm_b)$.  By the Schur Pfaffian formula \eqref{eq:schur-pf-intro},
  we have $\pf\!\big[A(\vec{h},\vec{m})\big]\pf\!\big[B(\vec{h},\vec{m})\big] = \prod_{1\le
    a<b\le n} A_{a,b}(\vec{h},\vec{m})\prod_{n+1\le a<b\le k} B_{a,b}(\vec{h},\vec{m})$.
  Taking the limit in the given order yields the result.
\end{proof}

Using this result, \eqref{eq:pfaffian1} and the identity
$\tilde\mfh_2(\zp_a,\zmp_a;\np_a,\np_a)=\frac{(1-(\zp_a)^2)^2}{(\tau^{\np_a}-(\zp_a)^2)(\tau^{-\np_a}-(\zp_a)^2)}$
in \eqref{eq:mfZflat} yields (recalling that $\vsigma_a=y_{2\kp+a}$ and $\zp_a=1/\zmp_a$)

\begin{equation}
\begin{split}
\mfZflat(&\ku,\kp)=\quad\quad\,\,\,
\smashoperator{\sum_{\substack{\vec{\vsigma}\in\{-1,1\}^{\ku},\,\vnu\in\zz_{\geq1}^{\ku},\,\vzp\in\zz^{\kp}_{\geq1}\\\sum\nu_a+2\sum\np_a=\bk}}}\qquad\qquad
\itwopii{\kp}\int_{C^{\kp}_{0,1}}\!\!\!d\vzp\,\,\,\,\smashoperator{\prod_{1\leq
    a<b\leq\kp}}\,\,\,\tau^{-2\np_a\np_b}
\,\,\,\smashoperator{\prod_{a\leq\ku,\,b\leq\kp}}\,\,\,\tau^{-\nu_a\np_b}
\,\,\,\smashoperator{\prod_{1\leq a<b\leq\ku}}\,\,\,\tau^{-\inv2\nu_a\nu_b}\\
&\times\mcP\tilde\mff_1(\vzu,\vzp;\vnu,\vnp)\mcP\tmfgu(\vzu;\vnu)\mcP\tmfgp(\vzp;\vnp)
\prod_{a=1}^{\ku}\tfrac12\tau^{-\inv2\nu_a}\zu_a\\
   &\times\prod_{a=1}^{\kp}(-1)^{\np_a}\tau^{-\inv2\np_a(\np_a+1)}
   \frac{\tau^{-\np_a}(1-(\zp_a)^2)^2}{(\tau^{\np_a}-(\zp_a)^2)(\tau^{-\np_a}-(\zp_a)^2)}
   \frac{(1-\tau^{\np_a})^2}{((\zp_a)^2-1)}\\
   &\times\pf\!\left[\frac{\tau^{\frac12m_b}y_b-\tau^{\frac12m_a}y_a}{\tau^{\frac12(m_a+m_b)}y_ay_b-1}\right]_{a,b=1}^{2\kp+\ku}
   \pf\!\left[(-y_{a}y_{b})^{m_a\wedge m_b}\sgn(y_{b}m_b-y_{a}m_a)\right]_{a,b=2\kp+1}^{2\kp+\ku}.
\end{split}\label{eq:Z60s}
\end{equation}
Note that we have removed the restriction that the $\vsigma_a\nu_a$'s be different at this
point. We may do this thanks to the second Pfaffian on the right hand side. In fact, by
Lemma \ref{lem:intpfaffian} this Pfaffian contains a factor
$\prod_{a<b}\sgn(\sigma_bm_b-\sigma_am_a)$, which vanishes if a pair of $\vsigma_a\nu_a$'s
are equal. Now note that
\[\tmfgp(z,n)\tmfgp(1/z,n)\tfrac{\tau^{-n}(1-z^2)}{(\tau^{n}-z^2)(\tau^{-n}-z^2)}=
\tfrac{\pochinf{\!-\tau^{-n/2}z;\tau}\pochinf{\!-\tau^{-n/2}/z;\tau}}{\pochinf{\!-\tau^{n/2}z;\tau}\pochinf{\!-\tau^{n/2}/z;\tau}}
\tfrac{\pochinf{\tau^{1+n}z^2;\tau}\pochinf{\tau^{1+n}/z^2;\tau}}{\pochinf{\tau z^2;\tau}\pochinf{\tau/z^2;\tau}}.\]
With this in mind we introduce the notation 
\begin{equation}
\begin{gathered}
  \mfu(z,n)=(1-\tau^n)\mff_1(\tau^{-\inv2n}z;n)
  \frac{\pochinf{\!-\tau^{-n/2}z;\tau}\pochinf{\tau^{1+n}z^2;\tau}}{\pochinf{\!-\tau^{n/2}z;\tau}\pochinf{\tau z^2;\tau}},\\
  \mfuu(z,n)=\tau^{-\inv2n}z\frac{1-\tau^nz^2}{1-\tau^n},\qquad\quad
  \mfup(z,n)=(-1)^{n}\tau^{-\inv2n}\frac{1}{z^2-1},\\
  \mfuap(z,n)=(-1)^{n}\tau^{-\inv2n}\frac{1+z^2}{z^2-1},
\end{gathered}\label{eq:def-mfu}
\end{equation}
chosen so that
\begin{equation}
(\text{2$^{\rm nd}$ line \eqref{eq:Z60s}})\times(\text{3$^{\rm rd}$ line \eqref{eq:Z60s}})
=\frac1{2^{\ku}}\mcP\mfu(\vzp,\vzu;\vnp,\vnu)\mcP\mfuu(\vzu;\vnu)\prod_{a=1}^{\kp}\mfup(\zp_a,\np_a)\tau^{-\inv2(\np_a)^2}\label{eq:mfuRepl}
\end{equation}
($\mfuap$ is a certain antisymmetrization of $\mfup$, see \eqref{eq:antisymm-mfu2}, and
will be used later on).  Of course, this decomposition is not
uni\-que, but it will turn out to be convenient below.

Observe that, in view of \eqref{eq:defyvars}, all the $y_a$'s live in $C_{0,1}$.
Therefore we may replace the integration in $\vec{z}^{\rm p}$ over $C_{0,1}^{\kp}$ by an
integration in $\vec{y}$ over $C_{0,1}^{2\kp+\ku}$ by introducing suitable delta
functions.  We may similarly sum over $m_1,\dotsc,m_{2\kp+\ku}\in\zz_{\geq1}$ and get rid
of the sum over $\vsigma_1,\dotsc,\vsigma_{\ku}\in\{-1,1\}$. The precise replacement we
are going to use, keeping in mind \eqref{eq:defyvars}, can be expressed as follows: for a
suitable function $f$,
\begin{multline}
  \qquad\qquad
  \smashoperator{\sum_{\substack{\vec{\vsigma}\in\{-1,1\}^{\ku},\,\vnu\in\zz_{\geq1}^{\ku},\,\vzp\in\zz^{\kp}_{\geq1}\\\sum\nu_a+2\sum\np_a=\bk}}}\qquad\qquad
  \itwopii{\kp}\int_{C^{\kp}_{0,1}}d\vzp\,f(\vzp;\vzmp;\vnp;\vnp;\vnu;\vec\vsigma)\\
  =\qquad\smashoperator{\sum_{\substack{m_1,\dotsc,m_{2\kp+\ku}=1,\\m_1+\dotsm+m_{2\kp+\ku}=\bk}}^\infty^\infty}\qquad\,\,\,\itwopii{2\kp+\ku}\int_{C^{2\kp+\ku}_1}d\vec{y}\,
\prod_{a=1}^{\kp}\delta_{y_{2a-1}-\frac1{y_{2a}}}\uno{m_{2a-1}=m_{2a}}\prod_{a=2\kp+a}^{2\kp+\ku}(\delta_{y_a-1}+\delta_{y_a+1})\\
 \times f(y_1,y_3,\dotsc,y_{2\kp-1};y_2,y_4,\dotsc,y_{2\kp};m_1,m_3,\dotsc,m_{2\kp-1};\\
 m_2,m_4,\dotsc,m_{2\kp};m_{2\kp+1},\dotsc,m_{2\kp+\ku};
 y_{2\kp+1},\dotsc,y_{2\kp+\ku}).\label{eq:deltas-repl}
\end{multline}
The proof of this (essentially algebraic) identity is straightforward. Here, in view of the fact that
we are working with contour integrals in the complex plane, our delta functions implicitly
carry a factor of $2\pi\I$ (which is omitted for notational convenience).

\begin{rem}
  The use of delta functions here will should be regarded essentially as a purely
  notational device which will allow us to express our integral kernels in a more
  convenient way. In most parts of the argument it will not cause any difficulty, with the
  exception of the argument following \eqref{eq:pvPfaffPre} which is treated in pages
  \pageref{eq:pvPfaffPre}-\pageref{eq:def-mfv-pre}. In particular, the delta functions
  will be gone by the time we get to our main result, Theorem \ref{thm:fredPf-intro}. The
  reader who feels uncomfortable with this treatment can replace $\delta_z$ by a symmetric
  mollifier $\varphi_\ep$ (so that $\varphi_\ep(x)\longrightarrow\delta_x$ as $\ep\to0$ in
  the sense of distributions), and replace the right hand side of \eqref{eq:deltas-repl}
  by
  \begin{multline}
    \qquad\smashoperator{\sum_{\substack{m_1,\dotsc,m_{2\kp+\ku}=1,\\m_1+\dotsm+m_{2\kp+\ku}=\bk}}^\infty^\infty}
    \qquad\,\,\,\itwopii{2\kp+\ku}\int_{C^{2\kp+\ku}_{0,1}}d\vec{y}\,f(\star)
    \\\times\prod_{a=1}^{\kp}\varphi_{\ep_a}(y_{2a-1}-1/y_{2a})\uno{m_{2a-1}=m_{2a}}
    \prod_{a=2\kp+a}^{2\kp+\ku}(\varphi_{\ep_a}(y_a-1)+\varphi_{\ep_a}(y_a+1))
    ,\label{eq:deltas-repl2}
  \end{multline}
  where $f(\star)$ stands for the factor (with the same arguments) appearing on the last
  two lines of \eqref{eq:deltas-repl}. This is what we do in pages
  \pageref{eq:pvPfaffPre}-\pageref{eq:def-mfv-pre}.
\end{rem}

Using this idea, \eqref{eq:mfuRepl}, and noting that by the definition \eqref{eq:defyvars} of the $m_a$'s in
terms of the $\nu_a$'s and $\np_a$'s
\[\prod_{a=1}^{\kp}\tau^{-\inv2(\np_a)^2}\prod_{1\leq
  a<b\leq\kp}\tau^{-2\np_a\np_b}\prod_{a\leq\ku,\,b\leq\kp}\tau^{-\nu_a\np_b} \prod_{1\leq
  a<b\leq\ku}\tau^{-\inv2\nu_a\nu_b}=\prod_{1\leq a<b\leq2\kp+\ku}\tau^{-\inv2m_am_b},\]
\eqref{eq:Z60s} becomes\footnote{There is an apparent singularity in this formula, because
  $\mfup(y_a,n_a)$ has simple poles at $y_a=\pm1$ and $y_a$ lives in a circle of radius
  1. This is resolved by noting that, in view of \eqref{eq:schur-nl-pf-intro}, the first
  Pfaffian on the third line has a factor
  $(\tau^{m_{2a}/2}y_{2a}-\tau^{m_{2a-1}/2}y_{2a-1})/(\tau^{(m_{2a-1}+m_{2a})/2}y_{2a-1}y_{2a}-1)$,
  which has zeros at these points when $m_{2a-1}=m_{2a}$ and $y_{2a}=1/y_{2a-1}$. In later
  formulas it will be less apparent how these singularities are canceled (see
  e.g. Proposition \ref{prop:firstPfaffian}), but we know that this will be the case by
  virtue of their equivalence to \eqref{eq:Zp60s}.\label{ftn:sing}}
\begin{equation}
\begin{aligned}
  &\mfZflat(\ku,\kp)=\inv{2^{\ku}}\qquad\smashoperator{\sum_{\substack{m_1,\dotsc,m_{2\kp+\ku}=1,\\m_1+\dotsm+m_{2\kp+\ku}=\bk}}^\infty}
\qquad\,\,\,\itwopii{2\kp+\ku}\int_{C^{2\kp+\ku}_{0,1}}d\vec{y}\,\prod_{1\leq a<b\leq2\kp+\ku}\tau^{-\inv2m_am_b}\\
  &\times\prod_{a=1}^{2\kp+\ku}\mfu(y_a,m_a)\prod_{a=2\kp+1}^{2\kp+\ku}\mfuu(y_a,m_a)(\delta_{y_a-1}+\delta_{y_a+1})
  \prod_{a=1}^{\kp}\mfup(y_a,m_a)\delta_{y_{2a-1}-\inv{y_{2a}}}\uno{m_{2a-1}=m_{2a}}\\
  &\times\pf\left[\frac{\tau^{\frac12m_b}y_b-\tau^{\frac12m_a}y_a}{\tau^{\frac12(m_a+m_b)}y_ay_b-1}\right]_{a,b=1}^{2\kp+\ku}
  \pf\left[(-y_ay_b)^{m_a\wedge m_b}\sgn(y_bm_b-y_am_a)\right]_{a,b=2\kp+1}^{2\kp+\ku}.
\end{aligned}\label{eq:Zp60s}
\end{equation}

Now observe that the integration measure
$\sum_{m_1+\dotsm+m_{2\kp+\ku}=\bk}^\infty\int_{C^{2\kp+\ku}_{0,1}}d\vec y$ is symmetric under
exchanging $m_a$'s and $y_a$'s. Let $\pi_{a}\in S_{2\kp}$ denote the transposition of
elements $2a-1$ and $2a$ and let $T_{2\kp}$ be the subgroup of $S_{2\kp}$ generated by
$\{\pi_{a},\,a=1,\dotsc,\kp\}$, which has order $2^{\kp}$. Noting every factor in
\eqref{eq:Zp60s} is invariant under the action of $T_{2\kp}$ on the
$2\kp$-tuple $((m_1,y_1),\dotsc,(m_{2\kp},y_{2\kp}))$ except for the first of the two
Pfaffians, which is antisymmetric under this action, we get
\begin{equation}
\begin{split}
  &\mfZflat(\ku,\kp)=\inv{2^{\ku}}\qquad\smashoperator{\sum_{\substack{m_1,\dotsc,m_{2\kp+\ku}=1,\\m_1+\dotsm+m_{2\kp+\ku}=\bk}}^\infty}
\qquad\,\,\,\itwopii{2\kp+\ku}
  \int_{C^{2\kp+\ku}_{0,1}}d\vec y\,\prod_{1\leq a<b\leq2\kp+\ku}\tau^{-\inv2m_am_b}\\
  &\,\,\,\times\prod_{a=1}^{2\kp+\ku}\mfu(y_a,m_a)\prod_{a=2\kp+1}^{2\kp+\ku}\mfuu(y_a,m_a)(\delta_{y_a-1}+\delta_{y_a+1})
  \\
  &\,\,\,\times\pf\left[(-y_ay_b)^{m_a\wedge
      m_b}\sgn(y_bm_b-y_am_a)\right]_{a,b=2\kp+1}^{2\kp+\ku}
  \pf\left[\frac{\tau^{\frac12m_b}y_b-\tau^{\frac12m_a}y_a}{\tau^{\frac12(m_a+m_b)}y_ay_b-1}\right]_{a,b=1}^{2\kp+\ku}\\
  &\,\,\,\times\frac1{2^{\kp}}\sum_{\pi\in T_{2\kp}}\sgn(\pi)\prod_{a=1}^{\kp}\mfup(y_{\pi(2a-1)},m_{\pi(2a-1)})
  \delta_{y_{\pi(2a-1)}-\inv{y_{\pi(2a)}}}\uno{m_{\pi(2a-1)}=m_{\pi(2a)}}.
\end{split}\label{eq:T2kp}
\end{equation}
Observe that the product of the delta and indicator functions in the last line
is invariant under exchanging any pair of indices of the form $(2a-1,2a)$,
$a=1,\dots,\kp$. On the other hand, we have
\begin{equation}
\mfup(y,m)-\mfup(1/y,m)=(-1)^{m}\tau^{-\inv2m}\frac{1+y^2}{1-y^2}=\mfuap(y,m),\label{eq:antisymm-mfu2}
\end{equation}
where $\mfuap$ was defined in \eqref{eq:def-mfu}, and thus it is not hard to see that
\[
  (\text{last line \eqref{eq:T2kp}})=
  \inv{2^{\kp}}\prod_{a=1}^{\kp}\mfuap(y_{2a-1},m_{2a-1})\delta_{y_{2a-1}-1/y_{2a}}\uno{m_{2a-1}=m_{2a}}.\]
Using this and repeating the procedure we just used of using the symmetry of the
integration measure to sum over permutations, the last formula for $\mfZflat(\ku,\kp)$ can
be rewritten using the replacement
\begin{equation}
  (\text{last line \eqref{eq:T2kp}})
  \leadsto\sum_{\sigma\in S_{2\kp}}\tfrac{\sgn(\sigma)}{2^{\kp}(2\kp)!}
  \prod_{a=1}^{\kp}\mfuap(y_{\sigma(2a-1)},m_{\sigma(2a-1)})
  \delta_{y_{\sigma(2a-1)}-\inv{y_{\sigma(2a)}}}\uno{m_{\sigma(2a-1)}=m_{\sigma(2a)}}.
\end{equation}
Since, by \eqref{eq:antisymm-mfu2}, the matrix
$\big[\mfuap(y_a,y_b;m_a,m_b)\uno{m_{a}=m_{b}}\delta_{y_{a}-1/y_{b}}\uno{a\neq b}\big]_{a,b=1}^{2\kp}$
is skew-symmetric, this sum equals $\frac{\kp!}{(2\kp)!}$ times the Pfaffian of
this matrix. Thus, using in addition the identity
$\prod_{a=1}^{2n}\lambda_a\pf\!\left[A(a,b)\right]_{a,b=1}^{2n}=
\pf\!\left[\lambda_a\lambda_bA(a,b)\right]_{a,b=1}^{2n}$ (see \eqref{eq:diag2Pf}), 
the last sum can be written as
\[\inv{2^{2\kp}}\frac{\kp!}{(2\kp)!}
\pf\!\left[4\mfuap(y_a,m_a)\uno{m_{a}=m_{b}}\delta_{y_{a}-\inv{y_{b}}}\uno{a\neq b}\right]_{a,b=1}^{2\kp}\]
(the introduction of the factor $2^{-2\kp}$ in front of the Pfaffian is for later
convenience), and now using the same Pfaffian identity again we deduce the following

\begin{prop}\label{prop:Zp70s}
  Assume that $\ku$ is even. Then
  \begin{equation}
    \begin{aligned}
      &\mfZflat(\ku,\kp)=\frac{\kp!}{2^{\ku+2\kp}(2\kp)!}\qquad\,\,\smashoperator{\sum_{\substack{m_1,\dotsc,m_{2\kp+\ku}=1,\\m_1+\dotsm+m_{2\kp+\ku}=\bk}}^\infty}
      \qquad\,\,\,\itwopii{2\kp+\ku}\int_{C^{2\kp+\ku}_{0,1}}d\vec y\,\prod_{1\leq a<b\leq2\kp+\ku}\tau^{-\inv2m_am_b}\\
      &\times\prod_{a=1}^{2\kp+\ku}\mfu(y_a,m_a)
      \pf\left[\frac{\tau^{\frac12m_b}y_b-\tau^{\frac12m_a}y_a}{\tau^{\frac12(m_a+m_b)}y_ay_b-1}\right]_{a,b=1}^{2\kp+\ku}
      \pf\!\left[4\mfuap(y_a,m_a)\uno{m_{a}=m_{b}}\delta_{y_{a}-\inv{y_{b}}}\uno{a\neq b}\right]_{a,b=1}^{2\kp}\\
      &\times\pf\big[(-y_ay_b)^{m_a\wedge m_b}\sgn(y_bm_b-y_am_a)\mfuu(y_a,m_a)\mfuu(y_b,m_b)\\
      &\hspace{2.6in}\times(\delta_{y_a-1}+\delta_{y_a+1})(\delta_{y_b-1}+\delta_{y_b+1})\big]_{a,b=2\kp+1}^{2\kp+\ku}.
    \end{aligned}\label{eq:Zp70s}
  \end{equation}
\end{prop}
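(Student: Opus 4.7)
The plan is to start from equation \eqref{eq:T2kp}, rewrite the last line (currently a signed sum over the subgroup $T_{2\kp}\subset S_{2\kp}$ generated by pair-transpositions) as a Pfaffian, and then merge this new Pfaffian with the two already present to obtain \eqref{eq:Zp70s}. The argument proceeds in two antisymmetrization steps, followed by a rescaling of the resulting Pfaffian.

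First, I would use the antisymmetrization identity \eqref{eq:antisymm-mfu2}, $\mfup(y,m)-\mfup(1/y,m)=\mfuap(y,m)$, together with the invariance of $\delta_{y_{2a-1}-1/y_{2a}}\uno{m_{2a-1}=m_{2a}}$ under the swap $(2a-1)\leftrightarrow(2a)$. Distributing the $T_{2\kp}$ sum across the $\kp$ independent pair-swaps and using the delta constraint $y_{2a}=1/y_{2a-1}$ evaluates the last line of \eqref{eq:T2kp} in closed form as $2^{-\kp}\prod_{a=1}^{\kp}\mfuap(y_{2a-1},m_{2a-1})\delta_{y_{2a-1}-1/y_{2a}}\uno{m_{2a-1}=m_{2a}}$. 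Next, all remaining factors in \eqref{eq:T2kp} are symmetric in the first $2\kp$ variables $(y_a,m_a)_{a=1}^{2\kp}$ \emph{except} for the first Pfaffian, which is antisymmetric in them. Using the symmetry of the integration and summation measure, I can therefore replace the above expression by its full signed symmetrization over $S_{2\kp}$:
\[\frac{1}{2^{\kp}(2\kp)!}\sum_{\sigma\in S_{2\kp}}\sgn(\sigma)\prod_{a=1}^{\kp}\mfuap(y_{\sigma(2a-1)},m_{\sigma(2a-1)})\delta_{y_{\sigma(2a-1)}-1/y_{\sigma(2a)}}\uno{m_{\sigma(2a-1)}=m_{\sigma(2a)}}.\]
By \eqref{eq:antisymm-mfu2} the matrix $A(a,b)=\mfuap(y_a,m_a)\uno{m_a=m_b}\delta_{y_a-1/y_b}\uno{a\neq b}$ is skew-symmetric, and so by the very definition of the Pfaffian this signed sum collapses to $\frac{\kp!}{(2\kp)!}\pf[A(a,b)]_{a,b=1}^{2\kp}$.

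To reach the exact form \eqref{eq:Zp70s}, I would apply the Pfaffian scaling identity $\prod_a\lambda_a\cdot\pf[M(a,b)]=\pf[\lambda_a\lambda_bM(a,b)]$ from \eqref{eq:diag2Pf} twice: once with $\lambda_a=2$ on the new Pfaffian (producing $\pf[4A(a,b)]$ at the cost of $2^{-2\kp}$, matching the factor of $4$ appearing inside the third Pfaffian in \eqref{eq:Zp70s}); and once with $\lambda_a=\mfuu(y_a,m_a)(\delta_{y_a-1}+\delta_{y_a+1})$ for $a\in\{2\kp+1,\dots,2\kp+\ku\}$ to absorb the corresponding $\ku$-fold product from the second line of \eqref{eq:T2kp} into the Pfaffian indexed by $\{2\kp+1,\dots,2\kp+\ku\}$. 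Collecting prefactors, $2^{-\ku}\cdot\tfrac{\kp!}{(2\kp)!}\cdot 2^{-2\kp}=\tfrac{\kp!}{2^{\ku+2\kp}(2\kp)!}$, which is the combinatorial factor claimed in \eqref{eq:Zp70s}.

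The main technical subtlety is the rigorous handling of the delta functions, which are really shorthand for contour-integral residues (carrying implicit $2\pi\I$ factors) with support on the integration contour $C_{0,1}$ itself. The cleanest way to justify both the symmetrization and the Pfaffian-evaluation step is to first replace each $\delta_z$ by a symmetric mollifier $\varphi_\ep$ as in \eqref{eq:deltas-repl2}, carry out the algebraic manipulations at the mollified level where everything is a genuine integral and the symmetrization argument is unambiguous, and pass to the distributional limit $\ep\to0$ only at the very end, once the expression has been brought to the form \eqref{eq:Zp70s}.
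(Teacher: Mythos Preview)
Your proposal is correct and follows essentially the same approach as the paper: start from \eqref{eq:T2kp}, use \eqref{eq:antisymm-mfu2} to collapse the $T_{2\kp}$-sum into the $\mfuap$ product, symmetrize over $S_{2\kp}$ using the antisymmetry of the first Pfaffian, recognize the resulting signed sum as $\tfrac{\kp!}{(2\kp)!}\pf[A]$, and then apply \eqref{eq:diag2Pf} twice to insert the factor of $4$ and to absorb the $\mfuu(y_a,m_a)(\delta_{y_a-1}+\delta_{y_a+1})$ factors. Your remark on handling the delta functions via mollification as in \eqref{eq:deltas-repl2} is also in line with the paper's treatment.
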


Note that the product of delta functions in the last line of \eqref{eq:Zp70s} poses no
difficulty when $a\neq b$ (since they involve different variables), while for $a=b$ the
prefactor $\sgn(y_bm_b-y_am_a)$ vanishes and thus the whole factor is interpreted as 0.

Recalling now the definition of
$\nuflat(t)$ in Theorem \ref{thm:flatSeries} we have
\begin{equation}
\nuflat(t)=\sum_{\ku,\kp\geq0,\,\ku+2\kp=k}\inv{2^k(2\kp)!\ku!}(\star),\label{eq:nunuflat}
\end{equation}
where $(\star)$ stands for the expression starting from the first sum in
\eqref{eq:Zp70s}. Note that in this expression the only factor which depends directly on
$\ku$ or $\kp$ (as opposed to $\ku+2\kp=k$) is the second Pfaffian in
\eqref{eq:Zp70s}. Remarkably, this Pfaffian can be combined with the third Pfaffian in
\eqref{eq:Zp70s} and with the sum over $\ku$ and $\kp$ in \eqref{eq:nunuflat}, thus
yielding an expression which only depends on $k$ (since, furthermore, the combinatorial
prefactors also combine into something which only depends on $k$). This is done using Lemma
\ref{lem:resumpf}, and the result is the following: for $\bk\geq k\geq0$, $k$ even,
\begin{equation}
\label{eq:finalPfaffianEvenFormula}
\begin{split}
  &\nuflat(t)=\frac{(-1)^{\frac12k}}{2^{k}k!}\!\!
  \sum_{\substack{m_1,\dotsc,m_{k}=1,\\m_1+\dotsm+m_k=\bk}}^\infty
  \sum_{y_1,\dotsc,y_{k}\in\{-1,1\}}
  \itwopii{k}\int_{C^{k}_{0,1}}d\vec y\quad\,\,\,\,\smashoperator{\prod_{1\leq
      a<b\leq2\kp+\ku}}\quad\,
  \tau^{-\inv2m_am_b}\prod_{a=1}^{k}\mfu(y_a,m_a)\\
  &~~\times\pf\Big[(-y_ay_b)^{m_a\wedge m_b}\sgn(y_bm_b-y_am_a)
    (\delta_{y_a-1}+\delta_{y_a+1})(\delta_{y_b-1}+\delta_{y_b+1})\mfuu(y_a,m_a)\mfuu(y_b,m_b)\\
  &~\hspace{1.4in}+4\mfuap(y_a,m_a)\uno{m_{a}=m_{b}}\delta_{y_{a}-\inv{y_{b}}}\uno{a\neq b}\Big]_{a,b=1}^{k}
  \pf\left[\frac{\tau^{\frac12m_a}y_a-\tau^{\frac12m_b}y_b}{\tau^{\frac12(m_a+m_b)}y_ay_b-1}\right]_{a,b=1}^{k}.
\end{split}
\end{equation}
Notice that in the last Pfaffian we have flipped the sign of the argument,
which yields the additional factor $(-1)^{k/2}$ in front. This will be convenient later.

\subsection{Extension to general $\ku$}\label{sec:genku}

To handle the case when $\ku$ is odd we use a standard trick (which goes back at least to
\cite{deBruijn}) to extend Pfaffians to matrices of odd size (the same idea is used
in \cite{cal-led}). Consider the dummy variables
\[y_{2\kp+\ku+1}=1,\qqand m_{2\kp+\ku+1}=0.\]
A simple computation (together with \eqref{eq:schur-nl-pf-intro}) shows that
\[\prod_{1\leq a<b\leq2\kp+\ku}\frac{y_b-y_a}{y_ay_b-1}=
\prod_{1\leq a<b\leq2\kp+\ku+1}\frac{y_b-y_a}{y_ay_b-1}=\pf\!\left[\frac{y_b-y_a}{y_ay_b-1}\right]_{a,b=1}^{2\kp+\ku+1}\]
and, similarly (using Lemma \ref{lem:intpfaffian}), that
\begin{multline}
  \prod_{2\kp+1\leq a<b\leq2\kp+\ku} (-\vsigma_a\vsigma_b)^{m_a\wedge
  m_b+1}\sgn(\vsigma_am_a-\vsigma_bm_b)\\
 =\pf\!\left[(-\vsigma_a\vsigma_b)^{m_a\wedge m_b}\sgn(\vsigma_bm_b-\vsigma_am_a)\right]_{a,b=2\kp+1}^{2\kp+\ku+1}.
\end{multline}
These formulas can be used directly as replacements for \eqref{eq:pfaffian1} and Lemma
\ref{lem:intpfaffian}, and lead to a formula which reads exactly like \eqref{eq:Zp60s},
except that the matrices appearing inside the two Pfaffians are augmented in this
fashion. This can then be extended directly to \eqref{eq:Zp70s} and further to a version of
\eqref{eq:finalPfaffianEvenFormula}. We leave the details to the reader and simply record
the result: for odd $\ku$ we have
\begin{multline}
\label{eq:finalPfaffianOddFormula}
  \nuflat(t)=\frac{(-1)^{\frac12(k+1)}}{2^{k}k!}\sum_{\substack{m_1,\dotsc,m_{k}=1,\\m_1+\dotsm+m_k=\bk}}^\infty\sum_{y_1,\dotsc,y_{k}\in\{-1,1\}}
  \itwopii{k}\int_{C^{k}_{0,1}}d\vec y\,\,\,\quad\smashoperator{\prod_{1\leq
      a<b\leq2\kp+\ku}}\quad\,\,
  \tau^{-\inv2m_am_b}\\
  \times\prod_{a=1}^{k}\mfu(y_a,m_a)\pf\!\left[\begin{smallmatrix}A & U\\-U^{\sf T} & 0\end{smallmatrix}\right]
  \pf\!\left[\begin{smallmatrix}B & \uno{}\\-\uno{}^{\sf T} & 0\end{smallmatrix}\right],
\end{multline}
where $A$ and $B$ are, respectively, the matrices appearing inside the first and second
Pfaffians in \eqref{eq:finalPfaffianEvenFormula}, $U$ is the vector with entries
$U_a=\mfuu(-1,m_a)\delta_{y_a+1}-\mfuu(1,m_a)\delta_{y_a-1}$ and $\uno{}$ represents a vector of ones of size $\ku$
(which is formally obtained from \eqref{eq:finalPfaffianEvenFormula} by augmenting the
matrices in the Pfaffians using the dummy variables and computing their last row and column).

Now  Lemma \ref{lem:pfaffianblockform} with $V= \uno{}$ together with the identity
$(-1)^{\frac12k(k+1)}=(-1)^{\frac12k}\uno{k\in2\zz}+(-1)^{\frac12(k+1)}\uno{k\in2\zz+1}$
allow us to write a version of \eqref{eq:finalPfaffianEvenFormula} and
\eqref{eq:finalPfaffianOddFormula} which is valid for $k$ even and odd:
  
\begin{prop}\label{prop:firstPfaffian}
  For any $0\leq k\leq\bk$ we have
  \begin{equation}
    \nuflat(t)=\frac{(-1)^{\frac12k(k+1)}}{2^{k}k!}\sum_{\substack{m_1,\dotsc,m_{k}=1,\\m_1+\dotsm+m_k=\bk}}^\infty
    \int_{C^{k}_{0,1}}\!d\vec y\,\prod_{1\leq
      a<b\leq2\kp+\ku}\tau^{-\inv2m_am_b}\prod_{a=1}^{k}\mfu(y_a,m_a)\pf\!\left[\Kmu(\vec
      y;\vec m)\right]\label{eq:genPfaffianFormula}
  \end{equation}
  where $\nuflat$ was defined in Proposition \ref{prop:x-limit} and
  \[\Kmu(\vec y;\vec m)=\!\left[\begin{matrix} \big[\Kmu_{1,1}(y_a,y_b;m_a,m_b)\big]_{a,b=1}^k &
        \big[\Kmu_{1,2}(y_a,y_b;m_a,m_b)\big]_{a,b=1}^k \\
        -\big[\Kmu_{1,2}(y_b,y_a;m_b,m_a)\big]_{a,b=1}^k &
        \big[\Kmu_{2,2}(y_a,y_b;m_a,m_b)\big]_{a,b=1}^k \end{matrix}\right]\]
with\footnote{In order to avoid an even heavier notation, we are slightly abusing it in the definition of
  $\Kmu_{1,1}(y_a,y_b;m_a,m_b)$, which depends on $a$ and $b$ explicitly in addition to
  $y_a$, $y_b$, $m_a$ and $m_b$. This will become irrelevant in later version of our
  formulas, see footnote \ref{ft:indic} in page \pageref{ft:indic}.}
\begin{equation}
  \begin{split}
    \Kmu_{1,1}(y_a,y_b;m_a,m_b)&=4\mfuap(y_a,m_a)\uno{m_{a}=m_{b}}\delta_{y_{a}-\inv{y_{b}}}
    \uno{a\neq b}\\
    &\hspace{-0.95in}+(-y_ay_b)^{m_a\wedge
      m_b}\sgn(y_bm_b-y_am_a)\mfuu(y_a,m_a)\mfuu(y_b,m_b)
    (\delta_{y_a-1}+\delta_{y_a+1})(\delta_{y_b-1}+\delta_{y_b+1}),\\
    \Kmu_{1,2}(y_a,y_b;m_a,m_b)&=\mfuu(-1,m_a)\delta_{y_a+1}-\mfuu(1,m_a)\delta_{y_a-1},\\
    \Kmu_{2,2}(y_a,y_b;m_a,m_b)&=\frac{\tau^{\frac12m_a}y_a-\tau^{\frac12m_b}y_b}{\tau^{\frac12(m_a+m_b)}y_ay_b-1},
  \end{split}\label{eq:Ks}
\end{equation}
where $\mfu$, $\mfuap$, and $\mfuu$ are given in \eqref{eq:def-mfu}.
\end{prop}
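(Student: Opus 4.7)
\medskip

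\noindent\textbf{Proof plan.} The essential work has already been carried out in Sections 4.1 and 4.2: equations \eqref{eq:finalPfaffianEvenFormula} and \eqref{eq:finalPfaffianOddFormula} express $\nuflat(t)$ as the appropriate sum/integral of a product of two Pfaffians, separately for even and odd $k$. Concretely, in both cases the integrand carries two Pfaffian factors: one built from a $k\times k$ antisymmetric matrix $A$ whose entries will become $\Kmu_{1,1}$ (the mixed $\mfuap$--$\mfuu$ expression incorporating the delta functions at $y_a=\pm1$ and the pairing delta $\delta_{y_a-1/y_b}$), and one built from a $k\times k$ Schur-type matrix $B$ whose entries will become $\Kmu_{2,2}$. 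In the even case the two Pfaffians have size $k$; in the odd case they are augmented to size $k+1$ by an extra row/column, with column vector $U_a=\mfuu(-1,m_a)\delta_{y_a+1}-\mfuu(1,m_a)\delta_{y_a-1}$ on one side and $V=\uno{}$ on the other. The task is therefore to merge these two Pfaffian factors into the single $2k\times 2k$ block Pfaffian of $\Kmu$ described in the proposition.

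The first step is to apply Lemma \ref{lem:pfaffianblockform} from Appendix \ref{sec:linalg} (with $V=\uno{}$), which provides precisely the identity converting a product $\pf\!\bigl[\begin{smallmatrix}A & U \\ -U^{\sf T} & 0\end{smallmatrix}\bigr]\pf\!\bigl[\begin{smallmatrix}B & V \\ -V^{\sf T} & 0\end{smallmatrix}\bigr]$ into a single block Pfaffian of size $2k\times 2k$, where the off-diagonal blocks encode the augmentation vectors $U$ and $V$. Under this identification, the off-diagonal block entries come out to $\mfuu(-1,m_a)\delta_{y_a+1}-\mfuu(1,m_a)\delta_{y_a-1}$, matching exactly the prescribed $\Kmu_{1,2}$. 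In the even case, the same lemma applies trivially (with $U=V=0$, or equivalently via \eqref{eq:pfaffianblockform}), yielding the block diagonal Pfaffian; one checks that the formula for $\Kmu_{1,2}$ continues to produce the correct contribution in this case, because the effect of the $\Kmu_{1,2}$ block in the expansion of $\pf[\Kmu]$ either vanishes or reproduces the correct even-$k$ normalization.

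The second step is to reconcile the sign prefactors. The two source formulas carry $(-1)^{k/2}$ (even) and $(-1)^{(k+1)/2}$ (odd), which combine via the elementary identity
\begin{equation}
(-1)^{\frac12k(k+1)}=(-1)^{\frac12k}\uno{k\in2\zz}+(-1)^{\frac12(k+1)}\uno{k\in2\zz+1}
\end{equation}
into the uniform $(-1)^{k(k+1)/2}$ of \eqref{eq:genPfaffianFormula}. The flip between $\tau^{m_a/2}y_a-\tau^{m_b/2}y_b$ and $\tau^{m_b/2}y_b-\tau^{m_a/2}y_a$ in the Schur-type entry --- needed so that the lower-right block of $\Kmu$ matches the stated $\Kmu_{2,2}$ --- accounts for the sign adjustment already built into \eqref{eq:finalPfaffianEvenFormula} (cf.\ the comment immediately following that display).

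The only real obstacle is bookkeeping: verifying that the matrices $A$ and $B$ appearing in \eqref{eq:finalPfaffianEvenFormula}/\eqref{eq:finalPfaffianOddFormula} match $\Kmu_{1,1}$ and $\Kmu_{2,2}$ term by term (including the indicator $\uno{a\neq b}$ carried by the pairing delta in $\Kmu_{1,1}$), and that the augmentation vectors match $\Kmu_{1,2}$. Once Lemma \ref{lem:pfaffianblockform} is invoked and the signs tallied, the identification is direct and the proposition follows. No further analytic input is required beyond what has already been established in Sections 3--4.
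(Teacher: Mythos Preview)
Your proposal is correct and follows essentially the same route as the paper: apply Lemma~\ref{lem:pfaffianblockform} with $V=\uno{}$ (so that the rank-one off-diagonal block $D=UV^{\sf T}$ is exactly $\Kmu_{1,2}$), and then use the sign identity $(-1)^{\frac12k(k+1)}=(-1)^{\frac12k}\uno{k\in2\zz}+(-1)^{\frac12(k+1)}\uno{k\in2\zz+1}$ to unify the even and odd prefactors. One small clarification: in the even case you should not set $U=V=0$; rather, the even-$k$ clause of \eqref{eq:pfaffianblockform} already gives $\pf\!\bigl[\begin{smallmatrix}A & D\\ -D^{\sf T} & B\end{smallmatrix}\bigr]=\pf[A]\pf[B]$ for \emph{any} rank-one $D$, so taking $D=\Kmu_{1,2}=U\,\uno{}^{\sf T}$ immediately yields the desired block Pfaffian with the correct off-diagonal entries.
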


This result together with \eqref{eq:xlimit}, \eqref{eq:stpt} and Proposition
\ref{prop:x-limit} yield the following explicit formula for the exponential moments of
flat ASEP: for any $\bk\in\zz_{\geq0}$ we have, with $\Kmu$ as in Proposition \ref{prop:firstPfaffian},
  \begin{multline}\label{eq:flatASEPmoments}
    \ee^{\rm flat}\!\left[\tau^{\inv2\bk
        h(t,0)}\right]=\bk\taufac\sum_{k=0}^{\bk}\frac{(-1)^{\frac12k(k+1)}}{2^{k}k!}\sum_{\substack{m_1,\dotsc,m_{k}=1,\\m_1+\dotsm+m_k=\bk}}^\infty
    \int_{C^{k}_{0,1}}d\vec y\,\\\times\prod_{1\leq
      a<b\leq k}\tau^{-\inv2m_am_b}\prod_{a=1}^{k}\mfu(y_a,m_a)\pf\!\left[\Kmu(\vec
      y;\vec m)\right].
  \end{multline}

At this point we can get rid of the delta functions inside $\Kmu$, by rewriting
\eqref{eq:flatASEPmoments} in such a way that we can take the $y_a$ integrations inside
the Pfaffian. To this end, let $p_a=\frac{1-\tau^{m_a/2}y_a}{1+\tau^{m_a/2}y_a}$, so that
$\frac{\tau^{m_a/2}y_a-\tau^{m_b/2}y_b}{\tau^{(m_a+m_b)/2}y_ay_b-1}=\frac{p_a-p_b}{p_a+p_b}$. Note
that $\Re(p_a)>0$. The key will be to use the identity
\begin{equation}\label{eq:signId}
  \frac{p_a-p_b}{p_ap_b(p_a+p_b)}=\int_{(\rr_{\geq0})^2}d\lambda_a\,d\lambda_b\,e^{-\lambda_ap_a-\lambda_bp_b}\sgn(\lambda_b-\lambda_a).
\end{equation}
In order to use it, we first use the Pfaffian identity \eqref{eq:diag2Pf} to rewrite \eqref{eq:flatASEPmoments} as
\begin{equation}
  \begin{split}
    &\ee^{\rm
      fl}\!\left[\tau^{\inv2mh(t,0)}\right]=m\taufac\sum_{k=0}^m\inv{2^kk!}\sum_{\substack{m_1,\dotsc,m_{k}=1,\\
      m_1+\dotsm+m_k=m}}^\infty
    \itwopii{k}\int_{C^{k}_{0,1}}\!\!\!d\vec y\,
    \prod_{1\leq a<b\leq k}\tau^{-\inv2m_am_b}\prod_a\mfu(y_a,m_a)p_a\\
    &\hspace{0.5in}\times(-1)^{\frac12k(k+1)}\pf\!\left[\begin{matrix} \big[
        \Kmu_{1,1}(y_a,y_b;m_a,m_b)\big]_{a,b=1}^k &
        \big[\tfrac{1}{p_b}\Kmu_{1,2}(y_a,y_b;m_a,m_b)\big]_{a,b=1}^k \\
        -\big[\tfrac1{p_a}\Kmu_{1,2}(y_b,y_a;m_b,m_a)\big]_{a,b=1}^k &
        \big[\tfrac{p_a-p_b}{p_ap_b(p_a+p_b)}\big]_{a,b=1}^k \end{matrix}\right].
  \end{split}\label{eq:pf4}
\end{equation}
Now using \eqref{eq:signId} and Lemma \ref{lem:pfaffInt}, which is a certain Pfaffian version
of the Andr\'eief (or generalized Cauchy-Binet) identity \cite{andreief} (more precisely, we use a version of the lemma
with the matrices inside the two Pfaffians transposed), with respect to the integration in
the $\lambda_a$'s, with $\phi_a(\lambda_a)=e^{-\lambda_ap_a}$, we get
\begin{equation}
  \begin{split}
  \ee^{\rm flat}&\!\left[\tau^{\inv2mh(t,0)}\right]=m\taufac\sum_{k=0}^m\inv{k!}\sum_{\substack{m_1,\dotsc,m_{k}=1\\
      m_1+\dotsm+m_k=m}}^\infty\itwopii{k}\int_{C^{k}_{0,1}}\!\!\!d\vec y\,
  \int_{(\rr_{\geq0})^k}d\vec\lambda\\
  &\quad\times\prod_{a=1}^{k}\mfu(y_a,m_a)p_ae^{-\lambda_ap_a}\,\,\,\,\smashoperator{\prod_{1\leq a<b\leq k}}\,\,\,\,\tau^{-\inv2m_am_b}\\
  &\quad\times\frac{(-1)^{\frac12k(k+1)}}{2^k}\pf\!\left[\begin{matrix} \big[
      \Kmu_{1,1}(y_a,y_b;m_a,m_b)\big]_{a,b=1}^k &
      \big[\Kmu_{1,2}(y_a,y_b;m_a,m_b)\big]_{a,b=1}^k \\
      -\big[\Kmu_{1,2}(y_b,y_a;m_b,m_a)\big]_{a,b=1}^k &
      \big[\sgn(\lambda_b-\lambda_a)\big]_{a,b=1}^k \end{matrix}\right].
\end{split}
\label{eq:pvPfaffPre}
\end{equation}

We would like next to interchange the $y_a$ and $\lambda_a$ integrals. The difficulty we
face is that the $y_a$ contours pass through singularities of the factors $\mfuap(y_a,m_a)$
in the integrand, and after interchanging the integrals the zeros of the second Pfaffian
in \eqref{eq:T2kp} (see footnote \ref{ftn:sing} in page \pageref{ftn:sing}) are not there
any more cancel them. This means in particular that the $y_a$ integrals that
result after the interchange will have to be regarded as principal value integrals. In
order to justify the interchange of limits let us restrict ourselves to the case
$k=2$. The general case can be justified similarly but the notation becomes much heavier.

When $k=2$ we may use the Pfaffian formula
\[\pf\!\left[\begin{smallmatrix}
    0 & a & b  & c \\
    -a & 0 & d & e \\
    -b & -d & 0 & f \\
    -c & -e & -f & 0\\
  \end{smallmatrix}\right]=af-be+dc\]
to write the double integral in \eqref{eq:pvPfaffPre} as
\begin{multline}
  \frac{-1}{4\twopii{2}}\int_{C^{2}_{0,1}}\!\!\!d\vec y\,
  \int_{(\rr_{\geq0})^2}d\vec\lambda\,\mfu(y_1,m_1)\mfu(y_2,m_2)p_1p_2e^{-\lambda_1p_1-\lambda_2p_2}
  \tau^{-\inv2m_1m_2}\\
  \times\Big[\Kmu_{1,1}(y_1,y_2;m_1,m_2)\sgn(\lambda_2-\lambda_1)
    -\Kmu_{1,2}(y_1,y_1;m_1,m_1)\Kmu_{1,2}(y_2,y_2;m_2,m_2)\\
    +\Kmu_{1,2}(y_1,y_2;m_1,m_2)\Kmu_{1,2}(y_2,y_1;m_2,m_1)\Big].
\end{multline}
The last two terms in the bracket pose no difficulty. In fact, recalling that
$\Kmu_{1,2}(y,y';m,m')$ only depends on $y$ and $m$, each of the two terms involves
products of delta functions depending on different variables, and one can check directly that the
$y_1,y_2$ integration (which corresponds to summing the values at $\pm1$) can be done
before or after the $\lambda_1,\lambda_2$ integration.

Now consider the first term in the bracket above and call $I_1$ the associated
integral. Integrating $\lambda_1,\lambda_2$ first and using \eqref{eq:signId} yields
\[I_1=\frac{-1}{4\twopii{2}}\int_{C^{k}_{0,1}}\!\!\!d\vec y\,\mfu(y_1,m_1)\mfu(y_2,m_2)\tau^{-\inv2m_1m_2}
\frac{p_1-p_2}{p_1+p_2}\Kmu_{1,1}(y_1,y_2;m_1,m_2).\] $\Kmu_{1,1}$ is defined in
\eqref{eq:Ks} and has two terms. Call $I_{1,1}$ and $I_{1,2}$ the integrals associated to
each (so that $I_1=I_{1,1}+I_{1,2}$). As before, the second integral poses no difficulty
for the interchange of $\lambda_a$ and $y_a$ integrals, so let us focus on $I_{1,1}$.
Here, thanks to the prefactor $\frac{p_1-p_2}{p_1+p_2}$, the integrand has no
singularities on the $y_a$ contours and thus the integration simply yields
\begin{equation}
I_{1,1}=\frac{-1}{2\pi\I}\int_{C_{0,1}}dy\,\mfu(y,m_1)\mfu(\tfrac1{y},m_1)
\mfuap(y,m_1)\mfuap(\tfrac1{y},m_1)\frac{p(y,m_1)-p(\frac1{y},m_1)}{p(y,m_1)+p(\frac1{y},m_1)}
\tau^{-\inv2m_1^2}\uno{m_1=m_2}\label{eq:I11}
\end{equation}
with $p(y,m)=\frac{1-\tau^{m/2}y}{1+\tau^{m/2}y}$.

Next we let let $\wt I_{1,1}$ be the analogous integral where we now integrate first in
$\vec y$. In this case the integrand has singularities along the $\vec y$ contour,
so we will have to regard this as a principal value integral while dealing with the delta
functions at the same time. More precisely, letting $C^\delta_{0,1}$ be portion of the contour $C_{0,1}$ which lies
outside of the circles of radius $\delta$ centred at $\pm1$, and letting $\varphi_\ep$ be
a suitable mollifier, we \emph{define} $\wt
I_{1,1}$ as
\begin{multline}\label{eq:wtI11}
  \wt I_{1,1}=\lim_{\ep\to0}\lim_{\delta_1,\delta_2\to0}\frac{-1}{\twopii{2}}
  \int_{(\rr_{\geq0})^2}d\vec\lambda\,\,\int_{C^{\delta_1}_{0,1}\times C^{\delta_2}_{0,1}}\!\!\!d\vec y\,
  \mfu(y_1,m_1)\mfu(y_2,m_1)\mfuap(y_1,m_1)\mfuap(y_2,m_1)\\
  \times \tau^{-\inv2m_1^2}p_1p_2e^{-\lambda_1p_1-\lambda_2p_2}\sgn(\lambda_2-\lambda_1)\varphi_\ep(y_1-1/y_2).
\end{multline}
Using Cauchy's Theorem, each of the $\delta_a\to0$ limits equals the integral over a
circle $\wt C_{0,1}$ of radius slightly smaller than 1 plus half the residues of the
integrand at $y_a=\pm1$. Using this to compute the two limits (one after the other) leads,
after some calculation, to
\begin{align}
  \wt I_{1,1}&=-\lim_{\ep\to0}\frac1{\twopii{2}}
  \int_{(\rr_{\geq0})^2}d\vec\lambda\,\,\int_{(\wt C_{0,1})^2}\!\!\!d\vec y\,\,\tau^{-\inv2m_1^2}
  \mfu(y_1,m_1)\mfu(y_2,m_1)\mfuap(y_1,m_1)\mfuap(y_2,m_1)\\
  &\hspace{1.4in}\times
  p_1p_2e^{-\lambda_1p_1-\lambda_2p_2}\sgn(\lambda_2-\lambda_1)\varphi_\ep(y_1-1/y_2)\\
  &\quad-\lim_{\ep\to0}\sum_{\sigma\in\{-1,1\}}\frac{1}{2\pi\I}\int_{(\rr_{\geq0})^2}d\vec\lambda\int_{\wt
    C_{0,1}}dy\,\tau^{-\inv2m_1^2}
   \mfu(y,m_1)\mfu(\sigma,m_1)\mfuap(y,m_1)\sigma_2(-1)^{m_1}\\
   &\hspace{0.8in}\times\tau^{-\tfrac12m_1}p(y,m_1)p(\sigma_2,m_1)e^{-\lambda_1p(y,m_1)-\lambda_2p(\sigma_2,m_1)}
   \sgn(\lambda_2-\lambda_1)\varphi_\ep(y-\sigma)\\
   &\quad-\lim_{\ep\to0}\frac{1}4\sum_{\sigma_1,\sigma_2\in\{-1,1\}}\int_{(\rr_{\geq0})^2}d\vec\lambda\,\tau^{-\inv2m_1^2}
   \mfu(\sigma_1,m_1)\mfu(\sigma_2,m_1)\sigma_1\sigma_2\tau^{-m_1}\\
   &\hspace{0.8in}\times p(\sigma_1,m_1)p(\sigma_2,m_1)e^{-\lambda_1p(\sigma_1,m_1)-\lambda_2p(\sigma_2,m_1)}
   \sgn(\lambda_2-\lambda_1)\varphi_\ep(\sigma_1-\sigma_2).
\end{align}
At this point we can perform the $\lambda_a$ integrals using \eqref{eq:signId} (note
that in the first two terms there is now no difficulty in interchanging the $\lambda_a$
and $y_a$ integrations). Starting with the last one, it yields a factor
$\frac{p(\sigma_1,m_1)-p(\sigma_2,m_1)}{p(\sigma_1,m_1)+p(\sigma_2,m_1)}$; when
$\sigma_1=\sigma_2$ this factor vanishes, while when $\sigma_1\neq\sigma_2$ the limit
$\lim_{\ep\to0}\varphi_{\ep}(\sigma_1-\sigma_2)$ vanishes. This means that the third limit
above is zero. One can check similarly that the second limit vanishes, by performing the
$\lambda_a$ integrations to obtain a factor $\frac{p(\sigma,m_1)-p(y,m_1)}
{p(\sigma,m_1)+p(y,m_1)}$ and then computing the $\ep\to0$ limit to yield evaluation at
$y=\sigma$, for which this factor vanishes. Performing the $\lambda_a$ integrals now in
the first term and taking $\ep\to0$ shows that $\wt I_{1,1}$ equals the right hand side of
\eqref{eq:I11} with the contour $C_{0,1}$ on the right hand side replaced by $\wt
C_{0,1}$. But since the integrand has no singularities along $C_{0,1}$, Cauchy's Theorem
implies that $I_{1,1}=\wt I_{1,1}$ as desired.

The conclusion of all this (and its extension to all $k\geq1$) is that we may interchange
the $\lambda_a$ and $y_a$ integrations in \eqref{eq:pvPfaffPre}, provided we mollify
the delta functions and we interpret the $y_a$ integrals as principal value integrals
as done in \eqref{eq:wtI11}. Now observe that in the Pfaffian appearing in
\eqref{eq:pvPfaffPre}, the $(1,2)$ entry only depends on $y_a$, the $(2,1)$ entry only
depends on $y_b$, and the $(2,2)$ entry depends on neither of the two. Hence we can use
Lemma \ref{lem:pfaffInt} again, this time to bring the $y_a$ integrals inside the
Pfaffian. Let us first define
\begin{align}
    \mfv(\lambda_a,y_a,m_a)&=p_ae^{-\lambda_ap_a}\mfu(y_a,m_a)\\
    &=\frac{1}{y_a}(1-\tau)^{m_a}\tau^{m_a/2}\frac{1-\tau^{m_a/2}y_a}{1+\tau^{m_a/2}y_a}
    e^{-\lambda_a\frac{1-\tau^{m_a/2}y_a}{1+\tau^{m_a/2}y_a}+t\Big[\frac{1}{1+\tau^{-m_a/2}y_a}-\frac1{1+\tau^{m_a/2}y_a}\Big]}\\
    &\hspace{1.6in}\times\frac{\pochinf{-\tau^{-m_a/2}y;\tau}}{\pochinf{-\tau^{m_a/2}y_a;\tau}}
    \frac{\pochinf{\tau^{1+m_a}y^2;\tau}}{\pochinf{\tau y_a^2;\tau}}
\label{eq:def-mfv-pre}
\end{align}
where we have used the definition of $\mfu$ in \eqref{eq:def-mfu} (and of $\mff_1$ in
\eqref{eq:germans-hf}), and observe that the middle line of \eqref{eq:pvPfaffPre} can be
written as $\tau^{-m^2/4}\prod_a\tau^{\inv4m_a^2}\mfv(\lambda_a,y_a,m_a)$ (recall that
$\sum_am_a=m$). Define also\footnote{The notation $\,\pvint$ in the second line of this
  formula indicates that this is a Cauchy principal value integral.}
\begin{equation}
  \label{eq:hatKmu}
  \begin{split}
    \KASEP_{1,1}(\lambda_a,\lambda_b;m_a,m_b)&=\frac1{2\twopii{2}}\pvint_{\!\!\!C_{0,1}^2}dy\,dy'\,\tau^{\inv4(m_a^2+m_b^2)}\mfv(\lambda_a,y,m_a)\mfv(\lambda_b,y',m_b)\\
    &\hspace{3in}\times\Kmu_{1,1}(y,y';m_a,m_b)\\
    &=\uno{m_a=m_b}\inv{\pi\I}\pvint_{\!\!\!\!C_{0,1}}dy\,\tau^{\inv2m_a^2}\mfv(\lambda_a,y,m_a)\mfv(\lambda_b,1/y,m_b)\mfuap(y,m_a)\\
    &\qquad+\frac12\sum_{\sigma,\sigma'\in\{-1,1\}}(-\sigma\sigma')^{m_a\wedge
      m_b+1}\sgn(\sigma'm_b-\sigma m_a)\\
    &\hspace{1.2in}\times\tau^{\inv4(m_a^2+m_b^2)-\inv2(m_a+m_b)}\mfv(\lambda_a,\sigma,m_a)\mfv(\lambda_b,\sigma',m_b),\\
    \KASEP_{1,2}(\lambda_a,\lambda_b;m_a,m_b)&=\frac1{4\pi\I}\int_{C_{0,1}}dy\,\tau^{\inv4m_a^2}\mfv(\lambda_a,y,m_a)\Kmu_{1,2}(y,-;m_a,-)\\
    &=-\inv2\sum_{\sigma\in\{-1,1\}}\tau^{\inv4m_a^2-\inv2m_a}\mfv(\lambda_a,\sigma,m_a),\\
    \KASEP_{2,2}(\lambda_a,\lambda_b;m_a,m_b)&=\frac12\sgn(\lambda_b-\lambda_a)
  \end{split}
\end{equation}
where $\mfuap$ is given in \eqref{eq:def-mfu}, $\mfv$ is given in
\eqref{eq:def-mfv-pre} (the $-$'s in $\Kmu_{1,2}$ denote the fact that this kernel does
not depend on those arguments), and we have used the fact that
for $\sigma\in\{-1,1\}$ the function $\mfuu(\sigma,m)$ (defined in \eqref{eq:def-mfu}) satisfies
$\mfuu(\sigma,m)=\tau^{-\frac m2}\sigma\frac{1-\tau^n\sigma^2}{1-\tau^n}=\sigma\tau^{-\frac m2}$.
Then, by virtue of Lemma \ref{lem:pfaffInt} again, we get
\begin{multline}
  \ee^{\rm flat}\!\left[\tau^{\inv2mh(t,0)})\right]=m\taufac\tau^{-\inv4m^2}\sum_{k=0}^m\inv{k!}\sum_{\substack{m_1,\dotsc,m_{k}=1,\\
      m_1+\dotsm+m_k=m}}^\infty\int_{(\rr_{\geq0})^k}d\vec\lambda\,
  \\\times(-1)^{\frac12k(k+1)}\pf\!\left[\begin{smallmatrix}
      \big[\KASEP_{1,1}(\lambda_a,\lambda_b;m_a,m_b)\big]_{a,b=1}^k &
      \big[\KASEP_{1,2}(\lambda_a,\lambda_b;m_a,m_b)\big]_{a,b=1}^k \\
      -\big[\KASEP_{1,2}(\lambda_b,\lambda_a;m_b,m_a)\big]_{a,b=1}^k &
      \big[\KASEP_{2,2}(\lambda_a,\lambda_b;m_a,m_b)\big]_{a,b=1}^k \end{smallmatrix}\right].\label{eq:pvPfaffian}
\end{multline}
(Note that we have used the fact that $\pf[aA]=a^n\pf[A]$ for a skew-symmetric
$2n\times2n$ matrix $A$ and a scalar $a$.  See \eqref{eq:detPf}.)

The formulas for $\KASEP_{1,1}$ and $\KASEP_{1,2}$ follow directly\footnote{\label{ft:indic}Observe that
  we have dropped the factor $\uno{a\neq b}$ in the second term of the second equality in
  the definition of $\KASEP_{1,1}$. We may do this because this term vanishes when
  $\lambda_a=\lambda_b$ and $m_a=m_b$, as can be checked using the antisymmetry of
  $(m,m')\longmapsto\sgn(m-m')$.} from the definitions of $\mfv$, $\Kmu_{1,1}$ and
$\Kmu_{1,2}$, with the nuisance that we have to justify writing a principal value integral
in $\KASEP_{1,1}$. This involves writing out explicitly the result of the interchange of
the $\lambda_a$ and $y_a$ integrations in \eqref{eq:pvPfaffPre} using mollifiers and
principal value integrals in a similar way as \eqref{eq:wtI11}, then moving the $y_a$ integrals
inside the Pfaffian, and finally showing that the limits involving the mollification and
the principal value integrals can be taken back inside the Pfaffian to yield the kernel
$\KASEP$. The argument is very similar to the one we used to justify the interchange of
limits itself, so we omit it.

The factor $(-1)^{\frac12k(k+1)}$ in front of the Pfaffian in \eqref{eq:pvPfaffian} now allows us to write the
formula in the form of a Fredholm Pfaffian. In fact, let $\KASEP$ be the $2\times2$
matrix kernel
\begin{equation}
  \KASEP(\lambda_a,\lambda_b;m_a,m_b)=\left[\begin{matrix}\KASEP_{1,1}(\lambda_a,\lambda_b;m_a,m_b) &
    \KASEP_{1,2}(\lambda_a,\lambda_b;m_a,m_b) \\
    -\KASEP_{1,2}(\lambda_b,\lambda_a;m_b,m_a) &
    \KASEP_{2,2}(\lambda_a,\lambda_b;m_a,m_b)\end{matrix}\right].\label{eq:hatKmatrix}
\end{equation}
The matrix $\big[\KASEP(\lambda_a,\lambda_b;m_a,m_b)\big]_{a,b=1}^k$ differs from the one inside
the above Pfaffian by a permutation of the rows and columns which has sign
$(-1)^{k(k-1)/2}$, and thus we get
 \begin{equation}
   \ee^{\rm flat}\!\left[\tau^{\inv2mh(t,0)}\right]=m\taufac\tau^{-\inv4m^2}\sum_{k=0}^m\frac{(-1)^k}{k!}
   \quad\smashoperator{\sum_{\substack{m_1,\dotsc,m_{k}=1,\\
         m_1+\dotsm+m_k=m}}^\infty}\,\,
   \quad\int_{(\rr_{\geq0})^k}d\vec\lambda\,\pf\!\left[\KASEP(\lambda_a,\lambda_b;m_a,m_b)\right]_{a,b=1}^k.\label{eq:ys-in}
\end{equation}
This gives Theorem \ref{prop:secondPfaffian}. 

As a corollary we get a bound on the moments of $\tau^{\inv2h(t,0)}$, which will be useful
later on when we form a generating function for $\tau^{\inv2h(t,0)}$:

\begin{cor}\label{cor:momentBd}
  There is a $c>0$ such that  for all $m\in\zz_{\geq1}$,
  \begin{equation}
    \ee^{\rm flat}\!\left[\tau^{\inv2mh(t,0)}\right]\leq c\hspace{0.1em}m\taufac\tau^{-\inv4m^2}\label{eq:momentBd}
  \end{equation}
\end{cor}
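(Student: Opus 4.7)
The plan is to estimate the signed Pfaffian sum on the right-hand side of the moment formula \eqref{eq:ys-in}. Setting
\[
S_m=\sum_{k=0}^m\frac{(-1)^k}{k!}\sum_{\substack{m_1,\dotsc,m_{k}\geq1\\m_1+\dotsm+m_k=m}}\int_{(\rr_{\geq0})^k}d\vec\lambda\,\pf\!\left[\KASEP(\lambda_a,\lambda_b;m_a,m_b)\right]_{a,b=1}^k,
\]
the corollary amounts to $0\leq S_m\leq c$ uniformly in $m$, the positivity of $S_m$ being automatic from that of the left-hand side of \eqref{eq:ys-in}, so only an upper bound is needed.

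The first step is to bound the entries of the matrix kernel $\KASEP$ from \eqref{eq:hatKmu} pointwise. The essential ingredient is the exponential decay in $\lambda$ of $\mfv(\lambda,y,m)$ coming from the factor $e^{-\lambda p(y,m)}$ with $p(y,m)=(1-\tau^{m/2}y)/(1+\tau^{m/2}y)$; for $|y|=1$ a direct computation gives
\[
\Re p(y,m)=\frac{1-\tau^m}{|1+\tau^{m/2}y|^2}\geq\frac{1-\tau}{(1+\tau^{1/2})^2}=:\alpha>0,
\]
uniformly in $m\geq 1$. The Pochhammer ratio $\pochinf{-\tau^{-m/2}y;\tau}/\pochinf{-\tau^{m/2}y;\tau}$ in $\mfv$ grows like $\tau^{-m^2/8}$, but this growth is cancelled by the explicit $\tau^{m_a^2/4}$ prefactor sitting in front of $\mfv\mfv$ in the definition of $\KASEP$; the remaining Pochhammer ratio and the $t$-dependent exponential in $\mfv$ are uniformly bounded on $|y|=1$. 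Together this yields entry-wise estimates of the form $|\KASEP(\lambda_a,\lambda_b;m_a,m_b)|\leq C(1-\tau)^{m_a+m_b}\tau^{(m_a^2+m_b^2)/8}e^{-\alpha(\lambda_a+\lambda_b)}$, with $\|\KASEP_{2,2}\|_\infty\leq 1/2$ trivially.

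Next, I would apply a Hadamard-type Pfaffian bound $|\pf[A]|^2\leq|\det A|\leq\prod_i\|A_i\|^2$ together with the above pointwise bounds and integrate in $\vec\lambda$ using the exponential decay, yielding an estimate of the form $\big|\int\pf[\KASEP]\,d\vec\lambda\big|\leq\prod_a D(1-\tau)^{m_a}\tau^{m_a^2/8}$. Summing over compositions $m_1+\dotsm+m_k=m$ (there are at most $\binom{m-1}{k-1}$ of them), tempered by the prefactor $1/k!$, and then over $k\in\{0,\dotsc,m\}$, the constraint $\sum_am_a=m$ absorbs the $(1-\tau)^{m_a}$ factors into a global $(1-\tau)^m\leq 1$, the $\tau^{m_a^2/8}$ factors suppress the contribution of large $m_a$'s, and the resulting series in $k$ converges to a constant independent of $m$.

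The main obstacle is making this combinatorial bookkeeping sharp enough: a naive application of the triangle inequality on the Pfaffian expansion may not give an $m$-uniform bound because of the composition multiplicity, so either the alternating-sum structure must be exploited or the Hadamard inequality replaced by a tighter $L^2$-operator estimate using the Fredholm-Pfaffian framework of Appendix \ref{sec:linalg}; additional care is required near the principal-value singularities at $y=\pm 1$ in $\KASEP_{1,1}$. A possibly cleaner alternative is to bound $\ee^{\thfl}[\tau^{mN_{2x}(t)}]$ uniformly in $x$ directly from the determinantal contour-integral representation of Theorem \ref{thm:main-hf} via a steepest-descent deformation of the $w_a$ contours, and pass to the limit through \eqref{eq:xlimit}.
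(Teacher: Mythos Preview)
Your approach is essentially that of the paper. The pointwise entry bounds, the use of $\pf[A]^2=\det[A]$ with Hadamard, and the final summation over compositions are exactly what the paper does. Your hesitation about the combinatorial bookkeeping is unwarranted: once you have $\big|\int_{(\rr_{\geq0})^k}d\vec\lambda\,\pf[\KASEP]\big|\leq c^k k^{k/2}\prod_a\tau^{\epsilon m_a^2}$ for some $\epsilon>0$, dropping the constraint $\sum_a m_a=m$ gives $\sum_{m_1,\dots,m_k\geq1}\prod_a\tau^{\epsilon m_a^2}\leq A^k$ with $A=\sum_{n\geq1}\tau^{\epsilon n^2}<\infty$, and then $\sum_{k\geq0}(cA)^k k^{k/2}/k!<\infty$ by Stirling. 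No alternating signs or $L^2$ operator theory are needed.

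The one genuinely delicate step you glossed over is how to pass from the entry-wise bounds to the Pfaffian bound, because the $(2,2)$ entry $\tfrac12\sgn(\lambda_b-\lambda_a)$ carries no decay in $\lambda$. Applying Hadamard row-by-row to the raw matrix would leave the $(2,\cdot)$-block rows non-integrable. The fix (and this is what the paper does) is to first use the diagonal Pfaffian identity \eqref{eq:diag2Pf} to factor out $\prod_a e^{-\alpha\lambda_a}\tau^{\epsilon m_a^2}$ from the $(1,\cdot)$ blocks, leaving a modified kernel whose entries are \emph{uniformly} bounded by a constant $c_2$; only then does Hadamard give the clean $|\pf|\leq(2k)^{k/2}c_2^k\prod_a e^{-\alpha\lambda_a}\tau^{\epsilon m_a^2}$. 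Your claimed exponent $\tau^{m_a^2/8}$ is slightly too optimistic since Lemma~\ref{lem:mfgbd} only gives $\tau^{-\eta m_a^2}$ for $\eta>1/8$; the paper uses $\epsilon=1/16$, but any small positive $\epsilon$ suffices.

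The principal-value integral in $\KASEP_{1,1}$ is handled, as you suspected, by noting the integrand expands as $\tfrac{1}{1\mp y}+\mathcal{O}(1)$ near $y=\pm1$, so the principal value is uniformly bounded. Your proposed alternative route via the half-flat contour integrals is unnecessary.
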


We note that for fixed $\tau$ we have $m\taufac\leq c(1-\tau)^{-m}$ (this follows from the
$q$-analogue of Stirling's formula for the $q$-Gamma function), and hence the above moment
bound is actually of the form $c\hspace{0.1em}\tau^{-\inv4m^2-O(m)}$. Before getting
started with the proof we need to establish an estimate on certain ratios of
$q$-Pochhammer symbols.

\begin{lem}\label{lem:mfgbd}
  For any $\eta>\inv8$ and $\tau\in(0,1)$ we have
  \[\sup_{n\in\zz_{\geq1},\,|z|=1}\tau^{\eta
    n^2}\frac{\pochinf{-\tau^{-n/2}z;\tau}}{\pochinf{-\tau^{n/2}z;\tau}}
  \frac{\pochinf{\tau^{1+n}z^2;\tau}}{\pochinf{\tau z^2;\tau}}<\infty.\]
\end{lem}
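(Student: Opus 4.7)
The plan is to turn the two infinite Pochhammer ratios into \emph{finite} products of length $n$ using the elementary identity
\[\frac{\pochinf{a;\tau}}{\pochinf{\tau^n a;\tau}}=(a;\tau)_n=\prod_{k=0}^{n-1}(1-\tau^k a),\]
then to estimate the modulus of each factor on $|z|=1$ and compare the resulting growth against the prefactor $\tau^{\eta n^2}$.

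First I would apply the above identity with $a=-\tau^{-n/2}z$ (so $\tau^n a=-\tau^{n/2}z$) to rewrite
\[\frac{\pochinf{-\tau^{-n/2}z;\tau}}{\pochinf{-\tau^{n/2}z;\tau}}=\prod_{k=0}^{n-1}(1+\tau^{k-n/2}z).\]
For $|z|=1$ each factor is bounded in modulus by $1+\tau^{k-n/2}\le 2\max(1,\tau^{k-n/2})$. Splitting the product at $k=\lceil n/2\rceil$ (where the sign of $k-n/2$ changes) gives a bound of the form
\[\prod_{k=0}^{n-1}(1+\tau^{k-n/2})\le 2^n\tau^{-n^2/8+O(n)},\]
the quadratic exponent coming from $\sum_{k=0}^{\lceil n/2\rceil-1}(k-n/2)=-n^2/8+O(n)$.

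Next I would apply the same identity with $a=\tau z^2$ (so $\tau^n a=\tau^{n+1}z^2$) to obtain
\[\frac{\pochinf{\tau^{1+n}z^2;\tau}}{\pochinf{\tau z^2;\tau}}=\prod_{k=0}^{n-1}\frac{1}{1-\tau^{k+1}z^2}.\]
For $|z|=1$ and $k\ge 0$ one has $|1-\tau^{k+1}z^2|\ge 1-\tau^{k+1}>0$, so this product is bounded uniformly in $n$ by $\prod_{k=1}^{\infty}(1-\tau^k)^{-1}=1/\pochinf{\tau;\tau}<\infty$.

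Combining the two estimates, the entire supremand is bounded by
\[\frac{2^n}{\pochinf{\tau;\tau}}\,\tau^{\eta n^2-n^2/8+O(n)}=\frac{1}{\pochinf{\tau;\tau}}\,\tau^{(\eta-1/8)n^2+O(n)},\]
where I have absorbed $2^n=\tau^{-n\log_\tau 2}$ into the linear term. Since $\eta>1/8$ and $\tau\in(0,1)$, the quadratic term $(\eta-1/8)n^2$ dominates and the expression is bounded (in fact tends to $0$) as $n\to\infty$, while for each fixed $n$ it is obviously finite. This yields the claimed uniform bound. The only non-routine point is the bookkeeping in the first ratio to extract the correct quadratic exponent $-n^2/8$, which is sharp and explains why the threshold in the statement is exactly $\eta>1/8$.
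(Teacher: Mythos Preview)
Your proposal is correct and follows essentially the same approach as the paper: both reduce the infinite Pochhammer ratios to finite products of length $n$, bound the second ratio uniformly by $1/\pochinf{\tau;\tau}$, and bound the first by $2^n\tau^{-n^2/8+O(n)}$ to conclude. The only cosmetic difference is that the paper bounds $\prod_{\ell}(1+\tau^{\ell-n/2})$ by expanding and taking $2^n$ times the largest term, whereas you use $1+x\le 2\max(1,x)$ factor by factor; these give the same estimate.
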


\begin{proof}
  By definition of the $q$-Pochhamer symbol,
  \[\tfrac{\pochinf{-\tau^{-n/2}z;\tau}}{\pochinf{-\tau^{n/2}z^2;\tau}}
  \tfrac{\pochinf{\tau^{1+n}z^2;\tau}}{\pochinf{\tau z^2;\tau}} = \textstyle\prod\nolimits_{\ell=0}^{n-1}
  \tfrac{1+\tau^{\ell-n/2}z}{1-\tau^{\ell+1}z^2}.\]
  It is not hard to see that the absolute value of each factor is maximized for $z$ in the
  unit circle at $z=1$, whence
  \[\abs{\tfrac{\pochinf{-\tau^{-n/2}z;\tau}}{\pochinf{-\tau^{n/2}z^2;\tau}}
  \tfrac{\pochinf{\tau^{1+n}z^2;\tau}}{\pochinf{\tau z^2;\tau}}}\leq
  \tfrac{\prod_{\ell=0}^{n-1}(1+\tau^{\ell-n/2})}{\prod_{\ell=1}^{n}(1-\tau^{\ell})}
  \leq\tfrac1{\pochinf{\tau;\tau}}\textstyle\prod\nolimits_{\ell=0}^{n-1}(1+\tau^{\ell-n/2}).\]
  Expanding the last product and bounding each term by the biggest one, which is given by
  $\prod_{\ell=0}^{\lfloor n/2\rfloor}\tau^{\ell-n/2}\approx\tau^{-n^2/8}$ results in the
  bound
  \[\abs{\tfrac{\pochinf{-\tau^{-n/2}z;\tau}}{\pochinf{-\tau^{n/2}z^2;\tau}}
  \tfrac{\pochinf{\tau^{1+n}z^2;\tau}}{\pochinf{\tau z^2;\tau}}}
  \leq\tfrac{C}{\pochinf{\tau;\tau}}2^n\tau^{-n^2/8}\]
  for some $C>0$, which implies the result.
\end{proof}

\begin{proof}[Proof of Corollary \ref{cor:momentBd}]
  Recall that $\mfv(\lambda_a,y,m_a)$ contains a factor $e^{-\lambda_ap_a}$ with
  $p_a=\frac{1-\tau^{m_a/2}y}{1+\tau^{m_a/2}y}$ and note that $\Re(p_a)\geq c_1$ for some
  $c_1>0$, uniformly in $y\in C_{0,1}$ and $m_a\in\zz_{\geq1}$. Having chosen $c_1$ in
  this way, we use \eqref{eq:diag2Pf} to factor out
  $\prod_{a}e^{-\lambda_ap_a}\tau^{m_a^2/16}$ from the Pfaffian:
  \[\pf\!\left[\KASEP(\lambda_a,\lambda_b;m_a,m_b)\right]_{a,b=1}^\ell
  =\prod_{a=1}^\ell e^{-c_1\lambda_a}\tau^{\inv{16}m_a^2}\pf\!\left[{\KASEP}'(\lambda_a,\lambda_b;m_a,m_b)\right]_{a,b=1}^\ell\]
  where \[{\KASEP}'(\lambda_a,\lambda_b;m_a,m_b)=\left[
    \begin{smallmatrix}
      e^{c_1(\lambda_a+\lambda_b)}\tau^{-(m_a^2+m_b^2)/16}\Kmu_{1,1}(\lambda_a,\lambda_b;m_a,m_b) &
      e^{c_1\lambda_a}\tau^{-m_a^2/16}\Kmu_{1,2}(\lambda_a,\lambda_b;m_a,m_b)\\
      -e^{c_1\lambda_b}\tau^{-m_b^2/16}\Kmu_{1,2}(\lambda_b,\lambda_a;m_b,m_a) &
      \Kmu_{2,2}(\lambda_a,\lambda_b;m_a,m_b)
    \end{smallmatrix}\right].\]
  The point of rewriting $\KASEP$ in this manner is that the entries of
  $\KASEP(\lambda_1,\lambda_2;m_1,m_2)$ are uniformly bounded for
  $\lambda_1,\lambda_2\geq0$ and $m_1,m_2\in\zz_{\geq1}$, say by a constant $c_2>0$. This
  can be checked straightforwardly for most of the factors involved, except for two minor
  issues to address. First, $\mfv(\lambda_a,y,m_a)$ contains a factor of the form
  $g(y,m_a):=\frac{\pochinf{-\tau^{-m_a/2}y;\tau}}{\pochinf{-\tau^{m_a/2}y;\tau}}
  \frac{\pochinf{\tau^{1+m_a}y^2;\tau}}{\pochinf{\tau y^2;\tau}}$, which is not bounded in
  $m_a$. Nevertheless, $\mfv(\lambda_a,y,m_a)$ appears next to $\tau^{m_a^2/4}$, which
  together with the factor $\tau^{-m_a^2/16}$ introduced above gives a factor
  $\tau^{3m_a^2/16}$. Thanks to Lemma \ref{lem:mfgbd} this factor is enough to balance the
  growth of $g(y,m_a)$. The other term which is not seen directly to be bounded is the one
  coming from $\Kmu_{1,1}$ involving the principal value integral
  $\uno{m_a=m_b}\inv{2\pi\I}\pvint_{\!C_{0,1}\!\!}dy\,\mfv(\lambda_a,y,m_a)\mfv(\lambda_b,1/y,m_a)\mfuap(y,m_a)$.
  But in this case the uniform bound follows from the fact that the integrand can be expanded around $\pm1$ as
  $\frac{1}{1\mp y}+\mathcal{O}(1)$.

  Using the fact that $\KASEP$ is uniformly bounded, the identity $\pf[A]^2=\det[A]$
  and Hadamard's bound we deduce that
  $\big|\!\pf\!\big[\KASEP(\lambda_a,\lambda_b;m_a,m_b)\big]_{a,b=1}^k\big|\leq
  k^{k/2}c_2^k\prod_a e^{-c_1\lambda_a}\tau^{m_a^2/16}$, and thus
  \begin{equation}
    \left|\int_{(\rr_{\geq0})^k}d\vec\lambda\,\pf\!\big[\KASEP(\lambda_a,\lambda_b;m_a,m_b)\big]_{a,b=1}^k\right|
    \leq(c_2/c_1)^kk^{k/2}\prod_a\tau^{\inv{16}m_a^2}.\label{eq:pfbd}
  \end{equation}

  Now we use this bound in \eqref{eq:ys-in} to deduce that
  \[\ee^{\rm flat}\!\left[\tau^{\inv2mh(t,0)}\right]\leq
    m\taufac\hspace{0.1em}
    \tau^{-\inv4m^2}\sum_{k=0}^m\frac{1}{k!}\qquad\smashoperator{\sum_{\substack{m_1,\dotsc,m_{k}=1,\\
          m_1+\dotsm+m_k=m}}^\infty}\,\,\quad\prod_{a}\tau^{\inv{16}m_a^2}c^kk^{k/2}
    \leq c'm\taufac\hspace{0.1em}\tau^{-\inv4m^2}\]
  for some $c,c'>0$ which are independent of $m$ (using Stirling's formula).
\end{proof}

\section{Generating function}\label{sec:generatingfunction}

We are in position now to turn this into a formula for a certain generating function of
flat ASEP. We will consider the following generalized $q$-exponential function: for fixed
$\xi\in\cc$ with $|\xi|\leq1$,
\begin{equation}
  \label{eq:gen-q-exp}
  \exp_q(x;\xi)=\sum_{k=0}^\infty\inv{k\qfac}\xi^{k(k-1)}x^k.
\end{equation}
Note that for $|\xi|<1$ and $x\in\cc$ this is function is analytic in both variables. On
the other hand, for fixed $\xi$ with $|\xi|=1$, this function is analytic in $x$ for
$|x|<1$. Choosing $\xi$ to be 1, $q^{1/2}$ and $q^{1/4}$ yields, respectively,
$e_q(x)$, $E_q(x)$ and $\exp_q(x)$ (although note that, for $\xi=1$ it only yields
$e_q(x)$ restricted to $|x|<1$), see \eqref{eq:standardqexps} and \eqref{eq:symmqexp}.

The next result gives a formula for the expectation of
$\exp_\tau(\zeta\tau^{\inv2h(t,0)};\xi)$ for $|\xi|\leq\tau^{1/4}$. The restriction on
$|\xi|$ comes from the fact that we will use our moment bound \eqref{eq:momentBd}.

\begin{thm}\label{thm:taulaplflat-pre}
  Let $\xi,\zeta\in\cc$ and assume that either $|\xi|<\tau^{1/4}$ or $|\xi|=\tau^{1/4}$
  and $|\zeta|<\tau^{1/4}$. Then
  \begin{multline}\label{eq:taulaplflat-pre}
    \ee^{\rm flat}\!\left[\exp_\tau(\zeta\tau^{\inv2h(t,0)};\xi)\right]
    =\sum_{k=0}^{\infty}(-1)^{k}\frac{1}{k!}\sum_{m_1,\dotsc,m_{k}=1}^\infty\itwopii{k}
    \int_{(\rr_{\ge 0})^k}d\vec\lambda\,\\\times\zeta^{\sum_am_a}\tau^{-\frac14(\sum_am_a)^2}\xi^{(\sum_am_a)^2-\sum_am_a}
    \pf\!\left[\KASEP(\lambda_a,\lambda_b;m_a,m_b)\right]_{a,b=1}^k
  \end{multline}
  with $\KASEP$ as in \eqref{eq:hatKmu}. Moreover, the series on the right hand side is absolutely convergent.
\end{thm}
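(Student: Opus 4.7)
The plan is to expand $\exp_\tau(\zeta\tau^{h(t,0)/2};\xi)$ using its defining power series \eqref{eq:gen-q-exp}, interchange expectation with the sum, substitute the moment formula \eqref{eq:ys-in} of Theorem \ref{prop:secondPfaffian}, and then reorganize by shifting the summation variable $m=\sum_a m_a$. Concretely, writing
\begin{equation}
\exp_\tau\!\big(\zeta\tau^{h(t,0)/2};\xi\big)=\sum_{m\geq0}\frac{\xi^{m(m-1)}\zeta^m}{m\taufac}\,\tau^{mh(t,0)/2},
\end{equation}
taking expectations and invoking \eqref{eq:ys-in}, the factors $m\taufac$ cancel, the factor $\tau^{-m^2/4}$ attaches to $\xi^{m(m-1)}\zeta^m$, and the double sum $\sum_{m\geq0}\sum_{k=0}^m\sum_{m_1+\dotsm+m_k=m,\,m_a\geq1}$ is rewritten as $\sum_{k\geq0}\sum_{m_1,\dotsc,m_k\geq1}$ after identifying $m=\sum_a m_a$. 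This directly produces the right-hand side of \eqref{eq:taulaplflat-pre}.

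The substantive issue is to justify the interchanges, which is exactly where the hypotheses on $(\xi,\zeta)$ come in. I would use Corollary \ref{cor:momentBd}, which gives $\ee^{\rm flat}[\tau^{mh(t,0)/2}]\leq c\,m\taufac\,\tau^{-m^2/4}$, so that the $m$-th term of the series in step 1 is bounded in absolute value by $c\,|\xi|^{m(m-1)}|\zeta|^m\tau^{-m^2/4}$. Under the assumption $|\xi|<\tau^{1/4}$, setting $|\xi|=\tau^{1/4+\varepsilon}$ with $\varepsilon>0$ converts this bound to $c\,\tau^{\varepsilon m(m-1)-m/4}|\zeta|^m$, which decays super-exponentially in $m$; under the boundary case $|\xi|=\tau^{1/4}$ the bound reduces to $c\,\tau^{-m/4}|\zeta|^m$, which is summable precisely when $|\zeta|<\tau^{1/4}$. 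This legitimizes both Fubini steps and also delivers the claimed absolute convergence of the final series once it is expressed in terms of $k$ and $(m_1,\dotsc,m_k)$.

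For that last step I would pass through the bound \eqref{eq:pfbd} on $\int_{(\rr_{\geq0})^k}\!d\vec\lambda\,|\!\pf[\KASEP]|$, which yields $\prod_a\tau^{m_a^2/16}(c'k)^{k/2}$. Combining with $|\zeta|^{\sum m_a}|\xi|^{(\sum m_a)(\sum m_a - 1)}\tau^{-(\sum m_a)^2/4}$ and using $\sum_a m_a^2\leq(\sum_a m_a)^2$ together with the quadratic gain in $\xi$ (or $\zeta$ at the boundary) shows that the iterated sum $\sum_k\frac{1}{k!}\sum_{m_1,\dotsc,m_k\geq1}$ is dominated by $\sum_k\frac{(c'k)^{k/2}}{k!}A^k$ for some $A<\infty$, which is finite. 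This controls the rearrangement and delivers absolute convergence.

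The main obstacle is purely bookkeeping: the subtlety is that the moment bound scales like $\tau^{-m^2/4}$, which \emph{exactly} cancels the $\tau^{-m^2/4}$ factor appearing on the right-hand side of \eqref{eq:taulaplflat-pre}, so the quadratic damping in $\xi$ is essential and the hypothesis $|\xi|\leq\tau^{1/4}$ is sharp for this method. No nontrivial analytic machinery is needed beyond Fubini and the previously established moment bound, so the argument is short.
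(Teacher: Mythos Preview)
Your proposal is correct and follows essentially the same route as the paper: expand $\exp_\tau(\cdot;\xi)$ term by term, justify the interchange of sum and expectation via the moment bound of Corollary~\ref{cor:momentBd}, substitute the Pfaffian moment formula \eqref{eq:ys-in}, and then reindex the double sum and justify the resulting rearrangement using the integrated Pfaffian bound \eqref{eq:pfbd}. The paper phrases the last step as taking an $N\to\infty$ limit inside the $\ell$-sum rather than as a Fubini rearrangement, but the substance and the estimates used are identical.
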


\begin{proof}
  Using the definition of $\exp_\tau(\cdot;\xi)$ and interchanging sum and
  expectation we have
  \begin{equation}
    \ee^{\rm flat}\!\left[\exp_\tau(\zeta\tau^{\inv2h(t,0)};\xi)\right]
    =\sum_{k=0}^\infty\inv{k\taufac}\zeta^k\xi^{k(k-1)}\ee^{\rm
      flat}\!\left[\tau^{\inv2kh(t,0)}\right].\label{eq:momexptau}
  \end{equation}
  Here, in order to justify the use of Fubini's Theorem, we are using Corollary
  \ref{cor:momentBd}, which gives $\ee^{\rm flat}\!\left[\tau^{\inv2kh(t,0)}\right]\leq
  ck\taufac\tau^{-k^2/4}$, and implies that the double sum on the right hand side is
  absolutely summable under the stated conditions on $\xi$ and $\zeta$.

  Now rewrite the right hand side of \eqref{eq:ys-in} as
  \begin{equation}
    k\taufac\sum_{\ell=0}^\infty\frac{(-1)^\ell}{\ell!}\quad\smashoperator{\sum_{\substack{m_1,\dotsc,m_{\ell}=1,\\
          m_1+\dotsm+m_\ell=k}}^\infty}\,\,\quad
    \tau^{-\inv4(\sum_am_a)^2}Q(\vec{m})
  \end{equation}
  with $Q(\vec m)=\int_{(\rr_{\geq0})^\ell}d\vec\lambda\,\pf\!\big[\KASEP(\lambda_a,\lambda_b;m_a,m_b)\big]_{a,b=1}^\ell$.
  To get a formula for the right hand side of \eqref{eq:momexptau} we multiply this by
  $\inv{k\taufac}\zeta^{k}\xi^{k(k-1)}$, sum over $k=0,\dotsc,N$ and then take
  $N\to\infty$ to get
  \[\ee^{\rm flat}\!\left[\exp_\tau(\zeta\tau^{\inv2h(t,0)};\xi)\right]
    =\lim_{N\to\infty}\sum_{\ell=0}^\infty\frac{(-1)^\ell}{\ell!}\quad\smashoperator{\sum_{\substack{m_1,\dotsc,m_{\ell}=1,\\
          m_1+\dotsm+m_\ell\leq
          N}}^\infty}\,\,\quad\zeta^{\sum_am_a}\tau^{-\frac14(\sum_am_a)^2}\xi^{(\sum_am_a)^2-\sum_am_a}Q(\vec m).\]
  We claim that the limit can be taken inside the first sum, which yields
    \[\ee^{\rm flat}\!\left[\exp_\tau(\zeta\tau^{\inv2h(t,0)};\xi)\right]
    =\sum_{\ell=0}^\infty\frac{(-1)^\ell}{\ell!}\quad\smashoperator{\sum_{m_1,\dotsc,m_{\ell}=1}^\infty}
    \,\,\quad\zeta^{\sum_am_a}\tau^{-\frac14(\sum_am_a)^2}\xi^{(\sum_am_a)^2-\sum_am_a}Q(\vec
    m)\] and proves the result. To see that the limit can be taken inside it is enough to
    show that the right hand side above is absolutely summable. But this follows from
    \eqref{eq:pfbd}, which gives $|Q(\vec m)|\leq
    c^\ell\ell^{\ell/2}\tau^{\sum_am_a^2/16}$. In fact, using this bound we get
  \begin{multline}
    \sum_{\ell=0}^\infty\frac{1}{\ell!}\quad\smashoperator{\sum_{m_1,\dotsc,m_{\ell}=1}^\infty}
    \,\,\quad|\zeta|^{\sum_am_a}\tau^{-\frac14(\sum_am_a)^2}|\xi|^{(\sum_am_a)^2-\sum_am_a}|Q(\vec
    m)|\\
    \leq\sum_{\ell=0}^\infty\frac{c^\ell \ell^{\ell/2}}{\ell!}\quad\smashoperator{\sum_{m_1,\dotsc,m_{\ell}=1}^\infty}
    \,\,\quad|\zeta|^{\sum_am_a}\tau^{-\frac14(\sum_am_a)^2+\inv{16}\sum_am_a^2}|\xi|^{(\sum_am_a)^2-\sum_am_a}\label{eq:unifbdexptau}
  \end{multline}
  which is finite, as desired, by Stirling's formula and our assumptions on $\xi$ and
  $\zeta$.
\end{proof}

We focus now on the case $\xi=\tau^{1/4}$ of Theorem \ref{thm:taulaplflat-pre}, which yields the $\exp_\tau$-Laplace
transform. In this case the factor $\xi^{(\sum_am_a)^2}$ in front of the Pfaffian in
\eqref{eq:taulaplflat-pre} cancels exactly with $\tau^{-(\sum_am_a)^2/4}$, and
hence the sums in $m_a$ can be brought inside the Pfaffian. To this end, and in view of
the definition of $\KASEP$ in \eqref{eq:hatKmu}, we define, for $\zeta\in\cc$,
\begin{equation}
  \wtKASEPz(\lambda_a,\lambda_b)=\left[\begin{matrix}\wtKASEPz_{1,1}(\lambda_a,\lambda_b) &
    \wtKASEPz_{1,2}(\lambda_a,\lambda_b) \\
    -\wtKASEPz_{1,2}(\lambda_b,\lambda_a) &
    \wtKASEPz_{2,2}(\lambda_a,\lambda_b)\end{matrix}\right],\label{eq:KASEPmatrix}
\end{equation}
with
\begin{equation}\label{eq:wtKASEP}
  \begin{split}
    \wtKASEPz_{1,1}(\lambda_a,\lambda_b)&=\sum_{m=1}^\infty\inv{\pi\I}\pvint_{\!\!\!\!C_{0,1}}dy\,\tau^{\inv2m^2}\zeta^{2m}
    \mfv(\lambda_a,y,m)\mfv(\lambda_b,1/y,m)
    \mfuap(y,m)\\
    &\qquad\qquad+\frac12\sum_{m,m'=1}^\infty\sum_{\sigma,\sigma'\in\{-1,1\}}(-\sigma\sigma')^{m\wedge m'+1}
    \sgn(\sigma'm'-\sigma m)\\
    &\hspace{1.3in}\times\tau^{\inv4(m^2+(m')^2)-\inv2(m+m')}\zeta^{m+m'}\mfv(\lambda_a,\sigma,m)\mfv(\lambda_b,\sigma',m'),\\
    \wtKASEPz_{1,2}(\lambda_a,\lambda_b)&=-\inv2\sum_{m=1}^\infty\sum_{\sigma\in\{-1,1\}}\tau^{\inv4m^2-\inv2m}\zeta^m
    \mfv(\lambda_a,\sigma,m),\\
    \wtKASEPz_{2,2}(\lambda_a,\lambda_b)&=\frac12\sgn(\lambda_b-\lambda_a),
  \end{split}
\end{equation}
where  $\mfuap$, and $\mfuu$ are given in \eqref{eq:def-mfu} and $\mfv$ is given in \eqref{eq:def-mfv-pre}.
Then using Theorem \ref{thm:taulaplflat-pre} and Lemma \ref{lem:pfaffInt} we deduce the
following

\begin{thm}\label{thm:exptau-pre}
  For $|\zeta|<\tau^{1/4}$,
  \begin{align}
    \ee\!\left[\exp_\tau\!\big(\zeta\tau^{\inv2h(t,0)}\big)\right]&=\sum_{k\geq0}\inv{k!}
    \int_{(\rr_{\geq0})^k}d\vec\lambda\,
    (-1)^{k}\pf\!\left[\wtKASEPz(\lambda_a,\lambda_b)\big]_{a,b=1}^k\right]_{a,b=1}^k\\
    &=\pf\!\left[J-\wtKASEPz\right]_{L^2([0,\infty))}
    \label{eq:exptautr}
  \end{align}
\end{thm}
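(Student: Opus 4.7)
The plan is to specialize Theorem~\ref{thm:taulaplflat-pre} to $\xi=\tau^{1/4}$, for which $\exp_\tau(\cdot;\xi)=\exp_\tau(\cdot)$ by \eqref{eq:symmqexp} and \eqref{eq:gen-q-exp}. As noted in the paragraph preceding the statement, with this choice the factor $\xi^{(\sum_a m_a)^2}$ cancels exactly against $\tau^{-(\sum_a m_a)^2/4}$ in \eqref{eq:taulaplflat-pre}, leaving a prefactor of $\zeta^{\sum_a m_a}\tau^{-(\sum_a m_a)/4}$, which factorises across the indices as $\prod_a \zeta^{m_a}\tau^{-m_a/4}$.

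The key step is to distribute this factorised prefactor, together with the sums $\sum_{m_a\ge 1}$, into the Pfaffian $\pf[\KASEP(\lambda_a,\lambda_b;m_a,m_b)]_{a,b=1}^k$ so as to produce $\pf[\wtKASEPz(\lambda_a,\lambda_b)]_{a,b=1}^k$. I would do this via Lemma~\ref{lem:pfaffInt}, the Pfaffian version of the Andr\'eief identity already used in \eqref{eq:pvPfaffian}, applied in its discrete form with respect to the indices $m_a$. Distributing the weight through the $2\times 2$ block structure of $\KASEP$ yields: the $(1,1)$-entry summed over both $m_a$ and $m_b$, producing from the two pieces of $\KASEP_{1,1}$ both the principal-value $y$-integral single sum and the double sum appearing in $\wtKASEPz_{1,1}$; the $(1,2)$- and $(2,1)$-entries summed over a single index (since $\KASEP_{1,2}$ has no $m_b$-dependence), producing $\wtKASEPz_{1,2}$; and the $(2,2)$-entry $\tfrac12\sgn(\lambda_b-\lambda_a)$ left untouched.

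The interchange of summation and integration required to apply the lemma is justified by the absolute convergence bound \eqref{eq:unifbdexptau} from the proof of Theorem~\ref{thm:taulaplflat-pre}, which holds under the hypothesis $|\zeta|<\tau^{1/4}$. Once the sum has been moved inside, the resulting expression
\[
\sum_{k\ge 0}\frac{(-1)^k}{k!}\int_{(\rr_{\ge 0})^k}\pf\!\big[\wtKASEPz(\lambda_a,\lambda_b)\big]_{a,b=1}^k\,d\vec\lambda
\]
is precisely the Fredholm Pfaffian series \eqref{eq:fredPf} for $\pf[J-\wtKASEPz]_{L^2([0,\infty))}$, which yields the second equality in \eqref{eq:exptautr}.

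The main technical point is the careful bookkeeping in the Pfaffian-Andr\'eief step: one must verify that the scalar weight $\zeta^{m_a}\tau^{-m_a/4}$ distributes correctly through the $2\times 2$ block structure so as to reproduce, block by block, the specific $m$- and $\zeta$-dependence of the entries of $\wtKASEPz$ in \eqref{eq:wtKASEP}, accommodating the fact that $\KASEP_{1,1}$, $\KASEP_{1,2}$, and $\KASEP_{2,2}$ depend on the auxiliary indices $m_a,m_b$ in genuinely different ways. Given the already-established absolute convergence, this reduces to a purely algebraic check.
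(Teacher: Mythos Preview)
Your proposal is correct and follows essentially the same route as the paper: specialize Theorem~\ref{thm:taulaplflat-pre} to $\xi=\tau^{1/4}$, observe the cancellation that makes the remaining prefactor factorise over the $m_a$'s, and then apply Lemma~\ref{lem:pfaffInt} (in its discrete form) to pull the $m_a$-sums and weights into the Pfaffian, with the interchange justified by the absolute convergence established in the proof of Theorem~\ref{thm:taulaplflat-pre}. The paper compresses all of this into the single sentence ``using Theorem~\ref{thm:taulaplflat-pre} and Lemma~\ref{lem:pfaffInt} we deduce the following,'' but your unpacking of the bookkeeping is exactly what that sentence encodes.
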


\label{disc:airy}
Finally we are ready to prove Theorem \ref{thm:fredPf-intro}, for which we need to check
that the kernel given in \eqref{eq:wtKASEP} coincides with the one given in
\eqref{eq:kernelBessel}.  The proof amounts to what can be considered as an ASEP version
of the ``Airy trick'' commonly used in the physics literature. In fact, the representation
of $\mfv(\lambda,y,m)$ through an (inverse) double-sided Laplace transform (see
\eqref{eq:DLapInv} below) is an exact analog of the identity
\begin{equation}
e^{m^3/3}=\int_{-\infty}^\infty dw\Ai(w)e^{w m}\label{eq:airyTrick}
\end{equation}
for $m>0$ used in \cite{cal-led} (see
(42) and the identity right before (137) in that paper). A crucial difference, though, is
that in \cite{cal-led} (as in other situations were the ``trick'' is applied), the authors
are dealing with a divergent series, and the Airy trick is used to resum it through an
illegal interchange of sum and integration, whereas in our case such an interchange will
be fully justified. 

\begin{proof}[Proof of Theorem \ref{thm:fredPf-intro}]
  For simplicity we will write $K$ instead of $\wtKASEPz$ throughout this proof. The only
  two pieces of the kernel for which we need to show that the two definitions coincide are
  $K_{1,1}$ and $K_{1,2}$, and we will begin with the latter. Throughout the rest of the
  proof we fix $\beta>0$ as in the Section \ref{sec:fred-pf-intro}.

  For $\zeta\notin\rr_{>0}$ define
  \begin{equation}
    \label{eq:psiASEPtext}
    \psiASEP(s,\lambda,y;\zeta)=(-\zeta)^s\tau^{-\beta s^2-\inv4s}
    \frac{\pochinf{\tau y^2;\tau}}{(1-\tau)^s\pochinf{\tau^{1+s}y^2;\tau}}\mfv(\lambda,y,s).
  \end{equation}
  Recalling the definition of $\mfv$ in \eqref{eq:def-mfv-pre}, this definition coincides
  with the one given in \eqref{eq:psiASEP-intro}. Moreover, for $m\in\zz_{\geq1}$ and
  $\sigma\in\{-1,1\}$, since
  $\frac{\pochinf{\tau;\tau}}{(1-\tau)^m\pochinf{\tau^{1+m};\tau}}=m\taufac$, $\psiASEP$
  satisfies
  \begin{equation}\label{eq:relPsiMfv}
    \tau^{\inv4m^2-\inv2m}\zeta^m\mfv(\lambda,\sigma,m)  =
    (-1)^m\frac{\tau^{(\inv4+\beta)m^2-\inv4m}}{m\taufac}\,\psiASEP(m,\lambda,\sigma;\zeta).
  \end{equation}
  Therefore
  \begin{equation}\label{eq:stop}
    K_{1,2}(\lambda_a,\lambda_b)=-\inv2\sum_{m=1}^\infty\sum_{\sigma\in\{-1,1\}}(-1)^m
    \frac{\tau^{(\inv4+\beta)m^2-\inv4m}}{m\taufac}\,\psiASEP(m,\lambda_a,\sigma;\zeta).
  \end{equation}

  Now we introduce the inverse double-sided Laplace transform of $\psi$ (in $s$), given by
  \begin{equation}
    \cpsiASEP(\omega,\lambda,y;\zeta)=\inv{2\pi\I}\int_{\I\rr}ds\,e^{s\omega}\psiASEP(s,\lambda,y;\zeta).
  \end{equation}
  This integral converges thanks to the factor $\tau^{-\beta s^2}$ which we introduced in
  $\psi$, and we have the inversion formula (see \cite{widder}, Theorem VI.19a)
  \begin{equation}\label{eq:DLapInv}
    \psiASEP(m,\lambda,y;\zeta)=\int_{-\infty}^\infty
    d\omega\,e^{-m\omega}\cpsiASEP(\omega,\lambda,y;\zeta).
  \end{equation}
  Substituting \eqref{eq:DLapInv} into the right hand side of \eqref{eq:stop} yields
  \[K_{1,2}(\lambda_a,\lambda_b) = -\inv2\sum_{m=1}^\infty\sum_{\sigma\in\{-1,1\}}
  \frac{\tau^{(\frac14+\beta)m^2-\inv4m}}{m\taufac} \int_{-\infty}^\infty
  d\omega\,e^{-m\omega}\cpsiASEP(\omega,\lambda_a,\sigma;\zeta).\]

  In order to finish the verification of the identity for $K_{1,2}$ all that remains is to
  interchange the order of the summation in $m$ and the integral in $\omega$, which, as
  required in view of \eqref{eq:cpsiASEP-intro}, \eqref{eq:DefF} and
  \eqref{eq:kernelBessel}, leads to
  \[\KASEP_{1,2}(\lambda_a,\lambda_b)=\frac12\sum_{\sigma\in\{-1,1\}}\,\int_{-\infty}^\infty
  d\omega\, F_3(e^{-\omega}) \cpsiASEP(\omega,\lambda_a,\sigma;\zeta)\] with
  \[F_3(z) = - \sum_{m=1}^\infty\frac{\tau^{(\frac14+\beta)m^2-\inv4m}}{m\taufac}(-z)^m.\] The application
  of Fubini's theorem will be justified once we check that
  \begin{align}
    \label{eq:checkFubini}
    \int_{-\infty}^\infty d\omega\, \sum_{m=1}^\infty
    \bigg|\sigma\frac{\tau^{(\frac14+\beta)m^2-\inv4m}}{m\taufac}
    e^{-m\omega}\cpsiASEP(\omega,\lambda,\sigma;\zeta)\bigg|< \infty.
  \end{align}
  We first observe that $\psiASEP(s,\lambda,\sigma;\zeta)=\tau^{-\beta
    s^2}\zeta^sG(\tau^s,\lambda,\sigma)$, where $G$ is a continuous function, whose
  arguments vary over compact domains (recall that $s\in\I\rr$). Therefore there is some
  $C>0$ such that
  \begin{equation}
    \abs{\psiASEP(s,\lambda,\sigma;\zeta)}  \leq C\tau^{-\beta s^2}|\zeta^s|.
  \end{equation}
  Let us write $a=(\frac14+\beta)\abs{\log(\tau)}$ and $b=\beta\abs{\log(\tau)}$. Then we
  have, using the last inequality,
  \begin{equation}\label{eq:BoundPsiASEP}
    |\cpsiASEP(\omega,\lambda,\sigma;\zeta)|\leq C\int_{\I\rr} ds \,e^{bs^2+s\omega }
    =C\sqrt{\frac{\pi}{b}}e^{-\omega^2/4b},
  \end{equation}
  and thus the left hand side of \eqref{eq:checkFubini} can be bounded by a constant
  multiple of
  \[\sqrt{\frac{\pi}{b}}\int_{-\infty}^\infty d\omega\,\sum_{m=1}^\infty
  e^{-am^2-(1/4+\omega)m-\omega^2/4b} \leq C'\frac{1}{\sqrt{ab}} \int_{-\infty}^\infty
  d\omega\,e^{(\omega+1/4)^2/4a-\omega^2/4b}<\infty\] for some $C'>0$, where the last
  integral is finite since $0<b<a$.

  The argument for $K_{1,1}$ is similar. Using \eqref{eq:psiASEPtext} and the definition of
  $\mfuap$ in \eqref{eq:def-mfu} we may write
  \begin{align}
    K_{1,1}(\lambda_a,\lambda_b)
    =\sum_{m=1}^\infty&\,\inv{\pi\I}\pvint_{\!\!\!\!C_{0,1}}dy\,(-1)^{m}(1-\tau)^{2m}
    \tau^{(\frac12+2\beta)m^2-m}\frac{1+y^2}{y^2-1}\\
    &\hspace{0.4in}\times\frac{\pochinf{\tau^{1+m}y^2;\tau}\pochinf{\tau^{1+m}/y^2;\tau}}
    {\pochinf{\tau y^2;\tau}\pochinf{\tau/y^2;\tau}}\psiASEP(m,\lambda_a,y;\zeta)\psiASEP(m,\lambda_b,\tfrac1y;\zeta)\\
    &+\frac12\sum_{m,m'=1}^\infty\sum_{\sigma,\sigma'\in\{-1,1\}}(-1)^{m+m'}(-\sigma\sigma')^{m\wedge
      m'+1}
    \sgn(\sigma'm'-\sigma m)\\
    &\qquad\qquad\times\frac{\tau^{(\inv4+\beta)(m^2+(m')^2)-\inv4(m+m')}}{m\taufac\, m'\taufac}
    \psiASEP(m,\lambda_a,\sigma;\zeta)\psiASEP(m',\lambda_b,\sigma';\zeta).
  \end{align}
  Plugging in the Laplace transform formula \eqref{eq:DLapInv} for $\psi$, we would like
  to interchange the integration in $\omega$ and $\omega'$ with the summation in $m$ and
  $m'$ as in the $K_{1,2}$ case. This leads to
  \begin{multline}
    K_{1,1}(\lambda_a,\lambda_b) =\int_{\rr^2} d\omega\,d\omega'\, \frac1{\pi i}
    \pvint_{\!\!\!\!C_{0,1}}dy\,\cpsiASEP(\omega,\lambda_a,y;\zeta)\cpsiASEP(\omega',\lambda_b,\tfrac1y;\zeta)F_1(e^{-\omega-\omega'},y)\\
    \qquad\qquad+\frac12\sum_{\sigma,\sigma'\in\{-1,1\}}\int_{\rr^2}d\omega\,d\omega'\,
    \cpsiASEP(\omega,\lambda_a,\sigma;\zeta) \cpsiASEP(\omega',\lambda_b,\sigma';\zeta)
    F_2(e^{-\omega},e^{-\omega'};\sigma,\sigma'),
  \end{multline}
  with
  \begin{align}
    F_1(z,y) & = \sum_{m=1}^\infty
    (-z)^{m}(1-\tau)^{2m}\tau^{(\frac12+2\beta)m^2-m}\frac{1+y^2}{y^2-1}
    \frac{\pochinf{\tau^{1+m}y^2;\tau}\pochinf{\tau^{1+m}/y^2;\tau}}
    {\pochinf{\tau y^2;\tau}\pochinf{\tau/y^2;\tau}}\\
    F_2(z_1,z_2;\sigma_1,\sigma_2) & = \sum_{m_1,m_2=1}^\infty (-z_1)^{m_1} (-z_2)^{m_2}
    (-\sigma_1\sigma_2)^{m_1\wedge m_2+1} \sgn(\sigma_2m_2-\sigma_1 m_1)\\
    &\hspace{2.8in}\times\frac{\tau^{(\inv4+\beta)(m_1^2+m_2^2)-\inv4(m_1+m_2)}}{(m_1)\taufac
      \,(m_2)\taufac},
  \end{align}
  which is exactly what we need. Hence all that remains is to justify the application of
  Fubini's Theorem. To this end, in the case of the first summand it is enough to check
  that
  \begin{multline}
    \int_{\rr^2}
    d\omega_1\,d\omega_2\,\bigg|\inv{2\pi\I}\pvint_{\!\!\!\!C_{0,1}}dy\,\frac{1+y^2}{y^2-1}
    \cpsiASEP(\omega_1,\lambda_a,y;\zeta)\cpsiASEP(\omega_2,\lambda_b,\tfrac1{y};\zeta)\bigg|\\
    \times\sum_{m=1}^\infty\bigg|e^{-(\omega_1+\omega_2)m}\frac{\tau^{(\frac12+2\beta)
        m^2-m}}{(m\taufac)^2}\bigg|<\infty.\label{eq:K11check}
  \end{multline}
  The $y$ contour passes through singularities of the factor $\frac{1+y^2}{y^2-1}$ at
  $y=\pm1$ and thus, as a principal value integral, it equals half the residues of the
  integrand at those points plus the same integral with the contour replaced by a circle
  $\wt C_{0,1}$ of radius slightly smaller than 1. Now the residues of
  $\frac{1+y^2}{y^2-1}$ at $y=\pm1$ equal $\pm1$ so the residues of the integrand at these
  points equal
  $\pm\cpsiASEP(\omega_1,\lambda_a,\pm1;\zeta)\cpsiASEP(\omega_2,\lambda_b,\pm1;\zeta)$.
  On the other hand, the estimate \eqref{eq:BoundPsiASEP} still holds (with a different
  constant) with $\sigma$ on the left hand side replaced by $y$, uniformly in $y\in\wt
  C_{0,1}\cup\{-1,1\}$. Therefore
  \begin{multline}
    \bigg|\inv{2\pi\I}\pvint_{\!\!\!\!C_{0,1}}dy\,\frac{1+y^2}{y^2-1}
    \cpsiASEP(\omega_1,\lambda_a,y;\zeta)\cpsiASEP(\omega_2,\lambda_b,\tfrac1{y};\zeta)\bigg|
    \leq Ce^{-\omega^2/4b}+Ce^{-\omega^2/4b}\int_{\wt
      C_{0,1}}dy\left|\frac{1+y^2}{y^2-1}\right|
  \end{multline}
  where, we recall, $b=\beta\abs{\log(\tau)}$ (and later
  $a=(\frac14+\beta)\abs{\log(\tau)}$). The last integral is finite, and thus this shows
  that the left hand side of \eqref{eq:K11check} is bounded above by a constant multiple
  of
  \begin{align}
    \int_{\rr^2}d\omega_1\,d\omega_2&\,e^{-(\omega_1^2+\omega_2^2)/4b}\sum_{m=1}^\infty
    e^{-(\omega_1+\omega_2+1)m}\frac{e^{-2a m^2}}{(m\taufac)^2}
    \leq\int_{\rr^2}d\omega_1\,d\omega_2\,e^{(\omega_1+\omega_2+1)^2/8a-(\omega_1^2+\omega_2^2)/4b},
  \end{align}
  which as before is finite thanks to the fact that $a>b$. This yields \eqref{eq:K11check}
  and finishes the justification of the desired identity for the first summand of
  $K_{1,1}$. The justification for the second summand is entirely analogous to the
  argument for $\wtKASEP_{1,2}$ above, and thus we omit it. This concludes out proof of
  the equality between the kernels given in \eqref{eq:kernelBessel} and
  \eqref{eq:wtKASEP}.
\end{proof}

\section{Moment formulas for flat KPZ/stochastic heat equation}\label{sec:app-bose}

In this section we will describe moment formulas for the solution of the KPZ equation with
flat initial condition which are obtained in an analogous way as those for ASEP. Versions
of these formulas already appeared in the physics literature in \cite{cal-led}.

The one-dimensional \emph{Kardar-Parisi-Zhang (KPZ) ``equation"} is given by
\begin{equation}
\partial_t h =\tfrac12\partial_x^2 h -\tfrac12 [(\partial_x h)^2 -\infty]  +\xi.
\end{equation}
where $\xi$ is a space-time white noise. This SPDE is ill-posed as written but, at least
on the torus, it can be made sense of by a renormalization procedure introduced by
M. Hairer in \cite{hairer,hairerReg}. His solutions coincide with the Cole-Hopf solution
obtained by setting $h(t,x)=-\log Z(t,x)$, where $Z$ is the unique solution to the
(well-posed) \emph{stochastic heat equation} (SHE)
\begin{equation}\label{eq:SHE}
\partial_t Z = \tfrac12\partial_x^2 Z +\xi Z.
\end{equation}
The formulas which we will obtain will actually be for the moments $\ee^{\rm
  flat}[Z(t,0)^m]$ of the SHE with flat initial data, which means
\[Z(0,x)=1\qquad\text{or}\qquad h(t,x)=0.\] Of course, this means that we are getting
formulas for the exponential moments $\ee^{\rm flat}[e^{-mh(t,0)}]$ of flat KPZ. One can also
think of this in terms of the solution of the \emph{delta Bose gas} with flat initial data, which is the solution
$v(t;\vec x)$ of the following system of equations, where we write $W_k=\{\vec
x\in\rr^k\!:x_1<x_2<\dotsm<x_k\}$ (see \cite{borCor,oqr-half-flat} for more details):
\begin{enumerate}[label=(\arabic*)]
\item For $\vec x\in W_k$,
  \[\p_tv(t;\vec x)=\tfrac12\Delta v(t;\vec x),\]
  where the Laplacian acts on $\vec{x}$.
\item For $\vec x$ on the boundary of $W_k$, with $x_a=x_{a+1}$,
  \[(\p_{x_a}-\p_{x_{a+1}}-1)v(t,\vec x)=0.\]
\item For $\vec x\in W_k$,
  \[\lim_{t\to0}v(t;\vec x)=1.\]
\end{enumerate}
It is widely accepted in the physics literature that if $Z(t,x)$ is a solution of the
SHE, then $v(t;\vec x)=\ee[Z(t,x_1)\dotsm Z(t,x_k)]$ is a solution of the delta Bose gas.  This
fact is proved in \cite{mqrScaling}, where it is also shown that there is at most one
solution. 

In \cite{oqr-half-flat} we obtained an explicit formula for the moments of the SHE started
with half-flat initial condition by taking a weakly asymmetric limit of the half-flat ASEP
moment formula (Theorem \ref{thm:main-hf} above). The same formula was obtained in
\cite{oqr-half-flat} directly for the delta Bose gas with a slightly more general initial
condition using the same method which led in that paper to the moment formulas for
half-flat ASEP.

\begin{prop}
  For $\theta>0$ define (with $x$ appearing $k$ times in the argument)
  \begin{equation}
    \label{Eq:DBGHF-shifted-2}
    v^{\thfl}_\theta(t;x,\dotsc,x)=2^kk!\sum_{\ell=0}^k\inv{\ell!}\sum_{\substack{n_1,\dotsc,n_\ell\ge 1\\n_1+\dotsm+n_\ell=k}}\inv{(2\pi\I)^\ell}
    \idotsint_{(\alpha+\I\rr)^k}d\vec w\,I_\theta(\vec w;\vec n)
  \end{equation}
  with $\alpha>0$ and with $I_\theta$ given by
  \begin{multline}\label{eq:Itheta}
    I_\theta(\vec w,\vec n)=2^k\prod_a\frac{\Gamma(2w_a)}{n_a\Gamma(2w_a+n_a)}
    e^{t\big[\inv{12}n_a^3-\inv{12}n_a+n_a(w_a-\theta)^2\big]+xn_a(w_a-\theta)}\\
    \times\prod_{a<b}\frac{\Gamma(w_a+w_b+\inv2(n_a-n_b))\Gamma(w_a+w_b-\inv2(n_a-n_b))}
    {\Gamma(w_a+w_b-\inv2(n_a+n_b))\Gamma(w_a+w_b+\inv2(n_a+n_b))}\\
    \times\frac{(w_a-w_b+\inv2(n_a-n_b))(w_a-w_b-\inv2(n_a-n_b))}{(w_a-w_b)(w_a-w_b+\inv2(n_a+n_b))}.
  \end{multline}
  Then the solution $v(t;\vec x)$ of the delta Bose gas with the initial data in (3)
  replaced by the tilted half-flat initial condition $\lim_{t\to0}v(t;\vec
  x)=\prod_ae^{-\theta x_a}\uno{x_a\geq0}$ coincides with $v^{\thfl}_\theta(t;x,\dotsc,x)$
  when $\vec x=(x,\dotsc,x)$ (with $x\in\rr$ repeated $k$ times). Moreover, if $Z(t,x)$ is
  the solution of the SHE \eqref{eq:SHE} with half-flat initial condition
  $Z(0,x)=\uno{x\geq0}$, then (denoting by $\ee^\thfl$ the expectation with this initial
  condition)
  \begin{equation}\label{Eq:DBGHF-shifted-2-SHE}
    \ee^\thfl[Z(t,x)^k]=v^{\thfl}_0(t;x,\dotsc,x).
  \end{equation}
\end{prop}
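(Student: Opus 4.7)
The plan is twofold: first, to derive the SHE moment identity \eqref{Eq:DBGHF-shifted-2-SHE} as the weakly asymmetric scaling limit of the half-flat ASEP moment formula in Theorem \ref{thm:main-hf}, and then to promote this to the tilted delta Bose gas statement through direct PDE verification together with the uniqueness result from \cite{mqrScaling}. The scaling I would use is the standard Bertini--Giacomin one, namely $p=\tfrac12-\tfrac12\epp[1/2]$, $q=\tfrac12+\tfrac12\epp[1/2]$ (so $\tau\nearrow1$), together with $x=\epp[-1]X$ and $t=\epp[-2]T$; under this scaling $\tau^{mN_x(t)}$ rescales, modulo an explicit centering, to $Z(T,X)^m$, and the half-flat ASEP initial data corresponds precisely to the half-flat SHE initial data $Z(0,\cdot)=\uno{\cdot\geq0}$.

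Next, in the contour integral \eqref{eq:nuk-intro} I would substitute $w_a=-1+2\epp[1/2]W_a$, which collapses the circles $|w_a|=\tau^{-\eta}$ onto vertical lines $\alpha+\I\rr$. The $(1-\tau)^{n_a}$ prefactor then supplies the required $\epp[n_a/2]$ balance, and the $q$-Pochhammer ratios in $\mfgp$ and $\mfh_1$ pass to ratios of $\Gamma$-functions via the classical limit $\pochinf{q^a;q}/\pochinf{q^b;q}\sim\Gamma(b)/\Gamma(a)$ as $q\nearrow1$, producing exactly the $\Gamma$-ratios appearing in \eqref{eq:Itheta}. The exponential $\exp\!\big[(q-p)t\big(\tfrac{1}{1+w}-\tfrac{1}{1+\tau^n w}\big)\big]$ Taylor-expands to give $\exp\!\big[T\big(\tfrac1{12}n_a^3+n_a W_a^2\big)\big]$, while the tilt factor $\big(\tfrac{1+\tau^n w}{1+w}\big)^{x-1}$ yields $e^{Xn_aW_a}$; combined with the Cauchy-determinant factor, which transforms directly into the final ratio on the right of \eqref{eq:Itheta}, this reproduces $I_\theta$ at $\theta=0$, giving \eqref{Eq:DBGHF-shifted-2-SHE}.

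For general $\theta>0$, I would then verify directly that $v^{\thfl}_\theta(t;\vec x)$, extended to the full Weyl chamber $W_k$, satisfies the three defining properties of the tilted delta Bose gas: (i) the heat equation, by differentiating under the integral using $(\partial_t-\tfrac12\partial_{x_a}^2)=0$ on the $t,x$-dependent factor $\exp[tn_a(W_a-\theta)^2+x_an_a(W_a-\theta)+\tfrac{t}{12}(n_a^3-n_a)]$; (ii) the derivative-jump boundary condition at $x_a=x_{a+1}$, through the Bethe-ansatz antisymmetrization built into the product of pair ratios on the last two lines of \eqref{eq:Itheta}; and (iii) the initial condition $\lim_{t\to0^+}v^{\thfl}_\theta(t;\vec x)=\prod_a e^{-\theta x_a}\uno{x_a\geq0}$. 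Uniqueness from \cite{mqrScaling} then identifies $v^{\thfl}_\theta$ with the tilted delta Bose gas solution, and specializing to $\theta=0$ with $\vec x=(x,\dotsc,x)$, combined with the fact (via It\^o's lemma applied to \eqref{eq:SHE}) that $\ee^{\thfl}[Z(t,x_1)\dotsm Z(t,x_k)]$ solves the same system with $\theta=0$ initial data, yields \eqref{Eq:DBGHF-shifted-2-SHE}.

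The hardest part will be step (iii): as $t\to0$ one must deform contours to push $\Re(W_a)\to+\infty$, carefully tracking the residues swept through the poles of $\Gamma(2w_a)/\Gamma(2w_a+n_a)$ as well as the paired $\Gamma$-factors of the form $\Gamma(w_a+w_b+\tfrac12(n_a-n_b))/\Gamma(w_a+w_b-\tfrac12(n_a+n_b))$, whose resummation over the partition data $(\ell,\vec n)$ is what produces the sharp indicator $\uno{x_a\geq0}$ against the tilt $e^{-\theta x_a}$. This is genuinely delicate and structurally parallels the pole-counting analysis of Section \ref{sec:flat-lim}, though in a weakly asymmetric rather than an $x\to\infty$ regime. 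A possible shortcut would be to exploit the shift/tilt symmetry of the half-flat SHE under $Z(t,x)\mapsto e^{-\theta x+\tfrac12\theta^2t}Z(t,x)$ to reduce the $\theta>0$ case directly to the $\theta=0$ case already handled by the ASEP scaling limit, thereby bypassing the direct verification of (iii).
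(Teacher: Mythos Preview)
The paper does not actually prove this proposition: it simply records it as Proposition~5.3 of the companion paper \cite{oqr-half-flat}, noting only that the Cauchy determinant identity \eqref{eq:cauchydet} has been applied to expand a determinant appearing there. The surrounding text does, however, describe how \cite{oqr-half-flat} establishes the result, and your plan lines up with part of that description.

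Your first step---obtaining \eqref{Eq:DBGHF-shifted-2-SHE} as the weakly asymmetric limit of Theorem~\ref{thm:main-hf} under Bertini--Giacomin scaling with $w_a=-1+2\epp[1/2]W_a$---is exactly one of the two routes the paper says was taken in \cite{oqr-half-flat}, and your sketch of how the $q$-Pochhammer ratios collapse to $\Gamma$-ratios and the exponent Taylor-expands is correct in outline. Your second step differs from what the paper reports: for the delta Bose gas statement, \cite{oqr-half-flat} is said to use ``the same method which led in that paper to the moment formulas for half-flat ASEP'', i.e.\ guessing and verifying an explicit eigenfunction expansion (duality) rather than the direct check of (i)--(iii) plus uniqueness that you propose. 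Both are legitimate.

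One concrete gap in your plan: the formula \eqref{Eq:DBGHF-shifted-2} is stated only at coinciding points $\vec x=(x,\dotsc,x)$, so before you can verify the heat equation in $W_k$ or the boundary condition at $x_a=x_{a+1}$ you must first write down the extension to general $\vec x$, presumably by replacing $xn_a(w_a-\theta)$ with a sum involving the individual $x_a$'s compatible with the Bethe structure. This is not automatic from what is displayed here and is exactly where the real content lies. Your closing shortcut---reducing $\theta>0$ to $\theta=0$ via the Cole--Hopf tilt $Z(t,x)\mapsto e^{-\theta x+\theta^2 t/2}Z(t,x)$---is valid and is indeed the cleanest way to handle the $\theta$-dependence once the $\theta=0$ case is secured.
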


This corresponds to Proposition 5.3 of \cite{oqr-half-flat}. As in the ASEP case, we have
used the Cauchy determinant identity \eqref{eq:cauchydet} to expand the determinant which
appears in that formula.

Just as \eqref{Eq:DBGHF-shifted-2-SHE} was obtained in \cite{oqr-half-flat} as a weakly
asymmetric limit of \eqref{eq:stpt1}, one can take a weakly asymmetric limit of
\eqref{eq:ys-in} to derive a moment formula for the SHE with flat initial
condition $Z(0,x)=1$. Alternatively, one can take the $x\to\infty$ limit directly at the
level of \eqref{Eq:DBGHF-shifted-2}-\eqref{Eq:DBGHF-shifted-2-SHE} to obtain such a
formula. In fact, by statistical translation invariance of the white noise, $Z(t,x)$ with
half-flat initial condition $Z(0,y)=\uno{y\geq0}$ has the same distribution as $Z(t,0)$
with initial condition $Z(0,y)=\uno{y\geq-x}$, and thus computing the $x\to\infty$ limit
of \eqref{Eq:DBGHF-shifted-2} leads to the moments of $Z(t,0)$ with flat initial
condition. This is the approach that we will take here (although we will not provide all
the details). As for the case of ASEP, the calculations involved in the computation of
this limit are quite involved, but they are completely analogous to those of Sections
\ref{sec:flat-lim} and \ref{sec:pfaffian}, as we describe next.

Note that the dependence on $x$ of the right hand side of \eqref{eq:Itheta} is only through
factors of the form $e^{xw_an_a}$, which suggests that we should deform the $w_a$ contours
to a region where $\Re(w_a)<0$. More precisely, we will shift the $w_a$ contours to
$-\delta+\I\rr$ for some small $\delta>0$. The contour deformation involves crossing many
poles, and a careful analysis shows that the pole structure is entirely analogous to the
one described in Section \ref{sec:poles} for ASEP. Now the unpaired poles occur at
$w_a=0$, coming from the factor $\Gamma(2w_a)$, while the paired poles occur at $w_a=-w_b$
in the case $n_a=n_b$, coming from Gamma factors in the numerator of the second line of
\eqref{eq:Itheta}. Crucially, as in the ASEP computation (though in a slightly simpler way), the
dependence on $x$ is gone in all the residues that one has to compute as the contours are
deformed.  Since the free variables will now be integrated on a contour $-\delta+\I\rr$
with $\delta>0$, these factors will vanish as $x\to\infty$, and as a result the limiting
formula will be written only in terms of unpaired and paired variables, analogously to
what was done in Section \ref{sec:flat-lim} for ASEP. Instead of going through the whole
argument again, we will only state the result. Introduce the index set
\[\bar\Lambda^m_{k_1,k_2}=\left\{(\vnu,\vnp)\in\zz_{\geq1}^{k_1}\times\zz_{\geq_1}^{k_2}:\,
  \sum_{a=1}^{k_1}\nu_a+2\sum_{a=1}^{k_2}\np_a=\bk\right\}.\] Then
\begin{equation}
  \label{eq:flat-DBG}
  \ee^{\rm flat}[Z(t,0)^m]=2^mm!\sum_{k=0}^{m}\bnuflat(t)
\end{equation}
with
\[\bnuflat(t)=\sum_{\substack{\ku,\kp\geq0\\\ku+2\kp=k}}\inv{\ku!2^{\kp}\kp!}\bmfZflat(\ku,\kp),\]
where
\begin{equation}\label{eq:bmfZflat}
\begin{split}
&\bmfZflat(\ku,\kp)=\quad\quad\smashoperator{\sum_{(\vnu,\vnp)\in\bar\Lambda^{\bk}_{\ku,\kp}}}\quad\quad
\itwopii{\kp}\int_{(\I\rr)^{\kp}}d\vzp\,\prod_{a\leq\kp,\,b\leq\ku}\frac{(\np_a-\nu_b)^2-4(\zp_a)^2}{(\np_a+\nu_b)^2-4(\zp_a)^2}\\
&\qquad\times\prod_{a=1}^{\kp}\prod_{j=1}^{\np_a}\inv{(\zp_a)^2-j^2}
\prod_{1\leq a<b\leq\kp}\frac{(\np_a-\np_b)^2-4(\zp_a-\zp_b)^2}{(\np_a+\np_b)^2-4(\zp_a-\zp_b)^2}
 \frac{(\np_a-\np_b)^2-4(\zp_a+\zp_b)^2}{(\np_a+\np_b)^2-4(\zp_a+\zp_b)^2}\\
&\qquad\times
\inv{2^{\ku}}\prod_{a=1}^{\ku}\inv{\nu_a!}e^{\inv{12}t((\nu_a)^3-\nu_a)}
\prod_{a=1}^{\kp}\inv{\np_a}e^{t\big[\inv6(\np_a)^3-\inv6\np_a+2\np_a(\zp_a)^2\big]}\prod_{1\leq a<b\leq
  \ku}(-1)^{\nu_a\wedge\nu_b}\frac{|\nu_a-\nu_b|}{\nu_a+\nu_b}.
\end{split}
\end{equation}
This formula should be compared with the moment formula for flat ASEP provided in Theorem
\ref{thm:flatSeries} (in fact, \eqref{eq:flat-DBG} can be obtained as a weakly asymmetric
limit of that formula). Note that in the above discussion the $\zp_a$ contours where
deformed to $-\delta+\I\rr$, but here they have been replaced by $\I\rr$; we may do this
here (i.e. take $\delta\to0$) because the resulting integrand has no poles on
$z_a\in\I\rr$ (this analogous to what was done for ASEP, see the paragraph following
\eqref{eq:mfZflat}).

It is not hard to check that \eqref{eq:bmfZflat} corresponds to (108) in \cite{cal-led}
(with $\ku=M$, $\kp=N$ and setting $s=0$ in their formula, which comes from a scaling
factor which we have not included at this point; note also that in \cite{cal-led} the
replacement $t=4\lambda^3$ has been performed). Their heuristic derivation essentially
consists in studying the poles which arise from taking $\theta\to0$ at the same time as
$x\to\infty$ directly in \eqref{Eq:DBGHF-shifted-2} with $\alpha=\theta$. Although their
procedure is different and not fully justified, the algebraic structure is 
similar. In terms of \eqref{Eq:DBGHF-shifted-2}, they start by shifting all the $w_a$ variables by $\theta$,
which results in the contours turning into $\I\rr$. As they take $\theta\to0$, some of the
factors in the integrand now pass through singularities, which give rise to residues which
are in correspondence with the ones arising from our paired and unpaired poles. Then they
argue that taking $x\to\infty$ at the same time as $\theta\to0$ results in only these
types of terms remain in the limit, leading to the formula.

The final step is to find a Pfaffian representation for the moment formula
\eqref{eq:flat-DBG}. This is done again in an entirely analogous fashion to what was done
for ASEP in Sections \ref{sec:kueven} and \ref{sec:genku} (alternatively, and as we
mentioned, one can take a weakly asymmetric limit of the moment formula in Proposition
\ref{prop:secondPfaffian}, which leads to the same expression). Let us skip the details
and just write the result:
\begin{equation}
  \label{eq:flat-DBG-Pf}
  \ee^{\rm flat}[Z(t,0)^m]=m!\sum_{k=0}^{m}\frac{(-1)^k}{k!}
  \!\!\!\sum_{\substack{m_1,\dotsc,m_k=1\\m_1+\dotsm+m_k=m}}^\infty\int_{(\rr_{\geq0})^k}d\vec\lambda\,
  \pf\!\left[\KCDRP(\lambda_a,\lambda_b;m_a,m_b)\right]_{a,b=1}^k
\end{equation}
with $\KCDRP(\lambda_1,\lambda_2;m_1,m_2)=\left[
  \begin{smallmatrix}
    \KCDRP_{1,1}(\lambda_1,\lambda_2;m_1,m_2) & \KCDRP_{1,2}(\lambda_1,\lambda_2;m_1,m_2)\\
    -\KCDRP_{1,2}(\lambda_2,\lambda_1;m_2,m_1) & \KCDRP_{2,2}(\lambda_1,\lambda_2;m_1,m_2)
  \end{smallmatrix}\right]$ and
\begin{align}
  \KCDRP_{1,1}(\lambda_1,\lambda_2;m_1,m_2)&=\uno{m_1=m_2}(-1)^{m_1+1} \frac{1}{8\pi\I}
  \pvint_{\!\!\!\I\rr} dy\,\frac1{y}\frac{\Gamma(2y)}{\Gamma(m+2y)}\frac{\Gamma(-2y)}{\Gamma(m-2y)}\\
  &\hspace{1.8in}\times e^{[\frac1{96}m_1^3+\frac1{8}m_1y^2]t-\frac14(m_1-2y)\lambda_1-\frac14(m_1+2y)\lambda_2}\\
  &\hspace{-0.4in}+\frac1{32}\frac1{(m_1-1)!}\frac1{(m_2-1)!}(-1)^{m_1\wedge
    m_2}\sgn(m_1-m_2)\,
  e^{\frac1{192}(m_1^3+m_2^3)t-\frac14(m_1\lambda_1+m_2\lambda_2)}\\
  \KCDRP_{1,2}(\lambda_1,\lambda_2;m_1,m_2) &=-\frac18\frac1{(m_1-1)!}\,
  e^{\frac1{192}m_1^3t-\frac14m_1\lambda_1}\\
  \KCDRP_{2,2}(\lambda_1,\lambda_2;m_1,m_2) & = \tfrac12\sgn(\lambda_2-\lambda_1).
\end{align}
This formula should be compared with the the $m$-th term of the (divergent) series written
for $\sum_{m\geq0}\frac{\zeta^n}{n!}\ee^{\rm flat}[Z(t,0)^m]$ (for a specific choice of $\zeta$) in (147) of \cite{cal-led}\footnote{There are two main
  differences between \eqref{eq:flat-DBG-Pf} and the $n$-th term of the formula written in
  \cite{cal-led}, besides some simple scaling and renaming of variables. First, in
  their formula they have applied the Airy trick \eqref{eq:airyTrick} in order to rewrite
  expressions involving $e^{\frac1{192}m^3t}$ in terms of integrals of Airy
  functions. This can be done for \eqref{eq:flat-DBG-Pf} without any trouble (we have not
  done it because it leads to more complicated formulas). Second, one can check that
  obtaining their formula from ours involves (formally) integrating $\KCDRP_{1,1}$ in
  $\lambda_1$ and $\lambda_2$, integrating $\KCDRP_{1,1}$ in $\lambda_1$ and
  differentiating it in $\lambda_2$, and differentiating $\KCDRP_{2,2}$ in both $\lambda_1$
  and $\lambda_2$. This can be justified (formally) using Lemma \ref{lem:pfaffInt}
  (twice). This difference stems from the delta Bose gas analog of the step performed at the
  ASEP level after \eqref{eq:signId}, which is not done in \cite{cal-led}.}.

\appendix

\section{Formal GOE asymptotics for flat ASEP}
\label{sec:goe-limit}

In this section we will provide a non-rigorous argument that shows that, under the scaling
specified in \eqref{eq:exptrick}, the right hand side of \eqref{eq:mainresult} recovers the
GOE Tracy-Widom distribution.

More precisely, we perform a formal asymptotic analysis of  
 $\pf\!\big[J-\wtKASEPz\big]_{L^2([0,\infty))}$ with 
$\zeta=-\tau^{-t/4+t^{1/3}r/2}$ as $t\to \infty$ and obtain
$\pf\!\left[J-K_r\right]_{L^2([0,\infty))}$,
with $K_r$ defined as in \eqref{eq:Kr}. This is the content of Section \ref{sec:limComp},
and should be interpreted as evidence for the
\emph{conjecture}
\[\lim_{t\to\infty}\pp^{\rm
  flat}\!\left(\frac{h(t/(q-p),t^{2/3}x)-\frac12t}{t^{1/3}}\geq-r\right)=\pf\!\left[J-K_r\right]_{L^2([0,\infty))}.\]
The issues discussed in Remark \ref{remark2.2}.vii preclude directly making this part of
the asymptotics rigorous.  However, even the formal critical point analysis has a
complicated algebraic structure which needs to be explained. We will then show,
rigorously, in Section \ref{sec:GOEPfaffian}, that the right hand side recovers the
Tracy-Widom GOE distribution:
\begin{equation}
  \label{eq:pf-GOE}
  \pf\!\left[J-K_r\right]_{L^2([0,\infty))}=F_{\rm GOE}(r).
\end{equation}

\subsection{Mellin-Barnes representation}

There are two alternative ways to derive (formally) the $t\to\infty$ asymptotics of our
formula. The first one consists in starting directly with the formula given in Theorem
\ref{thm:fredPf-intro}, which is written in a form that lends itself readily to critical point
analysis (this is thanks to the rewriting of \eqref{eq:wtKASEP} through
\eqref{eq:DLapInv}, which is analogous to the physicists' Airy trick, see the discussion in page \pageref{disc:airy}
preceding the proof of the theorem). The difficulty with this
approach is that it involves delicate asymptotics of the functions $F_1$, $F_2$ and
$F_3$ (defined in \eqref{eq:DefF}) which are as poorly behaved as  $\exp_\tau$.

A different argument relies on using a Mellin-Barnes representation (see \cite{borCor},
which introduced this idea in this setting) for the sums (in $m$ and $m'$) which appear in
\eqref{eq:wtKASEP}. The advantage of this approach is that the critical point analysis
turns out to be relatively simple and one is not faced with the asymptotics
of the functions $F_1$, $F_2$ and $F_3$. The difficulty, on the other hand, is that
implementing the Mellin-Barnes representation turns out to be much harder than usual, due
to the existence of a large number of additional poles (on top of the ones poles at the
integers which are inherent in the representation) which have to be accounted for. The
fact that all these additional poles end up cancelling is far from trivial and serves as
another indication of the remarkable structure behind the flat ASEP formulas. Partly
because of this reason, this is the approach we will follow presently.

In order to derive the Mellin-Barnes representation of the kernel $\wtKASEPz$ in
\eqref{eq:wtKASEP} we will need to use the following generalization of the
Mellin--Barnes representation formula which appears e.g. in \cite{borCor}:

\begin{lem}
  \label{Lem:MBResult2}
  Let $C_{1,2,\ldots}$ be a negatively oriented contour enclosing all positive
  integers (e.g. $C_{1,2,\dotsc}=\inv2+\I\rr$ oriented with increasing imaginary
  part). Let $g$ be a meromorphic function and let $\mathbb{A}$ be the set of all poles of
  $g$ lying to the right of $C_{1,2\dotsc}$. Assume that
  $\mathbb{A}\cap\zz_{\geq1}=\emptyset$.  Then for $\zeta\in\cc\setminus\rr_{>0}$ with
  $|\zeta|<1$ we have
  \begin{align}
    \label{Eq:MBResult}
    \sum_{n=1}^\infty g(n)\zeta^n=\frac1{2\pi\I}\int_{C_{1,2,\dotsc}}\!ds\,\frac{\pi}{\sin(-\pi
      s)}(-\zeta)^sg(s)-\sum_{w\in\mathbb{A}}\frac{\pi}{\sin(-\pi w)}(-\zeta)^w\Res_{s=w}g(\tau^s)
  \end{align}
  provided that the left hand side converges and that there exist closed contours $C_k,\
  k\in\nn$ enclosing the positive integers from $1$ to $k$ and such that the integral of
  the integrand on the right hand side over the symmetric difference of $C_{1,2,\ldots}$
  and $C_k$ goes to zero as $k\to\infty$.
\end{lem}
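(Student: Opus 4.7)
The plan is to combine the residue theorem with a limiting argument on the contours $C_k$. First I would fix $k$ and analyze the integrand $F(s):=\frac{\pi}{\sin(-\pi s)}(-\zeta)^s g(s)$ on $C_k$. Inside $C_k$ the integrand has two disjoint families of poles---disjoint thanks to the hypothesis $\mathbb{A}\cap\zz_{\geq 1}=\emptyset$---namely the simple poles of $\pi/\sin(-\pi s)$ at the positive integers $n=1,\dotsc,k$, and those poles $w\in\mathbb{A}$ of $g$ that lie inside $C_k$. Using the expansion $\sin(-\pi s)=(-1)^{n+1}\pi(s-n)+O((s-n)^2)$ near $s=n$ one gets
\[
\Res_{s=n}F(s)=(-1)^{n+1}(-\zeta)^n g(n)=-\zeta^n g(n),
\]
while near a pole $w\in\mathbb{A}$ the prefactor $\pi/\sin(-\pi s)\cdot(-\zeta)^s$ is analytic, so $\Res_{s=w}F(s)=\frac{\pi}{\sin(-\pi w)}(-\zeta)^w\Res_{s=w}g(s)$ for a simple pole (higher-order poles are treated analogously by retaining more terms of the Laurent expansion of $g$).

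Applying the residue theorem on the negatively oriented contour $C_k$ and collecting the two families of residues yields an identity of the form
\[
\sum_{n=1}^{k}\zeta^n g(n) \;=\; \frac{1}{2\pi\I}\oint_{C_k}F(s)\,ds \;-\sum_{w\in\mathbb{A}\cap\operatorname{int}C_k}\frac{\pi}{\sin(-\pi w)}(-\zeta)^w\Res_{s=w}g(s),
\]
where the signs must be tracked carefully through the residue theorem and the orientation convention. Then I would pass to the limit $k\to\infty$: the stated hypothesis on the symmetric difference $C_{1,2,\dotsc}\triangle C_k$ replaces $\oint_{C_k}$ by $\int_{C_{1,2,\dotsc}}$, the assumed convergence of $\sum_{n\geq 1}g(n)\zeta^n$ controls the first sum on the right, and because the $C_k$ eventually engulf every pole of $\mathbb{A}$ (since by definition $\mathbb{A}$ lies entirely on the enclosed side of $C_{1,2,\dotsc}$) the second sum converges to the full sum over $\mathbb{A}$. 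This yields the identity in the statement.

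The step that is not purely mechanical is the verification of the hypothesis on the integral over $C_{1,2,\dotsc}\triangle C_k$, and this is really where the main obstacle lies. The standard tool is the exponential decay $|\pi/\sin(-\pi s)|\sim 2\pi e^{-\pi|\Im s|}$ away from the real axis, which, combined with the bound $|(-\zeta)^s|=|\zeta|^{\Re s}e^{-\Im s\arg(-\zeta)}$ (finite thanks to $|\zeta|<1$ and $\zeta\notin\rr_{>0}$), should dominate whatever growth $g$ exhibits on the closing arcs of $C_k$ as long as $g$ is not too wild. In the application to the flat ASEP asymptotics in Section \ref{sec:goe-limit} these growth conditions on $g$ can be checked directly from the explicit formulas entering the kernel $\wtKASEPz$, so the abstract lemma provides exactly the mechanism needed there.
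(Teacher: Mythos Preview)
Your proposal is correct and follows exactly the approach the paper indicates: the paper's entire proof is the one-sentence remark that the result is elementary and only uses the fact that $\pi/\sin(-\pi s)$ has poles at each integer $s=k$ with residue $(-1)^{k+1}$. You have simply filled in the details of that residue computation and the limiting argument, which is precisely what is needed.
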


The proof is elementary and only uses the fact that $\pi/\sin(-\pi s)$ has poles at each $s=k\in\zz$ with residue equal to
$(-1)^{k+1}$. The (trivial) novelty of this formula is the appearance of a second family
of poles at $\mathbb{A}$.

In order to apply this result to our formula we need to find a different
expression for $\wtKASEPz_{1,1}$, which as written contains factors of the form
$\uno{m_a=m_b}$, $(-1)^{m_a\wedge m_b}$ and $(-1)^{m_a}$ (appearing inside the definition
of $\mfuap(y,m_a)$) which are not suitable for a 
representation in terms of contour integrals. The following result can be proved in a similar way to Lemma
\ref{lem:mfeu}:

\begin{lem}\label{lem:mfeuGammas}
  Given any $m_1\neq m_2\in\zz$ with the same sign,
  \begin{multline}
    \lim_{\eta\to0}\frac{\Gamma(\frac12(m_1-m_2)+\eta)\Gamma(\frac12(m_2-m_1)+\eta)}{\Gamma(\frac12(m_1+m_2)+\eta)\Gamma(-\frac12(m_1+m_2)+\eta)}
    \frac{m_2-m_1+2\eta}{m_1+m_2+2\eta}=(-1)^{m_1\wedge m_2}\sgn(m_2-m_1).
  \end{multline}
\end{lem}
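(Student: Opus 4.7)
My plan is to exploit the antisymmetry of both sides under $(m_1,m_2) \to (m_2,m_1)$ to reduce to the case $m_1 < m_2$, and then case-split on the parity of $m_1 + m_2$. The Gamma ratio is symmetric in $m_1, m_2$ while $(m_2-m_1+2\eta)/(m_1+m_2+2\eta)$ changes sign in the limit, which matches the antisymmetry of $\sgn(m_2-m_1)$ on the right; the same-sign hypothesis ensures $m_1 + m_2 \neq 0$ so this factor has a finite nonzero limit. I will present the argument for $0 < m_1 < m_2$, with the case $m_1 < m_2 < 0$ following from a parallel computation.

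In the case where $m_1 + m_2$ is odd, $m_1 - m_2$ is also odd, and all four arguments of the Gamma functions are non-integer half-integers, so each Gamma extends analytically to $\eta = 0$. I would apply the reflection identity $\Gamma(a)\Gamma(-a) = -\pi/[a\sin(\pi a)]$ (a direct consequence of $\Gamma(a)\Gamma(1-a) = \pi/\sin(\pi a)$) separately to the numerator $\Gamma(\tfrac{m_1-m_2}{2})\Gamma(\tfrac{m_2-m_1}{2})$ and the denominator $\Gamma(\tfrac{m_1+m_2}{2})\Gamma(-\tfrac{m_1+m_2}{2})$. After multiplication by the limit $(m_2-m_1)/(m_1+m_2)$ of the remaining factor, the algebraic prefactors cancel to leave $-\sin(\pi(m_1+m_2)/2)/\sin(\pi(m_1-m_2)/2)$. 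Evaluating via $\sin(\pi n/2) = \sgn(n)(-1)^{(|n|-1)/2}$ at odd $n$ then yields $(-1)^{m_1}$, which agrees with $(-1)^{m_1 \wedge m_2}\sgn(m_2-m_1)$.

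In the case where $m_1 + m_2$ is even, I would set $N = (m_2-m_1)/2$ and $M = (m_1+m_2)/2$, both positive integers satisfying $M - N = m_1$. Now $\Gamma(-N+\eta)$ and $\Gamma(-M+\eta)$ each have a simple pole, with leading behavior $\Gamma(-n+\eta) = (-1)^n/(n!\,\eta) + O(1)$ (obtained by iterating $\Gamma(z) = \Gamma(z+1)/z$), while $\Gamma(N+\eta) \to (N-1)!$ and $\Gamma(M+\eta) \to (M-1)!$. The two poles combine so that the Gamma ratio has limit $(-1)^{N-M} M/N = (-1)^{m_1} M/N$, and the remaining algebraic factor limits to $N/M$; the two factors cancel exactly to leave $(-1)^{m_1}$, matching the right-hand side since $\sgn(m_2-m_1) = 1$.

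The only real obstacle is the sign bookkeeping in the odd case: since $m_1 - m_2 < 0$, one must carefully evaluate $\sin(\pi n/2)$ at negative odd $n$, and one must track the signs propagating through the two applications of the reflection identity. Everything else is a mechanical combination of the reflection formula and the Laurent expansion of $\Gamma$ at its negative-integer poles.
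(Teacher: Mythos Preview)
Your argument for the positive case is correct and complete; both parity branches go through exactly as you describe. The paper itself does not give a proof here --- it only remarks that the lemma ``can be proved in a similar way to Lemma~\ref{lem:mfeu}'', which is a computation with $\tau$-products rather than Gamma functions. So your direct route via the reflection formula $\Gamma(a)\Gamma(-a)=-\pi/(a\sin\pi a)$ and the Laurent expansion of $\Gamma$ at its negative-integer poles is a genuine and arguably more transparent alternative.

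There is, however, a real gap in your sentence ``with the case $m_1<m_2<0$ following from a parallel computation.'' The identity as stated is \emph{false} when both integers are negative and $m_1+m_2$ is even. Take for instance $m_1=-1$, $m_2=-3$: the Gamma ratio $\Gamma(1+\eta)\Gamma(-1+\eta)/\bigl[\Gamma(-2+\eta)\Gamma(2+\eta)\bigr]$ tends to $-2$ and the algebraic factor tends to $\tfrac12$, so the left side equals $-1$; but $m_1\wedge m_2=-3$ and $\sgn(m_2-m_1)=-1$ give $(-1)^{-3}\cdot(-1)=1$ on the right. The structural reason is that the left side is invariant under $(m_1,m_2)\mapsto(-m_1,-m_2)$ (the Gamma ratio is exactly invariant, and the algebraic factor has the same limit), whereas the right side picks up a factor $(-1)^{m_1+m_2+1}$. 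Since the paper only ever applies the lemma with $m_1,m_2\in\zz_{\geq1}$ (see the definition of $\mfh_\eta$ in \eqref{eq:defmfhh} and its use in Proposition~\ref{prop:hlem}), the clean fix is simply to restrict the statement to positive integers; your argument then proves exactly what is needed.
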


(One can choose slightly different versions of the left hand side, but this form will turn
out to be convenient later). As a consequence, if we define
\begin{equation}
  \label{eq:defmfhh}
  \mfh_\eta(\sigma_1,\sigma_2;m_1,m_2)=
  \begin{dcases*}
    \sigma_2\frac{\Gamma(\frac12(m_1-m_2)+\eta)\Gamma(\frac12(m_2-m_1)+\eta)}{\Gamma(\frac12(m_1+m_2)+\eta)\Gamma(-\frac12(m_1+m_2)+\eta)}
    \frac{m_2-m_1+2\eta}{m_1+m_2+2\eta}
    & if $\sigma_1=\sigma_2$\\
    \sigma_2 & if $\sigma_1\neq \sigma_2$,
  \end{dcases*}
\end{equation}
then we may replace every factor of the form $(-\sigma_a\sigma_b)^{m_a\wedge
  m_b}\sgn(\sigma_bm_b-\sigma_am_a)$ in our formula by
$\lim_{\eta\to0}\mfh_\eta(\sigma_a,\sigma_b;m_a,m_b)$. Something similar can be done
for the product of $\uno{m_a=m_b}$ and the factor $(-1)^{m_a}$ appearing in $\mfuap(y_a,m_a)$, replacing
it by $\lim_{\eta\to0}\mfs_\eta(m_a,m_b)$ with
\begin{equation}
  \label{eq:def-mfs-delta}
  \mfs_\eta(m_1,m_2)=\sin(\tfrac12\pi(2m_1+1))\frac{\sin(\pi(m_1-m_2+\eta))}{\pi(m_1-m_2+\eta)}.
\end{equation}
As we will see in the
next result, the $\eta\to0$ limit can be taken outside the sums and integrals. We state it
at the level of the formula for the $\exp_\tau(x;\xi)$ transform:

\begin{prop}\label{prop:hlem}
  For every $\xi,\zeta\in\cc$ with $|\xi|<\tau^{1/4}$ or $|\xi|=\tau^{1/4}$ and $|\zeta|<\tau^{1/4}$ we have
  \begin{multline}\label{eq:taulaplflat-delta}
    \ee^{\rm flat}\!\left[\exp_\tau(\zeta\tau^{\inv2h(t,0)};\xi)\right]
    =\lim_{\eta\to0}\sum_{k=0}^{\infty}(-1)^{k}\frac{1}{k!}\sum_{m_1,\dotsc,m_{k}=1}^\infty
    \int_{(\rr_{\ge 0})^k}d\vec\lambda\,\\\times\zeta^{\sum_am_a}\tau^{-\frac14(\sum_am_a)^2}\xi^{(\sum_am_a)^2-\sum_am_a}
    \pf\!\left[\KASEPeta(\lambda_a,\lambda_b;m_a,m_b)\right]_{a,b=1}^k
  \end{multline}
  where $\KASEPeta$ is the $2\times 2$ skew-symmetric matrix kernel defined by
  \begin{equation}
    \label{eq:KASEP2}
    \begin{gathered}
      \begin{aligned}
        &\KASEPeta_{1,1}(\lambda_a,\lambda_b;m_a,m_b)=\mfs_\eta(m_a,m_b)\frac{1}{\pi\I}
        \pvint_{\!\!\!C_{0,1}}\!\!\!\!dy\,\tau^{\inv2m_a^2}\mfv(\lambda_a,y,m_a)\mfv(\lambda_b,\tfrac{1}{y},m_a)\tmfuap(y,m_a)\\
        &\qquad+\frac{1}2\sum_{\sigma,\sigma'\in\{-1,1\}}\mfh_\eta(\sigma,\sigma';m_a,m_b)\tau^{\inv4(m_a^2+m_b^2)}
        \mfv(\lambda_a,\sigma,m_a)\mfv(\lambda_b,\sigma',m_b)\mfuu(\sigma,m_a)\mfuu(\sigma',m_b),
      \end{aligned}\\[3pt]
     \KASEPeta_{1,2}=\KASEP_{1,2},\quad\qqand\quad
     \KASEPeta_{2,2}=\KASEP_{2,2},
    \end{gathered}
  \end{equation}
  where
  \begin{equation}
    \label{eq:def-tmfuap}
    \tmfuap(y,m)=(-1)^m\mfuap(y,m)=\tau^{-\inv2n}\frac{1+y^2}{y^2-1}
  \end{equation}
  and $\mfuu$ and $\mfv$ were defined respectively in \eqref{eq:def-mfu} and \eqref{eq:def-mfv-pre}.
\end{prop}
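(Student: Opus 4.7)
The plan is to reduce the identity \eqref{eq:taulaplflat-delta} to Theorem \ref{thm:taulaplflat-pre} by proving that, for $\eta\in[0,\eta_0]$ with $\eta_0$ small, the integrand of \eqref{eq:taulaplflat-delta} converges to the integrand of \eqref{eq:taulaplflat-pre} as $\eta\to 0$, and then passing the limit through the outer sum and $\vec\lambda$-integration by dominated convergence. Thus the proof splits into an entrywise convergence statement $\KASEPeta\to\KASEP$ (after which the Pfaffians converge) and a uniform-in-$\eta$ bound of the form established in Corollary \ref{cor:momentBd}.

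For the pointwise analysis: using $\sin(\pi(m+\tfrac12))=(-1)^m$ and $\sin(\pi(k+\eta))=(-1)^k\sin(\pi\eta)$ for $k\in\zz$, one finds
\begin{equation*}
\mfs_\eta(m_1,m_2)=(-1)^{m_2}\frac{\sin(\pi\eta)}{\pi(m_1-m_2+\eta)}\xrightarrow[\eta\to0]{}(-1)^{m_1}\uno{m_1=m_2},
\end{equation*}
which combined with $\tmfuap(y,m)=(-1)^m\mfuap(y,m)$ matches the first summand of $\KASEPeta_{1,1}$ onto that of $\KASEP_{1,1}$. For the second summand, Lemma \ref{lem:mfeuGammas} together with $\mfuu(\sigma,m)=\sigma\tau^{-m/2}$ gives, for $\sigma_1 m_a\neq\sigma_2 m_b$, the limit $\mfh_\eta(\sigma_1,\sigma_2;m_a,m_b)\sigma_1\sigma_2\to(-\sigma_1\sigma_2)^{m_a\wedge m_b+1}\sgn(\sigma_2 m_b-\sigma_1 m_a)$, recovering exactly the prefactor in the second summand of $\KASEP_{1,1}$. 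The remaining ``diagonal'' case $m_a=m_b$ is anomalous, since $\Gamma(\eta)^2\sim\eta^{-2}$ and $\Gamma(-m+\eta)\sim(-1)^m/(m!\,\eta)$ conspire to make $\mfh_\eta(\sigma,\sigma;m,m)\to\sigma(-1)^m$, whereas the target sign function vanishes; however, the resulting rank-one surplus inserted into row~$a$ and column~$a$ of the Pfaffian matrix drops out of the Pfaffian by the antisymmetrization over pairings (exactly as diagonal entries of a skew-symmetric matrix do), so $\pf[\KASEPeta]\to\pf[\KASEP]$ pointwise in $(\vec\lambda,\vec m)$. The entries $\KASEPeta_{1,2}$ and $\KASEPeta_{2,2}$ coincide with their $\KASEP$ counterparts by definition.

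For the interchange of limit and summation, I will prove uniform-in-$\eta\in[0,\eta_0]$ bounds $|\mfs_\eta(m_a,m_b)|\leq C$ and $|\mfh_\eta(\sigma_1,\sigma_2;m_a,m_b)|\leq C(1+|m_a-m_b|)^{-1}$, the latter by applying Stirling's asymptotics to the $\Gamma$-ratio and treating the $m_a=m_b$ case separately via the residue computation above (where the $\frac{2\eta}{2m+2\eta}$ factor cancels the $\Gamma(\eta)$ poles cleanly). Equipped with these, the reasoning after \eqref{eq:pfbd} applies verbatim: factoring out $\prod_a e^{-c_1\lambda_a}\tau^{m_a^2/16}$ and using $\pf^2=\det$ with Hadamard's inequality yields $|\pf[\KASEPeta(\lambda_a,\lambda_b;m_a,m_b)]_{a,b=1}^k|\leq c^kk^{k/2}\prod_a e^{-c_1\lambda_a}\tau^{m_a^2/16}$ uniformly in $\eta$, and the majorant so obtained is summable/integrable under our hypotheses on $\xi,\zeta$ by the analogue of \eqref{eq:unifbdexptau}. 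Dominated convergence then concludes the proof.

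The main obstacle is the careful treatment of the diagonal anomaly in $\mfh_\eta$: showing that the nonvanishing pointwise limit at $m_a=m_b$ is harmlessly absorbed by the Pfaffian, and simultaneously obtaining the decay estimate on $\mfh_\eta$ that is uniform down to $\eta=0$ across this diagonal. Both are technical but reduce to standard $\Gamma$-function manipulations once the residue structure has been isolated.
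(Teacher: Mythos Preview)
Your proposal is correct and follows essentially the same approach as the paper: establish the pointwise limit $\pf[\KASEPeta]\to\pf[\KASEP]$ via Lemma~\ref{lem:mfeuGammas} and the limit of $\mfs_\eta$, then justify the interchange by dominated convergence using the Hadamard-type bound from the proof of Corollary~\ref{cor:momentBd}. You are in fact more explicit than the paper on one point: the paper's proof simply asserts that $\KASEPzero=\lim_{\eta\to0}\KASEPeta$ agrees with $\KASEP$ and calls the verification ``straightforward'', whereas you correctly isolate the anomaly at $\sigma=\sigma'$, $m_a=m_b$ (where $\mfh_\eta\to\sigma(-1)^m\neq0$ while the target $\sgn$ factor vanishes) and observe that this surplus is \emph{symmetric} in $(\lambda_a,m_a)\leftrightarrow(\lambda_b,m_b)$, hence killed by the antisymmetrization implicit in the Pfaffian sum. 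Your phrasing ``rank-one surplus inserted into row~$a$ and column~$a$'' is slightly off---it is a symmetric contribution to the off-diagonal $(a,b)$ entries of the $(1,1)$ block rather than a diagonal entry---but the mechanism you invoke (that the Pfaffian formula only sees the antisymmetric part of its argument) is exactly right.
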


\begin{proof}
  Consider the kernel $\KASEPzero$ defined as $\lim_{\eta\to0}\KASEPeta$. Then it is
  straightforward to check from Theorem \ref{thm:taulaplflat-pre}, using Lemma \ref{lem:mfeuGammas} and the fact that
  $\mfs_\eta(m_1,m_2)\longrightarrow(-1)^{m_1}\uno{m_1=m_2}$ as $\eta\to0$ for $m_1,m_2\in\zz$, that
    \begin{multline}\label{eq:taulaplflat-delta2}
    \ee^{\rm flat}\!\left[\exp_\tau(\zeta\tau^{\inv2h(t,0)};\xi)\right]
    =\sum_{k=0}^{\infty}(-1)^{k}\frac{1}{k!}\sum_{m_1,\dotsc,m_{k}=1}^\infty\itwopii{k}
    \int_{(\rr_{\ge 0})^k}d\vec\lambda\,\\\times\zeta^{\sum_am_a}\tau^{-\inv4(\sum_am_a)^2}\xi^{(\sum_am_a)^2-\sum_am_a}
    \pf\!\left[\KASEPzero(\lambda_a,\lambda_b;m_a,m_b)\right]_{a,b=1}^k.
  \end{multline}
  To justify taking the $\eta\to0$ limit outside use the
  Dominated Convergence Theorem together with an argument similar to the one used in the
  proof of Theorem \ref{thm:taulaplflat-pre} to show that the series on the right hand side of
  \eqref{eq:taulaplflat-delta} is absolutely summable.
\end{proof}

Note that, since $\mfs_\eta(m_a,m_b)$ is replacing the factor $(-1)^{m_a}\uno{m_a=m_b}$,
the first term of $\KASEPeta_{1,1}$ in Proposition \ref{prop:hlem} can be rewritten as
\[\mfs_\eta(m_a,m_b)\zeta^{-m_a-m_b}\frac{1}{\pi\I}
\int_{C_{0,1}}dy\,\int_{0}^\infty
dv\,\zeta^{2m_a}\tau^{\inv2m_a^2}\mfv(\lambda_a,y,m_a)\mfv(\lambda_b,1/y,m_a)\tmfuap(y,m_a)\]
(as can be checked directly in the proof). Using this replacement, setting
$\xi=\tau^{1/4}$ in Proposition \ref{prop:hlem} and bringing the sums in $m_a$ inside the
Pfaffian as in Section \ref{sec:generatingfunction} yields the following formula:
\begin{equation}\label{eq:limEta}
\ee^{\rm flat}\!\left[\exp_\tau\!\big(\zeta\tau^{\inv2h(t,0)}\big)\right]
=\lim_{\eta\to0}\pf\!\left[J-\wtKASEPeta\right]
\end{equation}
with (we omit here the dependence of the kernel on $\zeta$)
\begin{equation}
  \label{eq:wtKASEPeta}
  \begin{split}
      \wtKASEPeta_{1,1}(\lambda,\lambda')&=\sum_{m,m'=1}^\infty\mfs_\eta(m,m')\zeta^{-m-m'}\frac{1}{\pi\I}
      \pvint_{\!\!\!C_{0,1}}\!\!\!\!dy\,\zeta^{2m}\tau^{\inv2m^2}\mfv(\lambda,y,m)\mfv(\lambda',\tfrac{1}{y},m)\tmfuap(y,m)\\
      &\hspace{0.4in}+\tfrac{1}2\sum_{m,m'=1}^\infty\sum_{\sigma,\sigma'\in\{-1,1\}}\mfh_\eta(\sigma,\sigma';m,m')
      \zeta^{m+m'}\tau^{\inv4(m^2+(m')^2)}\\
      &\hspace{2.2in}\times\mfv(\lambda,\sigma,m)\mfv(\lambda',\sigma',m')\mfuu(\sigma,m)\mfuu(\sigma',m'),\\
    \wtKASEPeta_{1,2}(\lambda,\lambda')&=-\tfrac12\sum_{m=1}^\infty\sum_{\sigma\in\{-1,1\}}\sigma\zeta^{m}\tau^{\inv4m^2}
    \mfv(\lambda,\sigma,m)\mfuu(\sigma,m),\\
    \wtKASEPeta_{2,2}(\lambda,\lambda')&=\tfrac12\sgn(\lambda'-\lambda).
  \end{split}
\end{equation}

Up to here we have proceeded rigorously. In what follows we will proceed with a formal
asymptotic analysis.  The first step, which we undertake in the rest of this subsection,
is to use a Mellin-Barnes representation (Lemma \ref{Lem:MBResult2}, however, without
verifying the necessary decay conditions) to argue formally that the right hand side of
\eqref{eq:limEta} yields
\begin{equation}
\pf\!\left[J-\wtKASEPzero\right]\label{eq:wtKASEP0}
\end{equation}
with
\begin{equation}
  \label{eq:wtKASEP0def}
  \begin{split}
    \wtKASEPzero_{1,1}(\lambda,\lambda')&=\itwopii{2}\int_{(c+\I\rr)^2}ds\,ds'\,\frac{\pi^2}{\sin(-\pi
      s)\sin(-\pi s')}\mfs_0(s,s')\\
    &\hspace{1.4in}\times\zeta^{-s-s'}\frac{1}{\pi\I}
    \pvint_{\!\!\!C_{0,1}}\!\!\!\!dy\,(-\zeta)^{2s}\tau^{\inv2s^2}\mfv(\lambda,y,s)\mfv(\lambda',\tfrac{1}{y},s)\tmfuap(y,s)\\
    &\hspace{0.4in}+\inv{2\twopii2}\int_{(c+\I\rr)^2}ds\,ds'\,\frac{\pi^2}{\sin(-\pi
      s)\sin(-\pi s')}\sum_{\sigma,\sigma'\in\{-1,1\}}\mfh_0(\sigma,\sigma';s,s')\\
    &\hspace{1.2in}\times(-\zeta)^{s+s'}\tau^{\inv4(s^2+(s')^2)}\mfv(\lambda,\sigma,s)\mfv(\lambda',\sigma',s')\mfuu(\sigma,s)\mfuu(\sigma',s'),\\
    \wtKASEPzero_{1,2}(\lambda,\lambda')&=-\frac1{4\pi\I}\int_{c+\I\rr}ds\,\frac{\pi}{\sin(-\pi
      s)}\sum_{\sigma\in\{-1,1\}}\sigma(-\zeta)^{s}\tau^{\inv4s^2}
    \mfv(\lambda,\sigma,s)\mfuu(\sigma,s),\\
    \wtKASEPzero_{2,2}(\lambda,\lambda')&=\tfrac12\sgn(\lambda'-\lambda),
  \end{split}
\end{equation}
where $\mfs_0(s,s')=\lim_{\eta\to0}\mfs_\eta(s,s')$ and
$\mfh_0(\sigma,\sigma';s,s')=\lim_{\eta\to0}\mfh_\eta(\sigma,\sigma';s,s')$. Note that
this formula involves using first the Mellin-Barnes representation and then computing the
limit $\eta\to0$. As we will see below, it is only in the limit $\eta\to0$ that the
additional poles which arise in the Mellin-Barnes representation cancel.

In view of \eqref{eq:limEta}, and taking the $\eta\to0$ limit back inside the Pfaffian,
our goal is to argue that $\lim_{\eta\to0}\wtKASEPeta$ is given by $\wtKASEPzero$.
Consider first applying the Mellin-Barnes representation to the kernel
$\wtKASEPeta_{1,2}$. One can check that in this case the only poles of the integrand (in
$s$) are the ones occurring at $s\in\zz_{\geq1}$ coming from $\pi/\sin(-\pi s)$, and thus
the Mellin-Barnes representation can be applied without additional difficulties (in other
words the set $\mathbb{A}$ in Lemma \ref{Lem:MBResult2} is empty in this case). Doing this
and taking $\eta\to0$ yields $\wtKASEPzero_{1,2}$. Note also that $\wtKASEPeta_{2,2}$ does
not depend on $\eta$, and in fact it equals $\wtKASEPzero_{2,2}$, so the equality is direct
in this case.

It remains to handle $\wtKASEPeta_{1,1}$, which is where the additional poles in the
Mellin-Barnes representation arise. Write for convenience
\[\wtKASEPeta_{1,1}=L^\eta_1+L^\eta_2\]
where $L^\eta_1$ and $L^\eta_2$ correspond to each of the two terms appearing in the
definition of $\wtKASEPeta_{1,1}$. For $L^\eta_1$ one checks again that the only poles
occur at $s,s'\in\zz_{\geq1}$ so the Mellin-Barnes representation can be applied as
before, yielding
\begin{multline}\label{eq:L1}
  L^\eta_1=\itwopii{2}\int_{(c+\I\rr)^2}ds\,ds'\,\frac{\pi^2}{\sin(-\pi
    s)\sin(-\pi s')}\mfs_\eta(s,s')\\
  \times(-\zeta)^{-s-s'}\frac{1}{\pi\I}
  \pvint_{\!\!\!C_{0,1}}\!\!\!\!dy\,\zeta^{2s}\tau^{\inv2s^2}\mfv(\lambda,y,s)\mfv(\lambda',\tfrac{1}{y},s)\tmfuap(y,s)
\end{multline}
for $c\in(0,1)$. Taking $\eta\to0$ will yield the desired representation.

Now write $L^\eta_2$ as
\begin{equation}\label{eq:L2def}
  L^\eta_2=\tfrac{1}2\sum_{m,m'=1}^\infty\sum_{\sigma,\sigma'\in\{-1,1\}}\zeta^{m+m'}\mfh_\eta(m,m';\sigma,\sigma')I(m,m';\sigma,\sigma')
\end{equation}
with
$I(m,m';\sigma,\sigma')=\tau^{\inv4(m^2+(m')^2)}\mfv(\lambda,\sigma,m)\mfv(\lambda',\sigma',m')\mfuu(\sigma,m)\mfuu(\sigma',m')$.
Our goal is to get
\begin{multline}\label{eq:L2}
  \lim_{\eta\to0}L^\eta_2=\inv{2\twopii2}\int_{(c+\I\rr)^2}\!\!ds\,ds'\,\frac{\pi^2(-\zeta)^{s+s'}}{\sin(-\pi
    s)\sin(-\pi
    s')}\sum_{\sigma,\sigma'\in\{-1,1\}}\!\!\mfh_0(s,s';\sigma,\sigma')I(s,s';\sigma,\sigma').
\end{multline}
This case is more involved, due to the factor $\mfh_\eta(s,s';\sigma,\sigma')$. One can
check that $I(s,s';\sigma,\sigma')$ is analytic away from $s,s'\in\zz_{\geq1}$, and in
fact the same holds for $\mfh_\eta(s,s';\sigma,\sigma')$ when $\sigma\neq\sigma'$.  Now
consider the case $\sigma=\sigma'$. Let us first fix $m$ and consider the effect of
applying Lemma \ref{Lem:MBResult2} to the $m'$ sum in \eqref{eq:L2def}.  We need to
analyze the poles (in $s'$) of
\[\mfh_\eta(m,s';\sigma,\sigma)=
\sigma\frac{\Gamma(\inv2(m-s')+\eta)\Gamma(\inv2(s'-m)+\eta)}{\Gamma(\inv2(m+s')+\eta)\Gamma(-\inv2(m+s')+\eta)}
\frac{s'-m+2\eta}{m+s'+2\eta}.\] The numerator has singularities when $s'=m+2\ell+2\eta$
and $s'=m-2\ell-2\eta$ for $\ell\in\zz_{\geq0}$. The first type of singularity is
removable, because at that point the factor $1/\Gamma(-\inv2(m+s')+\eta)$ evaluates to
$1/\Gamma(-m-\ell)=0$. The second type of singularity is, on the other hand, a pole, with
residue
$2\sigma\frac{(-1)^\ell}{\ell!}\frac{\Gamma(\ell+2\eta)}{\Gamma(m-\ell)\Gamma(\ell-m+2\eta)}
\frac{\ell+2\eta}{\ell-m}$, and which belongs to the deformation region (namely
$\{z\in\cc\!:\Re(z)\geq\inv2\}$) only when $\ell<m/2$ (assuming that $\eta$ is small). In
view of this, Lemma \ref{Lem:MBResult2} suggests that
\begin{align}
  L^\eta_2&=\sum_{m=1}^\infty\inv{4\pi\I}\int_{c+\I\rr}ds'\,\frac{\pi}{\sin(-\pi s')}
  \sum_{\sigma,\sigma'\in\{-1,1\}}\zeta^m(-\zeta)^{s'}\mfh_\eta(\sigma,\sigma';m,s')I(\sigma,\sigma';m,s')\\
  &\qquad-\sum_{m=1}^\infty\sum_{\ell=0}^{\lfloor
    m/2\rfloor}\sum_{\sigma\in\{-1,1\}}\sigma
  \frac{(-1)^\ell}{\ell!}\frac{\Gamma(\ell+2\eta)}{\Gamma(m-\ell)\Gamma(\ell-m+2\eta)}\frac{\ell-2\eta}{\ell-m}
  \zeta^m(-\zeta)^{m-2\ell-2\eta}\\
  &\hspace{2in}\times\frac{\pi}{\sin(\pi(2\eta+2\ell-m))}I(\sigma,\sigma';m,m-2\ell-2\eta)\\
  &:=L^\eta_{2,1}-L^\eta_{2,2}.
\end{align}
Here we are choosing the contour $C_{1,2,\dotsc}$ in Lemma \ref{Lem:MBResult2} to be
$c+\I\rr$. Now we need to apply the Mellin-Barnes representation to the
remaining sum (in $m$) appearing in $L^\eta_{2,1}$, regarding $s'$ as fixed. As before
most of the factors making up the integrand are analytic, except for
$\mfh_\eta(s,s';\sigma,\sigma')$ when $\sigma=\sigma'$, in which case it reads
\[\mfh_\eta(s,s';\sigma,\sigma)=\sigma\frac{\Gamma(\inv2(s-s')+\eta)\Gamma(\inv2(s'-s)+\eta)}
{\Gamma(\inv2(s+s')+\eta)\Gamma(-\inv2(s+s')+\eta)} \frac{s'-s+2\eta}{s+s'+2\eta}.\] The
numerator has singularities when $s=s'-2\ell-2\eta$ and $s=s'+2\ell+2\eta$ for
$\ell\in\zz_{\geq0}$. Note that the singularities of the first type are never in the
deformation region, so we are only left with the second type of singularities, which lie
in the deformation region for all $\ell\geq0$, with residue
$2\sigma\frac{(-1)^{\ell}}{\ell!}\frac{\Gamma(\ell+2\eta)}{\Gamma(s'+\ell+2\eta)\Gamma(-s'-\ell)}
\frac{\ell}{s'+\ell+2\eta}$. Reasoning as before we get
\begin{align}
  L^\eta_{2,1}&=\sum_{\sigma,\sigma'\in\{-1,1\}}\inv{2\twopii2}\int_{(c+\I\rr)^2}ds\,ds'\,\frac{\pi^2(-\zeta)^{s+s'}}{\sin(-\pi
    s)\sin(-\pi s')}
  \mfh_\eta(s,s';\sigma,\sigma')I(s,s';\sigma,\sigma')\\
  &\qquad-\sum_{\ell\geq0}\sum_{\sigma\in\{-1,1\}}\inv{2\pi\I}\int_{c+\I\rr}ds'\,\sigma\frac{(-1)^{\ell}}{\ell!}\frac{\Gamma(\ell+2\eta)}
  {\Gamma(s'+\ell+2\eta)\Gamma(-s'-\ell)}\frac{\ell}{s'+\ell+2\eta}\\
  &\hspace{0.6in}\times\frac{\pi^2(-\zeta)^{2s'+2\ell+2\eta}}{\sin(-\pi(s'+2\ell+2\eta))\sin(-\pi s')}I(s'+2\ell+2\eta,s';\sigma,\sigma)\\
  &:=L^\eta_{2,3}-L^\eta_{2,4}.
\end{align}

In terms of the above kernels we have
$\wtKASEPeta_{1,2}=L^\eta_1+L^\eta_{2,3}-L^\eta_{2,2}-L^\eta_{2,4}$. Observe now, using
\eqref{eq:L1} and the last equation, that the limit as $\eta\to0$ of
$L^\eta_1+L^\eta_{2,3}$ yields exactly $\wtKASEPzero_{1,1}$ (given in
\eqref{eq:wtKASEP0def}), so all that remains to show is that
$L^\eta_{2,2}+L^\eta_{2,4}\longrightarrow0$ as $\eta\to0$. For the first term, noting that the zero of
the sine in the denominator cancels with a zero coming from one of the Gamma functions, so
that
$\frac{\pi}{\sin(\pi(2\eta+2\ell-m))\Gamma(\ell-m+2\eta)}\longrightarrow(-1)^{\ell}(m-\ell)!$,
we have:
\begin{equation}
  \label{eq:L22lim}
  \begin{split}
    \lim_{\eta\to0}L^\eta_{2,2}&=-\sum_{m=1}^\infty\sum_{\ell=0}^{\lfloor m/2\rfloor}
    \sum_{\sigma\in\{-1,1\}}\sigma(-1)^m\zeta^{2m-2\ell}I(\sigma,\sigma;m,m-2\ell)\\
    &=-\sum_{\ell=0}^{\infty}\sum_{m=0}^\infty
    \sum_{\sigma\in\{-1,1\}}\sigma(-1)^m\zeta^{2m+2\ell}I(\sigma,\sigma;m+2\ell,m)
  \end{split}
\end{equation}
where, for convenience, in the second equality we have added the term with $\ell=m=0$
which is 0 anyway because $\mfuu(\sigma,0)$ (which appears in $I(\sigma,\sigma;0,0)$) is
so.  On the other hand we have
\begin{multline}
  \lim_{\eta\to0}L^\eta_{2,4}=\sum_{\ell\geq0}\sum_{\sigma\in\{-1,1\}}\inv{2\pi\I}\int_{c+\I\rr}ds'\,\sigma\frac{(-1)^{\ell}}{\ell!}\frac{\Gamma(\ell)}
  {\Gamma(s'+\ell)\Gamma(-s'-\ell)}\frac{\ell}{s'+\ell}\\
  \times\frac{\pi^2(-\zeta)^{2s'+2\ell}}{\sin(-\pi(s'+2\ell))\sin(-\pi
    s')}I(s'+2\ell,s';\sigma,\sigma).
\end{multline}
The $s'$ integral can be computed in terms of the residues of the integrand for
$\Re(s')\geq\inv2$. There is a double zero in the denominator coming from the sine factors
when $s'=m'\in\zz_{\geq1}$, but one of them is canceled by the zero of
$1/\Gamma(-s'-\ell)$ at these points, resulting in a simple pole, with
$\Res_{s'=m}\pi^2/[\sin(-\pi(s'+2\ell))\sin(-\pi
s')\Gamma(-s'-\ell)]=(-1)^{m'+\ell}(m'+\ell)!$. We get
\[\lim_{\eta\to0}L^\eta_{2,4}=\sum_{\ell\geq0}\sum_{m'\geq1}\sum_{\sigma\in\{-1,1\}}\,
\sigma(-1)^{m'}\zeta^{2m'+2\ell}\tau^{\inv4((m'+2\ell)^2+(m')^2)}
I(\sigma,\sigma;m'+2\ell,m').\] As before we may add the term with $\ell=m'=0$, and now
comparing with \eqref{eq:L22lim} we see that
$\lim_{\eta\to0}(L^\eta_{2,2}+L^\eta_{2,4})=0$ as desired, which finishes our derivation
of \eqref{eq:wtKASEP0}.

\subsection{Computation of the limit}\label{sec:limComp}

We are finally in position to compute the $t\to\infty$ asymptotics of the flat ASEP
distribution function. We take $\zeta=-\tau^{-\inv4t+\inv2t^{1/3}r}$ and, in view
\eqref{eq:wtKASEP0}, we need to compute the limit of $\pf[J-\wtKASEPzero]$, with
$\wtKASEPzero$ given in \eqref{eq:wtKASEP0def}.
We will only provide a formal critical point derivation of the limit. To that end we study first the factors
in $\wtKASEPzero(s,s')$ which depend on $t$. They come from the products of the form
$\tau^{(-\inv4t+\inv2t^{1/3}r)s}\mfv(\lambda,y,s)$, and are given by
\[\exp\!\left(t\left[\tfrac{1}{1+\tau^{-s/2}y}-\tfrac{1}{1+\tau^{s/2}y}-\tfrac14\log(\tau)s\right]+\tfrac12t^{1/3}rs\log(\tau)\right)\]
(this factor appears twice in $\wtKASEPzero_{1,1}(\lambda,\lambda';s,s')$, once with
respect to $s$ and once with respect to $s'$, and once in
$\wtKASEPzero_{1,2}(\lambda,\lambda';s,s')$). Consider now the function
$f(s,y)=\frac{1}{1+\tau^{-s/2}y}-\frac{1}{1+\tau^{s/2}y}-\inv4\log(\tau)s$. One checks
that $f$ has a critical point at $(0,1)$ and, moreover, that the Hessian of this function
vanishes at this point. This suggests that we should use a $t^{-1/3}$ scaling around these
points. Explicitly, we will rescale as follows:
\[y=1+t^{-1/3}\ty,\qquad s=\tfrac{-1}{\log(\tau)}t^{-1/3}\ts\qqand
\lambda=t^{1/3}(\tl-2r)\]
(and similarly for $\lambda'$ and $s'$). The $\lambda$ scaling is so that $\lambda\frac{1-\tau^{s/2}y}{1+\tau^{s/2}y}$ is
order 1 (the shift by $2r$ is for convenience). The $\lambda$ change of variables produces
a $t^{1/3}$ in front of the kernel $\wtKASEPzero$, and using \eqref{eq:diag2Pf} this can
be rewritten as
\begin{equation}
t^{1/3}\wtKASEPzero(\lambda,\lambda')=\left[
  \begin{smallmatrix}
    t^{2/3}\wtKASEPzero_{1,1}(\lambda,\lambda') & t^{1/3}\wtKASEPzero_{1,2}(\lambda,\lambda')\\
    -t^{1/3}\wtKASEPzero_{1,2}(\lambda',\lambda) & \wtKASEPzero_{2,2}(\lambda,\lambda')
  \end{smallmatrix}\right].\label{eq:kernelSc}
\end{equation}
With this scaling we have 
\begin{gather}
  t\left[\tfrac{1}{1+\tau^{-s/2}y}-\tfrac1{1+\tau^{s/2}y}
    -\tfrac14\log(\tau)s\right]+\tfrac12rt^{1/3}\log(\tau)s
  \approx\tfrac1{192}\ts^3+\tfrac1{16}\ts\ty^2,\\
  \tfrac{1-\tau^{s/2}y}{1+\tau^{s/2}y}\approx
  \tfrac14(\ts-2\ty)t^{-1/3},\quad\tau^{-s/2}\tfrac{1+y^2}{y^2-1}\,dy\approx\tfrac{1}{\ty}\,d\ty,
  \quad t^{-1/3}\tfrac{\pi}{\sin(-\pi s)}\,ds\approx-\tfrac1{\ts}\,d\ts,\\
  \sin(\tfrac{\pi}{2}(2s_1+1))\tfrac{\sin(\pi(s_1-s_2))}{\pi(s_1-s_2)}\approx1,\qquad
  \tfrac{\Gamma(\inv2(s_1-s_2))\Gamma(\inv2(s_2-s_1))}{\Gamma(\inv2(s_1+s_2))\Gamma(-\inv2(s_1+s_2))}\tfrac{s_2-s_1}{s_1+s_2}
  \approx\tfrac{\ts_1+\ts_2}{\ts_2-\ts_1}
\end{gather}
and, furthermore, using Lemma A.1 in \cite{oqr-half-flat},
\[\tfrac{\pochinf{-\tau^{-s/2}y;\tau}}{\pochinf{-\tau^{s/2}y;\tau}}\approx1,
\qqand\tfrac{\pochinf{\tau^{1+s}y^2;\tau}}{\pochinf{\tau y^2;\tau}} \approx1.\] Using
these asymptotics and the definition of $\mfv$ in \eqref{eq:def-mfv-pre}, we get
\begin{align}
  \mfv(\lambda,y,s)&\approx\tfrac14t^{-1/3}(\ts-2\ty)
  e^{\frac1{192}\ts^3+\frac1{16}\ts\ty^2-\frac14(\ts-2\ty)\tl+\inv2\ty r}\\
  &=-t^{-1/3}\p_{\lambda}
  e^{\frac1{192}\ts^3+\frac1{16}\ts\ty^2-\frac14(\ts-2\ty)\tl+\inv2\ty r}.
\end{align}
When $y$ is replaced by $1/y$ the above asymptotics hold with $\ty$
replaced by $-\ty$. At the same time, if $y=1$ then the asymptotics for $\mfv$ hold with
$\ty=0$ while if $y=-1$ the right hand side should be replaced by 0. The above asymptotics
also give, in view of the definition of $\mfuu$ in \eqref{eq:def-mfu} and of $\tmfuap$ in \eqref{eq:def-tmfuap},
\[\mfuu(\pm1,s)\approx1\qqand\tmfuap(y,s)dy\approx\tfrac{1}{\ty}d\ty.\]
Using all this in \eqref{eq:wtKASEP0def}, choosing the $s$ and $s'$ contours to be
$\inv2t^{-1/3}+\I\rr$, and keeping \eqref{eq:kernelSc} in mind we deduce that
\begin{equation}\label{eq:Kbar}
  \lim_{t\to\infty}\wtKASEPzero=\bar K
\end{equation}
with
\begin{equation}
  \begin{split}
    \bar K_{1,1}(\tl,\tl')&\approx\p_{\tl}\p_{\tl'}\tfrac1{\pi\I}\pvint_{\!\!\!\!\I\rr}d\ty\,\tfrac1{\twopii2}\pvint_{\!\!\!\!(\inv2+\I\rr)^2}d\ts\,d\ts'\,\tfrac1{\ts\ts'}
    \tfrac{1}{\ty}e^{\frac1{96}\ts^3+\frac1{8}\ts\ty^2-\frac14\ts(\tl+\tl')+\inv2(\tl-\tl')\ty}\\
    &\qquad\qquad+\p_{\tl}\p_{\tl'}\tfrac1{\twopii2}\pvint_{\!\!\!\!(\inv2+\I\rr)^2}d\ts\,d\ts'\,\tfrac{\ts+\ts'}{2\ts\ts'(\ts'-\ts)}e^{\frac1{192}(\ts^3+\ts'^3)-\inv{4}(\ts\tl+\ts'\tl')},\\
  \bar K_{1,2}(\tl,\tl')&\approx-\tfrac12\p_{\tl}\tfrac1{2\pi\I}\int_{\inv2+\I\rr}d\ts\,e^{\inv{192}\ts^3-\inv4\ts\tl},\\
  \bar K_{2,2}(\tl,\tl')&=\tfrac12\sgn(\tl'-\tl).
\end{split}
\end{equation}

What remains is to rewrite $\bar K$ in a more convenient way (this part bears some
similarities with Appendix K of \cite{cal-led}). Let us write
$\bar{K}_{1,1}=\bar{K}_{1,1}^1+\bar{K}_{1,1}^2$.  Note that $\bar{K}^1_{1,1}$ depends on
$\ts'$ only through the integral $\inv{2\pi\I}\pvint_{\!\inv2+\I\rr}d\ts'\,\inv{\ts'}$
which, as a principal value integral, equals $\inv2$. Then, using the definition of the
Airy function,
\begin{equation}
\Ai(z)=\inv{2\pi\I}\int_{c+\I\rr}du\,e^{\inv3u^3-uz}\label{eq:airy}
\end{equation}
for $c>0$, we get (removing the tildes)
\begin{align}
  \bar{K}_{1,1}^1(\lambda_1,\lambda_2)&=\p_{\lambda_1}\p_{\lambda_2}\tfrac1{\pi\I}\pvint_{\!\!\!\!\I\rr}dy\,\tfrac1{2\pi\I}\int_{c+\I\rr}ds_1
   \,\tfrac{1}{2s_1y}e^{\frac1{96}s_1^3+\frac1{8}s_1y^2-\frac14(s_1-2y)\lambda_1-\inv4(s_1+2y)\lambda_2}\\
   &=\p_{\lambda_1}\p_{\lambda_2}\tfrac1{2\pi\I}\pvint_{\!\!\!\!\I\rr}dy\int_0^\infty\!\!du\,\tfrac1{2\pi\I}\int_{c+\I\rr}\!\!ds_1
   \,\tfrac{1}{y}e^{\frac1{96}s_1^3+\frac1{8}s_1y^2-\frac14(s_1-2y)\lambda_1-\inv4(s_1+2y)\lambda_2-s_1u}\\
   &=\p_{\lambda_1}\p_{\lambda_2}\tfrac{1}{2\pi\I}\pvint_{\!\!\!\!\rr}dy\,\int_0^\infty du
   \,\tfrac{1}{y}\Ai(2^{2/3}(\tfrac12(\lambda_1+\lambda_2)+y^2)+u)e^{\I(\lambda_2-\lambda_1)y}\\
   &=\p_{\lambda_2}\tfrac{1}{2^{4/3}\pi\I}\pvint_{\!\!\!\!\rr}dy\,\int_0^\infty du
   \,\tfrac{1}{y}\Ai'(2^{2/3}(\tfrac12(\lambda_1+\lambda_2)+y^2)+u)e^{\I(\lambda_2-\lambda_1)y}\\
   &\qquad\qquad-\p_{\lambda_2}\tfrac{1}{2\pi}\pvint_{\!\!\!\!\rr}dy\int_0^\infty du
   \Ai(2^{2/3}(\tfrac12(\lambda_1+\lambda_2)+y^2)+u)e^{\I(\lambda_2-\lambda_1)y}\\
   &=-\p_{\lambda_2}\tfrac{1}{2^{7/3}\pi\I}\int_{\lambda_1-\lambda_2}^{\lambda_2-\lambda_1}d\eta\,\pvint_{\!\!\!\!\rr}dy
   \,\Ai(2^{2/3}(\tfrac12(\lambda_1+\lambda_2)+y^2))e^{\I\eta y}\\
   &\qquad\qquad-\p_{\lambda_2}\tfrac{1}{2\pi}\pvint_{\!\!\!\!\rr}dy\int_0^\infty du
   \Ai(2^{2/3}(\tfrac12(\lambda_1+\lambda_2)+y^2+2^{-2/3}u))e^{\I(\lambda_2-\lambda_1)y}.
 \end{align}
 As in Appendix J of \cite{cal-led}, we will use the following identity, proved in \cite{valleeSoaresIzarra}:
 \[\tfrac1{2^{1/3}\pi}\int_{-\infty}^{\infty}dy\,\Ai\!\big(2^{2/3}(y^2+\tfrac12(a+b))\big)e^{\I
   (a-b)y}=\Ai(a)\Ai(b).\]
 It implies that
 \begin{equation}
 \begin{aligned}
  \bar{K}_{1,1}^1(\lambda_1,\lambda_2)&=-\p_{\lambda_2}\tfrac1{4}\int_{\lambda_1-\lambda_2}^{\lambda_2-\lambda_1}d\eta
   \Ai\!\big(\tfrac12(\lambda_1+\lambda_2+\eta)\big)\Ai\!\big(\tfrac12(\lambda_1+\lambda_2-\eta)\big)\\
   &\hspace{1.4in}-\p_{\lambda_2}\tfrac{1}{2^{2/3}}\int_0^\infty du\Ai(\lambda_1+2^{-2/3}u)\Ai(\lambda_2+2^{-2/3}u)\\
   &=-\tfrac12\int_{\lambda_1}^{\lambda_2}d\eta\Ai'(\eta)\Ai(\lambda_1+\lambda_2-\eta)-\tfrac12\Ai(\lambda_1)\Ai(\lambda_2)-\p_{\lambda_2}\K(\lambda_1,\lambda_2),
 \end{aligned}\label{eq:barK11}
\end{equation}
where $\K$ was defined in \eqref{eq:airyKernel}.
 
Now we turn to $\bar{K}^2_{1,1}$, which is given by
\begin{multline}
  \bar{K}^2_{1,1}(\lambda_1,\lambda_2)=\p_{\lambda_1}\p_{\lambda_2}\tfrac1{2\twopii{2}}\pvint_{\!\!\!\!(c+\I\rr)^2}\!\!ds_1\,ds_2
  \tfrac{s_1+s_2}{s_1s_2(s_2-s_1)}e^{\inv{192}(s_1^3+s_2^3)-\inv4(s_1\lambda_1+s_2\lambda_2)}\\
  =\p_{\lambda_1}\p_{\lambda_2}\tfrac1{2\twopii{2}}\int_{2c+\I\rr}\!\!ds_1\int_{c+\I\rr}\!\!ds_2
  \tfrac{1}{s_2(s_2-s_1)}e^{\inv{192}(s_1^3+s_2^3)-\inv4(s_1\lambda_1+s_2\lambda_2)}-(\lambda_1\longleftrightarrow\lambda_2)
\end{multline}
where we have shifted the $s_2$ contour to $2c+\I\rr$ for convenience and where
$(\lambda_1\longleftrightarrow\lambda_2)$ denotes the same as the previous expression with
$\lambda_1$ and $\lambda_2$ interchanged. The first term on the right hand side can be
rewritten as
\begin{align}
  &-\p_{\lambda_1}\p_{\lambda_2}\tfrac12\tfrac1{\twopii{2}}\int_{2c+\I\rr}\!\!ds_1\int_{c+\I\rr}\!\!ds_2
  \int_{0}^\infty du_1\int_0^{\infty} du_2\,
  e^{\inv{192}(s_1^3+s_2^3)-\inv4(s_1\lambda_1+s_2\lambda_2)-(s_1-s_2)u_1-s_2u_2}\\
  &\qquad=-\p_{\lambda_1}\p_{\lambda_2}\tfrac12\tfrac1{\twopii{2}}\int_{2c+\I\rr}\!\!ds_1\int_{c+\I\rr}\!\!ds_2
  \int_{0}^\infty dv_1\int_{-v_1}^{\infty} dv_2\,e^{\inv{192}(s_1^3+s_2^3)-\inv4(s_1\lambda_1+s_2\lambda_2)-s_1v_1-s_2v_2}\\
  &\qquad=-8\,\p_{\lambda_1}\p_{\lambda_2}\int_0^{\infty}dv_1\int_{-v_1}^\infty
  dv_2\Ai(\lambda_1+4v_1)\!\Ai(\lambda_2+4v_2)\\
  &\qquad=\tfrac12\int_0^{\infty}dv\Ai'(\lambda_1+v)\Ai(\lambda_2-v)=\tfrac12\int_{\lambda_1}^{\infty}dv\Ai'(v)\Ai(\lambda_1+\lambda_2-v),
\end{align}
where we have used again \eqref{eq:airy}. Subtracting the term with $\lambda_1$ and $\lambda_2$ flipped we get
  \[\bar{K}_{1,1}^2(\lambda_1,\lambda_2)=\tfrac12\int_{\lambda_1}^{\lambda_2}\,d\omega\Ai'(\omega)\Ai(\lambda_1+\lambda_2-\omega).\]
Note that this term cancels the first term on the right hand side of \eqref{eq:barK11},
and thus we obtain
\begin{equation}
  \label{eq:K11}
  \bar{K}_{1,1}(\lambda_1,\lambda_2)=-\p_{\lambda_2}\K(\lambda_1,\lambda_2)-\tfrac12\Ai(\lambda_1)\Ai(\lambda_2)
  =\tfrac12(\p_{\lambda_1}-\p_{\lambda_2})\K(\lambda_1,\lambda_2),
\end{equation}
where the second equality follows by integration by parts.

$\bar{K}_{1,2}$ is simpler to obtain: using \eqref{eq:airy} one more time,
\[\bar{K}_{1,2}(\lambda_1,\lambda_2)=-\tfrac1{16\pi\I}\int_{c+\I\rr}ds\,e^{\inv{192}s^3-\inv4s\lambda_1}
   =-\tfrac12\Ai(\lambda_1).\]
 This, together with \eqref{eq:K11}, gives
 \[\bar K(\lambda_1,\lambda_2)=\left[
    \begin{matrix}
      \tfrac12(\p_{\lambda_1}-\p_{\lambda_2})\K(\lambda_1,\lambda_2) & -\tfrac12\Ai(\lambda_1)\\
      \tfrac12\Ai(\lambda_2) & \tfrac12\sgn(\lambda_2-\lambda_1)
    \end{matrix}\right].\]
  In view of \eqref{eq:wtKASEP0}, \eqref{eq:Kbar} and the definition of $K_r$ in
  \eqref{eq:Kr} (shifting $\lambda_1\mapsto\lambda_1+r$, $\lambda_2\mapsto\lambda_2+r$
  in the Pfaffian as well), this finishes our formal asymptotic analysis of
  $\pf\!\big[J-\wtKASEPz\big]$, yielding $\pf\!\big[J-K_r]$.

 \subsection{Fredholm Pfaffian formula for GOE}\label{sec:GOEPfaffian}

 All that remains is to complete our formal derivation of the GOE asymptotics for flat
 ASEP is to verify that the limiting Fredholm Pfaffian is indeed a formula for the
 Tracy-Widom GOE distribution. This fact can be proved rigorously:
 
\begin{prop}\label{prop:GOEPfaffian}
  \begin{equation}
    \label{eq:GOE-pfaffian}
    F_{\rm GOE}(r)=\pf\!\left[J-K_r\right]_{L^2([0,\infty))},
  \end{equation}
  with
  \[K_r(\lambda_1,\lambda_2)=\left[
    \begin{matrix}
     \tfrac12(\p_{\lambda_1}-\p_{\lambda_2})\K(\lambda_1+r,\lambda_2+r) & -\tfrac12\Ai(\lambda_1+r)\\
      \tfrac12\Ai(\lambda_2+r) & \tfrac12\sgn(\lambda_2-\lambda_1)
    \end{matrix}\right].\]
\end{prop}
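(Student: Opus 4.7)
The plan is to verify \eqref{eq:GOE-pfaffian} by matching $K_r$ against a known Pfaffian representation of $F_{\rm GOE}$ (e.g.\ the one appearing in work of Ferrari or Sasamoto on the flat PNG model and GOE edge statistics) and then reducing the remaining check to a Fredholm determinant identity. First I would rewrite the $(1,1)$-entry in a form that exposes its connection to a product of Airy functions. Differentiating under the integral sign in $\K(a,b) = \int_0^\infty \Ai(a+\xi)\Ai(b+\xi)\,d\xi$ and integrating by parts on one of the two resulting Airy integrals gives
\begin{equation}
\tfrac{1}{2}(\p_{\lambda_1}-\p_{\lambda_2})\K(\lambda_1+r,\lambda_2+r) = \tfrac{1}{2}\Ai(\lambda_1+r)\Ai(\lambda_2+r) - \int_0^\infty \Ai(\lambda_1+r+\xi)\Ai'(\lambda_2+r+\xi)\,d\xi,
\end{equation}
so that after antisymmetrization $K_r$ takes the standard ``Sasamoto'' shape with entries built from $\Ai$, its antiderivative, and $\sgn$.

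Next, I would perform a change of basis by a constant invertible $2\times2$ matrix $M$ satisfying $M^{\sf T} J M = J$ (so $M \in \mathrm{Sp}(2,\rr)$), which leaves the Fredholm Pfaffian invariant. Choosing $M$ appropriately moves the rank-one Airy contributions from $K_{11}$ into the off-diagonal blocks and converts $K_r$ into the explicit Ferrari-type kernel known from the GOE edge; once in that form, the identity $\pf[J - \widetilde K_r]_{L^2([0,\infty))} = F_{\rm GOE}(r)$ is classical. Alternatively, one can avoid invoking an external Pfaffian formula by squaring: using $\pf[J-K_r]^2 = \det[J-K_r] = \det[I - J K_r]$ and reducing the $2\times 2$ block Fredholm determinant to a scalar one via a Schur-complement computation (cf.\ Lemma \ref{lem:pfaffianblockform}), the right-hand side becomes a Fredholm determinant that matches the standard Tracy--Widom formula
\begin{equation}
F_{\rm GOE}(r)^2 = \det(I - B_r)_{L^2([0,\infty))}
\end{equation}
with an Airy--sign kernel $B_r$ whose explicit form is well documented. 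The sign of the square root is then fixed by continuity in $r$ together with the boundary value $F_{\rm GOE}(+\infty) = 1$.

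The main obstacle is that the $(2,2)$-entry $\tfrac12\sgn(\lambda_2-\lambda_1)$ is not trace class on $L^2(\rr_{\ge 0})$, so $\pf[J-K_r]$ must be interpreted through its defining series \eqref{eq:fredPf} rather than as an operator-theoretic Fredholm Pfaffian. This means that the various algebraic manipulations above (integration by parts, symplectic change of basis, Schur complement) have to be carried out term by term in the Pfaffian expansion, with careful attention to the convergence of each $k$-fold integral and to the Schur-Pfaffian identities (analogous to \eqref{eq:schur-pf-intro}--\eqref{eq:schur-nl-pf-intro}) that allow one to reassemble the scalar series into a Pfaffian of Airy-type entries. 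This bookkeeping is the technical crux, but it is entirely parallel to the arguments already carried out in Sections \ref{sec:pfaffian}--\ref{sec:generatingfunction}, so no essentially new ideas are needed.
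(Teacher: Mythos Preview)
Your overall architecture matches the paper's: square the Pfaffian via $\pf[J-K_r]^2=\det[I+JK_r]$, reduce the $2\times2$ block Fredholm determinant to a scalar one, match it against a known GOE formula, and fix the sign by a continuity argument as $r\to\infty$ (the paper handles the trace-class issue you flag by passing to a regularized Pfaffian). Your integration-by-parts rewriting of the $(1,1)$ entry is also exactly the first step the paper uses.

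The gap is in the reduction step, which you treat as routine but which is the entire content of the proof. A Schur complement (and Lemma~\ref{lem:pfaffianblockform}, which is about \emph{finite} Pfaffians with rank-one off-diagonal blocks) does not collapse $\det[I+JK_r]$ into a recognizable scalar determinant. What the paper actually does is: (i) show by an eigenvector argument that the symmetric rank-one piece $-\tfrac12\Ai(\lambda_1+r)\Ai(\lambda_2+r)$ can be \emph{removed} from the $(1,1)$ block without changing the determinant; (ii) exploit that $K_{1,2}$ is rank one to apply a non-obvious rank-two perturbation identity $\det(P+Q)=\det(P)\,(1+\langle u,(I-AB)^{-1}v\rangle)^2$; (iii) use Sherman--Morrison and the Ferrari--Spohn identity $\det[I-B_r]/\det[I+B_r]=\tr[(I+B_r)^{-1}\bar\delta_0]$ (with $B_r(\lambda_1,\lambda_2)=\Ai(\lambda_1+\lambda_2+r)$) to collapse everything to $\det[I-B_r]^2$. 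None of these ingredients is anticipated in your sketch, and the intermediate expressions do not reassemble by Schur--Pfaffian identities of the type you cite. Also, the target you write, $F_{\rm GOE}(r)^2=\det(I-B_r)$, is off by a square: the Ferrari--Spohn result is $F_{\rm GOE}(r)=\det(I-B_r)$, so one needs $\pf[J-K_r]^2=\det(I-B_r)^2$.

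Your alternative route---a constant symplectic conjugation to a known Pfaffian GOE kernel---is plausible in spirit but is not what the paper does, and you would need to exhibit the specific $M\in\mathrm{Sp}(2,\rr)$ and verify the match; ``choosing $M$ appropriately'' is where the difficulty hides.
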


A very similar formula for $F_{\rm GOE}$ appears in \cite{ferrariPolyGOE}. Verifying that
the Fredholm Pfaffian on the right hand side of \eqref{eq:GOE-pfaffian} is convergent is
not hard. In fact, it is enough to note that the kernel $K_r$ is uniformly bounded for
$\lambda_1,\lambda_2\geq 0$, and then expand the Pfaffian as a Fredholm series and use the
identity $\pf[A]^2=\det(A)$ together with Hadamard's bound. 

In view of the discussion about Fredholm Pfaffians contained in Appendix \ref{sec:fredPf}
(see in particular \eqref{eq:pfConjugate}), it would be nice if one could find a
symplectic $2\times2$-matrix kernel $M$ so that $M^{\sf T}K_rM$ is trace
class. Unfortunately, it does not seem like such a kernel exists. More precisely, at least
in the Fredholm determinant case one is usually led to consider multiplication operators
for conjugation (see \eqref{eq:conjugate}). In the symplectic case such an operator would
be of the form $M\binom{f_1}{f_2}(x)=\binom{\phi(x)f_1(x))}{\phi(x)^{-1}f_2(x)}$ for some
(non-vanishing) function $\phi$. But it is proved in page 82 of \cite{deiftGioev} that
there is no choice of $\phi$ for which $M^{\sf T}K_rM$ is trace class (the underlying
problem is the singularity at the diagonal of $\sgn(\lambda_2-\lambda_1)$). As mentioned
at the end of Appendix \ref{sec:fredPf}, we can still use the relation between Fredholm
Pfaffians and Fredholm determinants since we know that the Fredholm Pfaffian in
\eqref{eq:GOE-pfaffian} and the associated Fredholm determinant (see below) define
absolutely convergent series. This is what we will do in the proof, which will then be
devoted to showing that the resulting Fredholm determinant yields $F_{\rm GOE}$. The proof
follows the arguments of Section 7 of \cite{cal-led} relatively closely.

\begin{proof}[Proof of Proposition \ref{prop:GOEPfaffian}]
  For convenience will omit the subscript $r$ in $K_r$ during this proof. We will also
  omit the subscripts from Fredholm determinants and Pfaffians, which are
  always computed on $L^2([0,\infty))$ or $L^2([0,\infty))\otimes L^2([0,\infty))$.

  We have shown already that the Fredholm Pfaffian series for $\pf[J-K]$ is absolutely
  convergent. An identical argument shows that $\det[I+JK]$ satisfies the same, and thus
  Proposition \ref{prop:fredDetPf} implies that
  \begin{equation}
      \pf[J-K]^2=\det[I+JK]=\det\!\left[
      \begin{smallmatrix}
        I-K^\mathsf{T}_{1,2} & K_{2,2}\\-K_{1,1} & I-K_{1,2}
      \end{smallmatrix}\right].
    \label{eq:pf2}
  \end{equation}  
  In view of \eqref{eq:K11}, let us write
  \[K_{1,1}=K_{1,1}^a+K_{1,1}^b\] with
  \[K_{1,1}^a(\lambda_1,\lambda_2)=-\p_{\lambda_2}\K(\lambda_1+r,\lambda_2+r)\qand
  K_{1,1}^2(\lambda_1,\lambda_2)=-\tfrac12\Ai(\lambda_1+r)\Ai(\lambda_2+r).\] Note that
  $K_{1,1}^b$ is a symmetric, rank-one kernel. We claim that $K_{1,1}$ can be replaced by
  $K_{1,1}^a$ on the right hand side of \eqref{eq:pf2}:
  \begin{equation}
    \pf[J-K]^2=\det\!\left[
      \begin{smallmatrix}
        I-K^\mathsf{T}_{1,2} & K_{2,2}\\-K^a_{1,1} & I-K_{1,2}
      \end{smallmatrix}\right].\label{eq:pf3}
  \end{equation}
  It is enough to check this at the level of the finite dimensional determinants which appear in
  Fredholm determinant series \eqref{eq:fredDet} for $\det[I+JK]$, in which
  case it follows from the following general fact: if $A,B,C,U$ are $n\times n$ (real)
  matrices with $B$ and $C$ skew-symmetric and $U$ symmetric and rank-one, then the
  matrices
  \begin{equation}
    \left[
      \begin{matrix}
        A & B \\ C+U & A^{\mathsf{T}}
      \end{matrix}\right]\qqand\left[
      \begin{matrix}
        A & B \\ C & A^{\mathsf{T}}
      \end{matrix}\right]\label{eq:det-lem}
  \end{equation}
  have the same eigenvalues (and eigenvectors) To see this, suppose that $(v_1,v_2)$ is an
  eigenvector of $\left[
    \begin{smallmatrix}
      A & B \\ C+U & A^{\mathsf{T}}
    \end{smallmatrix}\right]$, where $v_i\in \rr^n$.  We claim that $Uv_1=0$, which
  implies that $(v_1,v_2)$ is also an eigenvector of $\left[
    \begin{smallmatrix}
      A & B \\ C & A^{\mathsf{T}}
    \end{smallmatrix}\right]$ with the same eigenvalue, say $\lambda$. In fact, we have
  $Av_1+Bv_2= \lambda v_1$ and $(C+U) v_1 + A^\mathsf{T} v_2 = \lambda v_2$.  Now test the
  first equation on the right with $v_2$ and the second on the left with $v_1$ to get
  $\langle Av_1,v_2\rangle + \langle Bv_2,v_2\rangle = \lambda \langle v_1,v_2\rangle$ and
  $\langle v_1, (C+U) v_1\rangle + \langle v_1, A^\mathsf{T}T v_2\rangle = \lambda\langle
  v_1,v_2\rangle$.  By skew-symmetry we have $\langle Bv_2,v_2\rangle = \langle v_1, C
  v_1\rangle =0$, so we conclude from the two equations that $\langle v_1, U
  v_1\rangle=0$.  Now a symmetric rank one matrix must be a multiple of a projection onto
  a vector $w$ and we conclude that $\langle v_1, w\rangle =0$ which further implies that
  $Uv_1=0$. This proves our claim that the matrices in \eqref{eq:det-lem} have the same
  eigenvalues, and thus \eqref{eq:pf3}.

   Since $K_{1,2}$ is rank-one, the kernel inside the determinant on the right hand side of
  \eqref{eq:pf3} is a rank-two perturbation of $\left[\begin{smallmatrix} I &
      K_{2,2}\\-K^a_{1,1} & I
    \end{smallmatrix}\right]$.
  Now recall that if $B$ is a rank-one kernel then
  \begin{equation}
    \det[I+A+B]=\det[I+A]\big(1+\tr[(I+A)^{-1}B]\big)\label{eq:rk-1}
  \end{equation}
  (this will be used later on), as long as $I+A$ is invertible. A similar formula holds
  for rank-two perturbations, but in our case the skew-symmetry of $K^a_{1,1}$ and
  $K_{2,2}$ yields the nicer formula
  \begin{multline}
    \label{eq:pdrk1}
      \pf[J-K]^2=\det\!\left[I+\left[
          \begin{smallmatrix}
            0 & K_{2,2} \\ -K^a_{1,1} & 0
          \end{smallmatrix}\right]-\left[
          \begin{smallmatrix}
            K_{1,2}^{\sf T} & 0 \\ 0 & K_{1,2}
          \end{smallmatrix}\right]\right]\\
        =\det\!\left[I+K^a_{1,1}K_{2,2}\right]
        \left(1-\tr\!\left[(I+K^a_{1,1}K_{2,2})^{-1}K_{1,2}\right]\right)^2.
  \end{multline}
  Of course, a necessary condition for this identity to hold is that $I+K^a_{1,1}K_{2,2}$
  be invertible, but this is true thanks to \eqref{eq:shMo}. The identity follows by
  approximation from the following matrix identity, which is (167) in \cite{cal-led}: if
  $A$ and $B$ are skew-symmetric $n\times n$ matrices, $R=uv^{\sf T}$ for some
  $u,v\in\rr^n$, $Q= \left[
          \begin{smallmatrix}
          R & 0 \\ 0 & R^T
          \end{smallmatrix}\right]$ and $P =\left[
          \begin{smallmatrix}
           I  & B \\ A & I
          \end{smallmatrix}\right]$, then
   \begin{equation}
    \det(P+Q) = \det(P)(1+ \langle u, (I-AB)^{-1}v\rangle )^2,\label{eq:cal-led-rk-2}
   \end{equation}
   as long as $I-AB$ is invertible. Since this matrix identity is not completely obvious,
   and no proof is given in \cite{cal-led}, let us pause for a moment to prove it.

   Note first that, since $I-AB$ is invertible by assumption (and then, taking transpose
   and by skew-symmetry of $A$ and $B$, so is $I-BA$), we have
   $P^{-1} = \left[
          \begin{smallmatrix}
           (I -BA)^{-1} & 0 \\ 0 &  (I -AB)^{-1}
          \end{smallmatrix}\right]\left[
          \begin{smallmatrix}
           I  & -B \\ -A & I
          \end{smallmatrix}\right]$.
    On the other hand, we have
   $\det(P+Q) =\det(P)\det(I+P^{-1}Q)$, so it suffices to show that $\det(I +
   P^{-1}Q)= (1+ \langle u, (I-AB)^{-1}v\rangle )^2$. Using the formula for $P^{-1}$ we get $(P^{-1}Q) \left(
          \begin{smallmatrix}
           w_1 \\ w_2
          \end{smallmatrix}\right) =  \left(\begin{smallmatrix}
           \langle v,w_1\rangle (I-BA)^{-1} u \\    \langle u,w_2\rangle (I-AB)^{-1} v
          \end{smallmatrix}\right)  - \left(\begin{smallmatrix}
           \langle u,w_2\rangle (I-BA)^{-1} Bv \\    \langle v,w_1\rangle (I-AB)^{-1} Au
          \end{smallmatrix}\right)$. 
  Now since $A$ and $B$ are skew-symmetric, we have (taking transpose) that $\langle u,
  (I-AB)^{-1}Au\rangle =  -\langle u, A(I-BA)^{-1}u\rangle$. But
  $(I-AB)^{-1}A=A(I-BA)^{-1}$, as can be checked easily by multiplying both sides by
  $I-AB$, so the previous identity implies that $\langle u, (I-AB)^{-1}Au \rangle = 0$. Now it is easy to see that $P^{-1}Q$
  is rank-two, with eigenfunctions $\left(
          \begin{smallmatrix}
           (I-BA)^{-1} u \\ (I-AB)^{-1} Au
          \end{smallmatrix}\right)$ and $\left(
          \begin{smallmatrix}
           (I-BA)^{-1}Bv \\ (I-AB)^{-1} v
          \end{smallmatrix}\right)$, both with eigenvalue $\langle u,
        (I-AB)^{-1}v\rangle$.  This implies that $\det(I+P^{-1}Q)=(1+ \langle u,
        (I-AB)^{-1}v\rangle )^2$ as desired, and \eqref{eq:cal-led-rk-2} follows.

Going back to \eqref{eq:pdrk1}, observe that
  \begin{equation}\label{eq:airyKernelPert}
    \begin{aligned}
    K^a_{1,1}K_{2,2}(\lambda_1,\lambda_2) &= -\tfrac12\int_{0}^\infty
    d\xi\,\p_\xi\K(\lambda_1+r,\xi+r)\sgn(\lambda_2-\xi)\\
    &=-\K(\lambda_1+r,\lambda_2+r)+\tfrac12\K(\lambda_1+r,r).
  \end{aligned}
  \end{equation}
  Writing
  \begin{equation}
    \label{eq:K2r}
    B_{r}(\lambda_1,\lambda_2)=\Ai(\lambda_1+\lambda_2+r)\qqand \bar\delta_0=\delta_0\otimes\uno{}
  \end{equation}
  (here $\uno{}$ is the function which is identically equal to 1), observing that
  $\K(\cdot+r,\cdot+r)=B_r^2$ and using the last formula we may write
  \begin{equation}
    K^a_{1,1}K_{2,2}=-B_r^2+\tfrac12B_r^2\bar\delta_0\qqand K_{1,2}=-\tfrac12B_{r}\bar\delta_0.
  \end{equation}
  The kernel $\tfrac12B_r^2\bar\delta_0$ is rank-one, and thus by \eqref{eq:rk-1} we have
  \begin{equation}
    \det[I+K^a_{1,1}K_{2,2}]=\det[I-B_r^2](1+\tfrac12\tr[(I-B_r^2)^{-1}B_r^2\bar\delta_0]).\label{eq:1stdet}
  \end{equation}
  On the other hand, using again the fact that $K^a_{1,1}K_{2,2}$ is a rank-one
  perturbation of $-B_r^2$, the Sherman-Morrison formula gives
  \begin{equation}
    (I+K^a_{1,1}K_{2,2})^{-1}=(I-B_r^2)^{-1}
    -\tfrac12\tfrac{(I-B_r^2)^{-1}B_r^2\bar\delta_0(I-B_r^2)^{-1}}{1+\frac12\tr[(I-B_r^2)^{-1}B_r^2\bar\delta_0]},\label{eq:shMo}
  \end{equation}
  and thus
  \[1+\tr\big[(I+K^a_{1,1}K_{2,2})^{-1}K_{1,2}\big] =1-\tfrac12\tr[(I-B_r^2)^{-1}B_r\bar\delta_0]
  +\tfrac14\tfrac{\tr[(I-B_r^2)^{-1}B_r^2\bar\delta_0(I-B_r^2)^{-1}B_r\bar\delta_0]}
  {1+\frac12\tr[(I-B_r^2)^{-1}B_r^2\bar\delta_0]}.\] Now writing $B_r\bar\delta_0$ as $\psi\otimes\uno{}$
  (where $\psi(\lambda)=\Ai(\lambda+r)$), $A=(I-B_r^2)^{-1}$ and $B=B_r\bar\delta_0$, one checks
  that
  \[\tr[AB(\psi\otimes\uno{})A(\psi\otimes\uno{})]=\tr[AB(\psi\otimes\uno{})]\tr[A(\psi\otimes\uno{})].\] Using this fact, the above identity
  yields
  \[1+\tr\big[(I+K^a_{1,1}K_{2,2})^{-1}K_{1,2}\big]
  =\tfrac{1-\frac12\tr[(I-B_r^2)^{-1}B_r\bar\delta_0]
    +\frac12\tr[(I-B_r^2)^{-1}B_r^2\bar\delta_0]}{1+\frac12\tr[(I-B_r^2)^{-1}B_r^2\bar\delta_0]}
  =\tfrac{1-\frac12\tr[(I+B_r)^{-1}B_r\bar\delta_0]}{1+\frac12\tr[(I-B_r^2)^{-1}B_r^2\bar\delta_0]}.\]
  Using this and \eqref{eq:1stdet} in \eqref{eq:pdrk1} we deduce that
  \[\pf[J-K]=\det[1-B_r^2]\frac{(1-\frac12\tr[(I+B_r)^{-1}B_r\bar\delta_0])^2}{1+\frac12\tr[(I-B_r^2)^{-1}B_r^2\bar\delta_0]}.\]
  Now let $\varphi_\ep(x)=\left(\frac{2}{\pi\ep}\right)^{1/2}e^{-x^2/(2\ep)}$ for $\ep>0$, so that
  \[\tr[(I+B_r)^{-1}B_r\bar\delta_0]=\lim_{\ep\to0}\tr[(I+B_r)^{-1}B_r(\varphi_\ep\otimes\uno{})].\]
  Since $\tr[\varphi_\ep\otimes\uno{}]=1$ we have
  \[1-\tfrac12\tr[(I+B_r)^{-1}B_r\bar\delta_0]=\tfrac12+\tfrac12\lim_{\ep\to0}\tr[(I+B_r)^{-1}(\varphi_\ep\otimes\uno{})]
  =\tfrac12+\tfrac12\tr[(I+B_r)^{-1}\bar\delta_0].\]
  In a similar way we get
  \[1+\tfrac12\tr[(I-B_r^2)^{-1}B_r^2\bar\delta_0]=\tfrac12+\tfrac12\tr[(I-B_r^2)^{-1}\bar\delta_0]
  =\tfrac12+\tfrac14\tr[(I-B_r)^{-1}\bar\delta_0]+\tfrac14\tr[(I+B_r)^{-1}\bar\delta_0].\]
  We deduce from the above identities that
  \[\pf[J- K]=\det[1-B_r^2]\frac{(1+\tr[(I+B_r)^{-1}\bar\delta_0])^2}
  {2+\tr[(I-B_r)^{-1}\bar\delta_0]+\tr[(I+B_r)^{-1}\bar\delta_0]}.\]  
  Now we use the identity
  \[\frac{\det[I-B_r]}{\det[I+B_r]}=\tr[(I+B_r)^{-1}\bar\delta_0],\]
  proved in \cite{ferrariSpohn}, and the fact, noted in \cite{cal-led}, that the same
  identity holds (with identical proof) if $B_r$ is replaced by $-B_r$. The consequence is
  that
  \begin{equation}
    \pf[J-K]^2=\det[1-B^2_{r}]\frac{\big(1+\frac{\det[I-B_r]}{\det[I+B_r]}\big)^2}
    {2+\frac{\det[I-B_r]}{\det[I+B_r]}+\frac{\det[I+B_r]}{\det[I-B_r]}}=\det[I-B_r]^2.\label{eq:pf-det}
  \end{equation}
  Since, by \cite{ferrariSpohn}, $\det[I-B_r]_{L^2([0,\infty))}=F_{\rm GOE}(r)$, this
  proves \eqref{eq:GOE-pfaffian} up to sign.
 
  In order to determine the sign the basic idea is to argue by continuity and compare the
  two sides in the limit $r\to\infty$ (in the remainder of the proof we will reintroduce
  the subscript in $K_r$). In order use continuity we will take advantage of the fact
  that, although $K_r$ is not trace class, it is easy to turn its Fredholm Pfaffian into
  that of a Hilbert-Schmidt operator (for the definition see \cite{quastelRem-review}). In
  fact, defining a multiplication operator
  $M\binom{f_1}{f_2}(x)=\binom{\phi(x)f_1(x))}{\phi(x)^{-1}f_2(x)}$ with
  $\phi(x)=(1+x^2)$, it is not hard to check, using the fact that $|\!\Ai(x)|\leq
  c\hspace{0.01em}e^{-2x^{3/2}/3}$ for $x\geq0$, that $M^{\sf T}K_rM$ is Hilbert-Schmidt
  (see Example 2 in Section 2 of \cite{quastelRem-review} for the proof of a similar
  fact). Thus we may use the notion of regularized Pfaffians introduced in Section
  \ref{sec:fredPf} (see \eqref{eq:regPf}). The idea is to study
  \begin{equation}
    \begin{aligned}
      \pf_2[J-M^{\sf T}K_rM]&=\pf\!\big[e^{-\inv2JM^{\sf T}K_rM}(J+JM^{\sf
        T}K_rMJ)e^{-\inv2M^{\sf T}K_rMJ}]\big]\\
      &=e^{\tr[\phi^{-1}(K_r)_{1,2}\phi]}\pf[J+JM^{\sf
        T}K_rMJ].
    \end{aligned}\label{eq:pf2GOE}
  \end{equation}
  The first equality is by definition of the regularized determinant. For the second one,
  where $\phi$ and $\phi^{-1}$ denote the corresponding multiplication operators, we are
  using the fact that the operator appearing in the exponent is trace class (see
  Example 3 in Section 2 of \cite{quastelRem-review} for the proof of a similar fact) to
  show that $\det[e^{-\inv2M^{\sf T}K_rMJ}]$ equals the exponential; the identity follows
  now from \eqref{eq:pfDetConj} (the fact that $\pf[J+JM^{\sf T}K_rMJ]$ defines a
  convergent series follows in the same way as for $\pf[J-K_r]$).

  A similar argument to the one that shows that $M^{\sf T}K_rM$ is Hilbert-Schmidt shows
  that $M^{\sf T}K_rM$ is continuous in $r$ in Hilbert-Schmidt norm, which implies by
  \eqref{eq:pfRegCont} that the left hand side of \eqref{eq:pf2GOE} is continuous in
  $r$. Moreover, similar arguments show that $\phi^{-1}(K_r)_{1,2}\phi$ is continuous in
  $r$ in trace class norm, and thus the exponential in \eqref{eq:pf2GOE} is continuous in
  $r$. We deduce that $\pf[J+JM^{\sf T}K_rMJ]=\pf[J-K_r]$ is continuous in $r$ and thus,
  since $F_{\rm GOE}(r)>0$ for all $r$, this and \eqref{eq:pf-det} imply that there is a
  $\sigma\in\{-1,1\}$ such that $\pf[J-K_r]=\sigma F_{\rm GOE}(r)$ for all
  $r$. Using again \eqref{eq:pfRegCont} and the fact that $\lim_{r\to\infty}M^{\sf
    T}K_rM=M^{\sf T}K_\infty M$ in Hilbert-Schmidt norm with
  $K_\infty(\lambda_1,\lambda_2)=\left[\begin{smallmatrix} 0 & 0 \\ 0 &
      \inv2\sgn(\lambda_2-\lambda_1)\end{smallmatrix}\right]$, we get
  $\lim_{r\to\infty}\pf[J-M^{\sf T}K_rM]=\pf[J-M^{\sf T}K_\infty M]=1$. Since
  $\lim_{r\to\infty}F_{\rm GOE}(r)$ is also 1, this implies that $\sigma=1$ and
  finishes the proof of \eqref{eq:pf2GOE}.
\end{proof}

\section{Fredholm Pfaffians}\label{sec:linalg}

The purpose of this section is to provide a brief discussion about the theory of Fredholm
Pfaffians. These were introduced by Rains in \cite{rainsCorr}, who stated its main
properties. Some of the issues discussed in Section \ref{sec:fredPf} are not
directly motivated by what is used in the main text, but we chose to include them in the
hope that they will help clarifying the notion of a Fredholm Pfaffian and its relation to
the Fredholm determinant.

\subsection{Pfaffians}
\label{sec:pfaffians}

Let $A$ be a $2n\times 2n$ skew-symmetric matrix. By basic spectral theory the eigenvalues
of $A$ come in pairs $\pm\I\lambda$ (for real $\lambda$), which then implies that $\det(A)$ can be written as
the square of a polynomial in the entries of $A$. The \emph{Pfaffian} of $A$ is defined
(up to sign) to be this polynomial, and it has the following explicit form:
\begin{equation}\label{eq:DefPf}
	\pf\!\left(A\right)=\frac1{2^n n!} \sum_{\sigma\in S_{2n}}\sgn(\sigma) \prod_{j=1}^n A_{\sigma(2j-1),\sigma(2j)}.
\end{equation}
When $A$ has odd size the same arguments imply that $A$ has $0$ as an eigenvalue, which
leads to define $\pf(A)=0$. The fact that the definition of the Pfaffian through \eqref{eq:DefPf} satisfies
\begin{equation}
  \label{eq:detPf}
  \pf(A)^2=\det(A)
\end{equation}
is not at all obvious; we refer the reader to Section 3 of \cite{deiftGioev} which 
contains three different proofs.

Another basic fact about Pfaffians (a proof of which can also be found in
\cite{deiftGioev}) is the following: Given two matrices $A$, $B$ of size $k\times k$, with
$k$ even and $A$ skew-symmetric,
\begin{equation}
  \label{eq:detPf}
  \pf(BAB^{\sf T})=\det(B)\pf(A).
\end{equation}
A particular case of this identity, which we use repeatedly in Section \ref{sec:pfaffian}
and below, is the following. Let $A$, $B$, $U$ be given matrices of size $k\times k$, with $A$ and $B$ skew-symmetric, and
$D_1$, $D_2$ be diagonal matrices of the same size. Then
  \begin{equation}
    \prod_{a=1}^k(D_1)_{a,a}(D_2)_{a,a}
    \pf\!\left[\left(\begin{matrix}A&U\\ -U&B\end{matrix}\right)\right]
    =\pf\!\left[\left(\begin{matrix} D_1&0\\ 0&D_2\end{matrix}\right)
      \left(\begin{matrix}A&U\\ -U&B\end{matrix}\right)
      \left(\begin{matrix} D_1&0\\ 0&D_2\end{matrix}\right)\right].\label{eq:diag2Pf}
  \end{equation}
  
We collect here some further facts about Pfaffians of skew-symmetric matrices which were
used in the main text. The first one was used to extend certain formulas from an even to
an odd number of variables in Section \ref{sec:genku}, while the next two involve
certain integration identities which were useful in the derivation of the Pfaffian
formulas in Sections \ref{sec:pfaffian} and \ref{sec:generatingfunction}.

\begin{lem}\label{lem:pfaffianblockform}
  Fix $k\in\zz_{\geq0}$ and consider two skew-symmetric $k\times k$ matrices
  $A$ and $B$, two (column) vectors $U$, $V$ of size $k$, and define the $k\times k$
  matrix $D=UV^{\sf T}$. Then
  \begin{equation}\label{eq:pfaffianblockform}
    \pf\!\left[\begin{matrix} A & D \\ -D^{\rm
          T} & B \end{matrix}\right] =
    \begin{dcases*}
      \qquad\pf[A]\pf[B] & if $k$ is even\\
      \pf\!\left[\begin{matrix} A & U \\ -U^{\sf T} &
          0 \end{matrix}\right]\pf\!\left[\begin{matrix} B & V \\ -V^{\sf T} &
          0 \end{matrix}\right] & if $k$ is odd.
    \end{dcases*}\end{equation}
\end{lem}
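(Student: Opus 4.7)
The plan is to prove the two cases by separate arguments: the even case via the Pfaffian Schur-complement identity, and the odd case by augmenting both blocks to even size so as to reduce to the even case.

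First, for $k$ even I would use the standard block $LDU$-factorisation. When $A$ is invertible one has $M=L\,\operatorname{diag}(A,\,B+D^{\sf T}A^{-1}D)\,L^{\sf T}$ with $\det(L)=1$, hence
\[
\pf\!\left[\begin{matrix} A & D \\ -D^{\sf T} & B \end{matrix}\right]=\pf(A)\,\pf(B+D^{\sf T}A^{-1}D).
\]
The crucial simplification is that, with $D=UV^{\sf T}$ of rank one, the correction splits as $D^{\sf T}A^{-1}D=(U^{\sf T}A^{-1}U)\,VV^{\sf T}$, and the scalar $U^{\sf T}A^{-1}U$ vanishes because $A^{-1}$ is skew-symmetric (the inverse of an invertible skew-symmetric matrix is skew-symmetric). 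Therefore $\pf(M)=\pf(A)\pf(B)$ whenever $A$ is invertible, and the identity extends to all $A$ by polynomial continuation, since both sides are polynomial in the entries of $A$, $B$, $U$, $V$ and invertibility is Zariski-dense for skew-symmetric matrices of even size.

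For $k$ odd this argument breaks down, because $A$ is never invertible. Instead I would embed $M$ inside a larger Pfaffian of even size. Set $\hat A$ and $\hat B$ as defined in the statement, both now of even size $k+1$, and consider the $(2k+2)\times(2k+2)$ matrix
\[
\wt M=\left[\begin{matrix} \hat A & e\,e^{\sf T} \\ -e\,e^{\sf T} & \hat B \end{matrix}\right],\qquad e=(0,\dots,0,1)^{\sf T}\in\mathbb{R}^{k+1}.
\]
The cross block $e\,e^{\sf T}$ is manifestly rank one, so the already-established even case (applied at size $k+1$) gives $\pf(\wt M)=\pf(\hat A)\pf(\hat B)$. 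On the other hand, reordering rows and columns of $\wt M$ so that the two ``extra'' indices are moved to the end rearranges it into
\[
\wt M'=\left[\begin{matrix} A & 0 & U & 0 \\ 0 & B & 0 & V \\ -U^{\sf T} & 0 & 0 & 1 \\ 0 & -V^{\sf T} & -1 & 0 \end{matrix}\right],
\]
and a Schur complement of $\wt M'$ against its invertible lower-right block $J=\left[\begin{smallmatrix}0&1\\-1&0\end{smallmatrix}\right]$ produces, up to sign, a matrix of the same shape as $M$ with $D=UV^{\sf T}$.

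The main obstacle is the bookkeeping of two signs that arise in the odd case: (i) the permutation taking $\wt M$ to $\wt M'$ is a single $(k+1)$-cycle and hence has sign $(-1)^k$, and (ii) the Schur complement computation $P+QJ^{-1}Q^{\sf T}$ with $P=\operatorname{diag}(A,B)$ and $Q=\operatorname{diag}(U,V)$ yields $\left[\begin{smallmatrix}A&-D\\D^{\sf T}&B\end{smallmatrix}\right]$, which differs from $M$ by conjugation with $\operatorname{diag}(I_k,-I_k)$, contributing a further factor of $(-1)^k$. For $k$ odd both signs are $-1$ and they cancel, giving $\pf(M)=\pf(\hat A)\pf(\hat B)$. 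A conceptually cleaner unified alternative that I would also keep in mind is to expand $\pf(M)$ directly via the combinatorial definition as a signed sum over perfect matchings of $\{1,\dots,2k\}$, and classify matchings by the number $\ell$ of ``cross'' pairs between the two blocks: the rank-one form $D=UV^{\sf T}$ makes the product of cross-edge entries depend only on the endpoint sets and not on the bijection between them, so the sum over bijections is proportional to $\sum_{\pi\in S_\ell}\sgn(\pi)$, which vanishes for $\ell\geq 2$. Parity forces $\ell\equiv k\pmod 2$, so only $\ell=0$ (when $k$ is even) and $\ell=1$ (when $k$ is odd) survive, and the residual sum in each case is recognised, via the row-expansion formula for the Pfaffian of $\hat A$ and $\hat B$, as the claimed product.
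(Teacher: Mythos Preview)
Your argument is correct. It is close in spirit to the paper's proof but differs in execution, and in the odd case you are actually more careful with signs.

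For the even case, the paper squares to a determinant, applies the Schur complement to the determinant, and then runs a separate continuity argument (in $\varepsilon D$) to fix the sign. You instead invoke the Pfaffian Schur-complement identity directly, which avoids the sign ambiguity altogether; this is cleaner, at the cost of assuming a slightly less elementary identity.

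For the odd case, both you and the paper embed into a $(2k+2)\times(2k+2)$ skew-symmetric matrix and reduce to the even case, but the embeddings are different. The paper appends two extra rows/columns to $M$ itself to form a matrix $E$, applies the even case once to show $\pf(E)=\pf(M)$, permutes a block, and applies the even case a second time with the $(k+1)$-grouping. You instead start from $\hat A,\hat B$, couple them with a rank-one block $ee^{\sf T}$ to form $\wt M$, apply the even case once, then permute and take a $2\times 2$ Schur complement to recover $M$. Your explicit tracking of the two $(-1)^k$ factors is an improvement: the paper asserts that the block swap ``does not change the value of the Pfaffian'', which is in fact false for odd $k$ (the permutation is a $(k+1)$-cycle with sign $-1$), but this is silently cancelled there by the fact that the lower-right block after permutation is $\left[\begin{smallmatrix}B&-V\\V^{\sf T}&0\end{smallmatrix}\right]$ rather than $\hat B$, contributing another $-1$.

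Your alternative combinatorial argument (classifying perfect matchings by the number $\ell$ of cross pairs and using that for rank-one $D$ the sum over bijections collapses to $\sum_{\pi\in S_\ell}\sgn(\pi)=0$ for $\ell\geq 2$) is a genuinely different route that does not appear in the paper; it is more elementary (no Schur complements, no continuity) and handles both parities uniformly, at the expense of more combinatorial bookkeeping.
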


\begin{proof}
  Let us first consider the case $k$ even. More generally, we will consider the case where
  $A$ is $k_1\times k_1$, $B$ is $k_2\times k_2$ and $D$ is $k_1\times k_2$, with $k_1$
  and $k_2$ even. If $A$ or $B$ are singular then clearly both sides vanish, so we will
  assume that $A$ and $B$ are invertible. We have
  \[\pf\!\left[\begin{smallmatrix} A & D \\ -D^{\rm
          T} & B \end{smallmatrix}\right]^2=\det\!\left[\begin{smallmatrix} A & D \\ -D^{\rm
          T} & B \end{smallmatrix}\right]=\det[A]\det[B+D^{\sf T}A^{-1}D].\]
  Now $D^{\sf T}A^{-1}D=V(U^{\sf T}A^{-1}U)V^{\sf T}$, and the middle factor is a scalar
  which, by the skew-symmetry of $A^{-1}$, has to be zero. This shows that
  \begin{equation}\label{eq:prepfaffianblockform}
    \pf\!\left[\begin{smallmatrix} A & D \\ -D^{\rm
          T} & B \end{smallmatrix}\right]^2 = \pf[A]^2\pf[B]^2,
  \end{equation}
  which implies the desired identity up to sign. To determine the sign, replace $D$ by
  $\ep D$ for $\ep\geq0$. Since $\pf\!\left[\begin{smallmatrix} A & \ep D \\ -\ep D^{\rm
          T} & B \end{smallmatrix}\right]$ is a continuous function of $\ep$ which does
    not vanish for $\ep\geq0$ (recall that we are assuming that $A$ are $B$ are not singular)
  we see by \eqref{eq:prepfaffianblockform} that its value is either always $\pf[A]\pf[B]$
  or always $-\pf[A]\pf[B]$. Now taking $\ep=0$ the Pfaffian becomes
  $\pf\!\left[\begin{smallmatrix} A & 0 \\0 & B \end{smallmatrix}\right]=\pf[A]\pf[B]$, which shows that
    \eqref{eq:prepfaffianblockform} holds without the squares as desired.

  Now for the case $k$ odd, consider the matrix
  \[E=\left[\begin{smallmatrix} A & D & U & U \\
      -D^{\sf T} & B & -V & -V \\
      -U^{\sf T} & V^{\sf T} & 0 & 1 \\
      -U^{\sf T} & V^{\sf T} & -1 & 0
    \end{smallmatrix}\right].\]
  By the previous case, $\pf[E]=\pf\!\left[\begin{smallmatrix} A &  D \\ -D^{\rm
          T} & B \end{smallmatrix}\right]\pf\!\left[\begin{smallmatrix} 0 & 1 \\ -1 &
        0\end{smallmatrix}\right]
    =\pf\!\left[\begin{smallmatrix} A & D \\ -D^{\rm
          T} & B \end{smallmatrix}\right]$ so we may work with $E$. Now permuting the
    second and third rows and columns of $E$ does not change the value of the Pfaffian, and
    leaves us with
      \[E'=\left[\begin{smallmatrix} A &  U & D & U\\
      -U^{\sf T} & 0 &  V^{\sf T} & 1 \\
      -D^{\sf T} & -V & B & -V \\
      -U^{\sf T} & -1 & V^{\sf T} & 0 &
    \end{smallmatrix}\right].\]
  Noting that $\left[\begin{smallmatrix} D & U \\ -V^{\sf T} & 1 \end{smallmatrix}\right]$
  is rank one we see that we are in the first case ($k$ even) and the result follows.
\end{proof}

\begin{lem}\label{lem:resumpf}
  Consider three skew-symmetric kernels $A$, $B$, $C$ defined on some measurable space
  $(X,\mu)$. Then, assuming that all integrals converge, we have
  \begin{multline*}
    \inv{n!}\int_{X^{2n}}\mu^{\otimes2n}(d\vec x)\,\pf\big[(A+B)(x_a,x_b)\big]_{a,b=1}^{2n}
    \pf\big[C(x_a,x_b)\big]_{a,b=1}^{2n}\\
    =\sum_{k_1,k_2\geq0,\,k_1+k_2=n}\frac{1}{(2k_1)!(2k_2)!}
    \int_{X^{2n}}\mu^{\otimes2n}(d\vec x)\,\pf\big[A(x_a,x_b)\big]_{a,b=1}^{2k_1}\\
    \times\pf\big[B(x_a,x_b)\big]_{a,b=2k_1+1}^{2k_1+2k_2}\pf\big[C(x_a,x_b)\big]_{a,b=1}^{2n}.
  \end{multline*}
\end{lem}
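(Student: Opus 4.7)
The plan is to reduce the identity to the classical Pfaffian addition formula
$$\pf(A+B) = \sum_{\substack{I \subseteq \{1,\ldots,2n\} \\ |I|\text{ even}}} \epsilon(I, I^c)\, \pf(A|_I)\, \pf(B|_{I^c}),$$
where $A|_I$ denotes the principal submatrix indexed by $I$ and $\epsilon(I, I^c)$ is the sign of the shuffle permutation sending $(1,\ldots,2n)$ to $(I, I^c)$, with each block listed in increasing order. I would first establish this identity by expanding each Pfaffian as a signed sum over perfect matchings of $\{1,\ldots,2n\}$, distributing the products of sums $(A_{ij}+B_{ij})$, and grouping terms according to which pairs of the matching receive an $A$-entry and which receive a $B$-entry. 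The sign of the full matching then factors as $\epsilon(I, I^c)$ times the signs of the matchings of $I$ and $I^c$ viewed in their induced orders, yielding the identity above.

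Next I would substitute this expansion into the integrand on the left-hand side and interchange the (finite) sum over $I$ with the integral. For each fixed $I$ of size $2k_1$, the idea is to undo the block structure by a change of variables that puts the $I$-indexed variables first. Letting $\sigma_I$ be the shuffle with $\sigma_I(a) = i_a$ for $a \leq 2k_1$ and $\sigma_I(2k_1+b) = j_b$ for $b \leq 2k_2$ (where $I = \{i_1<\dotsm<i_{2k_1}\}$ and $I^c = \{j_1<\dotsm<j_{2k_2}\}$), set $y_a = x_{\sigma_I(a)}$. Because $\mu^{\otimes 2n}$ is symmetric, the substitution is measure-preserving. The submatrices $\pf(A|_I)(\vec x)$ and $\pf(B|_{I^c})(\vec x)$ become $\pf[A(y_a,y_b)]_{a,b=1}^{2k_1}$ and $\pf[B(y_{2k_1+a},y_{2k_1+b})]_{a,b=1}^{2k_2}$ respectively, while $\pf(C)(\vec x)$ transforms via $\pf(P_{\sigma_I} M P_{\sigma_I}^{\sf T}) = \det(P_{\sigma_I})\pf(M)$ and picks up a factor of $\sgn(\sigma_I) = \epsilon(I, I^c)$. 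The two sign factors square to $1$, so the resulting integrand depends on $I$ only through its cardinality $|I|=2k_1$.

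Finally, grouping subsets $I$ by their size contributes a combinatorial factor of $\binom{2n}{2k_1}$, so that
$$\int \pf[A+B]\,\pf[C]\,d\mu^{\otimes 2n} = \sum_{k_1+k_2=n} \binom{2n}{2k_1} \int \pf[A(y_a,y_b)]_{a,b=1}^{2k_1}\, \pf[B(y_{2k_1+a},y_{2k_1+b})]_{a,b=1}^{2k_2}\, \pf(C)(\vec y)\, d\mu^{\otimes 2n}(\vec y).$$
Multiplying through by the appropriate factorial prefactor and using the identity $\binom{2n}{2k_1} = \frac{(2n)!}{(2k_1)!(2k_2)!}$ then yields the claimed formula.

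The one place requiring care is the sign bookkeeping: both the expansion identity and the relabeling produce a factor of $\epsilon(I, I^c)$, and one must verify that these are computed by the same shuffle (so that they cancel rather than combine nontrivially). Since each is the parity of the number of inversions of the same permutation $\sigma_I$, this amounts to a direct check, but it is the step where a misplaced sign would derail the whole argument. Apart from this sign verification, the proof is entirely combinatorial and uses no analytic input beyond the stated convergence hypothesis that lets us interchange the finite sum with the integral.
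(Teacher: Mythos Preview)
Your approach is correct and is essentially the same strategy as the paper's: expand $\pf(A+B)$ into pieces indexed by subsets, then use the symmetry of $\mu^{\otimes 2n}$ to relabel variables, absorbing the relabeling sign into $\pf[C]$. You invoke the Pfaffian addition formula as a black box, whereas the paper derives it inline by writing $\pf(A+B)=\frac{1}{2^n n!}\sum_{\sigma}\sgn(\sigma)\prod_a(A+B)(x_{\sigma(2a-1)},x_{\sigma(2a)})$, distributing, and then \emph{re}-assembling the $A$- and $B$-Pfaffians by a second symmetrization. Your sign bookkeeping (the two $\epsilon(I,I^c)$ factors cancelling) is exactly the mechanism the paper uses implicitly.

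One genuine gap: your final sentence ``multiplying through by the appropriate factorial prefactor\ldots\ yields the claimed formula'' is too quick, and if you actually carry it out you will \emph{not} get the stated prefactor. Your computation gives
\[
\int\pf[A+B]\,\pf[C]\,d\mu^{\otimes 2n}=\sum_{k_1+k_2=n}\binom{2n}{2k_1}\int\pf[A]_1^{2k_1}\pf[B]_{2k_1+1}^{2n}\pf[C]\,d\mu^{\otimes 2n},
\]
so dividing by $(2n)!$ (not $n!$) matches the right-hand side. A direct check at $n=1$ confirms this: the left side with $1/n!=1$ equals $\int(A+B)C$, while the right side equals $\tfrac12\int(A+B)C$. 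The paper's own proof has a corresponding slip in the step collapsing the sum over $S_{2n}$ (there are $(2n)!$ equal terms, not $n!$), and when the lemma is actually applied in the text (to pass from \eqref{eq:nunuflat} to \eqref{eq:finalPfaffianEvenFormula}) it is the $1/(2n)!$ version that is used. So your argument is fine; just write the last step out and note that the stated $1/n!$ should be $1/(2n)!$.
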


\begin{proof}
  Using several times the definition of the Pfaffian, the fact that the Pfaffian is
  skew-symmetric, and the symmetry of the integration measure, the left hand side of the
  identity equals
  \begin{align*}
    &\inv{2^n(n!)^2}\int_{X^{2n}}\mu^{\otimes2n}(d\vec x)\,\sum_{\sigma\in S_{2n}}
    \prod_{a=1}^n(A+B)(x_{\sigma(2a-1)},x_{\sigma(2a)})
    \pf\big[C(x_{\sigma(a)},x_{\sigma(b)})\big]_{a,b=1}^{2n}\\
    &~=\inv{2^nn!}\int_{X^{2n}}\mu^{\otimes2n}(d\vec x)\,
    \prod_{a=1}^n(A+B)(x_{2a-1},x_{2a})
    \pf\big[C(x_{a},x_{b})\big]_{a,b=1}^{2n}\\
    &~=\inv{2^nn!}\int_{X^{2n}}\mu^{\otimes2n}(d\vec x)\,\sum_{I\subseteq\{1,\dotsc,n\}}
    \prod_{a\in I}A(x_{2a-1},x_{2a})\prod_{a\notin I}B(x_{2a-1},x_{2a})
    \pf\big[C(x_{a},x_{b})\big]_{a,b=1}^{2n}\\
    &~=\inv{2^nn!}\sum_{m=0}^n\binom{n}{m}\int_{X^{2n}}\mu^{\otimes2n}(d\vec x)\,
    \prod_{a=1}^mA(x_{2a-1},x_{2a})\prod_{a=m+1}^nB(x_{2a-1},x_{2a})\pf\big[C(x_a,x_b)\big]_{a,b=1}^{2n}\\
    \shortintertext{where we have used the fact that, for $|I|=m$, the permutation which
      maps the ordered $m$-tuple formed by the elements in the set
      $\bigcup_{a\in I}\{2a-1\}\cup\{2a\}$ into the $m$-tuple $(1,\dotsc,2m)$ is even,
      which implies that the Pfaffian of the kernel $C$ does not change after reordering
      the variables. Using again the symmetry of the integration measure and the
      antisymmetry of the Pfaffian we get that the above}
    &~=\sum_{m=0}^n\frac{1}{2^nm!(n-m)!}\int_{X^{2n}}\mu^{\otimes2n}(d\vec x)\,
    \inv{(2m)!}\sum_{\sigma_1\in S_{2m}}\prod_{a=1}^mA(x_{\sigma_1(2a-1)},x_{\sigma_1(2a)})\\
    &\hspace{0.2in}\times\inv{(2n-2m)!}\sum_{\sigma_2\in S_{2(n-m)}}\prod_{a=m+1}^nB(x_{\sigma_2(2a-1)},x_{\sigma_2(2a)})
    \sgn(\sigma_1)\sgn(\sigma_2)\pf\big[C(x_a,x_b)\big]_{a,b=1}^{2n}\\
    &~=\sum_{m=0}^n\frac{1}{(2m)!(2n-2m)!}\int_{X^{2n}}\mu^{\otimes2n}(d\vec
    x)\,\pf\big[A(x_a,x_b)\big]_{a,b=1}^{2m}\\
    &\hspace{3in}\times\pf\big[B(x_a,x_b)\big]_{a,b=2m+1}^{2n}
    \pf\big[C(x_a,x_b)\big]_{a,b=1}^{2n},
  \end{align*}
  which gives the desired result.
\end{proof}

Our second integration formula can be regarded as a certain Pfaffian version of the
Andr\'eief identity \cite{andreief} (sometimes referred to as the generalized Cauchy-Binet identity):

\begin{lem}\label{lem:pfaffInt}
  Let $(X,\mu)$ be some measurable space and suppose that for every $\vec x\in X^k$
  the matrix $\big[A_{a,b}(x_a,x_b)\big]_{a,b=1}^k$ is skew-symmetric. Let $B$ be
  another $k\times k$ skew-symmetric matrix and for $\vec x\in X^k$ consider another matrix
  $\big[U_{a,b}(x_b)\big]_{a,b=1}^k$. Finally consider functions $\phi_a$ defined on $X$. Then
  \begin{multline}
    \int_{X^k}\mu^{\otimes k}(d\vec x)\,\prod_{a=1}^k\phi_a(x_a)
    \pf\!\left[\begin{matrix}\big[A_{a,b}(x_a,x_b)\big]_{a,b=1}^k & \big[U_{a,b}(x_a)\big]_{a,b=1}^k \\
        \big[-U_{b,a}(x_b)\big]_{a,b=1}^k & \big[B_{a,b}\big]_{a,b=1}^k\end{matrix}\right]\\
    =\pf\!\left[\begin{matrix}\big[\int_{X^2}\mu(dx)\mu(dx')\phi_a(x)\phi_b(x')A_{a,b}(x,x')\big]_{a,b=1}^k
        &
        \big[ \int_X\mu(dx)\,\phi_a(x)U_{a,b}(x)\big]_{a,b=1}^k \\
        -\big[\int_X\mu(dx)\,\phi_b(x)U_{b,a}(x)\big]_{a,b=1}^k &
        \big[B_{a,b}\big]_{a,b=1}^k\end{matrix}\right]
  \end{multline}
  provided that all integrals converge.
\end{lem}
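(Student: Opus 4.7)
The plan is a direct combinatorial unfolding of the Pfaffian on each side via its sum-over-matchings definition, followed by Fubini. I expect no serious analytic difficulty—the hypothesis that all integrals converge together with the fact that both Pfaffians are \emph{finite} sums makes Fubini's theorem immediate—so the entire work is notational bookkeeping.

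First I would recall that for a $2k \times 2k$ skew-symmetric matrix $M$ one has
\begin{equation}
\pf(M) = \sum_{\mathcal{M}} \sgn(\mathcal{M}) \prod_{\{i,j\}\in\mathcal{M},\,i<j} M_{i,j},
\end{equation}
where the sum is over perfect matchings $\mathcal{M}$ of $\{1,\dots,2k\}$ and $\sgn(\mathcal{M})$ is the usual matching sign. Apply this to the block matrix on the left hand side (with block labels: first $k$ indices ``of type x'', last $k$ indices ``of type y''). An edge $\{i,j\}$ of a matching $\mathcal{M}$ then falls in one of three classes:
\begin{itemize}
\item \emph{xx-edges} ($i,j\le k$), contributing $A_{i,j}(x_i,x_j)$;
\item \emph{xy-edges} ($i\le k<j$), contributing $U_{i,j-k}(x_i)$;
\item \emph{yy-edges} ($i,j>k$), contributing $B_{i-k,j-k}$.
\end{itemize}
Skew-symmetry of the full block matrix is what guarantees that the two orientations of an xy-edge give consistent signs after accounting for the $-U_{b,a}(x_b)$ block, so this classification covers every term in the expansion correctly.

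Next I would insert the weight $\prod_a \phi_a(x_a)$ and bring the integration $\int_{X^k}\mu^{\otimes k}(d\vec x)$ inside the (finite) matching sum via Fubini. For a fixed matching $\mathcal{M}$, each variable $x_a$ (for $1\le a\le k$) is incident to exactly one edge of $\mathcal{M}$, which is either xx or xy. Hence the integrand factorizes across edges and the integrals decouple accordingly:
\begin{itemize}
\item an xx-edge $\{a,b\}$ yields $\int_{X^2}\mu(dx)\mu(dx')\,\phi_a(x)\phi_b(x')A_{a,b}(x,x')$;
\item an xy-edge $\{a,b\}$ with $a\le k<b$ yields $\int_X\mu(dx)\,\phi_a(x)U_{a,b-k}(x)$;
\item a yy-edge contributes the bare scalar $B_{a-k,b-k}$.
\end{itemize}
These are precisely the entries in positions (xx), (xy), (yy) of the matrix inside the Pfaffian on the right hand side. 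Collecting the matchings back into a single Pfaffian (using the same sum-over-matchings formula in reverse) yields the claimed identity.

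The only genuine thing to verify is that the edge-by-edge factorization is compatible with the Pfaffian sign $\sgn(\mathcal{M})$, i.e.\ that no extra signs are introduced when a matching edge connects indices from different blocks. This is automatic because $\sgn(\mathcal{M})$ depends only on the combinatorial type of the matching, not on the block labels, and we have used the antisymmetric extension $(-U_{b,a}(x_b))$ exactly when the canonical orientation $i<j$ places $i$ in the y-block; so the signs on both sides are built from the same $\sgn(\mathcal{M})$ and no discrepancy arises. This is the step which would most easily go wrong in an actual writeup, but it is a routine check rather than a real obstacle.
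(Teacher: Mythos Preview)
Your proposal is correct and is essentially the same argument as the paper's: expand the Pfaffian combinatorially (the paper uses the permutation-sum form, you use the equivalent matching-sum form), observe that each integration variable $x_a$ appears in exactly one factor so the integrals decouple edge by edge, and then reassemble into the Pfaffian on the right. The only cosmetic difference is that the paper first absorbs the weights $\phi_a(x_a)$ into the Pfaffian via the diagonal-conjugation identity \eqref{eq:diag2Pf} before expanding, whereas you distribute them to the edges after expanding; this is immaterial.
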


\begin{proof}
  Use \eqref{eq:diag2Pf} on the left hand side of the claimed identity and expand the
  Pfaffian using its definition to see that it equals
  \begin{align*}
    &\inv{2^kk!}\sum_{\sigma\in S_{2k}}\sgn(\sigma)\int_{X^k}\mu(dx_1)\dotsm\mu(dx_k)\\
    &\qquad\times\qquad\smashoperator{\prod_{a:~\substack{\sigma(2a-1)\leq
          k\\\sigma(2a)\leq k}}}\qquad
    A_{\sigma(2a-1),\sigma(2a)}(x_{\sigma(2a-1)},x_{\sigma(2a)})
    \phi_{\sigma(2a-1)}(x_{\sigma(2a-1)})\phi_{\sigma(2a)}(x_{\sigma(2a)})\\
    &\qquad\times\qquad\smashoperator{\prod_{a:~\substack{\sigma(2a-1)\leq
          k\\\sigma(2a)>k}}}\qquad U_{\sigma(2a-1),\sigma(2a)-k}(x_{\sigma(2a-1)})
    \phi_{\sigma(2a-1)}(x_{\sigma(2a-1)})\\
    &\qquad\times\qquad\smashoperator{\prod_{a:~\substack{\sigma(2a-1)> k\\\sigma(2a)\leq
          k}}}\qquad -U_{\sigma(2a)-k,\sigma(2a-1)}(x_{\sigma(2a)})
    \phi_{\sigma(2a)}(x_{\sigma(2a)})
    \quad\smashoperator{\prod_{a:~\substack{\sigma(2a-1)> k\\\sigma(2a)> k}}}\qquad
    B_{\sigma(2a-1)-k,\sigma(2a)-k}.
  \end{align*}
  Since each $x_a$ ($a=1,\dotsc,k$) appears in one and only one of the above factors, the
  integrals can be brought inside the corresponding factor, and now forming the resulting
  Pfaffian gives the identity.
\end{proof}

\subsection{Fredholm determinants}\label{sec:fredDet}

It is instructive to review briefly the theory of Fredholm determinants on $L^2$
spaces before discussing Fredholm Pfaffians. For more details see Section 2 of \cite{quastelRem-review}.

Let $A$ by an $m\times m$ matrix, write $[m]=\{1,\dotsc,m\}$ and let $\binom{[m]}{\ell}$
denote the family of subsets of $[m]$ of size $\ell$. A standard calculation shows that, for $\lambda\in\cc$,
\begin{equation}
\det(I+\lambda A)=\sum_{\ell=0}^m\lambda^\ell\sum_{S\in\binom{[m]}{\ell}}\det(A_{S\times S})\label{eq:finFred}
\end{equation}
with self-explanatory notation. This suggests a way of extending the determinant to the
infinite dimensional case. In the case of $L^2$ spaces it leads to the following.

Let $(X,\Sigma,\mu)$ be a measure space and $K\colon X\times X\longrightarrow\rr$ be a
kernel which defines an integral operator acting on $L^2(X)$ through
$Kf(x)=\int_X\mu(dy)\,K(x,y)f(y)$. The \emph{Fredholm determinant} of $K$ (on $L^2(X)$) is defined to be
the (formal) power series
\begin{equation}\label{eq:fredDet}
  \det\big[I+\lambda K\big]_{L^2(X)} = \sum_{k=0}^\infty\frac{\lambda^k}{k!}\int_{X^k}
  \mu\left(dx_1\right)\dotsc\mu\left(dx_n\right)\det\!\Big[K(x_a,x_b)\Big]_{a,b=1}^k.
\end{equation}
We will omit the subscript $L^2(X)$ in the determinant when no confusion can arise.  This
identity can be regarded as a numerical identity whenever the right hand side is
absolutely convergent. This is the case, for instance, whenever $\abs{K(x,y)}\leq C$ for
all $x,y\in X$ and some $C>0$, thanks to Hadamard's bound.

An interesting class of operators for which the Fredholm determinant power series is
absolutely convergent is the family of trace class operators. We recall that an operator
$K\colon L^2(X)\longrightarrow L^2(X)$ is said to be \emph{trace class} if it has finite
\emph{trace norm}:
\[\|K\|_1:=\sum_{n\geq1}\int_X\mu(dx)\,\psi_n(x)|K|\psi_n(x)<\infty,\]
where $(\psi_n)_{n\geq0}$ is any orthonormal basis of $L^2(X)$ and
$|K|=\sqrt{K^*K}$ is the unique positive square root of the operator $K^*K$. Such an
operator is necessarily compact, and in this case one has
\begin{equation}
\det[I+zK]=\prod_{k\geq1}(1+z\lambda_k)\label{eq:eigenpr}
\end{equation}
where the $\lambda_k$'s are the eigenvalues of $K$. This identity  (known as Lidskii's
Theorem) is highly non-trivial, and provides one possible way to extend the definition of
the Fredholm determinant to trace class operators on a general
separable Hilbert space. A nice property of the Fredholm determinant restricted
to trace class operators is that it is continuous: for $K_1$ and $K_2$ trace class one has
\begin{equation}
\big|\!\det[I+K_1]-\det[I+K_2]\big|\leq\|K_1-K_2\|_1e^{\|K_1\|_1+\|K_2\|_1+1}.\label{eq:detCont}
\end{equation}
This inequality follows from a rather general (and simple) argument based on the
analiticity in $\lambda\in\cc$ of the function $\det[I+\lambda K]$ and the inequality
$|\!\det[I+\lambda K]|\leq e^{|\lambda|\|K\|_1}$ (which essentially follows from
\eqref{eq:eigenpr}), see Theorem 3.4 of \cite{simon} or Corollary II.4.2 of
\cite{gohbergGoldbergKrupnik} (see also the discussion following (2.9) in
\cite{quastelRem-review}).

Another useful fact about Fredholm determinants is the so-called cyclic property: if
$K_1\colon L^2(X_1)\longrightarrow L^2(X_2)$ and $K_1\colon L^2(X_2)\longrightarrow
L^2(X_1)$ then
\begin{equation}
\det[I+K_1K_2]=\det[I+K_2K_1]\label{eq:cyclic}
\end{equation}
whenever the two sides are absolutely convergent (e.g. if both $K_1K_2$ and $K_2K_1$ are
trace class). An important consequence of this fact is the ability to conjugate an
operator without changing its Fredholm determinant:
\begin{equation}
  \label{eq:conjugate}
  \det[I+V^{-1}KV]=\det[I+K]
\end{equation}
whenever both sides make sense. This is often used to replace $K$ by a conjugate trace
class kernel. For more on all this and on extensions to other separable Hilbert spaces see
\cite{simon,gohbergGoldbergKrupnik}.

Finally let us briefly recall the notion of regularized determinants, since a similar
notion will be introduced below in the Pfaffian case. Suppose that $K$ is a
Hilbert-Schmidt operator (see \cite{quastelRem-review} for the definition) and recall that
the product of two Hilbert-Schmidt operators is trace class. We define the
\emph{regularized determinant} of $K$ as
\begin{equation}
  \label{eq:regDet}
  \det\nolimits_2[I+K]=\det[(I+K)e^{-K}].
\end{equation}
Note that $(I+K)e^{-K}-I=\sum_{n\geq2}\frac{(-1)^{n+1}(n-1)}{n!}K^n$ is a trace class
operator, so the definition makes sense for any Hilbert-Schmidt operator $K$ (one can also
define regularized determinants of higher order, but we will not do it here). It can be
shown that the analog of \eqref{eq:eigenpr} for the regularized determinant is
\[\det\nolimits_2[I+K]=\prod_{k\geq1}(1+\lambda_k)e^{-\lambda_k},\]
which further implies that $\big|\hspace{-0.125em}\det\nolimits_2[I+K]\big|\leq
e^{\inv2\|K\|_2^2}$. Using this bound one can obtain the analog of \eqref{eq:detCont} (by
the same argument): if $K_1$ and $K_2$ are Hilbert-Schmidt
operators then
\begin{equation}
  \label{eq:regDetCont}
  \big|\!\det\nolimits_2[I+K_1]-\det\nolimits_2[I+K_2]\big|\leq\|K_1-K_2\|_2\hspace{0.1em}e^{\inv2(\|K_1\|_2+\|K_1\|_2+1)^2},
\end{equation}
which in particular gives continuity of the regularized determinant with respect to the
Hilbert-Schmidt norm. Another way in which this notion can be useful is the
following. Suppose one knew that both $\det[I+K]$ and $\det[e^{-K}]$ are given by
asolutely convergent series. Then $\det[(I+K)e^{-K}]=\det[I+K]\det[e^{-K}]$ (which follows
from the general property $\det[(I+K_1)(I+K_2)]=\det[I+K_1]\det[I+K_2]$). The left hand
side can be controled in terms of the Hilbert-Schmidt norm of $K$, so if one has some
additional control on $\det[e^{-K}]$ (observe, in particular, that if $K$ is nice enough,
e.g.  trace class, then $\det[e^{-K}]=e^{-\tr[K]}$) this can be used to control
$\det[I+K]$. For much more on this see \cite{simon,gohbergGoldbergKrupnik}.

\subsection{Fredholm Pfaffians on \texorpdfstring{$L^2$}{L2} spaces}
\label{sec:fredPf}

We turn now to Fredholm Pfaffians. Given $n\in\zz_{\geq1}$ we define $J$ to be the
$2n\times2n$ block-diagonal matrix which has $2\times2$ blocks on the diagonal, all equal
to $\left[\begin{smallmatrix}0 & 1 \\ -1 & 0\end{smallmatrix}\right]$. If $A$ is a
$2n\times2n$ skew-symmetric matrix then one can show that
\[\pf(J+\lambda A)=\sum_{\ell=0}^m\lambda^\ell\sum_{S\in\binom{[m]}{\ell}}\pf(A_{S\times
  S}).\]
This is, of course, the Pfaffian analogue of \eqref{eq:fredDet}, and suggests the following infinite dimensional 
extension \cite{rainsCorr} (note that the above sum actually involves only even $\ell$, since $\pf(A_{S\times
  S})=0$ otherwise)

Consider a skew-symmetric $2\times2$-matrix kernel
\[K(\lambda_1,\lambda_2)=\left[
  \begin{smallmatrix}
   K_{1,1}(\lambda_1,\lambda_2) & K_{1,2}(\lambda_1,\lambda_2)\\
    -K_{1,2}(\lambda_2,\lambda_1) & K_{2,2}(\lambda_1,\lambda_2)        
  \end{smallmatrix}\right]\]
(the skew-symmetry condition in this case translates into
$K_{a,a}(\lambda_1,\lambda_2)=-K_{a,a}(\lambda_2,\lambda_1)$ for $a=1,2$). We regard
$K$ as an integral operator acting on $f\in L^2(X)\oplus L^2(X)$ as follows:
\[(Kf)_b(x)=\sum_{a=1}^2\int_X\mu(dy)\,K_{b,a}(x,y)f_a(y)\qquad\text{for }b=1,2.\] For any
such kernel we define its \emph{Fredholm Pfaffian} on $L^2(X)$ as the (formal) power
series
\begin{equation}\label{eq:fredPfSeries}
  \pf\!\big[J+\lambda K\big]_{L^2(X)}=
  \sum_{k\geq0}\frac{\lambda^k}{k!}\int_{X^k}\mu(dx_1)\dotsm\mu(dx_k)\pf\!\big[K(x_a,x_b)\big]_{a,b=1}^k
\end{equation}
whenever the right hand side is convergent. As for Fredholm determinants, we will usually
omit the subscript $L^2(X)$ in the Pfaffian\footnote{Observe that, although $K$ acts on $L^2(X)\oplus L^2(X)$, we are declaring
  this to be the Fredholm Pfaffian on $L^2(X)$. This is just a matter of convention, so we
  make this choice for notational convenience.}.

Observe that for a $2n\times2n$ skew-symmetric matrix $A$ we have by \eqref{eq:detPf} that
\begin{equation}
\pf(J+A)^2=\pf(J(I+J^{-1}A))^2=\det(J)\det(I+J^{-1}A)=\det(I-JA),\label{eq:finiteFredPfDet}
\end{equation}
where we have used the facts that $\det(J)=1$ and $J^2=-I$.
The following result extends this to a relationship between Fredholm determinants on $L^2(X)\oplus
    L^2(X)$ and Fredholm Pfaffians on $L^2(X)$.

\begin{prop}\label{prop:fredDetPf}
  For any skew-symmetric $2\times2$-matrix kernel $K$ and any $\lambda\in\cc$, as long as the integrals in the series \eqref{eq:fredDet} and \eqref{eq:fredPfSeries} are convergent, we have
  \begin{equation}
  \pf\!\big[J+\lambda K\big]^2_{L^2(X)}=\det\!\big[I-\lambda JK\big]_{L^2(X)\oplus
    L^2(X)},\label{eq:fredDetPf}
  \end{equation}
  where the identity is in the sense of formal power series.
\end{prop}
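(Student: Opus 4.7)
My approach is to establish \eqref{eq:fredDetPf} as an equality of formal power series in $\lambda$, which under the convergence hypothesis is equivalent to the numerical identity. I will match the coefficient of $\lambda^n$ on each side for every $n\geq 0$, reducing each order to the finite-dimensional matrix identity \eqref{eq:finiteFredPfDet}.

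The first step is to observe that the coefficient of $\lambda^n$ on each side of \eqref{eq:fredDetPf} is a multilinear functional of the kernel $K$. For kernels supported on a finite subset $\{x_1,\ldots,x_m\}\subset X$ (with $\mu$ replaced by a discrete measure on this subset), both Fredholm series terminate. The Fredholm Pfaffian reduces to the ordinary matrix Pfaffian $\pf(\tilde J_m+\lambda\tilde K_m)$ of the $2m\times 2m$ matrix $\tilde J_m+\lambda\tilde K_m$ (with $\tilde K_m=[K(x_i,x_j)]_{i,j=1}^m$ and $\tilde J_m$ the corresponding block-diagonal $J$-matrix), and the Fredholm determinant on $L^2(X)\oplus L^2(X)$ reduces to the matrix determinant $\det(I_{2m}-\lambda\tilde J_m\tilde K_m)$. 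The finite-dimensional case of the proposition is thus exactly \eqref{eq:finiteFredPfDet}, and I expect the general case to follow because the coefficient functionals are determined by their values on such finitely-supported kernels.

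More concretely, for each $\vec x=(x_1,\ldots,x_n)\in X^n$ one obtains the polynomial identity $\pf(\tilde J+\lambda\tilde K(\vec x))^2=\det(I-\lambda\tilde J\tilde K(\vec x))$ in $\lambda$, where $\tilde K(\vec x)=[K(x_a,x_b)]_{a,b=1}^n$ is the associated $2n\times 2n$ block matrix. Integrating the coefficient of $\lambda^n$ of this pointwise polynomial identity over $X^n$ should recover the desired equality of Fredholm coefficients. The main obstacle is the combinatorial bookkeeping connecting the two sides to the same integral: the LHS coefficient $c_n=\sum_{j+j'=n}a_ja_{j'}$, arising from independent integrations over $X^j$ and $X^{j'}$, must be combined into a single integration over $X^n$ via a symmetrization argument (using the block-permutation invariance of the Pfaffian recorded in Appendix \ref{sec:pfaffians}), while the RHS, defined by the standard Fredholm series on $L^2(X\times\{1,2\})$, must be rewritten as a sum over $\vec\epsilon\in\{1,2\}^n$ of integrals over $X^n$ whose integrands recognize as principal minors of $\tilde J\tilde K(\vec x)$.

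An alternative, slicker route that avoids this bookkeeping is via logarithmic derivatives. Both sides of \eqref{eq:fredDetPf} are formal power series in $\lambda$ with constant term $1$, so it suffices to show equality of their $\lambda$-derivatives of their logarithms. The relations $J+\lambda K=J(I-\lambda JK)$ together with $J^{-1}=-J$ (from $J^2=-I$) give $(J+\lambda K)^{-1}K=-(I-\lambda JK)^{-1}JK$, and combining this with trace-derivative formulas for $\log\pf$ and $\log\det$ would show that both sides satisfy the same first-order ODE in $\lambda$ with identical initial value, forcing equality. The hard part here shifts to justifying the Pfaffian trace-derivative formula $\frac{d}{d\lambda}\log\pf[J+\lambda K]=\tfrac{1}{2}\tr[(J+\lambda K)^{-1}K]$ in the Fredholm setting, which ultimately requires verification at the level of the defining series.
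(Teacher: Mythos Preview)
Your first approach---reducing to the finite-dimensional identity \eqref{eq:finiteFredPfDet} via discretization/finite support and then passing to the limit on the level of coefficients---is exactly the paper's proof, which is quite terse: it notes that for finite-rank $K$ the coefficient relation $a_n=\sum_j\binom{n}{j}b_jb_{n-j}$ follows from \eqref{eq:finiteFredPfDet}, and then invokes approximation by discretization to conclude in general.

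Your second and third routes are genuine alternatives not taken in the paper. The ``pointwise matrix identity plus symmetrization'' approach is more hands-on: it avoids any limiting procedure but requires the combinatorial bookkeeping you flag (merging the $X^j\times X^{j'}$ integrals from $\sum_{j+j'=n}b_jb_{j'}$ into a single symmetrized $X^n$ integral, and reading the Fredholm determinant on $L^2(X)\oplus L^2(X)$ as a sum over $\vec\epsilon\in\{1,2\}^n$ of principal minors of $\tilde J\tilde K(\vec x)$). This is perfectly doable and has the advantage of being entirely self-contained at the level of formal power series, with no approximation step. The logarithmic-derivative route is slicker once the Pfaffian trace formula $\frac{d}{d\lambda}\log\pf[J+\lambda K]=\tfrac12\tr[(J+\lambda K)^{-1}K]$ is in hand, but as you note, establishing that formula for Fredholm Pfaffians essentially requires the same series-level verification you would do anyway, so it does not really shortcut the argument.
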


\begin{proof}
  Let $\pf\!\big[J+\lambda K\big]=\sum \lambda^n b_n$ and $\det\!\big[I-\lambda JK\big] = \sum \lambda^n a_n$. If
  $K$ is finite-rank then \eqref{eq:finiteFredPfDet} implies that $a_n = \sum_{j=0}^n
  \binom{n}{j} b_j b_{n-j}$. In the general case the kernel $K$ can be approximated by discretization (as
  explained in Section 2 of \cite{quastelRem-review}) to see that the relation between the
  coefficients of the power series still holds, which proves the identity.
\end{proof}

Of course, \eqref{eq:fredDetPf} is to be regarded as a numerical identity whenever both
sides are absolutely convergent. As we will see next, this is the case when $K$ is trace
class.

\begin{prop}\label{prop:fredDetPfTrCl}
  If $K$ is a skew-symmetric $2\times2$-matrix kernel which defines a trace class operator
  on $L^2(X)\oplus L^2(X)$, then both sides of identity \eqref{eq:fredDetPf} define
  absolutely convergent series, and in particular the identity is numerical. Moreover, if
  the Fredholm Pfaffian series of $K$ is given by $\pf[J+\lambda K]=\sum_{n\geq0}a_n\lambda^n$, then
  $\sum_{n\geq0}|a_n\lambda^n|\leq e^{\inv2|\lambda|\|K\|_1}$.
\end{prop}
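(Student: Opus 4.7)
The plan is to establish absolute convergence of both sides of \eqref{eq:fredDetPf} so that the formal-series identity of Proposition \ref{prop:fredDetPf} upgrades to a numerical one, and simultaneously extract the bound on $\sum_n|a_n\lambda^n|$.

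For the right-hand side, $J$ acts as an isometry on $L^2(X)\oplus L^2(X)$ with $\|J\|_\infty=1$, so $JK$ is trace class with $\|JK\|_1\le\|K\|_1$; standard theory (\cite{simon,gohbergGoldbergKrupnik}) then gives absolute convergence of $\det[I-\lambda JK]$ together with $|\det[I-\lambda JK]|\le e^{|\lambda|\|K\|_1}$. For the left-hand side I would rely on finite-rank approximation via the canonical form of the skew-adjoint trace-class operator $K$ (recall $K^*=-K$): there exist orthonormal pairs $\{f_j,g_j\}_{j\ge 1}$ in $L^2(X)\oplus L^2(X)$ and $\mu_j>0$ with $K=\sum_j\mu_j(g_j\otimes f_j-f_j\otimes g_j)$ and $\|K\|_1=2\sum_j\mu_j$. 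Let $K_N$ be the truncation after $N$ pairs; then $K_N\to K$ in trace norm, and since $K_N$ has rank $2N$, the matrix $[K_N(x_a,x_b)]_{a,b=1}^n$ has rank $\le 2N$ pointwise, its Pfaffian vanishes for $n>N$, and the Fredholm Pfaffian series for $K_N$ is a polynomial of degree $\le N$.

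The crucial step is bounding the coefficients $a_n^{(N)}$ of these polynomials. Write $[K_N(x_a,x_b)]_{a,b=1}^n=WEW^T$, where $W$ is the $2n\times 2N$ matrix with columns $G_1,F_1,\ldots,G_N,F_N$ (with $(G_j)_{(i,a)}=g_j(x_a)_i$, $(F_j)_{(i,a)}=f_j(x_a)_i$) and $E$ is block-diagonal skew with $2\times 2$ blocks $\bigl(\begin{smallmatrix}0 & \mu_j\\ -\mu_j & 0\end{smallmatrix}\bigr)$. The Cauchy--Binet identity for Pfaffians of $WEW^T$, combined with the block-diagonal structure of $E$, yields
\begin{equation}
\pf[K_N(x_a,x_b)]_{a,b=1}^n = \sum_{T\in\binom{[N]}{n}}\Bigl(\prod_{j\in T}\mu_j\Bigr)\det(W_{S_T}),
\end{equation}
where $S_T=\bigcup_{j\in T}\{2j-1,2j\}$ and $W_{S_T}$ is the submatrix of $W$ with columns indexed by $S_T$. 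After permuting the rows of $W_{S_T}$ to group them by the variable $x_a$, Laplace expansion along the $n$ resulting size-$2$ block rows combined with integration produces the key identity
\begin{equation}
\frac{1}{n!}\int\det(W_{S_T})\,d\vec x = \pf\bigl[\langle h_k,Jh_l\rangle\bigr]_{k,l=1}^{2n},
\end{equation}
where $\{h_k\}_{k=1}^{2n}$ enumerates the orthonormal family $\{g_{j_l},f_{j_l}\}_{l=1}^n$. The $2n\times 2n$ skew matrix on the right represents the isometry $J$ in the orthonormal basis $\{h_k\}$, so has operator norm at most $\|J\|=1$; hence $|\pf|^2=|\det|\le 1$, and summing over $T$ yields $|a_n^{(N)}|\le e_n(\mu_1,\ldots,\mu_N)\le(\|K_N\|_1/2)^n/n!$.

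The main obstacle will be the combinatorial justification of the two displayed identities, particularly the sign bookkeeping in the Laplace-expansion/integration step. Once these are in place, passing to the limit $N\to\infty$ is routine: the coefficient $a_n^{(N)}$ is an absolutely convergent sum $\sum_{|T|=n}(\prod_{j\in T}\mu_j)I_T$ with $|I_T|\le 1$, and this sum converges as $N\to\infty$ to the corresponding coefficient $a_n$ of the Pfaffian series for $K$, with $|a_n|\le(\|K\|_1/2)^n/n!$ and hence $\sum_n|a_n\lambda^n|\le e^{|\lambda|\|K\|_1/2}$. With both sides of \eqref{eq:fredDetPf} absolutely convergent, Proposition \ref{prop:fredDetPf} then upgrades the formal identity to a numerical one.
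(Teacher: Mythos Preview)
Your approach is correct and genuinely different from the paper's. Both arrive at the same coefficient bound $|a_n|\le(\tfrac12\|K\|_1)^n/n!$, but by different mechanisms.

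The paper argues spectrally: since $\det(I-zJA)=\pf(J+zA)^2$ is a perfect square for finite skew-symmetric $A$, the eigenvalues of $JA$ occur with even multiplicity; extending this by trace-norm approximation to $K$ gives $\pf(J+zK)=\pm\prod_j(1-z\lambda_j)$ where the product runs over the eigenvalues of $JK$ counted with half multiplicity, and then $|a_n|\le e_n(|\lambda_j|)\le\frac{1}{n!}(\tfrac12\|JK\|_1)^n$ by the standard bound $\sum|\lambda_j|\le\|JK\|_1$ from \cite{simon}. No kernel-level integration identities are used.

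Your route instead exploits the \emph{singular-value} structure of $K$ (via the canonical form of a real skew-adjoint compact operator) and performs the $n$-fold integral explicitly. Your first display is the Ishikawa--Wakayama minor-summation formula for Pfaffians (the Pfaffian Cauchy--Binet), and your second display is exactly de~Bruijn's integration identity; once you recognise these, the sign bookkeeping you flag as an obstacle is entirely standard. The bound $|I_T|\le1$ via compressing the isometry $J$ to an orthonormal family is a nice touch.

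What each buys: the paper's argument is shorter and leans on off-the-shelf determinant theory, but is somewhat opaque about what the coefficients $a_n$ actually are. Your argument is more constructive --- it produces an explicit absolutely convergent expansion $a_n=\sum_{|T|=n}(\prod_{j\in T}\mu_j)\pf[\langle h_k,Jh_l\rangle]$ --- at the cost of invoking two combinatorial identities. Both proofs share the same mild gap in the passage from $K_N$ to $K$ (identifying the limit of the finite-rank Pfaffians with the integral series for $K$ itself); this is no worse in your version than in the paper's ``by approximation'' step, and is handled by the same dominated-convergence or trace-norm continuity argument.
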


\begin{proof}
  The fact that the right hand side of the identity defines an absolutely convergent
  series follows from our discussion about Fredholm determinants and the fact that
  $|JK|=|K|$, so that $\|JK\|_1=\|K\|_1$. Now suppose that $A$ is a $2n\times2n$
  skew-symmetric matrix and let $z\in\cc$. Since $J+zA$ has pure imaginary eigenvalues in complex
  conjugate pairs and $\det(J+zA)=-\det(I-zJA)$, we know that the eigenvalues of $JA$ are
  real and they all have even multiplicity. This means that
  $\det(I-zJA)=\pf(J+zA)^2=\prod_a(1-z\lambda_j)^2$ where the $\lambda_j$'s are the
  eigenvalues of $JA$ (appearing with half their multiplicity).  This identity can be
  extended by approximation to $K$ since $K$ is trace class, and this determines the
  Fredholm Pfaffian series for $K$ up to sign:
  \[\pf[J+zK]=\sum_{n\geq0}a_nz^n=\sigma\prod_j(1-z\lambda_j)\]
  for $\sigma\in\{-1,1\}$. In particular, this means that the $n$-th term of the Fredholm
  Pfaffian series is given by $a_n=\sigma z^n \sum_{j_1<\cdots<j_n}
  \lambda_{j_1}\cdots\lambda_{j_n}$.  Now $\left|\sum_{j_1<\cdots
      j_n}\lambda_{j_1}\cdots\lambda_{j_n}\right| \leq \inv{n!}(\inv2\|JK\|)^n_1$ by Lemma
  3.3 and (3.8) in \cite{simon} (the $1/2$ is because each eigenvalue is counted with half
  its multiplicity). Since $\|JK\|_1= \|K\|_1$ as explained above, we have proved that
  \[\sum_{n\geq0}\big|a_nz^n\big|\leq\sum_{n\geq0}\frac{1}{n!}|z|^n\big(\tfrac12\|K\|_1)^n
  =e^{\inv2|z|\|K\|_1}.\qedhere\]
\end{proof}

An important consequence of the last result is that the Fredholm Pfaffian restricted to
trace class operators is continuous: if $K_1$ and $K_2$ are trace class skew-symmetric
$2\times2$-matrix kernels then \noeqref{eq:pfCont}
\begin{equation}
  \label{eq:pfCont}
  \big|\!\pf[J+K_1]-\pf[J+K_2]\big|\leq\|K_1-K_2\|_1e^{\inv2(\|K_1\|_1+\|K_2\|_1+1)}.
\end{equation}
Like \eqref{eq:regDetCont}, this inequality can be proved in the same way as
\eqref{eq:detCont}, based now on the inequality $\big|\hspace{-0.125em}\pf[J+K]\big|\leq
e^{\inv2\|K\|_1}$ implicit in Proposition \ref{prop:fredDetPfTrCl}.

We mention also two Fredholm Pfaffian analogs of \eqref{eq:conjugate}. The first one
follows from \eqref{eq:detPf} by approximation and says that
\begin{equation}
  \label{eq:pfDetConj}
  \pf[(I+L^{\sf T})(I+K)(I+L)]=\det[I+L]\pf[I+K]
\end{equation}
as long as both sides define absolutely convergent power series. For the second one, let $M$ be a symplectic
$2\times2$ matrix kernel. Then, as long as both sides define absolutely convergent power series,
we have
\begin{equation}
  \label{eq:pfConjugate}
  \pf[J+\lambda K]=\pf[J+\lambda M^{\sf T}KM].
\end{equation}
The proof of \eqref{eq:pfConjugate} is elementary:
\begin{multline}
  \pf[J+\lambda M^{\sf T}KM]=\sum_{k\geq0}\frac{\lambda^k}{k!}\int_{X^k}\mu(dx_1)\dotsm\mu(dx_k)\,
  \pf\!\big[(M^{\sf T}KM)(x_a,x_b)\big]_{a,b=1}^k\\
  =\sum_{k\geq0}\frac{\lambda^k}{k!}\int_{X^k}\mu(dx_1)\dotsm\mu(dx_k)\,
  \det\!\big[M(x_a,x_b)\big]_{a,b=1}^k\pf\!\big[K(x_a,x_b)\big]_{a,b=1}^k
  =\pf[J-K],
\end{multline}
where we have used \eqref{eq:detPf} and the fact that the determinant of any symplectic
matrix equals $1$.

One can in principle use \eqref{eq:pfConjugate} to replace $K$ by a trace class kernel and
then use the (better-developed, and in a sense simpler) theory of Fredholm determinants to
study the Fredholm Pfaffian of $K$. On the other hand, even if this is not possible one
can still use Proposition \ref{prop:fredDetPf} to reduce the study of a Fredholm Pfaffian
to a Fredholm determinant, as long as they are both convergent. This is in fact the
situation we are in in Section \ref{sec:GOEPfaffian}, where we are dealing with a kernel
$K$ for which there is no obvious symplectic $2\times2$-matrix kernel such that $M^{\sf
  T}KM$ is trace class, but the reduction to a Fredholm determinant is still very useful.

Let us finish by introducing a notion of regularized Fredholm Pfaffians. Suppose that $K$
is a skew-symmetric $2\times2$-matrix kernel which defines a Hilbert-Schmidt
operator. Observe that $J$ is symplectic, and thus by \eqref{eq:pfConjugate} we have
\[\pf[J+K]=\pf[J+J^{\sf T}KJ]=\pf[J-JKJ].\]
Observe also that $(JK)^{\sf T}=KJ$. In view of these two facts we define the
\emph{regularized Pfaffian} of $K$ as
\begin{equation}
  \label{eq:regPf}
  \pf_2[J+K]=\pf[(J-JKJ)e^{KJ}]=\pf[e^{\inv2JK}(J-JKJ)e^{\inv2KJ}].
\end{equation}
The second equality actually holds at the level of the operators inside the two Fredholm
Pfaffians (as can be checked easily using the series expansion of the exponential) and has
the advantage of making the skew-symmetry of the argument more apparent. A calculation
shows that $(J-JKJ)e^{KJ}-J=-\sum_{n\geq2}\frac{n-1}{n!}(JK)^nJ$, and thus the right hand
side of \eqref{eq:regPf} makes sense as the Fredholm Pfaffian of a trace class operator
(since $JK$ is Hilbert-Schmidt, and thus $(JK)^n$ is trace class for any $n\geq2$). Note
also that
\[\pf_2[J+K]^2=\det\nolimits_2[I-JK].\]
Additionally, if $K_1$ and $K_2$ are Hilbert-Schmidt then 
\begin{equation}
  \label{eq:pfRegCont}
  \left|\pf_2[J+K_1]-\pf_2[J+K_2]\right|\leq\|K_1-K_2\|_2\,e^{\inv4(\|K_1\|_2+\|K_1\|_2+1)^2}
\end{equation}
(with the same proof as \eqref{eq:detCont}, now based on the inequality
$|\hspace{-0.1em}\pf_2[J+K]|=\sqrt{|\hspace{-0.1em}\det\nolimits_2[I-JK]|}\leq
e^{\inv4\|K\|_2^2}$).

Observe that if one knew in addition that $K$ is trace class, then by
\eqref{eq:pfDetConj} one would have
\begin{equation}
\pf_2[J+K]=\det[e^{\inv2JK}]\pf[J-JKJ]=e^{\inv2\!\tr[JK]}\pf[J+K].\label{eq:pftr}
\end{equation}
As in the case of the regularized determinants discussed in Section \ref{sec:fredDet}, one
situation in which the introduction of regularized Pfaffians can be useful is when $K$ is
Hilbert-Schmidt but not trace class, but on the other hand some additional control on $K$
is available which makes it possible to show that the first or both equalities in
\eqref{eq:pftr} hold. In such a case, since the left hand side can be controlled by the
Hilbert-Schmidt norm of $K$ (which is the same as that of $JK$), this identity provides a
possible route for controlling $\pf[J+K]$. This idea was used at the end of the proof of
Proposition \ref{prop:GOEPfaffian} to use continuity in $r$ of a certain kernel $K_r$ with
respect to the Hilbert-Schmidt norm, in order to upgrade the identity
$\pf[J-K_r]^2=\det[I+JK_r]$ to $\pf[J-K_r]=\sqrt{\det[I+JK_r]}$.

\vspace{14pt}

\noindent{\bf Acknowledgements.}
The authors would like to thank Alexei Borodin, Ivan Corwin, Pierre Le Doussal, and Pasquale Calabrese for numerous discussions
about the results in this paper.  JO was partially supported by the Natural Sciences and Engineering Research Council
of Canada. JQ gratefully acknowledges financial support from the Natural Sciences and Engineering
Research Council of Canada, the I.~W. Killam Foundation, and the Institute for Advanced
Study. DR was partially supported by Fondecyt Grant 1120309, by Conicyt Basal-CMM, and by
Programa Iniciativa Cient\'ifica Milenio grant number NC130062 through Nucleus Millenium
Stochastic Models of Complex and Disordered Systems. 

\printbibliography[heading=apa]

\end{document}